\newtheorem{theorem}{Theorem}[section] 
\newtheorem{theoreme}{Theorem}
\newtheorem{corollary}[theorem]{Corollary}
\newtheorem{lemma}[theorem]{Lemma}
\newtheorem{sublemma}[theorem]{Sub-lemma}
\newtheorem{proposition}[theorem]{Proposition}
\newtheorem*{proposition*}{Proposition}
\newtheorem{claim}[theorem]{Claim}
\newtheorem*{question*}{Question}
\newtheorem*{theorem*}{Theorem}
\newtheorem*{claim*}{Claim}
\newtheorem*{corollary*}{Corollary}
\theoremstyle{definition}
\theoremstyle{remark}
\newtheorem*{remark*}{Remark}
\newcommand{\R}{\mathbb{R}}\newcommand{\N}{\mathbb{N}}
\newcommand{\Z}{\mathbb{Z}}\newcommand{\Q}{\mathbb{Q}}
\newcommand{\T}{\mathbb{T}}\newcommand{\C}{\mathbb{C}}
\newcommand{\D}{\mathbb{D}}\newcommand{\A}{\mathbb{A}}
\renewcommand{\S}{\mathbb{S}}
\newcommand{\G}{\mathcal{G}}
\newcommand{\F}{\mathcal{F}}
\newcommand{\dom}{\mathrm{dom}}
\def\FHL{\mathcal{FHL}}
\DeclareMathOperator{\Diff}{Diff}
\DeclareMathOperator{\Homeo}{Homeo}
\begin{document}

\author[P.-A. Guih\'eneuf]{Pierre-Antoine Guih\'eneuf}
\address{Institut de Math\'ematiques de Jussieu-Paris Rive Gauche, IMJ-PRG, Sorbonne Universit\'e, Universit\'e Paris-Diderot, CNRS, F-75005, Paris, France} 
\curraddr{}
\email{pierre-antoine.guiheneuf@imj-prg.fr}

\author[P. Le Calvez]{Patrice Le Calvez}
\address{Institut de Math\'ematiques de Jussieu-Paris Rive Gauche, IMJ-PRG, Sorbonne Universit\'e, Universit\'e Paris-Diderot, CNRS, F-75005, Paris, France \enskip \& \enskip Institut Universitaire de France}
\curraddr{}
\email{patrice.le-calvez@imj-prg.fr}

\author[A. Passeggi]{Alejandro Passeggi}
\address{CMAT, Facultad de Ciencias, Universidad de la Rep\'ublica, Uruguay.}
\curraddr{Igua 4225 esq. Mataojo. Montevideo, Uruguay.}
\email{alepasseggi@cmat.edu.uy}

\title[Conservative surface homeomorphisms with rational rotation]{Area preserving homeomorphisms of surfaces with rational rotational direction}

\begin{abstract}
Let $S$ be a closed surface of genus $g\geq 2$, furnished with a Borel probability measure $\lambda$ with total support. We show that if $f$ is a $\lambda$-preserving homeomorphism isotopic to the identity such that the rotation vector $\mathrm{rot}_f(\lambda)\in H_1(S,\R)$ is a multiple of an element of $H_1(S,\Z)$, then $f$ has infinitely many periodic orbits.

Moreover, these periodic orbits can be supposed to have their rotation vectors arbitrarily close to the rotation vector of any fixed ergodic Borel probability measure.
\end{abstract}

\maketitle

\bigskip
\noindent {\bf Keywords:} Rotation vector, maximal isotopy, transverse foliation

\bigskip
\noindent {\bf MSC 2020:}  37C25 37E30, 37E45

\tableofcontents

\section{Introduction}

\subsection{Rotation vector}\label{ss.rotationvector}

If $ S$ is a smooth compact boundaryless oriented surface of genus $g$, we denote $\mathrm{Homeo}(S)$ the space of homeomorphisms of $S$ furnished with the $C^0$-topology. This topology coincides with the uniform topology because $S$ is compact. The path-connected component of the identity map $\mathrm {Id}$, usually called the space of homeomorphisms \emph{isotopic to the identity}, will be denoted $\mathrm{Homeo}_*(S)$. A continuous path $I=(f_t)_{t\in[0,1]}$ joining the identity to a map $f\in \mathrm{Homeo}_*(S)$ is called an {\it identity isotopy} of $f$. We call  {\it trajectory} of a point $z\in S$ defined by $I$ the path $I(z): t\mapsto f_t(z)$ joining $z$ to $f(z)$.  

By compactness of $S$, one knows by Krylov-Bogolioubov's theorem that the set ${\mathcal M}(f)$ of $f$-invariant Borel probability measures is not empty. More precisely it is a non empty compact convex subset of the space $\mathcal M$ of Borel probability  measures furnished with the weak$^{\ast}$ topology. Remind that the {\it support} of $\mu$, denoted $\mathrm{supp}(\mu)$, is the smallest closed set of $\mu$-measure $1$. 
\medskip

Let us recall the definition of the {\it rotation vector} of a measure $\mu\in {\mathcal M}(f)$ (see \cite{Mats1}, \cite{Pol} or \cite{Sc}). Let $I=(f_t)_{t\in[0,1]}$ be an identity isotopy of $f$. Fix $z\in S$. The homotopy class of $I(z)$, relative to the endpoints,  contains a smooth path $\gamma$ joining  $z$ to $f(z)$. If $\alpha$ is a closed $1$-form,  the quantity $\int_{\gamma} \alpha$ does not depend on the choice of $\gamma$ and we denote it $\int_{I(z)} \alpha$. It is equal to $h(f(z))-h(z)$ if $\alpha$ is exact and $h$ is a primitive of $\alpha$.
One gets a real valued morphism $\alpha\mapsto \int_S \left(\int_{I(z)} \alpha\right) \, d\mu(z)$ defined on the space of closed $1$-forms, that vanishes on the space of exact $1$-forms because $\mu$ is invariant by $f$. So, it  induces a natural linear form on the first cohomology group $H^1(S, \R)$. Hence, there exists a homology class $\mathrm{rot}_{I}(\mu)\in H_1(S,\R)$, uniquely defined by the equation
$$\langle [\alpha], \mathrm{rot}_{I}(\mu)\rangle= \int_S \left(\int_{I(z)} \alpha\right) \, d\mu(z),$$
where $\alpha$ is any closed $1$-form, $[\alpha]\in H^1(S,\R)$ its cohomology class and 
$$\langle \enskip,\enskip \rangle: H^1(S,\R)\times H_1(S,\R)\to\R$$
the natural bilinear form. By definition $\mathrm{rot}_{I}(\mu)\in H_1(S,\R)$ is the rotation vector of $\mu$ (for the isotopy $I$). It is well known that two identity isotopies of $f$ are homotopic relative to the ends if the genus of $S$ is larger than $1$ (see \cite{H}). In that case, $\int_{I(z)} \alpha$ does not depend on $I$ and one can write 
\[\mathrm{rot}_{f}(\mu) = \mathrm{rot}_{I}(\mu).\]
If $O$ is a periodic orbit of $f$, one can define the rotation vector $\mathrm{rot}_I(O)$ of $O$ (or $\mathrm{rot}_f(O)$ if the genus of $S$ is larger than $1$) as being equal to the rotation vector of $\mu_O$, where $\mu_O$ is the probability measure equidistributed on $O$.
In particular we have $\mathrm{rot}_I(O)=0$ if $O$ is a contractible periodic orbit, which means that the loop $I^q(z)$ is homotopic to zero, if $z\in O$.

 Let us give an equivalent definition. Furnish $S$ with a Riemannian metric and for every points $z$, $z'$ in $S$, choose a path $\gamma_{z,z'}$ joining $z$ to $z'$ in such a way that the lengths of the paths $\gamma_{z,z'}$ are uniformly bounded. For every $z\in S$, and every $n\geq 1$, consider the path
$$I^n(z)= I(z) I(f(z))\cdots I(f^{n-1}(z)) $$
defined by concatenation, 
and the loop 
$$\Gamma_n(z)= I^n(z)\gamma_{f^n(z), z}.$$
One can prove that there exists a $\mu$-integrable function $\mathrm{rot}_f: S\to H_1(S,\R)$ such that for $\mu$-almost every point $z\in S$, the sequence $[\Gamma_n(z)]/n$ converges to $\mathrm{rot}_f(z)$. This allows to define
$$\mathrm{rot}_{I}(\mu)=\int \mathrm{rot}_f(z)\, d\mu(z).$$

Let us give a last definition that will be used in this article. In the whole text we will write $[\Gamma]\in H_1(S,\Z)$ for the homology class of an oriented loop $\Gamma\subset S$. Let $U\subset S$ be a topological open disk (meaning a simply connected domain) such that $\mu(U)\not=0$.
Write $\varphi_U:U\to U$ for the first return map of $f$ and $\tau_U :U\to \N\setminus\{0\}$ for the time of first return map. These maps are defined $\mu$-almost everywhere on $U$. Kac's Lemma \cite{K} tells us that $\varphi_U$ preserves the measure $\mu|_{U}$ and that $\tau_U$ is $\mu|_{U}$-integrable, and that moreover
$$\int_U \tau_U \, d\mu=\mu\left(\bigcup_{k\geq 0} f^k(U)\right) = \mu\left(\bigcup_{k\in\Z} f^k(U)\right).$$
We also denote by $\mu_U$ the normalized probability measure $\mu|_{U}/\mu(U)$. One can construct a map $\rho_U : U\to H_1(S,\Z)$ defined $\mu_U$-almost everywhere as follows: if $\varphi_U(z)$ is well defined, one closes the trajectory $I^{\tau_U(z)-1}(z)$ with a path $\gamma$ contained in $U$ that joins $\varphi_U(z)$ to $z$, and set $\rho_U(z)=[I^{\tau_U(z)-1}(z)\gamma]$, noting that $[I^{\tau_U(z)-1}(z)\gamma]$ is independent of the choice of $\gamma$. If the genus of of $S$ is bigger than 1 (what we suppose from now), then this map does not depend on the choice of $I$. It is easy to prove that the map  $\rho_U/\tau_U$ is uniformly bounded on $U$ and consequently that $\rho_U$ is $\mu_U$-integrable. So, by Birkhoff's theorem, there exist $\mu_{U}$-integrable functions $\rho_U{}^*: U\to H_1(S,\R)$ and $\tau_U{}^*: U\to \R$ such that for $\mu_U$-almost every point $z$ it holds that 
\begin{equation}\label{eq:def*}
\lim_{n\to+\infty} \frac1n \sum_{k=0}^{n-1} \rho_U( \varphi_U ^k(z))= \rho_U{}^*(z),\quad
\lim_{n\to+\infty} \frac1n \sum_{k=0}^{n-1} \tau_U( \varphi^k(z))= \tau_U{}^*(z).
\end{equation}
These quantities are related to the rotation number by the fact that for $\mu_U$-almost every point $z$, we have $\mathrm{rot}_f(z)=\rho_U{}^*(z)/\tau_U{}^*(z)$.

\subsection{The main theorem}\label{ss.maintheorem}

Let us begin this section by introducing the notion of {\it homotopical interval of rotation}. If $S$ is an oriented closed surface, denote ${\mathcal {FHL}}(S)$ the free homotopy  loop space of $S$. For every $\kappa\in {\mathcal {FHL}}(S)$ and every $\Gamma\in\kappa$, the homology class $[\Gamma]\in H_1(S,\Z)$ does not depend on the choice of $\Gamma$, we denote it $[\kappa]$. If $\Gamma:\R/\Z\to S$ is a loop and $k$ an integer, we can define the loop $\Gamma^k: t\mapsto \Gamma(kt)$.   For every $\kappa \in {\mathcal {FHL}}(S)$, every  $\Gamma\in\kappa$ and every $k\in\Z$, the free homotopy class of $\Gamma^k$ does not depend on the choice of $\Gamma$, we denote it $\kappa^k$. A homotopical interval of rotation of $f\in\mathrm{Homeo}_*(S)$ is a couple $(\kappa, r)$, where $\kappa\in {\mathcal {FHL}}(S)$ and $r$ is a positive integer, that satisfies the following: there exists an integer $s>0$ such that for every $p/q\in[0,1]\cap \Q$, one can find a point $z\in S$ of period at least $q/s$, such that the loop naturally defined by $I^{rq}(z)$ belongs to $\kappa^p$. In particular, we have $\mathrm{rot}_f(z) = p/(rq)[\kappa]$.

\medskip

Let us state the main result of the article.

\begin{theoreme} \label{th:main}
Let $ S$ be an oriented closed surface of genus $g\geq 2$. If $f\in \mathrm{Homeo}_*(S)$ preserves a Borel probability measure $\lambda$ such that
$\mathrm{supp}(\lambda)=S$ and $\mathrm{rot}_{f}(\lambda)\in \R H_1(S,\Z)$, then $f$ has infinitely many periodic points. 

More precisely, for every ergodic measure $\nu\in{\mathcal M}(f)$ that is not a Dirac measure at a contractible fixed point and every neighborhood $\mathcal U$ of $\mathrm{rot}_{f}(\nu)$ in $H_1(S,\R)$, there exists a homotopical interval of rotation $(\kappa, r)$ such that $[\kappa]/r\in \mathcal U$.
\end{theoreme}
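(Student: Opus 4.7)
The plan is to combine Le Calvez's theory of maximal identity isotopies and transverse foliations with a forcing argument exploiting the rational-direction hypothesis on $\mathrm{rot}_f(\lambda)$. First I would fix a maximal identity isotopy $I$ of $f$ (in Jaulent's sense, which applies in our area-preserving setting), producing a singular topological foliation $\cF$ on $S\setminus\mathrm{Fix}(I)$ whose regular leaves are positively transverse to the trajectories $I(z)$. Since $g\geq 2$, $I$ is unique up to homotopy relative to the endpoints, so transverse trajectories and rotation vectors are intrinsic to $f$. If $\mathrm{Fix}(I)$ is infinite we are done; otherwise $\cF$ has finitely many singularities, and the task reduces to producing the homotopical intervals of rotation.

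To exploit the hypothesis $\mathrm{rot}_f(\lambda)\in\R\cdot H_1(S,\Z)$, write $\mathrm{rot}_f(\lambda)=s\cdot [h]$ for some primitive $[h]\in H_1(S,\Z)$. I would pass to the infinite cyclic cover $\hat S\to S$ whose deck group annihilates $[h]$: the lift $\hat\lambda$ has, on average, bounded drift under the deck action, while the hypothesis $\mathrm{supp}(\lambda)=S$ ensures $\hat\lambda$-recurrence throughout every fundamental domain. This yields many $\cF$-transversal self-intersections of lifted $\lambda$-trajectories carrying controlled homological information proportional to $[h]$.

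Next, for a given ergodic $\nu$ that is not a Dirac mass at a contractible fixed point, Atkinson's theorem yields $\nu$-generic trajectories whose homological drift is linear with direction $\mathrm{rot}_f(\nu)$ (or, if $\mathrm{rot}_f(\nu)=0$, non-contractible recurrences in the sense of Franks--Handel). The Le Calvez--Tal forcing theorem, applied to an $\cF$-transversal intersection between a $\nu$-generic transverse trajectory and a $\lambda$-generic one supplied by the previous step, then produces for every rational $p/q$ in some non-trivial interval a periodic point whose transverse trajectory closes up, after $rq$ iterates, in a fixed free homotopy class $\kappa^p$ and has period at least $q/s$. This is exactly the data of a homotopical interval of rotation $(\kappa,r)$.

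The main obstacle will be controlling $[\kappa]/r$ precisely enough to place it inside an arbitrarily small neighbourhood $\mathcal U$ of $\mathrm{rot}_f(\nu)$, rather than at some uncontrolled interpolation between $\mathrm{rot}_f(\nu)$ and $\mathrm{rot}_f(\lambda)$. In the forcing construction $[\kappa]/r$ is determined by the homological displacements of the two transverse legs, one of which must approximate $\mathrm{rot}_f(\nu)$ (achievable by taking a very long $\nu$-typical segment), while the other must be commensurable with the first so that an integer ratio $r$ even exists; here the rational-direction hypothesis on $\lambda$ is essential, because the homological displacement of a $\lambda$-typical leg lies on the rational line $\R\cdot[h]$, making commensurability a matter of choice of an integer multiple. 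The quantitative matching of directions and rates -- placing $[\kappa]/r$ inside $\mathcal U$ while keeping the Le Calvez--Tal forcing uniform in $q$ -- is where the technical heart of the argument will lie, and where the combination of \emph{full support} of $\lambda$ (allowing a free choice of base point for the forcing configuration) and \emph{rational direction} of $\mathrm{rot}_f(\lambda)$ (providing commensurability) is genuinely used.
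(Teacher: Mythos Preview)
Your outline captures the right toolkit (maximal isotopy, transverse foliation, Le Calvez--Tal forcing, Atkinson) but has a genuine structural gap: you assume that a $\cF$-transverse intersection between a $\nu$-typical and a $\lambda$-typical trajectory can always be produced, and that the whole proof therefore reduces to forcing a horseshoe. This is false. There exist $f$ satisfying all the hypotheses with \emph{no} topological horseshoe whatsoever --- time-one maps of area-preserving flows are the basic examples --- and for these your mechanism produces nothing. The paper's proof is organized precisely around this dichotomy: one builds a $T$-strip $\tilde B$ (with $[T]/r$ close to $\mathrm{rot}_f(\nu)$) that $\nu$-typical transverse trajectories \emph{draw}, and then does a case analysis. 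If those trajectories also cross or visit $\tilde B$, one indeed obtains $\cF$-transverse (self-)intersections and a rotational horseshoe via forcing. But if they do neither, then $\nu$-typical trajectories are trapped in $\tilde B$, the whole transverse trajectory is equivalent to the core line $\tilde\gamma_*$, and one must instead pass to the annular cover $\widetilde{\mathrm{dom}}(I)_{\tilde\gamma_*}/T'$ and apply a Poincar\'e--Birkhoff argument, together with prime-end compactification, to extract periodic points of all rational rotation numbers in $(0,a)$. Your sketch has no analogue of this second branch.

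Two further points. First, your description of Atkinson's theorem is off: it is not used to say ``$\nu$-generic trajectories have linear drift'' (that is Birkhoff). In the paper Atkinson is applied to the $\nu|_U$-cocycle $z\mapsto \rho_U(z)\wedge\mathrm{rot}_f(\lambda)$, whose integral vanishes because $\mathrm{rot}_f(\nu)\wedge\mathrm{rot}_f(\lambda)=0$; recurrence to $0$ of the Birkhoff sums, combined with the fact that these sums lie in $s\Z$ (this is exactly where $\mathrm{rot}_f(\lambda)\in\R H_1(S,\Z)$ enters), forces an \emph{exact} hit $[T]\wedge\mathrm{rot}_f(\lambda)=0$ with $[T]/r$ close to $\mathrm{rot}_f(\nu)$. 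Second, the role you assign to the rational-direction hypothesis (``commensurability of legs in the forcing'') is not how it is actually used: the exact equality $[T]\wedge\mathrm{rot}_f(\lambda)=0$ is what guarantees, in the annular cover, that every essential simple loop meets its $\hat f$-image (Lemma~\ref{le:intersecting}), which is precisely the condition excluding the bad alternative in the Poincar\'e--Birkhoff theorem.
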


Note that if $f$ satisfies the hypotheses of the theorem and is different from identity, then by ergodic decomposition it has an ergodic invariant probability measure $\nu$ that is not supported on a fixed point. 
Theorem~\ref{th:main} applies and implies the existence of a homotopical interval of rotation; in particular $f$ has an infinite number of periodic points, of arbitrarily large period, and of rotation vector arbitrarily close to 0. 
If $\mathrm{rot}_f(\lambda)\not=0$, the measure $\nu$ can be chosen such that $\mathrm{rot}_f(\nu)\not=0$ and consequently, $f$ has periodic orbits of arbitrary large period and with non zero rotation vector.
In any case, any ergodic Borel probability measure, supported on a contractible fixed point or not, has its rotation vector approximated by rotation vectors of an infinite number of periodic points. Remark that this property is also true for $f$ equal to the identity.

Before explaining what are the two different sources of creation of homotopical interval of rotation in Paragraph~\ref{ss.idea}, let us comment Theorem~\ref{th:main}. We start by giving a direct application.
If $\omega$ is a smooth area form on $S$, denote $\Diff^r_{\omega}(S)$, $1\leq r\leq \infty$, the space of $C^r$ diffeomorphisms of $S$ preserving $\omega$, endowed with the $C^r$-topology, and $\Diff^r_{\omega, *}(S)$ the connected component of $\Diff^r_{\omega}(S)$ that contains the identity. It is a classical fact that $\Diff^r_{\omega, *}(S)=\Diff^r_{\omega}(S)\cap \Homeo_*(S)$.

\begin{corollary} \label{co:intro}
Suppose that $g\geq 2$. Then, for any $1\leq r\leq \infty$, the set of maps $f\in\Diff^r_{\omega, *}(S)$ that have infinitely many periodic points is dense in  $\mathrm{Diff}^r_{\omega, *}(S)$.
\end{corollary}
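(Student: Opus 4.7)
The plan is to approximate any given $f\in\Diff^r_{\omega,*}(S)$, in the $C^r$-topology, by some $g\in\Diff^r_{\omega,*}(S)$ whose rotation vector (for the normalized measure $\lambda=\omega/\omega(S)$, which has full support) lies in $\R\cdot H_1(S,\Z)$; Theorem~\ref{th:main} will then furnish infinitely many periodic points for $g$. This reduction rests on two ingredients: the density of $\R\cdot H_1(S,\Z)$ in $H_1(S,\R)$, which is immediate since any nonzero $v$ is approximated by $(\|v\|/\|c\|)c$ where $c\in H_1(S,\Z)$ is a primitive class whose direction approaches $v/\|v\|$, and a mechanism to shift the rotation vector by a prescribed small amount via composition with an area-preserving diffeomorphism close to the identity.

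For the shift I would fix an auxiliary Riemannian metric on $S$ and work with the $2g$-dimensional space $\cH^1(S)$ of harmonic $1$-forms. To each $\alpha\in\cH^1(S)$ I associate the $\omega$-dual vector field $X_\alpha$ defined by $i_{X_\alpha}\omega=\alpha$; closedness of $\alpha$ makes $X_\alpha$ preserve $\omega$, so its time-one flow $h_\alpha:=\phi^1_{X_\alpha}$ lies in $\Diff^\infty_{\omega,*}(S)\subset\Diff^r_{\omega,*}(S)$, and $\alpha\mapsto h_\alpha$ is continuous in the $C^r$-topology with $h_0=\Id$. A short computation using that $\beta\wedge\omega=0$ on any surface gives
\[\langle[\beta],\mathrm{rot}_{h_\alpha}(\lambda)\rangle=\frac{1}{\omega(S)}\int_S\beta(X_\alpha)\,d\omega=\frac{1}{\omega(S)}\int_S\beta\wedge\alpha\]
for every closed $1$-form $\beta$, identifying $\mathrm{rot}_{h_\alpha}(\lambda)$ (up to the factor $1/\omega(S)$) with the Poincar\'e dual of $[\alpha]$. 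Combined with the Hodge identification $\cH^1(S)\simeq H^1(S,\R)$, this makes $\alpha\mapsto\mathrm{rot}_{h_\alpha}(\lambda)$ a linear isomorphism onto $H_1(S,\R)$, so any prescribed element of $H_1(S,\R)$ is realized as $\mathrm{rot}_{h_\alpha}(\lambda)$ with $h_\alpha$ as $C^r$-close to $\Id$ as we wish (harmonic forms being smooth and depending linearly on their cohomology class).

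To finish, given $f$ and $\varepsilon>0$, I would set $v_0=\mathrm{rot}_f(\lambda)$, choose $w\in\R\cdot H_1(S,\Z)$ within any prescribed distance of $v_0$, take $\alpha$ to be the harmonic form with $\mathrm{rot}_{h_\alpha}(\lambda)=w-v_0$, and put $g:=h_\alpha\circ f$; then $g$ is $C^r$-close to $f$. The key step is the additivity
\[\mathrm{rot}_{h\circ f}(\lambda)=\mathrm{rot}_f(\lambda)+\mathrm{rot}_h(\lambda),\qquad f,h\in\Diff^r_{\omega,*}(S),\]
obtained by concatenating an identity isotopy of $f$ with one of $h$ into an identity isotopy of $h\circ f$, integrating a closed $1$-form along the concatenated trajectory, and invoking $f_*\omega=\omega$ to change variables in the contribution coming from $h$ (uniqueness of isotopies up to homotopy in genus $\geq 2$ removes any ambiguity in the definition). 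Applied with $h=h_\alpha$ this yields $\mathrm{rot}_g(\lambda)=w\in\R\cdot H_1(S,\Z)$, and Theorem~\ref{th:main} delivers the infinitely many periodic points of $g$. The main point to verify, beyond these essentially formal computations, is the additivity formula and the rotation-vector computation for $h_\alpha$; once those purely formal identities are in place, the corollary follows from density of $\R\cdot H_1(S,\Z)$.
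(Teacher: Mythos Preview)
Your proof is correct and follows essentially the same strategy as the paper: perturb $f$ by composing with the time-one map of a divergence-free flow so as to move $\mathrm{rot}_f(\lambda)$ into $\R\,H_1(S,\Z)$, then invoke Theorem~\ref{th:main}. The only cosmetic differences are that the paper builds its correcting flows from vector fields supported on tubular neighborhoods of a basis of loops (avoiding Hodge theory) and aims for the slightly smaller target $H_1(S,\Q)$, whereas you use the $\omega$-duals of harmonic $1$-forms and aim for the full set $\R\,H_1(S,\Z)$; both variants rest on the same additivity $\mathrm{rot}_{h\circ f}(\lambda)=\mathrm{rot}_f(\lambda)+\mathrm{rot}_h(\lambda)$ and the same flow computation $\langle[\beta],\mathrm{rot}_{h_\alpha}(\lambda)\rangle=\tfrac{1}{\omega(S)}\int_S\beta\wedge\alpha$.
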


\begin{proof}
There is no loss of generality by supposing that the measure $\mu_{\omega}$ naturally defined by $\omega$ is a probability measure. Note that the map $f\mapsto \mathrm{rot}_f(\mu_{\omega})$ is a morphism defined on  $\Diff^r_{\omega}(S)$. One can find a family of simple loops $(\Gamma_i)_{1\leq i \leq 2g}$ in $S$ such that the family $([\Gamma_i])_{1\leq i\leq 2g}$ generates $H_1(S, \R)$. For every $i\in\{1,\dots, 2g\}$ consider a closed tubular neighborhood $W_i$ of $\Gamma_i$. 
It is easy to construct a divergence free smooth vector field $\zeta_i$ supported on $W_i$ with an induced flow $(h_i^t)_{t\in\R}$ satisfying $\mathrm{rot}_{h_i^t}(\mu_{\omega})=t[\Gamma_i]$.  For every $t=(t_1, \dots, t_{2g})\in\R^{2g}$, define $f^t= h_1^{t_1}\circ \dots \circ h_{2g}^{t_{2g}}\circ f$. We have 
$$\mathrm{rot}_{f^t}(\mu_{\omega}) = \mathrm{rot}_f(\mu_{\omega}) + \sum_{i=1}^{2g} \mathrm{rot}_{h_i^{t_i}}(\mu_{\omega}) = \mathrm{rot}_f(\mu_{\omega}) + \sum_{i=1}^{2g} t_i [\Gamma_i].$$
So, we can find $t$ ``arbitrarily small'' such that $\mathrm{rot}_{f^t}(\mu_{\omega})\in  H_1(S,\Q)$. 
\end{proof}

\begin{remark*}
A very close version of the theorem has been proved independently by Rohil Prasad.
A very strong recent result of Cristofer-Prasad-Zhang \cite{CPrZ}, whose proof uses Periodic Floer Homology theory, asserts that if $\omega$ is a smooth area form on $S$, then for every $k\in \N\cup\{\infty\}$, the set of maps $f\in\Diff^{k}_{\omega}(S)$ that have a dense set of periodic points is dense in  $\mathrm{Diff}^{k}_{\omega}(S)$ (which of course implies that Corollary \ref{co:intro} holds in the smooth category, see also \cite {EH} and \cite{CPoPrZ}).
The following result is used in their proof: in the case where $f\in\Diff^{\infty}_{\omega,*}(S)$ and $\mathrm{rot}_f(\mu_{\omega})\in H_1(S,\Q)\setminus\{0\}$, the map $f$ has a periodic orbit with non zero rotation vector. Moreover they find an explicit upper bound of the period related to $\mathrm{rot}(\mu_{\omega})$ and to the genus of $S$.
As explained by Prasad \cite{Pr} in a recent note, a simple approximation process permits to extend this result to the case where $f\in\mathrm{Homeo}_*(S)$ preserves $\mu_{\omega}$ and satisfies $\mathrm{rot}_f(\mu_{\omega})\in H_1(S,\Q)\setminus\{0\}$.
Moreover a blow-up argument allows to extend the result in the case where $\mathrm{rot}_f(\mu_{\omega})\in \R H_1(S,\Z)\setminus\{0\}$.
Consequently it holds that $f$ has infinitely many periodic orbits of period arbitrarily large. This last point is a consequence of previous works where area preserving homeomorphisms with finitely many periodic points are characterized (\cite{AdT} in the case of the torus, \cite{Lec3} in the case of surfaces with higher genus). Using Oxtoby-Ulam theorem \cite{OxU} and the fact that every invariant probability measure is the barycenter of two invariant probability measures, the first one atomic and the second one with no atom, the measure $\mu_{\omega}$ can be replaced with any probability measure with total support.  In the present article, we give some precisions about the structure of the periodic points.
\end{remark*}

\begin{remark*} 
The theorem is untrue in the sphere and in the torus. Indeed, suppose that  $\alpha\in\R\setminus\Q$. 

The diffeomorphism $f_{\alpha}$ of the Riemann sphere $\S^2$ defined as follows
$$ f_{\alpha}(z)=\begin{cases} \infty & \mathrm{if} \enskip z=\infty,\\
e^{2i\pi\alpha} z&\mathrm{if} \enskip z\in\C,
\end{cases}$$
preserves a probability measure $\mu_{\omega}$ associated to an area form and has no periodic point but $0$ and $\infty$. If $I$ is an identity isotopy of $f$, then $\mathrm{rot}_I(\mu_{\omega})=0$ because $H_1(\S^2,\R)=0$. 

The diffeomorphism  
$$\begin{aligned} g_{\alpha}: \R^2/\Z^2&\longrightarrow \R^2/\Z^2\\
(x,y)&\longmapsto (x+(\alpha+\Z),y)\end{aligned}$$
preserves the area form $\omega=dx\wedge dy$ and has no periodic orbit. If $I=(R_{t\alpha})_{t\in[0,1]}$, then we have $\mathrm{rot}_I(\mu_{\omega})=\alpha(1,0)\in\R H_1(\T^2,\Z)$.   
\end{remark*}

\begin{remark*} 
In particular, the theorem asserts that if $\mathrm{rot}_{f}(\lambda)=0$, then there exists infinitely many periodic orbits. Moreover the set of periods is infinite if $f$ is not the identity because there exist ergodic invariant measures that are not Dirac measures at a fixed point. This result, that admits a version for the case $g=1$, was already known (see \cite{Lec2}). It is a generalization of a result stated in the differential setting (see \cite{FH}) which itself is the two dimensional version of what is called Conley conjecture, later proved in any dimension (see \cite{G}). Note that in \cite{Lec2} it is proved that if $f$ has finitely many fixed points, then there are infinitely many contractible periodic orbits.

 \end{remark*}

\begin{remark*}
The theorem was well known for the time one map of a conservative flow. Indeed, let $X$ be a (time independent) vector field of class $C^1$ whose flow preserves $\omega$. The equalities
$0=L_X\omega= i_X d\omega+di_X\omega$
tell us that the $1$-form $\beta=i_X\omega$ is closed. Moreover it is invariant by the flow of $X$ because $L_X\beta= i_Xd\beta+di_X(i_X\omega)=0$. If $f$ is the time one map of the flow  $(f^t)_{t\in\R}$ of $X$, then, denoting  $I=(f^t)_{t\in[0,1]}$, we know that for every closed $1$-form $\alpha$, we have
$$\begin{aligned}\langle [\alpha], \mathrm{rot}_{I}(\mu_{\omega})\rangle
&= \int_S \left(\int_{I(z)} \alpha\right) \, d\mu_\omega(z)\\
&= \int_S \left(\int_0^1\alpha(X(f_t(z))dt\right) \, d\mu_{\omega}(z)\\
&= \int_0^1\left(\int_S \alpha(X(f_t(z))d\mu_{\omega}(z)\right)dt\\
&=\int_S \alpha(X(z))d\mu_{\omega}(z)\\
 \end{aligned}.$$
Noting that $0=i_X(\alpha\wedge\omega)= i_X\alpha \, \wedge\,\omega -\alpha \wedge i_X \omega$ we deduce that
$$\langle [\alpha], \mathrm{rot}_{I}(\mu_{\omega})\rangle= \int_S \alpha\wedge \beta.$$
The fact that $\mathrm{rot}_{I}(\mu_{\omega})\in \R H_1(S,\Z)$ implies that $[\beta]\in \R H^1(S,\Z)$. Suppose for instance that $[\beta]\in H^1(S,\Z)$. Then there exists a function $H:S\to \R/\Z$ of class $C^2$ such that $\beta=dH$. Indeed, let us fix $z_0\in S$. For every point $z\in M$, the value modulo $1$, denoted $H(z)$,  of $\int_{\gamma} \beta$ does not depend on the $C^1$ path $\gamma$ joining $z_0$ to $z$. We get in that way a function $H:S\to \R/\Z$ of class $C^2$ such that $\beta=d H$. It is invariant by the flow of $X$ because 
$$L_X H = i_X dH+d i_XH=i_X\beta=i_X(i_X\omega)=0.$$

Denote $\mathrm{sing}(X)$ the set of singular points of $X$. Remind that the $\alpha$-limit set $\alpha(z)$ and the $\omega$-limit set $\omega(z)$ of a point $z\in S$ are the sets of subsequential limits of the sequences $(f^{-n}(z))_{n\geq 0} $ and $(f^{n}(z))_{n\geq 0} $ respectively. If $z$ is not singular, either the orbit of $z$ is periodic, or its limit sets $\alpha(z)$ and $\omega(z)$ are contained in $\mathrm{sing}(X)$. In particular the ergodic invariant probability measures that are non supported on a singular point are supported on a periodic orbit of $f$ lying on a periodic orbit of the flow with rational period, or supported on a whole periodic orbit of the flow with irrational period. The union $W$ of periodic orbits of the flow is non empty (by Sard's theorem) and open. Moreover every connected component $V$ of $W$ is annular (meaning homeomorphic to $\R/\Z\times \R$). The genus being at least two, there exist singular points. Furthermore $S$ is not a sphere. It implies that there exists at least one end of $V$ such that for every sequence $(z_n)_{n\geq 0}$ in $V$ converging to this end, the period of $z$ (for the flow) converges to $+\infty$. So the period is not constant on $V$.  It implies that $f$ has periodic points of arbitrarily large period. More precisely, the loops $\Gamma$ that appear in the Theorem are the simple loops contained in such a component $V$ that are non homotopic to zero in $V$ and suitably oriented.  Note that if $\mathrm{rot}_I(\mu_{\omega})\not=0$, there exits at least one connected component $V$ of $W$ such that $i_*(H_1(V,\Z))\not=\{0\}$, where $i_*:H_1(V,\Z)\to H_1(S, \Z)$ is the morphism naturally defined by the inclusion map $i:V\to S$, meaning that the periodic points in $V$ have non zero rotation vector. 
\end{remark*}

\begin{remark*}
The hypothesis $\mathrm{rot}_{f}(\lambda)\in \R H_1(S,\Z)$ is necessary to get the theorem. Indeed one can find smooth vector fields with finitely many singular points, whose flows preserves an area form $\omega$ and such that every orbit is dense if not reduced to a singular point. The time one map of this flow $f$ has no periodic points but the singular points. Of course it holds that $\mathrm{rot}_{f}(\lambda)\not\in \R H_1(S,\Z)$. Classical examples are given by translation flows in a minimal direction.  
\end{remark*}

\begin{remark*}
Corollary \ref{co:intro} was already known. In fact we have a much stronger result: the set of maps $f\in\Diff^r_{\omega}(S)$ that have a hyperbolic periodic point with transverse homoclinic intersection, is an open and dense subset of $\Diff^r_{\omega}(S)$  (see \cite{LecSa}). This result has been known for a long time in the case  where $g\leq 1$ (see \cite{Ad}, \cite{AdT}, \cite{D}, \cite{Ol1}, \cite{Ol2}, \cite{Pi}, \cite{R}). A difficult step in the proof of the case $g\geq 2$  is to show that the set of maps $f\in\Diff^r_{\omega,*}(S)$ having at least $2g-1$ periodic points is dense in $\Diff^r_{\omega,*}(S)$. \end{remark*}

\subsection{Idea of the proof}\label{ss.idea}

The main tool of the proof is the forcing theory developed in \cite{LecT1, LecT2, Lel}, which we introduce in Paragraphs~\ref{ss.intersections} and \ref{ss.maximal}. Using this tool, we analyse the possible configurations that can occur under the hypotheses of Theorem~\ref{th:main}. In most of the cases, we will find a rotational horseshoe (defined in Paragraph~\ref{ss:horseshoe}), which will allow us to get the conclusion of the theorem.  In only one case we will not be able to find such a horsheshoe and indeed, there are some examples of homeomorphisms satisfying the hypotheses of Theorem~\ref{th:main} and without topological horseshoe, for example time one maps of area preserving flows. The conclusion will be obtained using an improved version of Poincaré-Birkhoff Theorem~\ref{th:PB} in a suitable annulus. Caratheodory's theory of prime ends (see Paragraph~\ref{ss:Caratheodory}) will be used in this last case.

More precisely, one can find a suitable identity isotopy $I$ of $f$ and a singular oriented foliation $\F$ on $S$ whose regular set coincide with the set $\mathrm{dom}(I)$ of points with non trivial trajectory under the isotopy, that satisfy the following fundamental property: every non trivial trajectory $I(z)$ is homotopic in $\mathrm{dom}(I)$ to a path transverse to $\F$. Given an $f$-invariant ergodic probability measure $\nu$ such that $\nu(\mathrm{dom}(I))=1$, the proof starts by building an \emph{approximation} of a typical orbit for $\nu$ (Lemma~\ref{le:goodstrip}): it is an oriented loop $\Gamma_*$ transverse to $\F$, such that $[\Gamma_*]$ is close to $\mathrm{rot}_f(\nu)$, and such that, for $\nu$-almost every point $z$, the transverse path defined naturally by the whole orbit of $z$ draws this loop.  We will consider an annular covering space $\widehat{\mathrm{dom}}(I)$ of $\mathrm{dom}(I)$ where  $\Gamma_*$ is lifted to a non contractible simple loop $\hat\Gamma_*$. The isotopy $I|_{ \mathrm{dom}(I)}$ and the foliation $\F$ can be lifted to $\widehat{\mathrm{dom}}(I)$. The union of leaves that meet $\hat\Gamma_*$ is an open annulus $\tilde B$. Depending of the properties of the trajectories of typical points for the measure $\nu$ with respect to this annulus $\tilde B$, we get different conclusions: if they cross or visit this annulus (see Paragraph~\ref{ss:strips} for definitions), then we are able to find a topological rotational horseshoe, by means of the forcing theory results proved in Paragraph~\ref{sec:Dyna}; if they stay forever in this annulus then we prove that Poincaré-Birkhoff Theorem~\ref{th:PB} applies and implies the existence of an infinite number of periodic orbits. 

We strongly use, or develop, the results proved by Gabriel Lellouch in his PhD thesis \cite{Lel}. In particular we will need the main result of \cite{Lel}, where $\wedge$ denotes the natural intersection form on $H_1(S,\R)$ (see Paragraph \ref{ss.maximal}): if $\mu$ and $\mu'$ are two invariant probability measures such that $\mathrm{rot}_f(\mu)\wedge\mathrm{rot}_f(\mu')\not=0$, then $f$ has a rotational horseshoe. The hypothesis  $\mathrm{rot}_{f}(\lambda)\in \R H_1(S,\Z)$ will be used once: with the help of Atkinson's theorem \cite{At}, it will permit us to assume that $[\Gamma_*]\wedge\mathrm{rot}_{f}(\lambda)=0$.

\subsection{Acknowledgements}

We would like to thank Sobhan Seyfaddini for suggesting us this problem. While ending this article we received the recent note of Rohil Prasad. We would like to thank him for his useful comments.

A.P. was funded by Mathamsud Project TOMCAT 22-MATH-10.

\section{Definitions, notations and preliminaries}\label{s:preliminaries}

In the sequel, the letter $S$ will refer to a closed surface while the letter $\Sigma$ will refer to any surface (not necessarily compact, not necessarily connected). If $f$ is a surface homeomorphism, $\mu$ will refer to any $f$-invariant measure, $\lambda$ to an $f$-invariant measure with total support, and $\nu$ to an $f$-invariant ergodic measure.

\subsection{Loops and paths}\label{ss:loopsandpaths}

Let $\Sigma$ be an oriented surface (not necessarily closed, not necessarily boundaryless, not necessarily connected).  A {\it loop} of $\Sigma$ is a continuous map $\Gamma: \T\to \Sigma$, where $\T=\R/\Z$.  It will be called {\it essential} if it is not homotopic to a constant loop. A {\it path} of $\Sigma$ is a continuous map $\gamma: I\to \Sigma$ where $I\subset \R$ is an interval. A loop or a path will be called {\it simple} if it is injective. The {\it natural lift} of a loop $\Gamma: \T\to \Sigma$ is the path $\gamma:\R\to \Sigma$ such that $\gamma(t)=\Gamma(t+\Z)$. A {\it segment} is a simple path $\sigma:[a,b]\to \Sigma$, where $a<b$. The points $\sigma(a)$ and $\sigma(b)$ are the {\it endpoints} of $\sigma$. We will say that $\sigma$ {\it joins} $\sigma(a)$ to $\sigma(b)$. More generally if $A$ and $B$ are disjoint, we will say that $\sigma$ joins $A$ to $B$, if $\sigma(a)\in A$ and $\sigma(b)\in B$. A {\it line} is a proper simple path $\lambda:\R\to \Sigma$. As it is usually done we will use the same name and the same notation to refer to the image of a loop or a path $\gamma$. 

Note that a simple loop or a simple path is naturally oriented. Let $\Gamma$ be a simple loop of $\Sigma$, and denote $\Sigma'$ the connected component of $\Sigma$ it belongs to. If $\Sigma'\setminus\Gamma$ has two connected components, we say that $\Gamma$ \emph{separates} $\Sigma$; in this case the connected component that is located on the right of $\Gamma$ will be denoted $R(\Gamma)$ and the other one $L(\Gamma)$. We will use the same notations $R(\lambda)$, $L(\lambda)$ for a line $\lambda$ that separates the connected component it belongs to.

Let $f$ be an orientation preserving homeomorphism of $\Sigma$.  A {\it Brouwer line} of $f$ is a line $\lambda$ that separates $\Sigma$ such that $f(\lambda)\subset L(\lambda)$ and $f^{-1}(\lambda)\subset  R(\lambda)$. Equivalently it means that $f(\overline {L(\lambda)})\subset L(\lambda)$ or that $f^{-1}(\overline {R(\lambda)})\subset R(\lambda)$.

\subsection{Poincar\'e-Birkhoff theorem }\label{s:PB}
 
Let us consider the annulus $\A=\T\times I$, where $(0,1)\subset I\subset [0,1]$, and its universal covering space $\tilde \A=\R\times I$. We define the covering projection  $\tilde\pi:\enskip  (x,y)\mapsto (x+\Z,y) $ and the generating covering automorphism $T:(x,y)\mapsto (x+1,y)$. We denote $\tilde p_1: \tilde \A\to\R$ the projection on the first factor.

Let $f$ be a homeomorphism of $\A$  isotopic to the identity (meaning orientation preserving and fixing the boundary circles or ends) and $\tilde f$ a lift of $f$ to $\tilde \A$. The map $p_1\circ \tilde f-p_1$ lifts a continuous function $\psi_{\tilde f} : \A\to\R$ because  $\tilde f$ and $T$ commute. In particular, for every $ z\in\A$, for every lift $\tilde z\in \tilde\A$ of $z$ and every $n\geq 1$, we have 
$$\sum_{i=0}^{n-1} \psi_{\tilde f} (f^i(z))=p_1(\tilde f^n(\tilde z))-p_1(\tilde z).$$ 
Let $z$ be a positively recurrent point. Say that $f$ \emph{has $\mathrm{rot}_{\tilde f}(z)\in\R$ as a rotation number} if for every subsequence $(f^{n_k}(z))_{k\geq 0}$ of $(f^n(z))_{n\geq 0}$ that converges to $z$, we have 
$$\lim_{k\to+\infty} \frac1{n_k}\sum_{i=0}^{n_k-1} \psi_{\tilde f} (f^i(z))=\mathrm{rot}_{\tilde f}(z).$$
If $O$ is a periodic point of $f$ of period $q$, then there exists $p\in\Z$ such that for every $\tilde z\in\tilde \pi^{-1}(O)$ we have $\tilde f^q(\tilde z)=T^p(\tilde z)$. In this case, $p/q$ is the rotation number of $O$ for the lift $\tilde f$. 
We will use the following extension of the classical Poincar\'e-Birkhoff Theorem (see for example \cite{Lec1}):

\begin{theorem} \label{th:PB} 
Let $f$ be a homeomorphism of $\A$  isotopic to the identity and $\tilde f$ a lift of $f$ to $\tilde \A$. We suppose that there exist two positively recurrent points  $z_1$ and $z_2$, such that  $\mathrm{rot}_{\tilde f} (z_1)<\mathrm{rot}_{\tilde f} (z_2)$. Then: 
\begin{itemize} 
\item either, for every rational number $p/q\in (\mathrm{rot}_{\tilde f} (z_1),\mathrm{rot}_{\tilde f} (z_2))$, written in an irreducible way, there exists a periodic orbit $O$ of $f$ of period $q$ and rotation number $p/q$ for $\tilde f$;
\item or there exists an essential simple loop $\Gamma\subset\T\times (0,1)$ such that $f(\Gamma)\cap\Gamma=\emptyset$.
\end{itemize}
\end{theorem}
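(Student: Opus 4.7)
My plan is to argue by contradiction: fix an irreducible $p/q\in(\mathrm{rot}_{\tilde f}(z_1),\mathrm{rot}_{\tilde f}(z_2))$ for which no periodic orbit of $f$ of period $q$ and rotation number $p/q$ for $\tilde f$ exists, and produce an essential simple loop $\Gamma\subset\T\times(0,1)$ with $f(\Gamma)\cap\Gamma=\emptyset$. The natural reduction is to pass to $\tilde h:=\tilde f^q\circ T^{-p}$, a lift of $f^q$ commuting with $T$. A fixed point of $\tilde h$ in $\tilde\A$ would project to an $f$-periodic point of period dividing $q$ and rotation $p/q$ for $\tilde f$; since $p/q$ is irreducible the period would have to be exactly $q$, contradicting the assumption. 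So $\tilde h$ is fixed-point free on $\tilde\A$. The cocycle identity $\psi_{\tilde h}(z)=\sum_{i=0}^{q-1}\psi_{\tilde f}(f^i(z))-p$ shows that any point $z$ with $\mathrm{rot}_{\tilde f}(z)=\rho$ that is positively recurrent for $f^q$ has rotation $q\rho-p$ for $\tilde h$. Refining the recurrence sequences of $z_1,z_2$ for $f$ modulo $q$ by a pigeonhole argument on consecutive differences of return times makes $z_1,z_2$ positively recurrent for $f^q$ as well, and choosing appropriate lifts $\tilde z_1,\tilde z_2$ yields $\tilde h$-orbits drifting respectively to $-\infty$ and $+\infty$ in the first coordinate along those recurrence times.

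The main ingredient is the equivariant Brouwer foliation theorem: since $\tilde h$ is a fixed-point-free orientation-preserving homeomorphism of $\tilde\A$ commuting with $T$, there exists a $T$-invariant oriented topological foliation $\tilde\F$ of $\tilde\A$ whose every leaf $\ell$ is a Brouwer line for $\tilde h$, i.e.\ $\tilde h(\overline{L(\ell)})\subset L(\ell)$; it descends to an oriented foliation $\F$ of $\A$. I would then argue that $\F$ must contain an essential simple leaf-loop $\ell_0\subset\T\times(0,1)$: if all leaves of $\F$ were arcs with both ends on $\partial\A$, the Brouwer property together with a Poincar\'e--Bendixson analysis in the annulus would confine $\tilde h$-orbits to bounded horizontal strips of $\tilde\A$, contradicting the $+\infty$ drift of the orbit of $\tilde z_2$. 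The essential leaf $\ell_0$ lifts to a $T$-invariant Brouwer line $\tilde\ell_0$ for $\tilde h$, and projecting the disjointness $\tilde h(\tilde\ell_0)\cap\tilde\ell_0=\emptyset$ to $\A$ yields the intersection property $f^q(\ell_0)\cap\ell_0=\emptyset$.

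The remaining and most delicate step, which I view as the main obstacle, is to upgrade from $f^q(\ell_0)\cap\ell_0=\emptyset$ to $f(\Gamma)\cap\Gamma=\emptyset$ for some essential simple $\Gamma\subset\T\times(0,1)$. My plan is to take $\Gamma$ to be the essential boundary component of the nested intersection $K=\bigcap_{n\geq 0}f^n(\overline{L(\ell_0)})$, the maximal forward-$f$-invariant closed subset contained in $\overline{L(\ell_0)}$. This $K$ is $f$-invariant by construction, and the key technical point is to show that $K$ is essentially annular, i.e.\ that it separates $\A$ with a single essential boundary loop. This uses crucially that $\tilde h=\tilde f^q\circ T^{-p}$ strictly pushes $\overline{L(\tilde\ell_0)}$ into $L(\tilde\ell_0)$, while the intermediate iterates $\tilde f,\ldots,\tilde f^{q-1}$ translate $\overline{L(\tilde\ell_0)}$ only by uniformly bounded amounts in the first coordinate, and that the opposite-sign drifts of $\tilde z_1,\tilde z_2$ prevent $K$ from collapsing to a non-essential remainder. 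Once this annular structure of $K$ is established, the strict inclusion at the $q$-th iterate propagates to $f(K)\subsetneq K$ at the essential end, forcing $f(\Gamma)$ to lie entirely in one component of $\A\setminus\Gamma$ and thereby completing the construction.
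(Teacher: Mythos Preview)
The paper does not prove Theorem~\ref{th:PB}; it is quoted from \cite{Lec1}. Your overall reduction---pass to the fixed-point-free lift $\tilde h=\tilde f^qT^{-p}$, apply the equivariant Brouwer foliation theorem, and split according to whether the quotient foliation on $\A$ has a closed essential leaf---is the natural one and matches the strategy of the cited reference. The ``no closed leaf'' branch works essentially as you say, though the contradiction is that all $\tilde h$-orbits drift in the \emph{same} horizontal direction across the resulting vertical-like foliation, not that they are confined to bounded horizontal strips.

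The genuine gap is the ``closed leaf'' branch. From such a leaf $\ell_0$ you correctly obtain $f^q(\ell_0)\cap\ell_0=\emptyset$, but you must then produce $\Gamma$ with $f(\Gamma)\cap\Gamma=\emptyset$, and your construction does not do this. First, the set $K=\bigcap_{n\ge0}f^n(\overline{L(\ell_0)})$ satisfies $K\subset f(K)$, not $f(K)\subset K$; the maximal forward-$f$-invariant closed subset of $\overline{L(\ell_0)}$ is $\bigcap_{n\ge0}f^{-n}(\overline{L(\ell_0)})$. More importantly, even after this correction you only get $f(K)\subset K$, and nothing you have written rules out $f(K)=K$: the strict inclusion $f^q(\overline{L(\ell_0)})\subset L(\ell_0)$ does not by itself propagate to a strict inclusion for $f$, so the ``essential boundary'' you extract may well be $f$-invariant rather than $f$-free. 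The implication ``$f^q$ admits a free essential loop $\Rightarrow$ $f$ admits one'' is true but is a separate lemma requiring its own argument; this is the missing idea. (On a minor point: your pigeonhole does not give positive $f^q$-recurrence of $z_i$, only that $z_i$ lies in the $\omega$-limit set of $f^r(z_i)$ under $f^q$ for some $0\le r<q$; but this is harmless, since boundedness of $\psi_{\tilde f}$ already yields the required $\pm\infty$ drifts of $\tilde p_1(\tilde h^{\lfloor n_k/q\rfloor}(\tilde z_i))$ along any $f$-recurrence sequence $n_k$.)
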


Of course, we have a similar result in an abstract annulus, meaning a topological space homeomorphic to $\A$.

\subsection{Homeomorphisms of hyperbolic surfaces}

Let $\Sigma$ be a connected oriented hyperbolic surface without boundary, meaning different from the sphere, the plane, the open annulus or the torus.
One can furnish $\Sigma$ with a complete Riemannian metric of constant negative curvature $-1$. The universal covering space of $\Sigma$ is the disk $\D=\{z\in \C\,\vert\, \vert z\vert <1\}$ and the group of covering transformations, denoted ${\mathcal G}$, is composed of  M\H obius automorphisms of $\D$. One can suppose that the metric is of first type, meaning that the closure in $\C$ of every ${\mathcal G}$-orbit contains $\S_1=\{z\in \C\,\vert\, \vert z\vert=1\}$ (see \cite{Mats2} for instance). Every hyperbolic element $T\in {\mathcal G}$ can be extended to a homeomorphism of $\overline{\D}$ having two fixed points on the boundary: a repelling fixed point $\alpha(T)$ and an attracting fixed point $\omega(T)$. For every $z\in \overline{\D}\setminus\{\alpha(T), \omega(T)\}$, it holds that
$$\lim_{k\to -\infty} T^kz =\alpha(T),\quad \lim_{k\to +\infty} T^kz =\omega(T).$$
The metric being of first type, the set of points $\alpha(T)$ and the set of points $\omega(T)$, $T$ among all hyperbolic automorphism, is dense in $\S_1$.
Every parabolic element $T\in {\mathcal G}$ can be extended to a homeomorphism of $\overline{\D}$ having one fixed point $\alpha\omega(T)$ on the boundary. For every $z\in \overline{\D}\setminus\{\alpha\omega(T)\}$, it holds that
$$\lim_{k\to \pm\infty} T^kz =\alpha\omega(T).$$

A homeomorphism $f$ of $\Sigma$ isotopic to the identity has a unique lift $\tilde f$ to $\D$ that commutes with the covering automorphisms. We will call it the {\it canonical lift} of $f$. It is well known that $\tilde f$ extends to a homeomorphism $\overline{\tilde f}$ of  $\overline{\D}$ that fixes every point of $\S_1$. If $T\in {\mathcal G}$ is hyperbolic, then $\tilde f$  lifts a homeomorphism $\hat f$ of $\hat \Sigma=\tilde \Sigma/T$. Moreover $\hat f$ extends to a homeomorphism of the compact annulus $\overline{\hat \Sigma}$ obtained by adding the two circles $\hat J= \tilde J/T$ and $\hat J'= \tilde J'/T$, where $\tilde J$ and $\tilde J'$ are the two connected components of $\S_1\setminus\{\alpha(T), \omega(T)\}$. Note that every point of $\hat J\cup \hat J'$ is fixed, with a rotation number equal to zero for the lift $\overline{\tilde f}\vert_{\overline{\D}\setminus\{\alpha(T),\,\omega(T)\}}$. Similarly, if $T\in {\mathcal G}$ is parabolic, then $\tilde f$  lifts a homeomorphism $\hat f$ of $\hat \Sigma=\tilde \Sigma/T$ that extends to a homeomorphism of $\overline{\hat \Sigma}$ obtained by adding the circle $(\S_1\setminus\{\alpha\omega(T)\})/T$ at one end of $\hat \Sigma$. Every point of this circle is fixed, with a rotation number equal to zero for the lift $\overline{\tilde f}\vert_{\overline{\D}\setminus\{\alpha\omega(T)\}}$.

\subsection{Caratheodory theory of prime ends}\label{ss:Caratheodory}

In this small subsection we state a result that will be used once in the article, consequence of what is called {\it prime end theory} (see  \cite{Math} for instance). Let $S$ be a closed surface of genus $\geq 1$ and $U$ an open annulus of $S$. Say that an end $e$ of $U$ is {\it singular} if there exists a point $z\in S$ and a neighborhood of $e$ in $U$ that is a punctured neighborhood of $z$ in $S$. Otherwise say that $e$ is {\it regular}. There is at least one regular end because $S$ is not the $2$-sphere. Suppose that $U$ is invariant by an orientation preserving homeomorphism $f$. Then the homeomorphism $f|_{ U}$ extends to a homeomorphism $\overline f_{U}$ of a larger annulus $\overline U_{\mathrm{pe}}$ obtained by blowing up each regular end of $U$ and replace it with the associated circle of prime ends. Moreover if $U$ is a connected component of the complement of a closed subset $X$ of $\mathrm{fix}(f)$, then the extended map fixes each point of the circles of prime ends. More precisely, suppose that $I=(f_t)_{t\in[0,1]}$ is an identity isotopy of $f$, such that $f_t(U)=U$ and $X\subset\mathrm{fix}(f_t)$ for every $t\in[0,1]$. Then, the rotation number of the points on the added circles (they are fixed) is equal to $0$, for the lift of $\overline f_{U}$ to the universal covering space of $\overline U_{\mathrm{pe}}$, that extends the lift of $f|_{ U}$ to the universal covering space of $U$, naturally defined by $I|_{ U}$.

\subsection{Rotational topological horseshoes}\label{ss:horseshoe}

Let $\Sigma$ be a connected oriented surface. Say that $Y\subset S$ is a {\it topological horseshoe} of $f\in \mathrm{Homeo}_*(S)$ if $Y$ is closed, invariant by a power $f^r$ of $f$, and if $f^r{}|_{ Y}$ admits a finite extension $g:Z\to Z$ on a Hausdorff compact space $Z$ such that:
\begin{itemize}
\item $g$ is an extension of the Bernouilli shift $\sigma :\{1,\dots, m\}^{\Z}\to \{1,\dots, m\}^{\Z}$, where $m\geq 2$;
\item the preimage of every $s$-periodic sequence of $\{1,\dots,m\}^{\Z}$  by the factor map contains  at least one $s$-periodic point of $g$.
\end{itemize}
It means that $g$ is a  homeomorphism of $Z$ that is semi-conjugated to $f^r{}|_{ Y}$ and that the fibers of the factor map are all finite with an uniform bound $M$ in their cardinality. Note that, if $h(f)$ denotes the topological entropy of $f$, then it holds that
$$ rh(f)=h(f^r)\geq  h(f^r{}|_{ Y})=h(g)\geq h(\sigma)=\log q,$$and that $f^{r}$ has at least $q^n/M$ fixed points for every $n\geq 1$.  

Suppose now that $S$ is a connected closed oriented surface. Say that a  topological horseshoe $Y$ of $f\in \mathrm{Homeo}_*(S)$ is a {\it rotational topological horseshoe of type $(\kappa,r)$}, where $\kappa\in {\mathcal {FHL}}(S)$ and $r$ is a positive integer, if there exists a positive integer $s$ such that for every $p/q\in[0,1]\cap \Q$, there exists a point $z\in Y$ of period at least $q/s$, such that the loop naturally defined by $I^{rq}(z)$ belongs to $\kappa^p$. In particular the horseshoe defines a homotopical interval of rotation. The rotational topological horseshoes that appear in the present article will be constructed in an annular covering of an invariant open set,  
satisfying the geometric definition given in \cite{PaPotSa}.

\section{Foliations on surfaces}\label{s.foliations}

In this section we will consider an oriented boundaryless surface $\Sigma$, not necessarily closed, not necessarily connected, and a non singular oriented topological foliation $\F$ on $\Sigma$. We will consider:
\begin{itemize}
\item the universal covering space $\tilde\Sigma$ of $\Sigma$;
\item the covering projection $\tilde\pi: \tilde\Sigma\to \Sigma$;
\item the group ${\mathcal G}$ of covering automorphisms;
\item the lifted foliation $\tilde{\F}$ on $\tilde{\Sigma}$.
\end{itemize}

For every point $z\in\Sigma$, we denote $\phi_{z}$ the leaf of $\F$ that contains $z$. If $\phi_z:\R\to\Sigma$ is a parametrization of $\phi_z$ inducing the orientation, such that $\phi_z(0)=z$, we set $\phi_z^+=\phi_z\vert_{[0,+\infty)}$ and $\phi_z^-=\phi_z\vert_{(-\infty,0]}$. Similarly, for every point $\tilde z\in\tilde\Sigma$, we denote $\tilde\phi_{\tilde z}$ the leaf of $\tilde{\F}$ that contains $\tilde z$ and we define in the same way $\tilde\phi_{\tilde z}^+$ and $\tilde\phi_{\tilde z}^-$.

\subsection{$\F$-transverse intersections}\label{ss.intersections}

A path $\gamma:J\to\Sigma$ is \emph {positively transverse}\footnote{In the whole text, ``transverse'' will mean ``positively transverse''.} to $\F$ if it locally crosses each leaf of $\F$ from the right to the left. Observe that every lift  $\tilde\gamma:J\to\tilde\Sigma$ of $\gamma$ is positively transverse to $\tilde{\F}$ and that for every $a<b$ in $J$: 
\begin{itemize}
\item $\tilde\gamma|_{[a,b]}$ meets once every leaf $\tilde\phi$ such that $R(\tilde\phi_{\tilde \gamma(a)})\subset R(\tilde\phi)\subset R(\tilde\phi_{\tilde \gamma(b)})$;
\item  $\tilde\gamma|_{[a,b]}$ does not meet any other leaf.
\end{itemize}
Two transverse paths $\tilde\gamma_1:J_1\to\tilde\Sigma$ and $\tilde\gamma_2:J_2\to\tilde\Sigma$ are said \emph{equivalent} if they meet the same leaves of $\tilde{\F}$. Two transverse paths $\gamma_1:J_1\to\Sigma$ and $\gamma_2:J_2\to\Sigma$ are \emph{equivalent} if there exists a lift  $\tilde\gamma_1:J_1\to\tilde\Sigma$ of $\gamma$ and a lift $\tilde\gamma_2:J_2\to\tilde\Sigma$ of $\gamma_2$ that are equivalent.

Let $\tilde\gamma_1:J_1\to \tilde\Sigma$ and $\tilde\gamma_2:J_2\to \tilde \Sigma$ be two transverse paths such that there exist $t_1\in J_1$ and $t_2\in J_2$ satisfying $\tilde\gamma_1(t_1)=\tilde\gamma_2(t_2)$. We will say that $\tilde\gamma_1$ and $\tilde\gamma_2$ have a \emph{$\tilde{\F}$-transverse intersection} at $\tilde\gamma_1(t_1)=\tilde\gamma_2(t_2)$ if there exist $a_1, b_1\in J_1$ satisfying $a_1<t_1<b_1$  and $a_2, b_2\in J_2$ satisfying $a_2<t_2<b_2$ such that:

\begin{itemize}
\item $\tilde\phi_{\tilde\gamma_1(a_1)}\subset L(\tilde\phi_{\tilde\gamma_2(a_2)}),\enskip \tilde\phi_{\tilde\gamma_2(a_2)}\subset L(\tilde\phi_{\widetilde\gamma_1(a_1)})$;

\item $\tilde\phi_{\widetilde\gamma_1(b_1)}\subset R(\tilde\phi_{\tilde\gamma_2(b_2)}),\enskip \tilde\phi_{\tilde\gamma_2(b_2)}\subset R(\tilde\phi_{\widetilde\gamma_1(b_1)})$;

\item every path joining $\tilde\phi_{\tilde\gamma_1(a_1)}$ to $\tilde\phi_{\tilde\gamma_1(b_1)}$ and every path joining $\tilde\phi_{\tilde\gamma_2(a_2)}$ to $\tilde\phi_{\tilde\gamma_2(b_2)}$ must intersect. 
\end{itemize}

\begin{figure}
\begin{center}

\tikzset{every picture/.style={line width=0.6pt}} 

\begin{tikzpicture}[x=0.75pt,y=0.75pt,yscale=-1.1,xscale=1.1]

\draw [color={rgb, 255:red, 245; green, 166; blue, 35 }  ,draw opacity=1 ]   (261.43,50.73) .. controls (259.25,85.76) and (271.02,112.03) .. (273.64,145.74) .. controls (276.25,179.45) and (300.23,213.16) .. (286.72,229.8) ;
\draw [shift={(266.37,101.91)}, rotate = 258.81] [fill={rgb, 255:red, 245; green, 166; blue, 35 }  ,fill opacity=1 ][line width=0.08]  [draw opacity=0] (8.04,-3.86) -- (0,0) -- (8.04,3.86) -- (5.34,0) -- cycle    ;
\draw [shift={(285.23,191.05)}, rotate = 251.75] [fill={rgb, 255:red, 245; green, 166; blue, 35 }  ,fill opacity=1 ][line width=0.08]  [draw opacity=0] (8.04,-3.86) -- (0,0) -- (8.04,3.86) -- (5.34,0) -- cycle    ;
\draw [color={rgb, 255:red, 245; green, 166; blue, 35 }  ,draw opacity=1 ]   (156.35,52.92) .. controls (164.64,69.56) and (165.07,76.12) .. (155.92,84.88) .. controls (146.76,93.64) and (130.19,100.2) .. (120.6,95.83) ;
\draw [shift={(162.66,72.57)}, rotate = 263.3] [fill={rgb, 255:red, 245; green, 166; blue, 35 }  ,fill opacity=1 ][line width=0.08]  [draw opacity=0] (8.04,-3.86) -- (0,0) -- (8.04,3.86) -- (5.34,0) -- cycle    ;
\draw [shift={(136.28,95.94)}, rotate = 340.46] [fill={rgb, 255:red, 245; green, 166; blue, 35 }  ,fill opacity=1 ][line width=0.08]  [draw opacity=0] (8.04,-3.86) -- (0,0) -- (8.04,3.86) -- (5.34,0) -- cycle    ;
\draw [color={rgb, 255:red, 245; green, 166; blue, 35 }  ,draw opacity=1 ]   (391,50.6) .. controls (378.79,61.98) and (385.69,73.93) .. (392.23,82.25) .. controls (398.77,90.57) and (420.57,95.39) .. (430.6,85.32) ;
\draw [shift={(385.1,69.68)}, rotate = 262.21] [fill={rgb, 255:red, 245; green, 166; blue, 35 }  ,fill opacity=1 ][line width=0.08]  [draw opacity=0] (8.04,-3.86) -- (0,0) -- (8.04,3.86) -- (5.34,0) -- cycle    ;
\draw [shift={(414.19,90.98)}, rotate = 185.63] [fill={rgb, 255:red, 245; green, 166; blue, 35 }  ,fill opacity=1 ][line width=0.08]  [draw opacity=0] (8.04,-3.86) -- (0,0) -- (8.04,3.86) -- (5.34,0) -- cycle    ;
\draw [color={rgb, 255:red, 245; green, 166; blue, 35 }  ,draw opacity=1 ]   (209.98,50.29) .. controls (206.49,112.03) and (133.24,101.96) .. (135.86,135.67) .. controls (138.48,169.38) and (228.73,165.88) .. (216.09,224.11) ;
\draw [shift={(176.16,100.54)}, rotate = 329.51] [fill={rgb, 255:red, 245; green, 166; blue, 35 }  ,fill opacity=1 ][line width=0.08]  [draw opacity=0] (8.04,-3.86) -- (0,0) -- (8.04,3.86) -- (5.34,0) -- cycle    ;
\draw [shift={(189.24,174.67)}, rotate = 208.12] [fill={rgb, 255:red, 245; green, 166; blue, 35 }  ,fill opacity=1 ][line width=0.08]  [draw opacity=0] (8.04,-3.86) -- (0,0) -- (8.04,3.86) -- (5.34,0) -- cycle    ;
\draw [color={rgb, 255:red, 245; green, 166; blue, 35 }  ,draw opacity=1 ]   (322.47,48.1) .. controls (322.47,103.27) and (410.46,94.79) .. (412.2,125) .. controls (413.94,155.21) and (323.34,172.01) .. (347.76,227.17) ;
\draw [shift={(364.02,94.87)}, rotate = 202.65] [fill={rgb, 255:red, 245; green, 166; blue, 35 }  ,fill opacity=1 ][line width=0.08]  [draw opacity=0] (8.04,-3.86) -- (0,0) -- (8.04,3.86) -- (5.34,0) -- cycle    ;
\draw [shift={(364.67,172.23)}, rotate = 320.57] [fill={rgb, 255:red, 245; green, 166; blue, 35 }  ,fill opacity=1 ][line width=0.08]  [draw opacity=0] (8.04,-3.86) -- (0,0) -- (8.04,3.86) -- (5.34,0) -- cycle    ;
\draw [color={rgb, 255:red, 84; green, 0; blue, 213 }  ,draw opacity=1 ][line width=1.5]    (155.92,84.88) .. controls (175.97,129.54) and (235.71,91.89) .. (273.64,145.74) .. controls (311.57,199.59) and (356.04,167.19) .. (381.77,186.02) ;
\draw [shift={(219.54,113.55)}, rotate = 188.99] [fill={rgb, 255:red, 84; green, 0; blue, 213 }  ,fill opacity=1 ][line width=0.08]  [draw opacity=0] (8.75,-4.2) -- (0,0) -- (8.75,4.2) -- (5.81,0) -- cycle    ;
\draw [shift={(327.1,178.63)}, rotate = 187.51] [fill={rgb, 255:red, 84; green, 0; blue, 213 }  ,fill opacity=1 ][line width=0.08]  [draw opacity=0] (8.75,-4.2) -- (0,0) -- (8.75,4.2) -- (5.81,0) -- cycle    ;
\draw [color={rgb, 255:red, 0; green, 116; blue, 201 }  ,draw opacity=1 ][line width=1.5]    (151.12,196.96) .. controls (196.46,209.66) and (233.96,202.65) .. (273.64,145.74) .. controls (313.31,88.82) and (366.07,123.85) .. (392.23,82.25) ;
\draw [shift={(225.84,192.34)}, rotate = 153.5] [fill={rgb, 255:red, 0; green, 116; blue, 201 }  ,fill opacity=1 ][line width=0.08]  [draw opacity=0] (8.75,-4.2) -- (0,0) -- (8.75,4.2) -- (5.81,0) -- cycle    ;
\draw [shift={(335.43,108.69)}, rotate = 169.41] [fill={rgb, 255:red, 0; green, 116; blue, 201 }  ,fill opacity=1 ][line width=0.08]  [draw opacity=0] (8.75,-4.2) -- (0,0) -- (8.75,4.2) -- (5.81,0) -- cycle    ;
\draw [color={rgb, 255:red, 245; green, 166; blue, 35 }  ,draw opacity=1 ]   (121.47,179.01) .. controls (134.99,171.57) and (152.43,182.51) .. (151.12,196.96) .. controls (149.81,211.41) and (137.17,224.98) .. (125.4,224.55) ;
\draw [shift={(144.21,181.45)}, rotate = 211.25] [fill={rgb, 255:red, 245; green, 166; blue, 35 }  ,fill opacity=1 ][line width=0.08]  [draw opacity=0] (8.04,-3.86) -- (0,0) -- (8.04,3.86) -- (5.34,0) -- cycle    ;
\draw [shift={(140.71,217.82)}, rotate = 311.55] [fill={rgb, 255:red, 245; green, 166; blue, 35 }  ,fill opacity=1 ][line width=0.08]  [draw opacity=0] (8.04,-3.86) -- (0,0) -- (8.04,3.86) -- (5.34,0) -- cycle    ;
\draw [color={rgb, 255:red, 245; green, 166; blue, 35 }  ,draw opacity=1 ]   (428.42,198.71) .. controls (412.72,179.45) and (387.87,180.76) .. (381.77,186.02) .. controls (375.66,191.27) and (355.61,205.72) .. (373.92,221.48) ;
\draw [shift={(403.94,183.63)}, rotate = 16.09] [fill={rgb, 255:red, 245; green, 166; blue, 35 }  ,fill opacity=1 ][line width=0.08]  [draw opacity=0] (8.04,-3.86) -- (0,0) -- (8.04,3.86) -- (5.34,0) -- cycle    ;
\draw [shift={(366.62,205.34)}, rotate = 290.52] [fill={rgb, 255:red, 245; green, 166; blue, 35 }  ,fill opacity=1 ][line width=0.08]  [draw opacity=0] (8.04,-3.86) -- (0,0) -- (8.04,3.86) -- (5.34,0) -- cycle    ;
\draw  [fill={rgb, 255:red, 0; green, 0; blue, 0 }  ,fill opacity=1 ] (270.32,145.74) .. controls (270.32,143.9) and (271.81,142.41) .. (273.64,142.41) .. controls (275.47,142.41) and (276.95,143.9) .. (276.95,145.74) .. controls (276.95,147.58) and (275.47,149.07) .. (273.64,149.07) .. controls (271.81,149.07) and (270.32,147.58) .. (270.32,145.74) -- cycle ;
\draw  [fill={rgb, 255:red, 0; green, 0; blue, 0 }  ,fill opacity=1 ] (388.92,82.25) .. controls (388.92,80.41) and (390.4,78.92) .. (392.23,78.92) .. controls (394.06,78.92) and (395.55,80.41) .. (395.55,82.25) .. controls (395.55,84.09) and (394.06,85.58) .. (392.23,85.58) .. controls (390.4,85.58) and (388.92,84.09) .. (388.92,82.25) -- cycle ;
\draw  [fill={rgb, 255:red, 0; green, 0; blue, 0 }  ,fill opacity=1 ] (378.45,186.02) .. controls (378.45,184.18) and (379.94,182.69) .. (381.77,182.69) .. controls (383.6,182.69) and (385.08,184.18) .. (385.08,186.02) .. controls (385.08,187.86) and (383.6,189.35) .. (381.77,189.35) .. controls (379.94,189.35) and (378.45,187.86) .. (378.45,186.02) -- cycle ;
\draw  [fill={rgb, 255:red, 0; green, 0; blue, 0 }  ,fill opacity=1 ] (147.8,196.96) .. controls (147.8,195.12) and (149.29,193.63) .. (151.12,193.63) .. controls (152.95,193.63) and (154.44,195.12) .. (154.44,196.96) .. controls (154.44,198.8) and (152.95,200.29) .. (151.12,200.29) .. controls (149.29,200.29) and (147.8,198.8) .. (147.8,196.96) -- cycle ;
\draw  [fill={rgb, 255:red, 0; green, 0; blue, 0 }  ,fill opacity=1 ] (152.6,84.88) .. controls (152.6,83.04) and (154.09,81.55) .. (155.92,81.55) .. controls (157.75,81.55) and (159.23,83.04) .. (159.23,84.88) .. controls (159.23,86.72) and (157.75,88.21) .. (155.92,88.21) .. controls (154.09,88.21) and (152.6,86.72) .. (152.6,84.88) -- cycle ;

\draw (284.07,145.38) node [anchor=west] [inner sep=0.75pt]  [font=\small]  {$\tilde{\gamma }_{1}( t_{1}) =\tilde{\gamma }_{2}( t_{2})$};
\draw (205.68,114.4) node [anchor=north] [inner sep=0.75pt]  [color={rgb, 255:red, 84; green, 0; blue, 213 }  ,opacity=1 ]  {$\tilde{\gamma }_{1}$};
\draw (244.15,176.67) node [anchor=south east] [inner sep=0.75pt]  [color={rgb, 255:red, 0; green, 116; blue, 201 }  ,opacity=1 ]  {$\tilde{\gamma }_{2}$};
\draw (394.23,78.85) node [anchor=south west] [inner sep=0.75pt]  [font=\small]  {$\tilde{\gamma }_{2}( b_{2})$};
\draw (383.77,189.42) node [anchor=north west][inner sep=0.75pt]  [font=\small]  {$\tilde{\gamma }_{1}( b_{1})$};
\draw (145.8,196.96) node [anchor=east] [inner sep=0.75pt]  [font=\small]  {$\tilde{\gamma }_{2}( a_{2})$};
\draw (153.92,81.48) node [anchor=south east] [inner sep=0.75pt]  [font=\small]  {$\tilde{\gamma }_{1}( a_{1})$};

\end{tikzpicture}

\caption{Example of $\tilde{\F}$-transverse intersection.\label{Fig:extransverse}}
\end{center}
\end{figure}
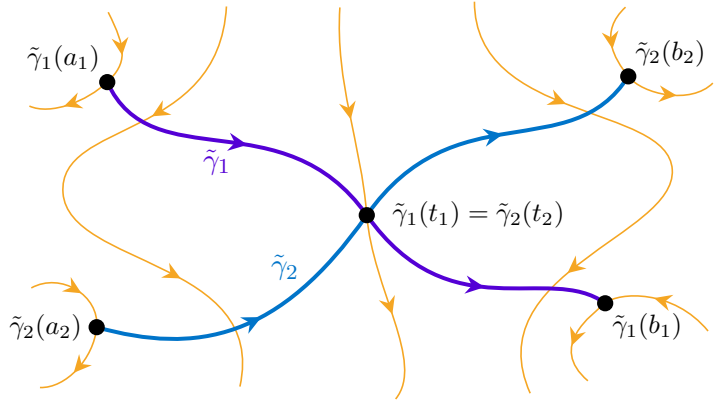

It means that there is a ``crossing'' between the two paths naturally defined by $\tilde\gamma_1$ and $\tilde\gamma_2$ in the space of leaves of $\widetilde{\F}$, which is a one-dimensional topological manifold, usually non Hausdorff (see Figure~\ref{Fig:extransverse}).

\medskip
Now, let  $\gamma_1:J_1\to \Sigma$ and $\gamma_2:J_2\to \Sigma$ be two transverse paths such that there exist $t_1\in J_1$ and $t_2\in J_2$ satisfying $\gamma_1(t_1)=\gamma_2(t_2)$. Say that $\gamma_1$ and $\gamma_2$ have a {\it ${\F}$-transverse intersection} at $\gamma_1(t_1)=\gamma_2(t_2)$ if $\tilde\gamma_1$ and $\tilde\gamma_2$ have a {\it ${\tilde{\F}}$-transverse intersection} at $\tilde\gamma_1(t_1)=\tilde\gamma_2(t_2)$, where $\tilde\gamma_1:J_1\to \tilde \Sigma$ and $\tilde\gamma_2:J_2\to \tilde  \Sigma$  are lifts of $\gamma_1$ and $\gamma_2$ such that  $\tilde\gamma_1(t_1)=\tilde\gamma_2(t_2)$.  If $\gamma_1=\gamma_2$ one speaks of a {\it $\F$-transverse self-intersection}. This means that if $\widetilde \gamma_1$ is a lift of $\gamma_1$, there exists $T\in\mathcal G$ such that $\widetilde\gamma_1$ and $T\widetilde\gamma_1$ have a $\widetilde{\F}$-transverse intersection at  $\widetilde\gamma_1(t_1)=T\widetilde\gamma_1(t_2)$.

\subsection{Recurrence, equivalence and accumulation} 
A transverse path $\gamma:\R\to \Sigma$ is {\it positively recurrent} if, for every $a<b$, there exist $c<d$, with $b<c$, such that $\gamma|_{[a,b]}$ and $\gamma|_{[c,d]}$  are equivalent. Similarly $\gamma$ is {\it negatively recurrent} if, for every $a<b$, there exist  $c<d$, with $d<a$,  such that $\gamma|_{[a,b]}$ and $\gamma|_{[c,d]}$  are equivalent. Finally $\gamma$ is {\it recurrent} if it is both positively and negatively recurrent.
\medskip 

Two transverse paths $\gamma_1:\R\to \Sigma$ and $\gamma_2:\R\to \Sigma$ are {\it equivalent at $+\infty$} if there exists $a_1$ and $a_2$ in $\R$ such that $\gamma_1{}|_{[a_1,+\infty)}$ and  $\gamma_2{}|_{[a_2,+\infty)}$ are equivalent. Similarly $\gamma_1$ and $\gamma_2$ are {\it equivalent at $-\infty$} if there exists $b_1$ and $b_2$ in $\R$ such that $\gamma_1{}|_{(-\infty,b_1]}$ and  $\gamma_2{}|_{(-\infty,b_2]}$ are equivalent.
\medskip

A transverse path $\gamma_1:\R\to \Sigma$  {\it accumulates positively} on the transverse path $\gamma_2:\R\to \Sigma$ if there exist real numbers $a_1$ and $a_2<b_2$ such that  $\gamma_1{}|_{[a_1,+\infty)}$ and $\gamma_2{}|_{[a_2,b_2)}$ are equivalent. Similarly, $\gamma_1$  {\it accumulates negatively} on $\gamma_2$ if there exist real numbers $b_1$ and $a_2<b_2$ such that  $\gamma_1{}|_{(-\infty,b_1]}$ and $\gamma_2{}|_{(a_2,b_2]}$ are equivalent. 
Finally $\gamma_1$ {\it accumulates} on $\gamma_2$ if it accumulates  positively or negatively on $\gamma_2$.

\subsection{Strips} \label{ss:strips}

We fix $T\in {\mathcal G}\setminus\{0\}$ and consider
\begin{itemize}
\item the annulus $\hat\Sigma=\tilde\Sigma/T$;
\item the covering projections $\pi: \tilde\Sigma\to \hat\Sigma$ and $\hat\pi: \hat\Sigma\to \Sigma$;
\item the foliation $\hat{\F}$ on $\hat{\Sigma}$ induced by $\tilde{\F}$.
\end{itemize}
Suppose that $\hat\Gamma_*$ is a simple loop transverse to $\hat{\F}$. Then, $\hat\Gamma_*$ is essential and $\tilde\gamma_*=\pi^{-1}(\hat\Gamma_*)$ is an oriented line of $\tilde \Sigma$, invariant by $T$ and  transverse to $\tilde{\F}$.
The set   
$$\hat B=\{\hat z \in \hat\Sigma \mid \hat \phi_{\hat z} \cap \hat\Gamma_*\not=\emptyset\}$$
is an open annulus which is $\hat{\F}$-saturated, meaning that it is a union of leaves. Similarly
$$\tilde B=\pi^{-1}(\hat B)=\{\tilde z \in \tilde\Sigma\,\vert\enskip \tilde\phi_{\tilde z} \cap \tilde\gamma_*\not=\emptyset\}$$ is an $\tilde{\F}$-saturated plane invariant by $T$. We will call such a set a {\it strip} or a {\it $T$-strip} if we want to be more precise.
The frontier of $\tilde B$, denoted $\partial \tilde B$, is a union of leaves (possibly empty) and can be written  $\partial\tilde B=\partial\tilde B^R\sqcup\partial\tilde B^L$, where 
$$\partial\tilde B^R=\partial\tilde B\cap R(\tilde\gamma_*)\,,\qquad \partial\tilde B^L=\partial\tilde B\cap L(\tilde\gamma_*).$$
Let us state some facts that can be proven easily (see \cite{LecT2} or \cite{Lel}). Note first that:

\begin{itemize}
\item if there is a leaf $\tilde \phi\subset \partial\tilde B$ that is invariant by  $T$, then the set $ \partial\tilde B^R$ or $ \partial\tilde B^L$ that contains $\tilde\phi$ is reduced to this leaf;
\item if $\tilde\gamma:\R\to\tilde\Sigma$ is transverse to $\tilde{\F}$, then the set  of real numbers $t$ such that $\gamma(t)\in\tilde B$ is an interval (possibly empty).

\end{itemize} 
Suppose now that $\tilde\gamma:\R\to\tilde\Sigma$ is transverse to $\tilde{\F}$ and that 
$$\big\{ t\in\R\,\vert\enskip \gamma(t)\in\tilde B\big\}=(a,b),$$ 
where $-\infty\leq a<b\leq\infty$. Say that
\begin{itemize}
\item $\tilde\gamma$ {\it draws $\tilde B$} if there exist $t<t'$ in $(a,b)$ such that $\tilde\phi_{\tilde\gamma(t')}=T\tilde\phi_{\tilde\gamma(t))}$.
\end{itemize}

If, moreover, we suppose that $-\infty<a<b<+\infty$, say that:
\begin{itemize}
\item $\tilde\gamma$ {\it crosses $\tilde B$ from the right to the left }if $\tilde\gamma(a)\in \partial\tilde B^R$ and $\tilde\gamma(b)\in \partial\tilde B^L$;
\item $\tilde\gamma$ {\it crosses $\tilde B$ from the left to the right} if $\tilde\gamma(a)\in \partial\tilde B^L$ and $\tilde\gamma(b)\in \partial\tilde B^R$;
\item $\tilde\gamma$ {\it visits $\tilde B$ on the right} if $\tilde\gamma(a)\in \partial\tilde B^R$ and $\tilde\gamma(b)\in \partial\tilde B^R$;
\item $\tilde\gamma$ {\it visits $\tilde B$ on the left} if $\tilde\gamma(a)\in \partial\tilde B^L$ and $\tilde\gamma(b)\in \partial\tilde B^L$.
\end{itemize}
We will say that $\tilde\gamma$ {\it crosses} $\tilde B$ if it crosses it from the right to the left or from the left to the right. Similarly, we will say that $\tilde\gamma$ {\it visits} $\tilde B$ if it visits it on the right or on the left. Note that $T(\tilde\gamma)$ satisfies the same properties as $\tilde\gamma$. Note also that if $\tilde\gamma$ visits $\tilde B$ on the right, then $\partial \tilde B^R$ is not reduced to a $T$-invariant leaf. An analogous property holds if $\tilde\gamma$ visits $\tilde B$ on the left. Finally, observe that at least one of the following situations occurs (the two last assertions are not incompatible):
\begin{itemize}
\item $\tilde\gamma$ crosses $\tilde B$;
\item $\tilde\gamma$ visits $\tilde B$;
\item $\tilde\gamma$ is equivalent to $\tilde \gamma_*$ at $+\infty$ or at $-\infty$;
\item $\tilde\gamma$ accumulates on $\tilde\gamma_*$ positively or negatively.
\end{itemize}

Let us conclude this list of properties by the following ones (see \cite[Section 2.1.2.c]{Lel}):

\begin{proposition}\label{prop:transversestrips}
We have the following results:
\begin{itemize}
\item If $\tilde\gamma$ visits and draws $\tilde B$, then 
$\tilde\gamma$ and $T(\tilde\gamma)$ have an $\tilde{\F}$-transverse intersection and so $\gamma=\tilde\pi\circ\tilde\gamma$ has an $\F$-transverse self intersection.

\item If $\tilde\gamma_1$ crosses $\tilde B$ from the right to the left,  if $\tilde\gamma_2$ crosses $\tilde B$ from the right to the left and at least one of the paths $\tilde\gamma_1$ or $\tilde\gamma_2$ draws $\tilde B$, then there exists $k\in \Z$ such that 
$\tilde\gamma_1$ and $T^k(\tilde\gamma_2)$ have an $\tilde{\F}$-transverse intersection, and so $\gamma_1=\tilde\pi\circ\tilde\gamma_1$ and $\gamma_2=\tilde\pi\circ\tilde\gamma_2$ have a  $\F$-transverse intersection.
\end{itemize} 
\end{proposition}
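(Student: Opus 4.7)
The plan is to handle both parts in parallel by working inside the $T$-invariant strip $\tilde B$. The leaves of $\tilde B$ are parameterized by their unique intersection with $\tilde\gamma_*$, giving a natural order on which $T$ acts as a non-trivial shift. Each leaf of $\tilde{\F}$ in $\tilde\Sigma$ separates $\tilde\Sigma$ (and hence $\tilde B$), and a positively transverse path crosses each such leaf at most once, in the right-to-left direction. The proof then reduces to turning a combinatorial ``interlocking'' of leaves crossed by the relevant translates of the paths into a genuine pointwise intersection that satisfies the three separation-plus-non-avoidance conditions in the definition of $\tilde{\F}$-transverse intersection.

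For \emph{Part 1}, up to symmetry I would assume $\tilde\gamma$ visits $\tilde B$ on the right, with entry leaf $\tilde\phi_a \subset \partial\tilde B^R$, exit leaf $\tilde\phi_b \subset \partial\tilde B^R$ on the parameter interval $(a,b)$, and fix $a<t<t'<b$ witnessing drawing, so $\tilde\phi_{\tilde\gamma(t')}=T\tilde\phi_{\tilde\gamma(t)}$. By $T$-equivariance, $T\tilde\gamma$ also visits $\tilde B$ on the right with entry and exit leaves $T\tilde\phi_a, T\tilde\phi_b$, and meets the leaf $T\tilde\phi_{\tilde\gamma(t)}=\tilde\phi_{\tilde\gamma(t')}$ at parameter $t$. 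Both paths thus form a ``U-shape'' inside $\tilde B$ entering and exiting on $\partial\tilde B^R$, and $T\tilde\gamma$'s U-shape is the $T$-shift of $\tilde\gamma$'s; since the drawing condition forces $\tilde\gamma$'s U-shape to span at least a full $T$-fundamental domain in the leaf ordering, the images of $\tilde\gamma$ and $T\tilde\gamma$ must cross at some point of $\tilde B$. The separation conditions are then verified by taking $a_1=a_2=a$ and $b_1=b_2=b$: the leaves $\tilde\phi_a$ and $T\tilde\phi_a$ are distinct and strictly $T$-shifted in the order on $\partial\tilde B^R$, and symmetrically at $b$; the non-avoidance condition comes from the separating property of interior leaves.

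For \emph{Part 2}, assume $\tilde\gamma_1$ is the one drawing $\tilde B$, witnessed by parameters $t<t'$ with $\tilde\phi_{\tilde\gamma_1(t')}=T\tilde\phi_{\tilde\gamma_1(t)}$. Since both paths cross $\tilde B$ from right to left, their leaf-traces join $\partial\tilde B^R$ to $\partial\tilde B^L$ and are monotone in the leaf order, with $\tilde\gamma_1$'s trace containing at least a full $T$-fundamental domain of interior leaves. As $k$ ranges over $\Z$, the entry leaves of the translates $T^k\tilde\gamma_2$ on $\partial\tilde B^R$ form a $T$-orbit; I would pick $k$ so that this entry leaf lies, in the order on $\partial\tilde B^R$, strictly between the entry leaf of $\tilde\gamma_1$ and its $T$-translate. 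With this choice $\tilde\gamma_1$ and $T^k\tilde\gamma_2$ have interlocked endpoints on both boundaries of $\tilde B$, and monotonicity of leaf-traces forces a pointwise crossing, at which the transverse intersection conditions are verified as in Part 1.

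The \emph{main obstacle} will be to promote the combinatorial interlocking of leaf orderings into pointwise $\tilde{\F}$-transverse intersections satisfying the non-avoidance condition. This will rely essentially on the separating property of every leaf of $\tilde{\F}$ in $\tilde\Sigma$ together with the strip structure of $\tilde B$, so that two transverse paths whose boundary leaves are sufficiently interlocked in the linear leaf order must meet at a point in $\tilde B$, with the crossing being automatically transverse in the sense of the definition. A secondary delicate point is that in Part 1 the translate of interest must be exactly $T$ rather than some $T^k$ with $k>1$: this is forced by the fact that the drawing hypothesis quantitatively involves only the generator $T$.
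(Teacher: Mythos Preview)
The paper does not prove this proposition; it merely refers to \cite[Section~2.1.2.c]{Lel}. So there is no in-paper argument to compare against, and your sketch has to stand on its own.

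For Part~1 your overall plan is the right one, and with the choice $a_1=a_2=a$, $b_1=b_2=b$ the first two separation conditions do hold: since $\tilde\gamma$ enters $\tilde B$ at $\tilde\phi_a$ and exits at $\tilde\phi_b$ one gets $\tilde B\subset L(\tilde\phi_a)\cap R(\tilde\phi_b)$, and $T$-invariance of $\tilde B$ together with distinctness of the four leaves then forces $\tilde\phi_a,T\tilde\phi_a$ each into the $L$ of the other and $\tilde\phi_b,T\tilde\phi_b$ each into the $R$ of the other. The genuine gap is the third, non-avoidance, condition. Saying it ``comes from the separating property of interior leaves'' is not an argument: the common interior leaf $\tilde\psi=\tilde\phi_{\tilde\gamma(t')}=T\tilde\phi_{\tilde\gamma(t)}$ does separate $\{\tilde\phi_a,T\tilde\phi_a\}$ from $\{\tilde\phi_b,T\tilde\phi_b\}$, so any path joining one of these pairs crosses $\tilde\psi$, but two such paths can perfectly well both cross $\tilde\psi$ without meeting each other. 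What you actually need is that the two pairs $\{\tilde\phi_a,\tilde\phi_b\}$ and $\{T\tilde\phi_a,T\tilde\phi_b\}$ are \emph{linked} in the cyclic order around the central region $L(\tilde\phi_a)\cap L(T\tilde\phi_a)\cap R(\tilde\phi_b)\cap R(T\tilde\phi_b)$. This interleaving is exactly what the drawing hypothesis buys (without drawing the conclusion is plainly false: if $\tilde\gamma$ meets only a short interval of leaves, then $\tilde\gamma$ and $T\tilde\gamma$ share no interior leaf and are separated by one), and you never use drawing in your verification of Condition~3. Your ``secondary delicate point'' about $T$ versus $T^k$ is a non-issue: transverse intersection of $\tilde\gamma$ with $T\tilde\gamma$ is the same statement as with $T^{-1}\tilde\gamma$.

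For Part~2 there is a prior problem: as stated in the paper, with \emph{both} paths crossing $\tilde B$ from right to left, the assertion is false. If $\partial\tilde B^R$ and $\partial\tilde B^L$ are each reduced to a single $T$-invariant leaf (which the paper explicitly allows just before the proposition), then every crossing path has the same entry and exit leaves, and one checks that no choice of $a_i,b_i$ can satisfy the first separation condition for $\tilde\gamma_1$ against any $T^k\tilde\gamma_2$. The applications later in the paper (in the proofs of Lemmas~\ref{le:signofintersection} and~\ref{le:nointersection}) all use one path crossing $R\to L$ and the other $L\to R$, which is the hypothesis in \cite{Lel}; the paper's statement contains a typo that you have taken at face value. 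Your step ``pick $k$ so that the entry leaf of $T^k\tilde\gamma_2$ lies strictly between the entry leaf of $\tilde\gamma_1$ and its $T$-translate'' presupposes a non-trivial $T$-action on the set of entry leaves and collapses in exactly this situation. Once the hypothesis is corrected to opposite crossings, the heart of the matter is again the non-avoidance condition, and the same interleaving analysis as in Part~1 is required.
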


\subsection{More about the accumulation property}

In this final paragraph, we will suppose moreover than $\Sigma$ is connected and that $\Sigma\not=\R^2/\Z^2$. The goal  is to prove the following result that has its own interest and will be used in the sequel to prove Theorem \ref{th:main}. This statement is stronger than some results of \cite[Section 2.1.1]{Lel}.

\begin{proposition} \label{prop: nontransitivity3}
Suppose that $\gamma_1:\R\to \Sigma$ is a positively recurrent transverse path that accumulates positively on a transverse path $\gamma_2:\R\to \Sigma$. 
Then, there exists a transverse simple loop $\Gamma_*\subset\Sigma$ with the following properties.
\begin{enumerate}
\item The set $B$ of leaves met by $\Gamma_*$ is an open annulus of $\Sigma$.
\item The path $\gamma_1$ stays in $B$ and is equivalent to the natural lift of $\Gamma_*$.
\item If $\tilde\gamma_1$, $\tilde\gamma_2$ are lifts of $\gamma_1$, $\gamma_2$ to the universal covering space $\tilde\Sigma$ such that $\tilde\gamma_1\vert_{[a_1,+\infty)}$ is equivalent to $\tilde\gamma_2\vert_{[a_2,b_2)}$ and if $\tilde B$ is the lift of $B$ that contains $\tilde\gamma_1$, then one of the inclusions $\tilde\phi_{\tilde\gamma_2(b_2)}\subset \partial \tilde B^R$, $\phi_{\tilde\gamma_2(b_2)}\subset \partial \tilde B^L$ holds. In the first case, we have $\tilde B\subset L(\tilde \phi)$ for every $\tilde \phi\subset \partial \tilde B^R$ and in the second case, we have $\tilde B\subset R(\tilde \phi)$ for every $\tilde \phi\subset \partial \tilde B^L$. 
\end{enumerate}
\end{proposition}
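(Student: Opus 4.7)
The strategy is to extract the loop $\Gamma_*$ from a ``return'' of $\gamma_1$ produced by combining its positive recurrence with the bounded strip structure given by the accumulation. Fix lifts $\tilde\gamma_1,\tilde\gamma_2$ to $\tilde\Sigma$ realizing the equivalence $\tilde\gamma_1|_{[a_1,+\infty)}\sim\tilde\gamma_2|_{[a_2,b_2)}$, so that both paths cross exactly the same $\tilde{\F}$-saturated, linearly ordered family of leaves in $\tilde\Sigma$, accumulating from one side on the ``limit leaf'' $\tilde\phi_{\tilde\gamma_2(b_2)}$.

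First I apply positive recurrence of $\gamma_1$ to a short initial arc $\gamma_1|_{[a_1,a_1+\eta]}$, producing $c>a_1+\eta$ and $d>c$ with $\gamma_1|_{[a_1,a_1+\eta]}$ equivalent to $\gamma_1|_{[c,d]}$; lifting yields $T\in\G$ such that $T\tilde\phi_{\tilde\gamma_1(c)}=\tilde\phi_{\tilde\gamma_1(a_1)}$. Because leaves separate $\tilde\Sigma$ and $\tilde\gamma_1$ is transverse, $\tilde\gamma_1$ meets each leaf at most once, so $T\neq\Id$. The core step, which I expect to be the principal obstacle, is to use this $T$ together with the bounded leaf structure and repeated applications of recurrence to build a genuine $T$-invariant strip $\tilde B$ (in the sense of Section~\ref{ss:strips}) containing the image of $\tilde\gamma_1$; the stabilizer of $\tilde B$ in $\G$ is shown to be equal to $\langle T\rangle$ (after possibly replacing $T$ by a generator) by arguing that any stabilizer element acts on the linearly ordered leaf space of $\tilde B$ as an orientation-preserving monotone bijection, and that discreteness of $\G$ forces such a subgroup to be infinite cyclic.

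Granted the $T$-strip $\tilde B$, the quotient $\hat B=\tilde B/\langle T\rangle$ is an open annulus in $\hat\Sigma=\tilde\Sigma/\langle T\rangle$. I construct a $T$-invariant transverse line $\tilde\gamma_*\subset\tilde B$ meeting each leaf of $\tilde B$ exactly once by taking a fundamental transverse arc (a suitable perturbation of $\tilde\gamma_1|_{[a_1,c]}$ adjusted so that its endpoints are $T$-related) and concatenating its $T^k$-translates. The quotient $\hat\Gamma_*=\tilde\gamma_*/\langle T\rangle$ is then a simple essential transverse loop in $\hat\Sigma$ whose associated annulus of leaves is $\hat B$. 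The standard fact (cf.\ Section~\ref{ss:strips} and \cite{Lel}) that distinct $\G$-translates of a strip are pairwise disjoint ensures that the further projection $\hat\pi\colon\hat\Sigma\to\Sigma$ is injective on $\hat B$, so $\Gamma_*=\hat\pi(\hat\Gamma_*)$ is a simple transverse loop in $\Sigma$ and $B=\hat\pi(\hat B)$ is an open annulus, establishing (1). For (2), the $T$-invariance and recurrence ensure that $\gamma_1$'s lift to $\hat\Sigma$ visits every leaf of $\hat B$ infinitely often, matching the natural lift of $\hat\Gamma_*$, and hence of $\Gamma_*$ after further projection. Finally for (3), the limit leaf $\tilde\phi_{\tilde\gamma_2(b_2)}$ lies in $\partial\tilde B$ because it is the one-sided accumulation of the leaves of $\tilde\gamma_1|_{[a_1,+\infty)}$; it therefore belongs to exactly one of the two components $\partial\tilde B^R$, $\partial\tilde B^L$, and the one-sidedness of $\tilde B$ with respect to every leaf of that component then follows from the monotone ordering of the leaves of $\tilde B$ together with the fact that the leaves in the corresponding frontier component are all limits from the same side.
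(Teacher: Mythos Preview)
Your proposal has the right general shape but contains genuine gaps at the three places where the paper does real work, and your logical order is inverted relative to what the argument actually requires.

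\textbf{The ``core step'' is not a proof.} You correctly identify that producing a $T$-invariant strip $\tilde B$ containing all of $\tilde\gamma_1$ is the heart of the matter, but you then only assert that recurrence and the ``bounded leaf structure'' will do it. They do not, without a further idea. The paper's Lemma~\ref{LemInclus} argues by contradiction: if $\tilde\gamma_1$ exits $\tilde B$, recurrence produces infinitely many $\G$-translates $R_n\tilde\gamma_1$ that either draw-and-cross or draw-and-visit $\tilde B$ over a fixed compact arc of $\tilde\gamma_2$; properness of the $\G$-action on compacta then gives the contradiction. The accumulation on the \emph{compact} piece $\tilde\gamma_2|_{[a_2,b_2]}$ is used essentially here; nothing in your sketch invokes it.

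\textbf{The ``standard fact'' is neither standard nor true.} You claim that distinct $\G$-translates of a strip are pairwise disjoint, citing Section~\ref{ss:strips}. This is false in general and is exactly what must be proved (Lemma~\ref{Lem:ProjAnnulus}). The paper establishes it \emph{after} and \emph{using} the one-sidedness property (Lemma~\ref{lem:NotLeft}): if $R\tilde B\cap\tilde B\neq\emptyset$ with $R\notin\langle T\rangle$, then $R\tilde\gamma_*$ must enter $\tilde B$ from the left (after reducing to $R\tilde\gamma_*\subset L(\tilde\gamma_*)$), which the one-sidedness forbids. Your order of deduction is therefore circular: you want the annulus first and the one-sidedness afterwards, but the annulus depends on the one-sidedness.

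\textbf{The one-sidedness argument is insufficient.} Your justification for~(3) --- ``leaves in the corresponding frontier component are all limits from the same side'' --- does not establish that $\tilde B\subset R(\tilde\phi)$ for \emph{every} $\tilde\phi\subset\partial\tilde B^L$. A priori $\partial\tilde B^L$ can contain leaves $\tilde\phi_0$ with $\tilde B\subset L(\tilde\phi_0)$ (so that a transverse path can \emph{enter} $\tilde B$ from the left through $\tilde\phi_0$) alongside leaves with $\tilde B\subset R(\tilde\phi)$. The paper's Lemma~\ref{lem:NotLeft} rules this out by a concrete topological argument: from such a $\tilde\phi_0$ one builds a line $\tilde\lambda_3$ and compares it with a line $\tilde\lambda_2$ coming from $\tilde\gamma_2$; pushing $\tilde\lambda_3$ by a large power of $T$ and using that $\tilde\gamma_2|_{[a'_2,b_2)}$ must cross every leaf of $\tilde B$ beyond $\tilde\phi_{\tilde\gamma_2(a'_2)}$ yields a contradiction. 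Again the accumulation on $\tilde\gamma_2$ is the crucial input, and your sketch does not use it.

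A smaller point: the paper builds the simple loop $\Gamma_*$ by a compactness/minimization argument on self-intersections of an equivalent closed path, rather than by perturbing and concatenating $T^k$-translates; both are reasonable, but simplicity of your concatenated loop also needs justification (it follows once the annulus and stabilizer facts are in hand, but those are precisely what is in question).
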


An example of a situation where Proposition~\ref{prop: nontransitivity3} holds is depicted in Figure~\ref{fig:nontransitivity3}.

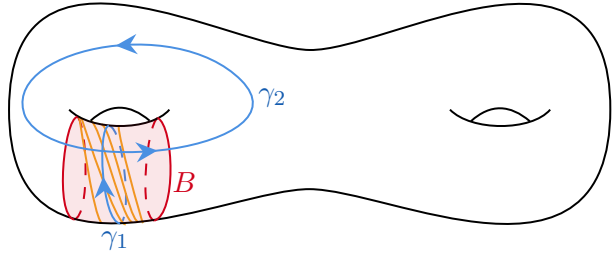
\begin{figure}
\begin{center}

\tikzset{every picture/.style={line width=0.75pt}} 

\begin{tikzpicture}[x=0.75pt,y=0.75pt,yscale=-1,xscale=1]
\draw [color={rgb, 255:red, 208; green, 2; blue, 27 }  ,draw opacity=1 ] [dash pattern={on 4.5pt off 4.5pt}]  (214.14,203.57) .. controls (219,208.43) and (220.71,253.29) .. (212.43,255.29) ;
\draw [color={rgb, 255:red, 208; green, 2; blue, 27 }  ,draw opacity=1 ] [dash pattern={on 4.5pt off 4.5pt}]  (254.14,204.14) .. controls (248.71,207.86) and (244.43,252.71) .. (252.43,255.86) ;
\draw [color={rgb, 255:red, 74; green, 144; blue, 226 }  ,draw opacity=1 ] [dash pattern={on 4.5pt off 4.5pt}]  (229.8,208.1) .. controls (237.2,209.1) and (241.6,254.7) .. (235.4,257.3) ;

\draw [color={rgb, 255:red, 245; green, 166; blue, 35 }  ,draw opacity=1 ]   (220.71,205.57) .. controls (225.4,206.7) and (236.8,257.3) .. (241.6,256.5) ;
\draw [color={rgb, 255:red, 245; green, 166; blue, 35 }  ,draw opacity=1 ]   (216.11,204.17) .. controls (218.6,211.7) and (233.4,258.5) .. (238.2,257.7) ;
\draw [color={rgb, 255:red, 245; green, 166; blue, 35 }  ,draw opacity=1 ]   (226.11,207.37) .. controls (230.8,208.5) and (239.8,255.1) .. (244.2,256.3) ;
\draw [color={rgb, 255:red, 245; green, 166; blue, 35 }  ,draw opacity=1 ]   (234.71,208.77) .. controls (234.6,214.1) and (245.4,253.5) .. (246.8,257.1) ;
\draw [color={rgb, 255:red, 245; green, 166; blue, 35 }  ,draw opacity=1 ]   (215,203.57) .. controls (216.49,212.3) and (222.06,252.3) .. (226.06,256.9) ;

\draw   (180,200) .. controls (180.2,101.4) and (291.4,169.4) .. (330,170) .. controls (368.6,170.6) and (480.2,101) .. (480,200) .. controls (479.8,299) and (369.8,240.2) .. (330,240) .. controls (290.2,239.8) and (179.8,298.6) .. (180,200) -- cycle ;
\draw    (210,200) .. controls (220.6,211) and (250.6,211) .. (260,200) ;
\draw    (400,200) .. controls (410.6,211) and (440.6,211) .. (450,200) ;
\draw    (220.71,205.57) .. controls (230.57,196.57) and (240.29,197.14) .. (250.14,205.57) ;
\draw    (410.43,205.86) .. controls (420.29,196.86) and (430,197.43) .. (439.86,205.86) ;
\draw [color={rgb, 255:red, 208; green, 2; blue, 27 }  ,draw opacity=1 ][line width=0.75]    (214.14,203.57) .. controls (207.29,205) and (202.71,252.71) .. (212.43,255.29) ;
\draw [color={rgb, 255:red, 208; green, 2; blue, 27 }  ,draw opacity=1 ]   (254.14,204.14) .. controls (262.43,208.71) and (263.57,252.43) .. (252.43,255.86) ;

\fill [color={rgb, 255:red, 208; green, 2; blue, 27 }  ,fill opacity=.1][line width=0.75]    
(214.14,203.57) .. controls (207.29,205) and (202.71,252.71) .. 
(212.43,255.29) .. controls +(7,2) and +(-7,2) .. 
(252.43,255.86) .. controls +(263.57-252.43,252.43-255.86) and +(262.43-254.14,208.71-204.14) .. (254.14,204.14) 
.. controls +(-10,5) and +(10,5) ..  (214.14,203.57);

\draw [color={rgb, 255:red, 74; green, 144; blue, 226 }  ,draw opacity=1 ]   (188.14,203.57) .. controls (205.29,236.71) and (329.57,219.57) .. (295.86,185.29) .. controls (262.14,151) and (174.43,170.43) .. (188.14,203.57) -- cycle ;
\draw [shift={(254.01,220.61)}, rotate = 535.36] [fill={rgb, 255:red, 74; green, 144; blue, 226 }  ,fill opacity=1 ][line width=0.08]  [draw opacity=0] (10.72,-5.15) -- (0,0) -- (10.72,5.15) -- (7.12,0) -- cycle    ;
\draw [shift={(233.23,167.82)}, rotate = 355.82] [fill={rgb, 255:red, 74; green, 144; blue, 226 }  ,fill opacity=1 ][line width=0.08]  [draw opacity=0] (10.72,-5.15) -- (0,0) -- (10.72,5.15) -- (7.12,0) -- cycle    ;
\draw [color={rgb, 255:red, 74; green, 144; blue, 226 }  ,draw opacity=1 ]   (229.8,208.1) .. controls (223.8,209.3) and (226.2,256.1) .. (235.4,257.3) ;
\draw [shift={(227,233.78)}, rotate = 86.21] [fill={rgb, 255:red, 74; green, 144; blue, 226 }  ,fill opacity=1 ][line width=0.08]  [draw opacity=0] (10.72,-5.15) -- (0,0) -- (10.72,5.15) -- (7.12,0) -- cycle    ;

\draw (303,187.65) node [anchor=north west][inner sep=0.75pt]  [color={rgb, 255:red, 20; green, 92; blue, 176 }  ,opacity=1 ]  {$\gamma_2$};
\draw (224.5,259.65) node [anchor=north west][inner sep=0.75pt]  [color={rgb, 255:red, 20; green, 92; blue, 176 }  ,opacity=1 ]  {$\gamma_1$};

\draw (260,230) node [anchor=north west][inner sep=0.75pt]  [color={rgb, 255:red, 208; green, 2; blue, 27 }  ,opacity=1 ]  {$B$};

\end{tikzpicture}

\caption{An example where Proposition~\ref{prop: nontransitivity3} holds. \label{fig:nontransitivity3}}
\end{center}
\end{figure}

In Proposition~\ref{prop:conditionhorsehoestrip} we will get additional properties when the paths are supposed to be trajectories that are typical for some ergodic $f$-invariant measures.

\begin{proof}
Let us start with a lemma.

\begin{lemma}\label{LemEquivSubpath}
Let $ \Gamma : \T\to\Sigma$ be a transverse loop, $\tilde\gamma : \R\to\widetilde \dom(I)$ a lift of $\Gamma$ and $\tilde B$ the strip that contains $\tilde\gamma$.  Let $T\in\G$ be the deck transformation associated to $\tilde B$. Suppose that there exists a deck transformation $R\in\G$ and $a\in\R$ such that $\tilde\gamma|_{[a,a+1]}$ is equivalent to a subpath of $R\tilde\gamma$. Then $\tilde\gamma|_{[a,a+1)} \cap R\tilde\gamma \neq \emptyset$. 
\end{lemma}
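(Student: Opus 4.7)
The case $R \in \langle T\rangle$ is trivial: if $R = T^k$ then $R\tilde\gamma(s) = T^k\tilde\gamma(s) = \tilde\gamma(s+k)$, so $R\tilde\gamma = \tilde\gamma$ set-wise and $\tilde\gamma|_{[a,a+1)} \subset R\tilde\gamma$. So it suffices to show that the hypothesis forces $R \in \langle T \rangle$, after which the conclusion is immediate.

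By hypothesis, there is a subpath $R\tilde\gamma|_{[c,d]}$ equivalent to $\tilde\gamma|_{[a,a+1]}$: the two paths meet exactly the leaves $\{\tilde\phi_{\tilde\gamma(t)} : t\in[a,a+1]\}$, which form a fundamental domain for the $T$-action on the set of leaves of $\tilde B$. Consequently $R\tilde\gamma|_{[c,d]}\subset\tilde B$, with $R\tilde\gamma(c)\in\tilde\phi_{\tilde\gamma(a)}$ and $R\tilde\gamma(d)\in T\tilde\phi_{\tilde\gamma(a)}$. If $R\tilde\gamma(c) = \tilde\gamma(a)$ one is already done, so assume not.

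In the generic setting where $d=c+1$, the $(RTR^{-1})$-equivariance
\[
R\tilde\gamma(s+1)=RTR^{-1}\cdot R\tilde\gamma(s)
\]
gives $R\tilde\gamma(d)\in(RTR^{-1})\tilde\phi_{\tilde\gamma(a)}$; combined with $R\tilde\gamma(d)\in T\tilde\phi_{\tilde\gamma(a)}$, two leaves of $\tilde{\F}$ share a point and hence coincide:
\[
(RTR^{-1})\tilde\phi_{\tilde\gamma(a)} = T\tilde\phi_{\tilde\gamma(a)}.
\]
Therefore $T^{-1}RTR^{-1}$ stabilises the leaf $\tilde\phi_{\tilde\gamma(a)}$. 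For a generic leaf this stabiliser is trivial in $\G$, forcing $RTR^{-1}=T$, i.e.\ $R$ commutes with $T$. Together with the fact that $R$ already maps a full fundamental domain of leaves of $\tilde B$ into $\tilde B$, the commutation relation propagates the correspondence by $T$-equivariance to all of $\tilde B$, yielding $R\tilde B=\tilde B$. The stabiliser of $\tilde B$ in $\G$ being $\langle T\rangle$ (since $\tilde B/T$ is an annulus with fundamental group generated by $T$), we conclude $R\in\langle T\rangle$, contradicting the case we are trying to exclude.

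The main obstacle lies in the non-generic cases: $d\neq c+1$ (the equivalent subpath meeting some leaves multiple times) and the possibility that $\tilde\phi_{\tilde\gamma(a)}$ has non-trivial stabiliser in $\G$. The first can be handled by a careful analysis of the transverse orientation and the $(RTR^{-1})$-periodicity of $R\tilde\gamma$, ruling out extra wrapping through the fundamental domain. The second can be managed by a small perturbation of $a$ within $[a,a+1]$, since leaves with non-trivial stabiliser form a discrete set. A cleaner alternative is to project the whole configuration to the annulus $\hat B = \tilde B/T$ and phrase the argument via covering-space theory applied to the simple essential loop $\pi(\tilde\gamma)$, which sidesteps the stabiliser issue directly.
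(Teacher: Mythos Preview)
Your overall strategy has a fundamental flaw: you try to establish the conclusion by first showing $R\in\langle T\rangle$, but this stronger statement is \emph{not true in general}. The lemma does not assume $\Gamma$ is simple; the remark immediately following the lemma in the paper notes that $R\in\langle T\rangle$ is a \emph{consequence} of the lemma only under the additional hypothesis that $\Gamma$ is simple. For a non-simple $\Gamma$ the conclusion $\tilde\gamma|_{[a,a+1)}\cap R\tilde\gamma\neq\emptyset$ can hold while $R\notin\langle T\rangle$. A simple example: if $\Gamma(t)=\Gamma_0(2t)$ for a simple transverse loop $\Gamma_0$ with associated deck transformation $T_0$, then $T=T_0^2$ and $R=T_0$ satisfies the hypothesis (indeed $R\tilde\gamma$ and $\tilde\gamma$ have the same image), yet $R\notin\langle T\rangle$. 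So your reduction ``it suffices to show $R\in\langle T\rangle$'' cannot work.

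Even restricting to simple $\Gamma$, the argument has real gaps. You assume $d=c+1$ ``generically'' but give no reason this should hold; the parametrisation of $R\tilde\gamma$ is governed by $RTR^{-1}$, not $T$, so matching one $T$-period of leaves in $\tilde B$ says nothing about the parameter length on $R\tilde\gamma$. The step ``for a generic leaf the stabiliser is trivial'' is also not justified: $a$ is fixed by hypothesis and there is no evident way to perturb it while preserving the equivalence assumption. Finally, the claim that $R$ commuting with $T$ together with $R\tilde\gamma\cap\tilde B\neq\emptyset$ forces $R\tilde B=\tilde B$ is unproved; two distinct $T$-strips can certainly share leaves.

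The paper's proof is entirely different and does not go through $\langle T\rangle$ at all. It rests on a separate sub-lemma: for any transverse loop $\Gamma$ there exists $z\in\Gamma$ whose forward half-leaf $\phi_z^+$ meets $\Gamma$ only at $z$ (and symmetrically a point $z'$ for the backward half-leaf). This is proved by a global topological argument: if every forward half-leaf returned to $\Gamma$, the union of the arcs $\delta_z$ from $z$ to its first return would form a compact subsurface carrying a nonsingular foliation pointing inward on the boundary, which is impossible. Given such $z,z'$ with lifts $\tilde z,\tilde z'\in\tilde\gamma|_{[a,a+1)}$, the equivalence hypothesis forces $R\tilde\gamma$ to meet $\tilde\phi_{\tilde z}$ only in $\tilde\phi_{\tilde z}^-$ and $\tilde\phi_{\tilde z'}$ only in $\tilde\phi_{\tilde z'}^+$; an intermediate-value style argument along $\tilde\gamma|_{[a,a+1)}$ then yields an actual intersection with $R\tilde\gamma$. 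This geometric mechanism is what you are missing.
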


Note that if moreover $\Gamma$ is a simple path, then the conclusion of the lemma implies that $R\in\langle T\rangle$.

This lemma can be reduced easily to the following fact.

\begin{sublemma}\label{SlemFoliationLoop}
Let $\F$ be a singular foliation on $\Sigma$, and  $ \Gamma : \T\to\Sigma$ a loop of $\Sigma$ that is transverse to $\F$. Then, there exists $z\in\Gamma$ such that $\phi_z^+$ does not meet $\Gamma$ but at the end point.
\end{sublemma}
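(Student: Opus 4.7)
My plan is to argue by contradiction: suppose that for every $z\in\Gamma$, the forward half-leaf $\phi_z^+$ meets $\Gamma$ at some point distinct from $z$. Under this assumption I define the first-return map $R:\Gamma\to\Gamma$ sending $z$ to the first point of $\phi_z^+\cap\Gamma$ strictly after $z$, together with the corresponding return time $r(z)>0$. Transversality of $\Gamma$ to $\F$, combined with the local flow-box structure of $\F$ along $\Gamma$ (which lies in the regular part of $\F$), ensures that both $R$ and $r$ are continuous, so $r$ is bounded on the compact loop. The map $R$ is injective, because if $R(z_1)=R(z_2)=w$ then $z_1$ and $z_2$ are both the nearest $\Gamma$-intersection preceding $w$ along $\phi_w$, forcing $z_1=z_2$. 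When $\Gamma$ is simple, $\Gamma$ is a topological circle, and a continuous injection of $\T$ into itself must be surjective (its image is compact and connected, and $\T$ does not embed into a proper arc); thus $R$ is an orientation-preserving homeomorphism, the orientation being preserved since flow-boxes identify transverse cross-sections consistently.

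Next I form the mapping torus $M_R=(\Gamma\times[0,1])/((z,1)\sim(R(z),0))$ and introduce the natural map
\[
\Phi:M_R\longrightarrow\Sigma,\qquad [z,s]\longmapsto\phi_z\bigl(s\,r(z)\bigr).
\]
A flow-box analysis, matching the forward half-neighborhood at $(z,0)$ with the backward half-neighborhood at $(R^{-1}(z),1)$ across the seam identification, shows that $\Phi$ is a local homeomorphism. Since $R$ is an orientation-preserving homeomorphism of a circle, $M_R$ is homeomorphic to $\R^2/\Z^2$. Hence $\Phi$ is a local homeomorphism from a compact connected surface into $\Sigma$; its image is both open (because $\Phi$ is locally a homeomorphism) and closed (being compact in a Hausdorff space), and so equals $\Sigma$ by connectedness. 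Therefore $\Phi$ is a finite covering $\R^2/\Z^2\to\Sigma$, forcing $\Sigma=\R^2/\Z^2$, which contradicts the standing hypothesis of the paragraph.

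The main obstacle I expect is the careful verification that $\Phi$ is a local homeomorphism at the seam $s=0\sim 1$: one must combine the flow-box charts on the two sides of $\Gamma$ through the first-return map, using that $R$ exactly encodes how forward half-leaves return and cross back. A secondary subtlety is the case where $\Gamma$ has self-intersections; this can be handled by the standard ``resolution at each self-intersection'' that replaces $\Gamma$ by a finite collection of simple transverse sub-loops to which the argument applies, or alternatively by carrying the same argument out on the parametrizing circle $\T$ with a canonical choice of local branch of $\Gamma$ at each first return.
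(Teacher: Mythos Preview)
Your argument for simple $\Gamma$ is correct and takes a genuinely different route from the paper. The paper does not form the mapping torus; instead it studies the swept region $X=\bigcup_{z\in\Gamma}\delta_z([0,1])$, shows directly that $X$ is a compact subsurface with boundary (using the min/max of the local branches $\psi_z^\pm$ at multiple points), and then embeds $X$ hookrightarrow $\S^2$ and uses a dual-function (winding number) argument to find a closed disk with the foliation pointing inward, a contradiction. Your approach is cleaner in the simple case: surjectivity of $R$ corresponds exactly to $\partial X=\emptyset$ in the paper's language, and the covering $M_R\to\Sigma$ forces $\Sigma=\R^2/\Z^2$ in one stroke. What the paper's approach buys is that it handles all transverse loops uniformly, without ever needing $R$ to be a self-map of a circle.

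That is where your proposal has a genuine gap. The sub-lemma is stated (and the paper proves it) for arbitrary transverse loops, not just simple ones. Your first suggested fix, resolving self-intersections into simple sub-loops $\Gamma_1,\dots,\Gamma_k$, does not work: the contradiction hypothesis ``every forward half-leaf from $\Gamma$ returns to $\Gamma$'' does not descend to the $\Gamma_i$, since a leaf from $z\in\Gamma_i$ may first return to $\Gamma_j$ with $j\neq i$. Your second suggestion, lifting $R$ to the parametrizing circle $\T$, runs into the problem that at a multiple point $z=\Gamma(t_1)=\Gamma(t_2)$ the first return of $\phi_z^+$ lands on a specific \emph{local branch} of $\Gamma$ at the target (the lowest one, $\gamma_w^-$ in the paper's notation), not on a specific parameter value; turning this into a continuous self-map of $\T$ requires exactly the bookkeeping with $\psi_z^\pm$ and $\gamma_z^\pm$ that the paper sets up. If you only need the simple case (which is all the paper actually applies downstream, since $\Gamma_*$ is constructed to be simple), say so explicitly and your proof stands; otherwise the non-simple case needs the paper's branch analysis or something equivalent.
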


\begin{proof}[Proof of Lemma~\ref{LemEquivSubpath}]
By Sub-lemma~\ref{SlemFoliationLoop}, there exist $z$, $z'$ in $\Gamma$ (possibly equal) such that $\phi_z^+$ and $\phi_{z'}^-$ do not meet $\Gamma$ but at their end point. Denote $\tilde z$, $\tilde z'$ the respective lifts of $z$, $z'$ that belong to $\tilde\gamma|_{[a,a+1)}$. We know that $\tilde\phi_{\tilde z}^+\cap R\tilde \gamma = \emptyset$, and that $\tilde\phi_{\tilde z'}^-\cap R\tilde \gamma\neq\emptyset$. We deduce that $R\tilde \gamma\cap \tilde\gamma|_{[a,a+1)}\neq \emptyset$. 
\end{proof}

\begin{proof}[Proof of Sub-lemma~\ref{SlemFoliationLoop}]
Fix $z\in\Gamma$. The loop $\Gamma$ being transverse to $\F$, there are finitely many parameters $t\in\T$ such that $z=\Gamma(t)$. Consequently, there exists a compact neighborhood $W_z$ of $z$, a homeomorphism $\Phi_z: W_z\to[-1,1]^2$ and a finite set $I_z$ such that:

\begin{itemize}
\item $\Phi_z$ sends $z$ onto $(0,0)$;
\item $\Phi_z$ sends $\F|_{  W_z}$ onto the vertical foliation oriented upward; 
\item we have $\Phi_z(\Gamma\cap W_z)= \bigcup_{i\in I_z}\mathrm{gr}(\psi_{i,z})$, where $\psi_{i,z}: [-1,1]\to[-1,1]$ is a continuous function satisfying $\psi_{i,z}(0)=0$.\end{itemize}
Here the notation $\mathrm{gr}(\psi)$ denotes the graph of $\psi:[-1,1]\to [-1,1]$ oriented  from the right to the left. See Figure~\ref{FigLocalConfig} for an example of such a configuration.

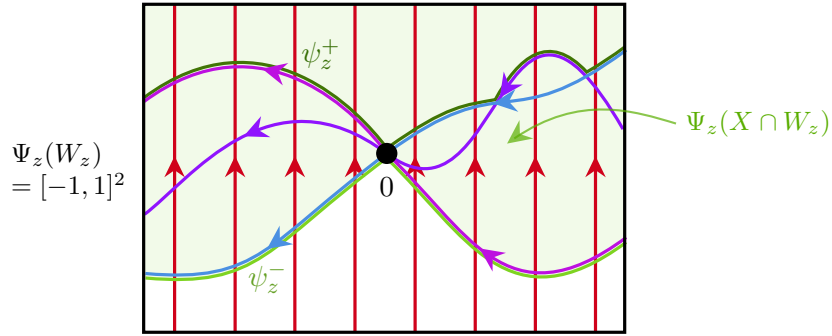
\begin{figure}
\begin{center}

\tikzset{every picture/.style={line width=1.2pt}} 

\begin{tikzpicture}[x=0.75pt,y=0.75pt,yscale=-1.5,xscale=1.5]

\draw [draw opacity=0][fill={rgb, 255:red, 126; green, 211; blue, 33 }  ,fill opacity=0.1 ]   (198.93,60.17) .. controls (199,108.79) and (199.6,125.79) .. (199.46,151.47) .. controls (235,156.59) and (244.2,139.79) .. (279.75,111.22) .. controls (303.2,133.39) and (322,169.79) .. (359.38,139.39) .. controls (359.12,108.26) and (359.6,88.19) .. (358.93,60.17) ;
\draw [color={rgb, 255:red, 208; green, 2; blue, 27 }  ,draw opacity=1 ]   (250,60) -- (250,170) ;
\draw [shift={(250,110.9)}, rotate = 90] [fill={rgb, 255:red, 208; green, 2; blue, 27 }  ,fill opacity=1 ][line width=0.08]  [draw opacity=0] (7.14,-3.43) -- (0,0) -- (7.14,3.43) -- (4.74,0) -- cycle    ;
\draw [color={rgb, 255:red, 208; green, 2; blue, 27 }  ,draw opacity=1 ]   (270,60) -- (270,170) ;
\draw [shift={(270,110.9)}, rotate = 90] [fill={rgb, 255:red, 208; green, 2; blue, 27 }  ,fill opacity=1 ][line width=0.08]  [draw opacity=0] (7.14,-3.43) -- (0,0) -- (7.14,3.43) -- (4.74,0) -- cycle    ;
\draw [color={rgb, 255:red, 208; green, 2; blue, 27 }  ,draw opacity=1 ]   (290,60) -- (290,170) ;
\draw [shift={(290,110.9)}, rotate = 90] [fill={rgb, 255:red, 208; green, 2; blue, 27 }  ,fill opacity=1 ][line width=0.08]  [draw opacity=0] (7.14,-3.43) -- (0,0) -- (7.14,3.43) -- (4.74,0) -- cycle    ;
\draw [color={rgb, 255:red, 208; green, 2; blue, 27 }  ,draw opacity=1 ]   (310,60) -- (310,170) ;
\draw [shift={(310,110.9)}, rotate = 90] [fill={rgb, 255:red, 208; green, 2; blue, 27 }  ,fill opacity=1 ][line width=0.08]  [draw opacity=0] (7.14,-3.43) -- (0,0) -- (7.14,3.43) -- (4.74,0) -- cycle    ;
\draw [color={rgb, 255:red, 208; green, 2; blue, 27 }  ,draw opacity=1 ]   (330,60) -- (330,170) ;
\draw [shift={(330,110.9)}, rotate = 90] [fill={rgb, 255:red, 208; green, 2; blue, 27 }  ,fill opacity=1 ][line width=0.08]  [draw opacity=0] (7.14,-3.43) -- (0,0) -- (7.14,3.43) -- (4.74,0) -- cycle    ;
\draw [color={rgb, 255:red, 208; green, 2; blue, 27 }  ,draw opacity=1 ]   (350,60) -- (350,170) ;
\draw [shift={(350,110.9)}, rotate = 90] [fill={rgb, 255:red, 208; green, 2; blue, 27 }  ,fill opacity=1 ][line width=0.08]  [draw opacity=0] (7.14,-3.43) -- (0,0) -- (7.14,3.43) -- (4.74,0) -- cycle    ;
\draw [color={rgb, 255:red, 208; green, 2; blue, 27 }  ,draw opacity=1 ]   (230.13,60) -- (230.13,170) ;
\draw [shift={(230.13,110.9)}, rotate = 90] [fill={rgb, 255:red, 208; green, 2; blue, 27 }  ,fill opacity=1 ][line width=0.08]  [draw opacity=0] (7.14,-3.43) -- (0,0) -- (7.14,3.43) -- (4.74,0) -- cycle    ;
\draw [color={rgb, 255:red, 208; green, 2; blue, 27 }  ,draw opacity=1 ]   (210,60) -- (210,170) ;
\draw [shift={(210,110.9)}, rotate = 90] [fill={rgb, 255:red, 208; green, 2; blue, 27 }  ,fill opacity=1 ][line width=0.08]  [draw opacity=0] (7.14,-3.43) -- (0,0) -- (7.14,3.43) -- (4.74,0) -- cycle    ;
\draw [color={rgb, 255:red, 126; green, 211; blue, 33 }  ,draw opacity=1 ]   (200.33,151.8) .. controls (238.61,155.23) and (240.62,141.54) .. (280.62,111.54) ;
\draw [color={rgb, 255:red, 126; green, 211; blue, 33 }  ,draw opacity=1 ]   (281.01,112.08) .. controls (306.6,137.08) and (321.1,168.99) .. (360.18,139.83) ;
\draw [color={rgb, 255:red, 65; green, 117; blue, 5 }  ,draw opacity=1 ]   (280.91,108.07) .. controls (300.86,93.7) and (306.91,93.81) .. (316.66,91.98) .. controls (324.46,77.7) and (334.41,68.98) .. (347.08,82.73) .. controls (352.16,79.72) and (355.48,77.51) .. (359.84,74.24) ;
\draw [color={rgb, 255:red, 144; green, 19; blue, 254 }  ,draw opacity=1 ]   (200.01,130.44) .. controls (214.7,119.42) and (241.99,80.77) .. (280.57,110.03) .. controls (319.15,139.3) and (318.3,30.15) .. (359.15,101.87) ;
\draw [shift={(232.96,104.27)}, rotate = 336.35] [fill={rgb, 255:red, 144; green, 19; blue, 254 }  ,fill opacity=1 ][line width=0.08]  [draw opacity=0] (8.04,-3.86) -- (0,0) -- (8.04,3.86) -- (5.34,0) -- cycle    ;
\draw [shift={(318.13,91.36)}, rotate = 304.37] [fill={rgb, 255:red, 144; green, 19; blue, 254 }  ,fill opacity=1 ][line width=0.08]  [draw opacity=0] (8.04,-3.86) -- (0,0) -- (8.04,3.86) -- (5.34,0) -- cycle    ;
\draw [color={rgb, 255:red, 65; green, 117; blue, 5 }  ,draw opacity=1 ]   (199.77,91.36) .. controls (239.77,61.36) and (271.69,96.33) .. (280.39,107.98) ;
\draw [color={rgb, 255:red, 74; green, 144; blue, 226 }  ,draw opacity=1 ]   (280.57,110.03) .. controls (320.57,80.03) and (320.2,105.35) .. (360.2,75.35) ;
\draw [shift={(315.89,93.26)}, rotate = 352.84] [fill={rgb, 255:red, 74; green, 144; blue, 226 }  ,fill opacity=1 ][line width=0.08]  [draw opacity=0] (8.04,-3.86) -- (0,0) -- (8.04,3.86) -- (5.34,0) -- cycle    ;
\draw [color={rgb, 255:red, 74; green, 144; blue, 226 }  ,draw opacity=1 ]   (200.29,150.29) .. controls (238.57,153.71) and (240.57,140.03) .. (280.57,110.03) ;
\draw [shift={(240.71,142.08)}, rotate = 323.91] [fill={rgb, 255:red, 74; green, 144; blue, 226 }  ,fill opacity=1 ][line width=0.08]  [draw opacity=0] (8.04,-3.86) -- (0,0) -- (8.04,3.86) -- (5.34,0) -- cycle    ;
\draw [color={rgb, 255:red, 189; green, 16; blue, 224 }  ,draw opacity=1 ]   (280.57,110.03) .. controls (306.77,134.78) and (320.2,168.2) .. (360.2,138.2) ;
\draw [shift={(311.43,141.87)}, rotate = 35.27] [fill={rgb, 255:red, 189; green, 16; blue, 224 }  ,fill opacity=1 ][line width=0.08]  [draw opacity=0] (8.04,-3.86) -- (0,0) -- (8.04,3.86) -- (5.34,0) -- cycle    ;
\draw [color={rgb, 255:red, 189; green, 16; blue, 224 }  ,draw opacity=1 ]   (199.63,92.78) .. controls (239.63,62.78) and (271.91,97.92) .. (280.57,110.03) ;
\draw [shift={(239.09,81.59)}, rotate = 13.44] [fill={rgb, 255:red, 189; green, 16; blue, 224 }  ,fill opacity=1 ][line width=0.08]  [draw opacity=0] (8.04,-3.86) -- (0,0) -- (8.04,3.86) -- (5.34,0) -- cycle    ;
\draw   (199.71,60) -- (359.71,60) -- (359.71,170) -- (199.71,170) -- cycle ;
\draw  [draw opacity=0][fill={rgb, 255:red, 0; green, 0; blue, 0 }  ,fill opacity=1 ] (277.03,110.03) .. controls (277.03,108.08) and (278.62,106.49) .. (280.57,106.49) .. controls (282.53,106.49) and (284.11,108.08) .. (284.11,110.03) .. controls (284.11,111.99) and (282.53,113.57) .. (280.57,113.57) .. controls (278.62,113.57) and (277.03,111.99) .. (277.03,110.03) -- cycle ;
\draw [color={rgb, 255:red, 90; green, 174; blue, 0 }  ,draw opacity=0.7 ][line width=0.8]   (376.78,100) .. controls (362.89,95.82) and (344.89,89.94) .. (322.98,105.01) ;
\draw [shift={(320.58,106.73)}, rotate = 323.21] [fill={rgb, 255:red, 90; green, 174; blue, 0 }  ,fill opacity=0.7 ][line width=0.08]  [draw opacity=0] (8.04,-3.86) -- (0,0) -- (8.04,3.86) -- (5.34,0) -- cycle    ;

\draw (280.57,116.97) node [anchor=north] [inner sep=0.75pt]    {$0$};
\draw (241,145.33) node [anchor=north] [inner sep=0.75pt]  [color={rgb, 255:red, 101; green, 160; blue, 37 }  ,opacity=1 ]  {$\psi _{z}^{-}$};
\draw (251,83) node [anchor=south west] [inner sep=0.75pt]  [color={rgb, 255:red, 55; green, 110; blue, 5 }  ,opacity=1 ]  {$\psi _{z}^{+}$};
\draw (199.37,116.25) node [anchor=east] [inner sep=0.75pt]  [font=\small]  {$ \begin{array}{l}
\Psi _{z}( W_{z})\\
=[ -1,1]^{2}
\end{array}$};
\draw (378.78,100) node [anchor=west] [inner sep=0.75pt]  [font=\small,color={rgb, 255:red, 90; green, 174; blue, 0 }  ,opacity=1 ]  {$\Psi _{z}( X\cap W_{z})$};

\end{tikzpicture}

\caption{\label{FigLocalConfig} Local configuration of the path $\Gamma$ and the foliation $\F$ (in red) around the point $0=\Psi_z(z)$.}
\end{center}
\end{figure}

Consider the two continuous functions 
$$\psi_z^-=\min_{i\in I_z} \psi_{i,z}\,,\quad \psi_z^+=\max_{i\in I_z} \psi_{i,z}$$ and define 
$$\gamma_z^{-}=\Phi_z^{-1}(\mathrm{gr}(\psi_z^-))\,,\quad\gamma_z^{+}=\Phi_z^{-1}(\mathrm{gr}(\psi_z^+)).$$
We will argue by contradiction by supposing that for any $z\in\Gamma$, the path $\phi_z^+$ meets $\Gamma$ in a point that is not the end point. In that case, for every $z\in\Gamma$, there exists a sub-path $\delta_z:[0,1]\to\Sigma$ of $\phi_z^+$ such that 
$$\delta_z(0)=z,\quad
\delta_z(1)\in \Gamma,\quad
\delta_z((0,1))\cap\Gamma=\emptyset.$$
In particular we can define a first return map $\theta:\Gamma\to\Gamma$ by setting $\theta(z)= \delta_z(1)$. We will prove that $X=\bigcup_{z\in\Gamma} \delta_z([0,1])$ is a compact sub-surface with boundary.
Note that for every $z\in\Gamma$, the function $\theta$ induces a homeomorphism from a compact neighborhood $\alpha_z$ of $z$ in $\gamma_z^{+}$ to a compact neighborhood $\omega_{z}$ of $\theta(z)$ in $\gamma_{\theta(z)}^{-}$ and consequently that every point $\delta_z(t)$, $ t\in(0,1)$, belongs to the interior of $X$. Note also that for every $z\in\Gamma$, the set $\Phi_z^{-}(\{ (x,y)\,\vert\enskip y\geq \psi_z^{-}(x)\})$ is included in $X$. 
By compactness, one can cover $\Gamma$ with finitely many $\alpha_z$, $z\in\Gamma$. 
We deduce that the image of $\theta$, denoted $\mathrm{im}(\theta)$,  is the union of finitely many compact subsets (the corresponding $\omega_{z}$) and therefore is compact. We deduce also that $X$ is compact because for every $z\in\Gamma$, the set $\bigcup_{z'\in\alpha_z} \delta_{z'}([0,1])$ is compact. Now, observe that for every $z\in\Gamma$ and every $z'\in \gamma_z^{-}$, the sets $ \gamma_{z'}^{-}$ and  $\gamma_z^{-}$ coincide in a neighborhood of $z'$.  It implies that $\mathrm{im}(\theta)\cap\gamma_z^{-}$  is an open subset of  $\gamma_z^{-}$.
By connectedness of $\gamma_z^{-}$, either $\gamma_z^-$ is contained in $\mathrm{im}(\theta)$ or it is disjoint from  $\mathrm{im}(\theta)$. In the first case, $W_z$ is contained in $X$, in the second case $W_z\cap X=\Phi_z^{-1}(\{ (x,y)\,\vert\enskip y\geq \psi_z^{-}(x)\})$: we have proved that $X$ is a compact sub-surface of $\Sigma$ (possibly with boundary). Note that for every $z\in\partial X$ it holds that $\phi_z^+\setminus\{z\}\subset \mathrm{int}(X)$ (in other terms the foliation is pointing inward on the boundary).
 
By hypothesis, $\Sigma$ is connected and different from $\R^2/\Z^2$. So, it does not bear a non-singular foliation. We deduce that $X$ is a surface with boundary. More precisely it is homeomorphic to the closed annulus because it bears a non singular foliation. Let $\Psi: X\to\S^2$ be a topological embedding compatible with the usual orientations. The loop $\Psi(\Gamma)$ is homologous to $0$ in $\S^2$ and one can define a dual function  $\delta:\S^2\setminus\Psi(\Gamma)\to\Z$. Such a function is defined by the following property: for every $z$, $z'$ in $\S^2\setminus\Psi(\Gamma)$ and every path $\beta$ joining $z$ to $z'$, the algebraic intersection number $\Psi(\Gamma)\wedge \beta$ is equal to $\delta(z')-\delta(z)$. Let $U$ be a connected component of $\S^2\setminus\Psi(\Gamma)$ where $\delta$ reaches its maximum. The set $\Psi(\Gamma)$ being connected, the closure of $U$ is a topological disk. Moreover the fact that $\delta$ reaches its maximum in $U$ implies that for every $z\in \partial U$ it holds that $\phi_z^+\setminus\{z\}\subset U$. So $U$ is not a connected component of  $\S^2\setminus \Psi(X)$ and it holds that $\overline U\subset \psi(X)$. Summarizing, we have found a closed topological disk bearing a non-singular foliation pointing inward on the boundary. We have got a contradiction. 
\end{proof}

\medskip

Let us explain how to construct the simple loop $\Gamma_*$ that appears in Proposition~\ref{prop: nontransitivity3}. As $\gamma_1$ is positively recurrent, there exist two numbers $c_1<c'_1$, with $c_1>a_1$, such that $\phi_{\gamma_1(c_1)} = \phi_{\gamma_1(c'_1)}$ (see Figure~\ref{Fignontransitivity3} for these different points). It implies that $\gamma_1\vert_{[c_1,c'_1]}$ is equivalent to a transverse path $\gamma_* :[c_1,c'_1]\to\Sigma $ such that $\gamma_*(c_1) = \gamma_*(c'_1)$. The set
$$X=\big\{(t,t')\in [c_1,c'_1]^2\,\vert \enskip t<t'\enskip\mathrm{and}\enskip  \gamma_*(t) = \gamma_*(t')\big\}$$ 
is non empty (because it contains $(c_1,c'_1)$) and compact. Indeed, it is closed in $\{(t,t')\in [c_1,c'_1]^2\,\vert \enskip t<t'\}$, an its closure in the compact set $\{(t,t')\in [c_1,c'_1]^2\,\vert \enskip t\leq t'\}$ does not contain any couple $(t,t)$. The function $(t,t')\mapsto t'-t$ being continuous and positive on $X$, reaches its minimum at a couple $(c''_1,c'''_1)$. So, replacing $(c_1,c'_1)$ with $(c''_1, c'''_1)$ if necessary, one can always suppose that the loop $\Gamma_*$ naturally defined by $\gamma_*$ is simple. We denote $B$ the union of leaves met by $\Gamma_*$. 

By hypothesis there exist two lifts $\tilde\gamma_1$ and $\tilde\gamma_2$ of respectively $\gamma_1$ and $\gamma_2$ to $\tilde\Sigma$ such that $\tilde\gamma_1|_{[a_1, +\infty)}$ and $\tilde\gamma_2|_{[a_2, b_2)}$ are equivalent. We denote $\tilde B$ the strip that lifts $B$ and contains $\tilde \gamma_1\vert_{[c_1,c'_1]}$. We denote $\tilde \gamma_*$ a lift of $\Gamma_*$ that lies inside $\tilde B$ and  $T\in\mathcal G$ the primitive deck transformation associated to $\tilde B$ (chosen accordingly to the orientation of $\tilde\gamma_*$).

\begin{lemma}\label{LemInclus}
The path $\tilde\gamma_1|_{[c_1,+\infty)}$ is included in $\tilde B$.
\end{lemma}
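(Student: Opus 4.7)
The plan is to argue by contradiction, using positive recurrence to produce a ``copy'' of the exit behaviour at a later time, and then exploiting the simplicity of $\Gamma_*$ to derive an intersection of two disjoint lifts.

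First, suppose $\tilde\gamma_1|_{[c_1,+\infty)}$ is not contained in $\tilde B$. Recall from Section~\ref{s.foliations} that the set $\{t\in\R : \tilde\gamma_1(t)\in\tilde B\}$ is an interval; since it contains $[c_1,c'_1]$ but misses some point after $c_1$, it has the form $(\alpha_0,\beta_0)$ with $c'_1<\beta_0<+\infty$, and $\tilde\gamma_1(\beta_0)\in\partial\tilde B$. After reversing the orientation of $\Gamma_*$ if needed, we may assume $\tilde\gamma_1(\beta_0)\in\partial\tilde B^L$, and we choose $\delta>0$ so that $\tilde\gamma_1|_{(\beta_0,\beta_0+\delta]}$ is disjoint from $\tilde B$.

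Next, apply positive recurrence of $\gamma_1$ to the sub-path $\gamma_1|_{[c_1,\beta_0+\delta]}$: there exist $\sigma > \beta_0+\delta$ and $\sigma'>\sigma$ with $\gamma_1|_{[c_1,\beta_0+\delta]}\sim \gamma_1|_{[\sigma,\sigma']}$. Lifting with $\tilde\gamma_1$ as reference, there exists $R\in\G$ such that $\tilde\gamma_1|_{[c_1,\beta_0+\delta]}$ and $R\tilde\gamma_1|_{[\sigma,\sigma']}$ meet the same leaves of $\tilde\F$. Matching the starting leaves gives $\tilde\gamma_1(\sigma)\in R^{-1}\tilde\phi_{\tilde\gamma_1(c_1)}$. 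Since $\sigma>\beta_0$ we have $\tilde\gamma_1(\sigma)\notin\tilde B$, whereas every $T$-translate of the leaf $\tilde\phi_{\tilde\gamma_1(c_1)}\subset\tilde B$ lies in the $T$-invariant strip $\tilde B$. Hence $R\notin\langle T\rangle$, and by simplicity of $\Gamma_*$ the two lifts $\tilde\gamma_*$ and $R^{-1}\tilde\gamma_*$ are disjoint lines in $\tilde\Sigma$.

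Restricting the equivalence to the first period $[c_1,c'_1]$ produces $\sigma_1\in(\sigma,\sigma']$ such that $R\tilde\gamma_1|_{[\sigma,\sigma_1]}$ meets exactly the leaves crossed by $\tilde\gamma_*|_{[0,1]}$, lies in $\tilde B$, and is equivalent to $\tilde\gamma_*|_{[0,1]}$; dually, $\tilde\gamma_1|_{[\sigma,\sigma_1]}$ lies in $R^{-1}\tilde B$ and is equivalent to $R^{-1}\tilde\gamma_*|_{[0,1]}$. Now run the argument of Lemma~\ref{LemEquivSubpath}: Sub-lemma~\ref{SlemFoliationLoop} supplies points $z_*,z'_*\in\Gamma_*$ such that $\phi_{z_*}^+$ and $\phi_{z'_*}^-$ do not re-meet $\Gamma_*$; lifting them to $\tilde z_*,\tilde z'_*\in\tilde\gamma_*|_{[0,1)}$ and tracking their corresponding points on the equivalent path $R\tilde\gamma_1|_{[\sigma,\sigma_1]}$ (and hence on $\tilde\gamma_1|_{[\sigma,\sigma_1]}$ on the leaves $R^{-1}\tilde\phi_{\tilde z_*}$ and $R^{-1}\tilde\phi_{\tilde z'_*}$), the non-meeting property of the forward ray and the meeting property of the backward ray force $\tilde\gamma_*$ and $R^{-1}\tilde\gamma_*$ to intersect, contradicting the disjointness proved above.

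The main obstacle is the last paragraph: the equivalent return $R\tilde\gamma_1|_{[\sigma,\sigma_1]}$ is not literally a sub-path of $R\tilde\gamma_*$ but only equivalent to one, so the intersection-extraction argument of Lemma~\ref{LemEquivSubpath} must be re-phrased purely in terms of ``same leaves met'' rather than direct sub-path inclusion. The key technical point is that, in the strip $\tilde B$, the leaf space is Hausdorff and a positively transverse path crosses each leaf at most once, so leaf-correspondence between two equivalent paths in $\tilde B$ is unambiguous, and the rays $\tilde\phi_{\tilde z_*}^+$ and $\tilde\phi_{\tilde z'_*}^-$ can be tracked through the correspondence exactly as in the original proof.
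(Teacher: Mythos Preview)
Your argument has a genuine gap in the final step, and more fundamentally it never uses the accumulation hypothesis on $\gamma_2$, which is essential here.

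The attempt to ``run the argument of Lemma~\ref{LemEquivSubpath}'' cannot be completed. That lemma requires that a full period $\tilde\gamma_*|_{[a,a+1]}$ be equivalent to a subpath of another lift $R^{-1}\tilde\gamma_*$ of the \emph{same} loop $\Gamma_*$; the special leaves supplied by Sub-lemma~\ref{SlemFoliationLoop} are then crossed by both lifts, and comparing positions along those leaves forces the intersection. In your situation you only know that $R\tilde\gamma_1|_{[\sigma,\sigma_1]}$ is equivalent to $\tilde\gamma_*|_{[0,1]}$, i.e.\ that $\tilde\gamma_1|_{[\sigma,\sigma_1]}$ meets exactly the leaves $R^{-1}\tilde\phi_{\tilde\gamma_*(s)}$ for $s\in[0,1]$. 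But since $\sigma>\beta_0$ and $\{t:\tilde\gamma_1(t)\in\tilde B\}=(\alpha_0,\beta_0)$, the whole path $\tilde\gamma_1|_{[\sigma,\sigma_1]}$ lies outside $\tilde B$: none of the leaves $R^{-1}\tilde\phi_{\tilde z_*}$, $R^{-1}\tilde\phi_{\tilde z'_*}$ belong to $\tilde B$, hence none of them meet $\tilde\gamma_*$. There is simply no common leaf on which to compare $\tilde\gamma_*$ and $R^{-1}\tilde\gamma_*$, and the intersection argument cannot even begin. Your last paragraph suggests the obstacle is only a matter of rephrasing ``subpath'' as ``equivalent path'', but the real obstruction is that $\tilde\gamma_*$ and $R^{-1}\tilde\gamma_*$ need not share a single leaf.

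The paper's proof is structured quite differently and uses the accumulation on $\gamma_2$ in an essential way. First (Claim~\ref{ClaimReturn}) one shows, applying Lemma~\ref{LemEquivSubpath} correctly, that some translate $R\tilde\gamma_1|_{[e_1,e'_1]}$ both draws $\tilde B$ and has its endpoints outside $\tilde B$ (so it crosses or visits). Positive recurrence then produces infinitely many pairwise distinct $R_n\in\mathcal G$ with $R_n\tilde\gamma_1|_{[e_{1,n},e'_{1,n}]}$ equivalent to this piece. The accumulation hypothesis transfers this to $\tilde\gamma_2$: one gets $R_n\tilde\gamma_2|_{[e_{2,n},e'_{2,n}]}$ with the same behaviour and with parameters inside the fixed interval $[a_2,b_2]$. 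In the crossing case each $R_n^{-1}(\tilde\gamma_*|_{[t,t+1]})$ meets the compact set $\tilde\gamma_2([a_2,b_2])$; in the visiting case each $R_n\tilde\gamma_2([a_2,b_2])$ meets the fixed compact $TR\tilde\gamma_1([e_1,e'_1])$. Either contradicts the properness of the $\mathcal G$-action. Without the compact target supplied by $\gamma_2$, no such contradiction is available --- and indeed the conclusion is false in general for a positively recurrent path without the accumulation hypothesis (e.g.\ recurrent transverse paths with an $\F$-transverse self-intersection typically leave and re-enter their strip).
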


\begin{proof}
We will argue by contradiction and suppose it is not. Then there exists $d_1>c'_1$, uniquely defined, such that $\tilde\gamma_1(d_1)\notin\tilde B$ and  $\tilde\gamma_1\vert_{[c_1,d_1)}\subset\tilde B$.

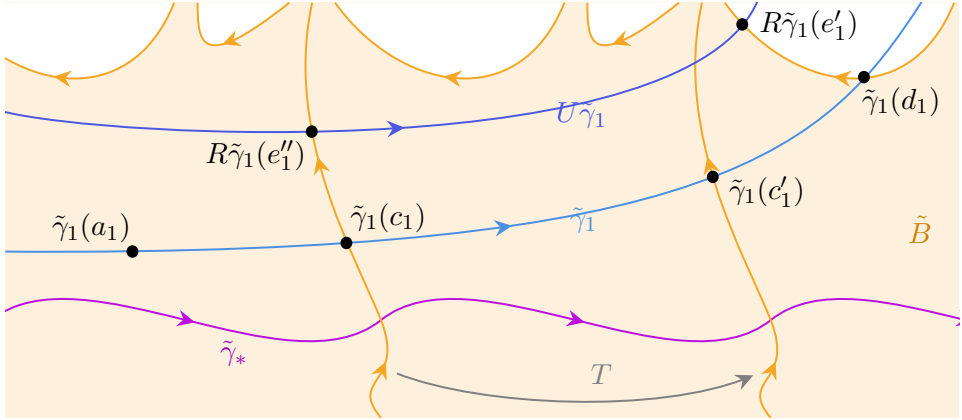
\begin{figure}
\begin{center}

\tikzset{every picture/.style={line width=0.75pt}} 

\begin{tikzpicture}[x=0.75pt,y=0.75pt,yscale=-1.1,xscale=1.1]
\clip  (36.6,30.17) rectangle (469,220) ;

\draw  [draw opacity=0][fill={rgb, 255:red, 245; green, 166; blue, 35 }  ,fill opacity=0.15 ] (36.6,30.17) -- (469,30.17) -- (469,220) -- (36.6,220) -- cycle ;
\draw [color={rgb, 255:red, 245; green, 166; blue, 35 }  ,draw opacity=1 ][fill={rgb, 255:red, 255; green, 255; blue, 255 }  ,fill opacity=1 ]   (185.98,30.17) .. controls (214.06,69.7) and (268.96,82.65) .. (289.97,30.17) ;
\draw [shift={(235.25,63.96)}, rotate = 7.23] [fill={rgb, 255:red, 245; green, 166; blue, 35 }  ,fill opacity=1 ][line width=0.08]  [draw opacity=0] (8.04,-3.86) -- (0,0) -- (8.04,3.86) -- (5.34,0) -- cycle    ;
\draw [color={rgb, 255:red, 245; green, 166; blue, 35 }  ,draw opacity=1 ][fill={rgb, 255:red, 255; green, 255; blue, 255 }  ,fill opacity=1 ]   (300.37,30.17) .. controls (300.57,47.36) and (300.37,63.67) .. (341.96,30.17) ;
\draw [shift={(311.58,49.2)}, rotate = 339.28] [fill={rgb, 255:red, 245; green, 166; blue, 35 }  ,fill opacity=1 ][line width=0.08]  [draw opacity=0] (8.04,-3.86) -- (0,0) -- (8.04,3.86) -- (5.34,0) -- cycle    ;
\draw [color={rgb, 255:red, 245; green, 166; blue, 35 }  ,draw opacity=1 ][fill={rgb, 255:red, 255; green, 255; blue, 255 }  ,fill opacity=1 ]   (362.76,30.17) .. controls (390.83,69.7) and (445.74,82.65) .. (466.75,30.17) ;
\draw [shift={(412.02,63.96)}, rotate = 7.23] [fill={rgb, 255:red, 245; green, 166; blue, 35 }  ,fill opacity=1 ][line width=0.08]  [draw opacity=0] (8.04,-3.86) -- (0,0) -- (8.04,3.86) -- (5.34,0) -- cycle    ;
\draw [color={rgb, 255:red, 245; green, 166; blue, 35 }  ,draw opacity=1 ][fill={rgb, 255:red, 255; green, 255; blue, 255 }  ,fill opacity=1 ]   (123.59,30.17) .. controls (123.8,47.36) and (123.59,63.67) .. (165.18,30.17) ;
\draw [shift={(134.8,49.2)}, rotate = 339.28] [fill={rgb, 255:red, 245; green, 166; blue, 35 }  ,fill opacity=1 ][line width=0.08]  [draw opacity=0] (8.04,-3.86) -- (0,0) -- (8.04,3.86) -- (5.34,0) -- cycle    ;
\draw [color={rgb, 255:red, 189; green, 16; blue, 224 }  ,draw opacity=1 ]   (206.78,175.33) .. controls (248.37,141.83) and (341.96,208.83) .. (383.56,175.33) ;
\draw [shift={(299.22,176.34)}, rotate = 193.9] [fill={rgb, 255:red, 189; green, 16; blue, 224 }  ,fill opacity=1 ][line width=0.08]  [draw opacity=0] (8.04,-3.86) -- (0,0) -- (8.04,3.86) -- (5.34,0) -- cycle    ;
\draw [color={rgb, 255:red, 189; green, 16; blue, 224 }  ,draw opacity=1 ]   (383.56,175.33) .. controls (425.15,141.83) and (518.74,208.83) .. (560.33,175.33) ;
\draw [shift={(476,176.34)}, rotate = 193.9] [fill={rgb, 255:red, 189; green, 16; blue, 224 }  ,fill opacity=1 ][line width=0.08]  [draw opacity=0] (8.04,-3.86) -- (0,0) -- (8.04,3.86) -- (5.34,0) -- cycle    ;
\draw [color={rgb, 255:red, 189; green, 16; blue, 224 }  ,draw opacity=1 ]   (30,175.33) .. controls (71.59,141.83) and (165.18,208.83) .. (206.78,175.33) ;
\draw [shift={(122.44,176.34)}, rotate = 193.9] [fill={rgb, 255:red, 189; green, 16; blue, 224 }  ,fill opacity=1 ][line width=0.08]  [draw opacity=0] (8.04,-3.86) -- (0,0) -- (8.04,3.86) -- (5.34,0) -- cycle    ;
\draw [color={rgb, 255:red, 245; green, 166; blue, 35 }  ,draw opacity=1 ]   (175.58,30.17) .. controls (162.48,90.69) and (195.34,146.08) .. (206.78,175.33) .. controls (218.22,204.59) and (191.6,201.91) .. (206.78,220) ;
\draw [shift={(178,100.57)}, rotate = 75.04] [fill={rgb, 255:red, 245; green, 166; blue, 35 }  ,fill opacity=1 ][line width=0.08]  [draw opacity=0] (8.04,-3.86) -- (0,0) -- (8.04,3.86) -- (5.34,0) -- cycle    ;
\draw [shift={(209.01,194.24)}, rotate = 113.35] [fill={rgb, 255:red, 245; green, 166; blue, 35 }  ,fill opacity=1 ][line width=0.08]  [draw opacity=0] (8.04,-3.86) -- (0,0) -- (8.04,3.86) -- (5.34,0) -- cycle    ;
\draw [color={rgb, 255:red, 245; green, 166; blue, 35 }  ,draw opacity=1 ]   (352.36,30.17) .. controls (339.26,90.69) and (372.12,146.08) .. (383.56,175.33) .. controls (394.99,204.59) and (368.37,201.91) .. (383.56,220) ;
\draw [shift={(354.78,100.57)}, rotate = 75.04] [fill={rgb, 255:red, 245; green, 166; blue, 35 }  ,fill opacity=1 ][line width=0.08]  [draw opacity=0] (8.04,-3.86) -- (0,0) -- (8.04,3.86) -- (5.34,0) -- cycle    ;
\draw [shift={(385.79,194.24)}, rotate = 113.35] [fill={rgb, 255:red, 245; green, 166; blue, 35 }  ,fill opacity=1 ][line width=0.08]  [draw opacity=0] (8.04,-3.86) -- (0,0) -- (8.04,3.86) -- (5.34,0) -- cycle    ;
\draw [color={rgb, 255:red, 74; green, 144; blue, 226 }  ,draw opacity=1 ]   (36.4,144) .. controls (360,146.4) and (409.44,96.33) .. (452,29.83) ;
\draw [shift={(266.27,132.18)}, rotate = 171.76] [fill={rgb, 255:red, 74; green, 144; blue, 226 }  ,fill opacity=1 ][line width=0.08]  [draw opacity=0] (8.04,-3.86) -- (0,0) -- (8.04,3.86) -- (5.34,0) -- cycle    ;
\draw  [fill={rgb, 255:red, 0; green, 0; blue, 0 }  ,fill opacity=1 ] (188.85,140.05) .. controls (188.85,138.72) and (189.85,137.65) .. (191.09,137.65) .. controls (192.32,137.65) and (193.33,138.72) .. (193.33,140.05) .. controls (193.33,141.37) and (192.32,142.45) .. (191.09,142.45) .. controls (189.85,142.45) and (188.85,141.37) .. (188.85,140.05) -- cycle ;
\draw  [fill={rgb, 255:red, 0; green, 0; blue, 0 }  ,fill opacity=1 ] (355.05,109.83) .. controls (355.05,108.5) and (356.05,107.43) .. (357.28,107.43) .. controls (358.52,107.43) and (359.52,108.5) .. (359.52,109.83) .. controls (359.52,111.16) and (358.52,112.23) .. (357.28,112.23) .. controls (356.05,112.23) and (355.05,111.16) .. (355.05,109.83) -- cycle ;
\draw  [fill={rgb, 255:red, 0; green, 0; blue, 0 }  ,fill opacity=1 ] (423.6,64.2) .. controls (423.6,62.88) and (424.6,61.8) .. (425.83,61.8) .. controls (427.07,61.8) and (428.07,62.88) .. (428.07,64.2) .. controls (428.07,65.53) and (427.07,66.61) .. (425.83,66.61) .. controls (424.6,66.61) and (423.6,65.53) .. (423.6,64.2) -- cycle ;
\draw [color={rgb, 255:red, 128; green, 128; blue, 128 }  ,draw opacity=1 ]   (214,199.75) .. controls (252.55,214.94) and (332.71,218.29) .. (373.24,201.63) ;
\draw [shift={(375.67,200.58)}, rotate = 155.79] [fill={rgb, 255:red, 128; green, 128; blue, 128 }  ,fill opacity=1 ][line width=0.08]  [draw opacity=0] (8.04,-3.86) -- (0,0) -- (8.04,3.86) -- (5.34,0) -- cycle    ;
\draw  [fill={rgb, 255:red, 0; green, 0; blue, 0 }  ,fill opacity=1 ] (91.82,143.96) .. controls (91.82,142.63) and (92.82,141.56) .. (94.06,141.56) .. controls (95.29,141.56) and (96.29,142.63) .. (96.29,143.96) .. controls (96.29,145.29) and (95.29,146.36) .. (94.06,146.36) .. controls (92.82,146.36) and (91.82,145.29) .. (91.82,143.96) -- cycle ;
\draw [color={rgb, 255:red, 245; green, 166; blue, 35 }  ,draw opacity=1 ][fill={rgb, 255:red, 255; green, 255; blue, 255 }  ,fill opacity=1 ]   (9.01,30.17) .. controls (37.09,69.7) and (91.99,82.65) .. (113,30.17) ;
\draw [shift={(58.28,63.96)}, rotate = 7.23] [fill={rgb, 255:red, 245; green, 166; blue, 35 }  ,fill opacity=1 ][line width=0.08]  [draw opacity=0] (8.04,-3.86) -- (0,0) -- (8.04,3.86) -- (5.34,0) -- cycle    ;
\draw [color={rgb, 255:red, 74; green, 88; blue, 226 }  ,draw opacity=1 ]   (34.5,79.75) .. controls (83.93,92.61) and (346.66,104.9) .. (377.8,28.9) ;
\draw [shift={(217.55,87.34)}, rotate = 176.19] [fill={rgb, 255:red, 74; green, 88; blue, 226 }  ,fill opacity=1 ][line width=0.08]  [draw opacity=0] (8.04,-3.86) -- (0,0) -- (8.04,3.86) -- (5.34,0) -- cycle    ;
\draw  [fill={rgb, 255:red, 0; green, 0; blue, 0 }  ,fill opacity=1 ] (173.15,89.13) .. controls (173.15,87.81) and (174.15,86.73) .. (175.39,86.73) .. controls (176.63,86.73) and (177.63,87.81) .. (177.63,89.13) .. controls (177.63,90.46) and (176.63,91.53) .. (175.39,91.53) .. controls (174.15,91.53) and (173.15,90.46) .. (173.15,89.13) -- cycle ;
\draw  [fill={rgb, 255:red, 0; green, 0; blue, 0 }  ,fill opacity=1 ] (368.41,40.11) .. controls (368.41,38.78) and (369.41,37.71) .. (370.65,37.71) .. controls (371.88,37.71) and (372.89,38.78) .. (372.89,40.11) .. controls (372.89,41.43) and (371.88,42.51) .. (370.65,42.51) .. controls (369.41,42.51) and (368.41,41.43) .. (368.41,40.11) -- cycle ;

\draw (191.63,136.41) node [anchor=south west] [inner sep=0.75pt]    {$\tilde{\gamma }_{1}( c_{1})$};
\draw (363.32,116.61) node [anchor=west] [inner sep=0.75pt]    {$\tilde{\gamma }_{1}( c'_{1})$};
\draw (423.62,67.34) node [anchor=north west][inner sep=0.75pt]    {$\tilde{\gamma }_{1}( d_{1})$};
\draw (306.72,193.9) node [anchor=north] [inner sep=0.75pt]  [color={rgb, 255:red, 128; green, 128; blue, 128 }  ,opacity=1 ]  {$T$};
\draw (140.08,183.53) node [anchor=north] [inner sep=0.75pt]  [color={rgb, 255:red, 159; green, 1; blue, 191 }  ,opacity=1 ]  {$\tilde{\gamma }_{*}$};
\draw (94.68,141.11) node [anchor=south east] [inner sep=0.75pt]    {$\tilde{\gamma }_{1}( a_{1})$};
\draw (291.05,122.53) node [anchor=north west][inner sep=0.75pt]  [color={rgb, 255:red, 74; green, 144; blue, 226 }  ,opacity=1 ]  {$\tilde{\gamma }_{1}$};
\draw (284.83,74.46) node [anchor=north west][inner sep=0.75pt]  [color={rgb, 255:red, 74; green, 88; blue, 226 }  ,opacity=1 ]  {$U\tilde{\gamma }_{1}$};
\draw (174.13,91.8) node [anchor=north east] [inner sep=0.75pt]    {$R\tilde{\gamma }_{1}( e''_{1})$};
\draw (377.05,40.29) node [anchor=west] [inner sep=0.75pt]    {$R\tilde{\gamma }_{1}( e'_{1})$};
\draw (444.5,126.4) node [anchor=north west][inner sep=0.75pt]  [color={rgb, 255:red, 205; green, 129; blue, 3 }  ,opacity=1 ]  {$\tilde{B}$};

\end{tikzpicture}

\caption{The different objects appearing in the proof of Proposition~\ref{prop: nontransitivity3}, Lemma~\ref{LemInclus} and Claim~\ref{ClaimReturn}. The leaves are in orange.\label{Fignontransitivity3}}
\end{center}
\end{figure}

\begin{claim}\label{ClaimReturn}
There exists a deck transformation $R\in\G$ and real numbers $e_1<e'_1$, with $e_1\geq a_1$, such that either $R\tilde\gamma_1\vert_{[e_1,e'_1]}$ draws and crosses $\tilde B$, or it draws and visits $\tilde B$.
\end{claim}

\begin{proof}
Note that to prove this claim one has to find $R\in\G$ and $e_1<e'_1$ such that $R\tilde\gamma_1|_{[e_1,e_1']}$ draws $\tilde B$ and both $R\tilde\gamma_1(e_1)$ and $R\tilde\gamma_1(e'_1)$ do not belong to $\tilde B$.

As $\gamma_1$ is positively recurrent, there exist real numbers $e''_1<e'_1$, with $e''_1>d_1$, and a deck transformation $R\in\G$ such that $R\tilde\gamma_1|_{[e''_1,e'_1]}$ is equivalent to $\tilde\gamma_1|_{[c_1,d_1]}$; in particular:
\begin{itemize}
\item $\tilde\gamma_1|_{[c_1,c'_1]}$ is equivalent to a subpath of $R\tilde\gamma_1|_{[e''_1,e'_1]}$;
\item $R\tilde\gamma_1([e''_1,e'_1))\subset \tilde B$ and $R\tilde\gamma_1(e'_1)\notin \tilde B$.
\end{itemize}
To prove the claim, it is sufficient to show that $R\tilde\gamma_1([a_1,e'_1))\not\subset \tilde B$, because in that case there exists $e_1\in [a_1,e''_1]$ such that $R\tilde\gamma_1((e_1,e'_1))\subset \tilde B$ and $R\tilde\gamma_1(e_1)\notin \tilde B$. 

We argue by contradiction. Suppose that $R\tilde\gamma_1([a_1,e'_1))$ is contained in $\tilde B$. Then $\tilde\gamma_1([a_1,e'_1))$ is contained in $R^{-1}(\tilde B)$. 
Recall that there exists $t$ such that $\tilde \gamma_*\vert_{[t,t+1]}$ is equivalent to $\tilde\gamma_1|_{[c_1, c'_1]}$ which is a subpath of $\tilde\gamma_1|_{[a_1,e'_1)}$. It implies that $\tilde \gamma_*\vert_{[t,t+1]}$ is equivalent to a subpath of $R^{-1}\tilde\gamma_*$ because $\tilde\gamma_1([a_1,e'_1))$ is contained in $R^{-1}(\tilde B)$. Lemma~\ref{LemEquivSubpath} applies and ensures that $R^{-1}\in\langle T\rangle$. As $\tilde B$ is invariant by $T$, the condition $R\tilde\gamma_1([a_1,e'_1))\subset \tilde B$ gives $\tilde\gamma_1([a_1,e'_1))\subset \tilde B$. This contradicts the condition $\tilde\gamma_1(d_1)\notin \tilde B$, because $a_1<d_1<e'_1$.
\end{proof}

As $\gamma_1$ is positively recurrent, there exist sequences $(e_{1,n})_{n\ge 0}$ and $(e'_{1,n})_{n\ge 0}$ with $a_1<e_{1,n}<e'_{1,n}<e_{1,n+1}$, and a sequence $(R_n)_{n\ge 0}$ of deck transformations, such that $R_n \tilde\gamma_1|_{[e_{1,n},e'_{1,n}]}$ is equivalent to $R\tilde \gamma_1|_{[e_1,e'_1]}$. As $\tilde\gamma_1$ accumulates on $\tilde\gamma_2$, a similar statement holds for $\tilde\gamma_2$: there exist sequences $(e_{2,n})_{n\ge 0}$ and $(e'_{2,n})_{n\ge 0}$ with $a_2<e_{2,n}<e'_{2,n}<e_{2,n+1}<b_2$ such that $R_n \tilde\gamma_2|_{[e_{2,n},e'_{2,n}]}$, is equivalent to $R\tilde \gamma_1|_{[e_1,e'_1]}$. Note that the $R_n$ are all different because every leaf of $\tilde{\F}$ intersects $\tilde \gamma_2([a_2,b_2])$ at most once.

We have two possibilities given by Claim~\ref{ClaimReturn}: either  $R\tilde\gamma_1|_{[e_1,e_1']}$ draws and crosses $\tilde B$, or it draws and visits $\tilde B$.  

Suppose that $R\tilde\gamma_1|_{[e_1,e_1']}$ draws and crosses $\tilde B$. 
In this case, for any $n\in\N$, the path $R_n \tilde\gamma_2|_{[e_{2,n},e'_{2,n}]}$ intersects $\tilde\gamma_*$. Replacing $R_n$ with $T^{k_N}\circ R_n$ for a certain $k_N\in\Z$ if necessary, one can suppose that $R_n \tilde\gamma_2|_{[e_{2,n},e'_{2,n}]}$ intersects $\tilde\gamma_*|_{[t,t+1]}$ and so $R_n^{-1}(\tilde\gamma_*|_{[t,t+1]})$ intersects $\tilde \gamma_2([a_2,b_2])$. It contradicts the fact that the action of $\G$ on compact subsets is proper.

Suppose now that $R\tilde\gamma_1|_{[e_1,e_1']}$ draws and visits $\tilde B$. Then $R\tilde\gamma_1|_{[e_1,e_1']}$ and $TR\tilde\gamma_1|_{[e_1,e_1']}$ have an $\tilde{\F}$-transverse intersection. One deduces that for any $n\in\N$, one has $R_n \tilde\gamma_2|_{[e_{2,n},e'_{2,n}]}$ and $TR\tilde\gamma_1|_{[e_1,e_1']}$ have an $\tilde{\F}$-transverse intersection because $R_n \tilde\gamma_2|_{[e_{2,n},e'_{2,n}]}$ and  $R\tilde\gamma_1|_{[e_1,e_1']}$  are equivalent. Consequently, it holds that $R_n \tilde\gamma_2|_{[e_{2,n},e'_{2,n}]}\cap TR\tilde\gamma_1|_{[e_1,e_1']}\not =\emptyset$ and so that $R_n \tilde\gamma_2|_{[a_2,b_2]}\cap TR\tilde\gamma_1|_{[e_1,e_1']}\not =\emptyset$. It contradicts once again the fact that the action of $\G$ on compact subsets is proper.
This finishes the proof of Lemma~\ref{LemInclus}.
\end{proof}

By Lemma~\ref{LemInclus}, we know that $\tilde\gamma_1|_{[c_1,+\infty)}$ stays in $\tilde B$. We first prove that $\tilde\gamma_1$ cannot accumulate in $\tilde\gamma_*$.

Indeed, otherwise, as $\gamma_1$ is positively recurrent, there exist deck transformations $(R_n)_{n\ge 0}\in \G$ and parameters $d_n<d'_n$ both going to $+\infty$ such that $\tilde\gamma_1|_{[d_n,d'_n]}$ is equivalent to $R_n\tilde\gamma_1|_{[c_1,c'_1]}$, which is itself equivalent to $R_n\tilde\gamma_*|_{[t,t+1]}$. The fact that $\tilde\gamma_1$ accumulates in $\tilde\gamma_*$ implies that $R_n\notin\langle T \rangle$ eventually. Recall that for any $n$, the path $\tilde\gamma_1|_{[d_n,d'_n]}$ is equivalent to a subpath of $\tilde\gamma_*$; this allows to apply Lemma~\ref{LemEquivSubpath} to the simple path $\Gamma_*$, which implies that $R_n\in\langle T \rangle$, a contradiction.

Hence, there exists $t_1\in\R$ such that $\tilde\gamma_1|_{[c_1,+\infty)}$ is equivalent to $\tilde\gamma_*\vert_{[t_1,+\infty)}$.
Moreover it is equivalent to $\tilde\gamma_2\vert_{[c_2,b_2)}$, where $c_2\in[a_2,b_2]$. It implies that $\tilde\phi_{\tilde\gamma_2(b_2)}\subset\partial\tilde B$. We do not lose generality by supposing that $\tilde\phi_{\tilde\gamma_2(b_2)}\subset\partial\tilde B^L$. We choose $a'_2\in[c_2,b_2)$ such that $\tilde\gamma_2([a'_2,b_2])\in L(\tilde\gamma_*)$.

\begin{lemma}\label{lem:NotLeft}
For every leaf $\tilde\phi\subset\partial\tilde B^L$ it holds that $\tilde B\subset R(\tilde \phi)$.
\end{lemma}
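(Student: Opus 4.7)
The strategy will be to examine how leaves of $\tilde B$ accumulate on the boundary leaf $\tilde\phi$, by tracking them through the $\tilde\gamma_*$-parametrisation, and to exploit positive transversality together with the information provided by the distinguished leaf $\tilde\phi_0 := \tilde\phi_{\tilde\gamma_2(b_2)}$. Since $\tilde B$ is homeomorphic to a plane and is disjoint from $\tilde\phi$, connectedness already yields the dichotomy $\tilde B\subset R(\tilde\phi)$ or $\tilde B\subset L(\tilde\phi)$, so it suffices to rule out the second alternative.

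I will pick $\tilde z\in\tilde\phi$ and a sequence $\tilde z_n\in\tilde B$ with $\tilde z_n\to\tilde z$, which exists because $\tilde\phi\subset\partial\tilde B$. Each $\tilde z_n$ lies on a leaf $\tilde\phi_n\subset\tilde B$ meeting $\tilde\gamma_*$ at some $\tilde\gamma_*(t_n)$; after extraction I will assume $t_n\to t_\infty\in\R\cup\{\pm\infty\}$. A finite value $t_\infty\in\R$ is excluded by flow-box continuity of $\tilde\F$ around $\tilde\gamma_*(t_\infty)$, which would force $\tilde\phi_n\to\tilde\phi_{\tilde\gamma_*(t_\infty)}\subset\tilde B$ and hence $\tilde\phi\subset\tilde B$, contradicting $\tilde\phi\in\partial\tilde B$. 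When $t_n\to+\infty$, I will fix any $t_0\in\R$: eventually $t_0<t_n$, so by the property that $\tilde\gamma_*(t_0)\in R(\tilde\phi_{\tilde\gamma_*(t_n)})$ for $t_0<t_n$ (a direct consequence of positive transversality, via the description of the leaves met by $\tilde\gamma_*|_{[t_0,t_n]}$ recalled in Paragraph~\ref{ss.intersections}), we get $\tilde\gamma_*(t_0)\in R(\tilde\phi_n)$. Letting $n\to\infty$ yields $\tilde\gamma_*(t_0)\in\overline{R(\tilde\phi)}$, and since $\tilde\gamma_*\cap\tilde\phi=\emptyset$, this forces $\tilde\gamma_*(t_0)\in R(\tilde\phi)$. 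As $t_0$ was arbitrary, $\tilde\gamma_*\subset R(\tilde\phi)$, and therefore $\tilde B\subset R(\tilde\phi)$ by connectedness of $\tilde B$ together with $\tilde\gamma_*\subset\tilde B$.

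The hard part will be to rule out $t_n\to-\infty$, which by the symmetric version of the previous argument instead yields $\tilde\gamma_*\subset L(\tilde\phi)$ and $\tilde B\subset L(\tilde\phi)$. To exclude this case, I plan to use the distinguished leaf $\tilde\phi_0\in\partial\tilde B^L$ provided by the hypothesis: since $\tilde\gamma_2$ is positively transverse and terminates at $\tilde\gamma_2(b_2)\in\tilde\phi_0$, it approaches $\tilde\phi_0$ from $R(\tilde\phi_0)$ just before $b_2$, so $\tilde\gamma_2|_{(b_2-\varepsilon,b_2)}\subset\tilde B\cap R(\tilde\phi_0)$ and hence $\tilde B\subset R(\tilde\phi_0)$ by the same connectedness argument. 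The sought contradiction will then be extracted from the simultaneous constraints $\tilde\phi,\tilde\phi_0\subset L(\tilde\gamma_*)$, $\tilde\gamma_*\subset R(\tilde\phi_0)\cap L(\tilde\phi)$, and $\tilde\phi\cap\tilde\phi_0=\emptyset$, by invoking that $\tilde\F$ is an oriented nonsingular foliation on the simply connected universal cover $\tilde\Sigma$, hence topologically trivial: two of its leaves cannot stand in opposite side-relations to a common positively transverse line $\tilde\gamma_*$. Making this last topological incompatibility rigorous, presumably by trivialising $\tilde\F$ on an $\tilde\F$-saturated neighborhood containing $\tilde\gamma_*$, $\tilde\phi$ and $\tilde\phi_0$ and then comparing the asymptotic positions of these three curves, is the most delicate point I expect to face.
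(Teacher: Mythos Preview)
Your proposal has a genuine gap at step 8, and the earlier steps are shakier than you may realize because the leaf space of $\tilde\F$ is typically non-Hausdorff.

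First, step 4 does not go through as stated. If $t_n\to t_\infty\in\R$, then in a flow box around $\tilde\gamma_*(t_\infty)$ the plaques of $\tilde\phi_n$ do converge to the plaque of $\tilde\phi_{\tilde\gamma_*(t_\infty)}$, but in the flow box $W$ around $\tilde z$ they converge to the plaque of $\tilde\phi$. In a non-Hausdorff leaf space a sequence can have two distinct limits, so you cannot conclude $\tilde\phi=\tilde\phi_{\tilde\gamma_*(t_\infty)}$. Your limit passage in step 5 (``letting $n\to\infty$ yields $\tilde\gamma_*(t_0)\in\overline{R(\tilde\phi)}$'') is similarly unjustified: there is no semicontinuity of $R(\cdot)$ available. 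Both steps can be repaired by arguing locally in $W$: if after extraction $t_n$ is increasing, then for $m<n$ one has $\tilde\phi_m\subset R(\tilde\phi_n)$, hence the heights $y_n$ in $W$ are increasing and tend to $0$, so $y_n<0$ and $\tilde z_n\in R(\tilde\phi)$; the finite/infinite distinction becomes irrelevant. This salvages the dichotomy.

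The real problem is step 8. You want to derive a contradiction from $\tilde B\subset R(\tilde\phi_0)$ and $\tilde B\subset L(\tilde\phi)$ with $\tilde\phi,\tilde\phi_0\subset\partial\tilde B^L$, invoking that ``two leaves cannot stand in opposite side-relations to a common positively transverse line''. But $\tilde\gamma_*$ meets neither $\tilde\phi$ nor $\tilde\phi_0$, so this heuristic has no content here; and nothing in your argument uses the $T$-invariance of $\tilde B$ or the crucial fact that $\tilde\gamma_2|_{[c_2,b_2)}$ is equivalent to an entire forward ray of $\tilde\gamma_*$. Without these ingredients the claimed incompatibility is not apparent (the non-Hausdorff leaf space can in principle allow such configurations), and you yourself flag this as the point you do not know how to close.

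The paper proceeds quite differently. It argues by contradiction: if some leaf in $\partial\tilde B^L$ had $\tilde B$ on its left, a short transverse path $\tilde\gamma_3$ could \emph{enter} $\tilde B$ from the left through that leaf. One then builds two oriented lines in $\tilde B$: $\tilde\lambda_2=(\tilde\gamma_2|_{[a'_2,b_2)})^{-1}\tilde\phi^+_{\tilde\gamma_2(a'_2)}$ (using the exiting path $\tilde\gamma_2$) and $\tilde\lambda_3=\tilde\gamma_3|_{(a_3,b_3]}\tilde\phi^+_{\tilde\gamma_3(b_3)}$ (using the entering path). A compactness argument, applied after translating $\tilde\lambda_3$ by a large power $T^n$, makes $\tilde\lambda_2$ and $T^n\tilde\lambda_3$ disjoint with $\overline{L_{\tilde B}(T^n\tilde\lambda_3)}\subset L_{\tilde B}(\tilde\lambda_2)$. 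But every leaf $\tilde\phi_{\tilde\gamma_*(s)}$ with $s$ large lies in $L_{\tilde B}(T^n\tilde\lambda_3)$, hence cannot meet $\tilde\gamma_2|_{[a'_2,b_2)}$, contradicting the equivalence of $\tilde\gamma_2|_{[c_2,b_2)}$ with $\tilde\gamma_*|_{[t_1,+\infty)}$. The $T$-action and the full-ray equivalence are exactly what your approach is missing.
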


\begin{proof}
See Figure~\ref{Fig:NotLeft} for an example of configuration of the proof.
Suppose that there exists a leaf $\tilde\phi_0\subset\partial\tilde B^L$ such that $\tilde B\subset L(\tilde \phi)$. One can find a transverse path $\tilde \gamma_3:[a_3,b_3]\to\tilde\Sigma$ such that $\tilde\gamma_3(a_3)\in\tilde\phi_0$ and $\tilde\gamma_3((a_3,b_3])\subset \tilde B$. Such a path enters in $\tilde B$ by the left. By taking a smaller $b_3$ if necessary, we can suppose moreover than $\tilde\gamma_3([a_3,b_3])\subset L(\tilde \gamma_*)$. We will prove that it prevents $\tilde\gamma_1$ accumulating positively in $\tilde\gamma_2$. If $\tilde \lambda$ is an oriented line of $\tilde B$, denote $R_{\tilde B}(\tilde\lambda)$ the connected component of $\tilde B\setminus\tilde\lambda$ located on the right of $\tilde\lambda$ and $L_{\tilde B}(\tilde\lambda)$ the connected component of $\tilde B\setminus\tilde\lambda$ located on the left of $\tilde\lambda$. One defines two oriented lines $\tilde\lambda_2$,  $\tilde\lambda_3$ of $\tilde B$ by setting
$$\tilde\lambda_2=(\tilde\gamma_2\vert_{[a'_2,b_2)})^{-1} \tilde\phi^+_{\tilde\gamma_2(a'_2)}, \quad \tilde\lambda_3=\tilde\gamma_3\vert_{(a_3,b_3]} \tilde\phi^+_{\tilde \gamma_3(b_3)}.$$ 
The line $\tilde\gamma_*$ intersects $\tilde\phi_{\tilde\gamma_2(a'_2)}$ in a unique point $\tilde z_2$ and we have $\tilde z_2\in\tilde\phi^+_{\tilde\gamma_2(a'_2)}$.
Similarly, $\tilde\gamma_*$ intersects $\tilde\phi_{\tilde\gamma_3(b_3)}$ in a unique point $\tilde z_3$ and we have $\tilde z_3\in\tilde\phi^+_{\tilde\gamma_3(b_3)}$.
Denote $\tilde\sigma_2\subset\tilde\phi_{\tilde\gamma_2(a_2)}$ the segment that joins $\tilde\gamma_2(a'_2)$ to $\tilde z_2$ and $\tilde\sigma_3\subset\tilde\phi_{\tilde\gamma(b_3)}$ the segment that joins $\tilde\gamma_3(b_3)$ to $\tilde z_3$.
By compactness of all segments, if $n$ is large enough, then we have
$$T^n\left(\tilde\gamma_3([a_3,b_3])\cup \tilde\sigma_3\right)\cap \left(\tilde\gamma_2([a'_2,b_2])\cup \sigma_2\right)=\emptyset.$$
Moreover, one can suppose that 
$$T^n\tilde \phi_{\tilde\gamma_3(b_3)}\subset L(\tilde\phi_{\tilde\gamma_2(a'_2)}).$$ 
The fact that $\tilde \gamma_2([a'_2,b_2])$ and  $\tilde \gamma_3([a_3,b_3])$ are included in $L(\tilde\gamma_*)$ while $\tilde \phi^+_{\tilde\gamma_2(b_2)}$ and $\tilde \phi^+_{\tilde\gamma_3(b_3)}$ are included in $\overline {R(\tilde\gamma_*)}$ tells us that
$$T^n\tilde \phi_{\tilde\gamma(b_3)}\cap \left(\tilde\gamma_2([a'_2,b_2])\cup \tilde\sigma_2\right)=\emptyset.$$
We deduce that the lines $\tilde\lambda_2$ and $T^n\tilde\lambda_3$ are disjoint.

\begin{figure}
\begin{center}
\tikzset{every picture/.style={line width=1.2pt}} 

\begin{tikzpicture}[x=0.75pt,y=0.75pt,yscale=-1.3,xscale=1.3]

\draw  [color={rgb, 255:red, 0; green, 0; blue, 0 }  ,draw opacity=1 ][fill={rgb, 255:red, 245; green, 166; blue, 35 }  ,fill opacity=0.08 ] (262.83,50) -- (600,50) -- (600,260) -- (262.83,260) -- cycle ;
\clip (262.83,50) rectangle (600,260);

\draw [color={rgb, 255:red, 245; green, 166; blue, 35 }  ,draw opacity=1 ][fill={rgb, 255:red, 255; green, 255; blue, 255 }  ,fill opacity=1 ][line width=0.75]    (530,50) .. controls (538.61,107.47) and (556.88,134.58) .. (575.72,136.92) .. controls (598.07,139.71) and (621.23,107.62) .. (630,50) ;
\draw [shift={(543.72,103.2)}, rotate = 248.4] [fill={rgb, 255:red, 245; green, 166; blue, 35 }  ,fill opacity=1 ][line width=0.08]  [draw opacity=0] (8.04,-3.86) -- (0,0) -- (8.04,3.86) -- (5.34,0) -- cycle    ;
\draw [shift={(616.41,100.13)}, rotate = 114.7] [fill={rgb, 255:red, 245; green, 166; blue, 35 }  ,fill opacity=1 ][line width=0.08]  [draw opacity=0] (8.04,-3.86) -- (0,0) -- (8.04,3.86) -- (5.34,0) -- cycle    ;
\draw [color={rgb, 255:red, 245; green, 166; blue, 35 }  ,draw opacity=1 ][fill={rgb, 255:red, 255; green, 255; blue, 255 }  ,fill opacity=1 ][line width=0.75]    (420,50) .. controls (430.39,69.71) and (446.18,100.5) .. (465.19,107.41) .. controls (481.54,113.36) and (500.27,101.64) .. (520,50) ;
\draw [shift={(436.01,78.45)}, rotate = 56.42] [fill={rgb, 255:red, 245; green, 166; blue, 35 }  ,fill opacity=1 ][line width=0.08]  [draw opacity=0] (8.04,-3.86) -- (0,0) -- (8.04,3.86) -- (5.34,0) -- cycle    ;
\draw [shift={(498.3,92.43)}, rotate = 305.21] [fill={rgb, 255:red, 245; green, 166; blue, 35 }  ,fill opacity=1 ][line width=0.08]  [draw opacity=0] (8.04,-3.86) -- (0,0) -- (8.04,3.86) -- (5.34,0) -- cycle    ;
\draw [color={rgb, 255:red, 245; green, 166; blue, 35 }  ,draw opacity=1 ][fill={rgb, 255:red, 255; green, 255; blue, 255 }  ,fill opacity=1 ][line width=0.75]    (300,50) .. controls (307.45,99.73) and (322.14,126.73) .. (338.16,134.62) .. controls (362.63,146.66) and (390.23,114.15) .. (400,50) ;
\draw [shift={(312.22,99.48)}, rotate = 249.95] [fill={rgb, 255:red, 245; green, 166; blue, 35 }  ,fill opacity=1 ][line width=0.08]  [draw opacity=0] (8.04,-3.86) -- (0,0) -- (8.04,3.86) -- (5.34,0) -- cycle    ;
\draw [shift={(384.97,103.33)}, rotate = 116.45] [fill={rgb, 255:red, 245; green, 166; blue, 35 }  ,fill opacity=1 ][line width=0.08]  [draw opacity=0] (8.04,-3.86) -- (0,0) -- (8.04,3.86) -- (5.34,0) -- cycle    ;
\draw [color={rgb, 255:red, 65; green, 117; blue, 5 }  ,draw opacity=1 ][line width=1.5]    (354,136.96) .. controls (350.35,142.46) and (347.03,146.89) .. (331,151.57) .. controls (316.72,179.62) and (324.52,220.93) .. (332,260.28) ;
\draw [color={rgb, 255:red, 74; green, 144; blue, 226 }  ,draw opacity=1 ]   (360,50) .. controls (373.48,65.44) and (374.68,141.44) .. (330,150) ;
\draw [shift={(366.7,103.24)}, rotate = 101.43] [fill={rgb, 255:red, 74; green, 144; blue, 226 }  ,fill opacity=1 ][line width=0.08]  [draw opacity=0] (8.04,-3.86) -- (0,0) -- (8.04,3.86) -- (5.34,0) -- cycle    ;
\draw [color={rgb, 255:red, 65; green, 117; blue, 5 }  ,draw opacity=1 ][line width=1.5]    (472,108.96) .. controls (475.62,128.19) and (482.77,139.32) .. (499,142) .. controls (507,182.03) and (421,230.5) .. (487.47,260.42) ;
\draw [color={rgb, 255:red, 74; green, 144; blue, 226 }  ,draw opacity=1 ]   (460,50) .. controls (473.48,65.44) and (463.48,137.44) .. (500,140) ;
\draw [shift={(472.19,104.05)}, rotate = 259.03] [fill={rgb, 255:red, 74; green, 144; blue, 226 }  ,fill opacity=1 ][line width=0.08]  [draw opacity=0] (8.04,-3.86) -- (0,0) -- (8.04,3.86) -- (5.34,0) -- cycle    ;
\draw [color={rgb, 255:red, 245; green, 166; blue, 35 }  ,draw opacity=1 ]   (500,140) .. controls (511.48,183.44) and (420.68,230.63) .. (490,260) ;
\draw [shift={(472.2,207.15)}, rotate = 301.29] [fill={rgb, 255:red, 245; green, 166; blue, 35 }  ,fill opacity=1 ][line width=0.08]  [draw opacity=0] (8.04,-3.86) -- (0,0) -- (8.04,3.86) -- (5.34,0) -- cycle    ;
\draw [color={rgb, 255:red, 245; green, 166; blue, 35 }  ,draw opacity=1 ]   (330,150) .. controls (315.88,177.83) and (320.68,214.23) .. (330,260) ;
\draw [shift={(321.81,208.38)}, rotate = 266.05] [fill={rgb, 255:red, 245; green, 166; blue, 35 }  ,fill opacity=1 ][line width=0.08]  [draw opacity=0] (8.04,-3.86) -- (0,0) -- (8.04,3.86) -- (5.34,0) -- cycle    ;
\draw [color={rgb, 255:red, 144; green, 19; blue, 254 }  ,draw opacity=1 ]   (410,200) .. controls (433.83,200.17) and (478.22,182.26) .. (530,190) .. controls (581.78,197.74) and (621.83,200.17) .. (650,200) ;
\draw [shift={(472.91,190.14)}, rotate = 172.05] [fill={rgb, 255:red, 144; green, 19; blue, 254 }  ,fill opacity=1 ][line width=0.08]  [draw opacity=0] (8.04,-3.86) -- (0,0) -- (8.04,3.86) -- (5.34,0) -- cycle    ;
\draw [shift={(593.38,197.52)}, rotate = 185.04] [fill={rgb, 255:red, 144; green, 19; blue, 254 }  ,fill opacity=1 ][line width=0.08]  [draw opacity=0] (8.04,-3.86) -- (0,0) -- (8.04,3.86) -- (5.34,0) -- cycle    ;
\draw [color={rgb, 255:red, 144; green, 19; blue, 254 }  ,draw opacity=1 ]   (170,200) .. controls (193.83,200.17) and (238.22,182.26) .. (290,190) .. controls (341.78,197.74) and (381.83,200.17) .. (410,200) ;
\draw [shift={(232.91,190.14)}, rotate = 172.05] [fill={rgb, 255:red, 144; green, 19; blue, 254 }  ,fill opacity=1 ][line width=0.08]  [draw opacity=0] (8.04,-3.86) -- (0,0) -- (8.04,3.86) -- (5.34,0) -- cycle    ;
\draw [shift={(353.38,197.52)}, rotate = 185.04] [fill={rgb, 255:red, 144; green, 19; blue, 254 }  ,fill opacity=1 ][line width=0.08]  [draw opacity=0] (8.04,-3.86) -- (0,0) -- (8.04,3.86) -- (5.34,0) -- cycle    ;
\draw  [fill={rgb, 255:red, 0; green, 0; blue, 0 }  ,fill opacity=1 ] (471.7,108.54) .. controls (471.7,107.61) and (472.45,106.86) .. (473.38,106.86) .. controls (474.31,106.86) and (475.06,107.61) .. (475.06,108.54) .. controls (475.06,109.47) and (474.31,110.22) .. (473.38,110.22) .. controls (472.45,110.22) and (471.7,109.47) .. (471.7,108.54) -- cycle ;
\draw  [fill={rgb, 255:red, 0; green, 0; blue, 0 }  ,fill opacity=1 ] (350.51,136.67) .. controls (350.51,135.74) and (351.26,134.99) .. (352.19,134.99) .. controls (353.12,134.99) and (353.87,135.74) .. (353.87,136.67) .. controls (353.87,137.6) and (353.12,138.35) .. (352.19,138.35) .. controls (351.26,138.35) and (350.51,137.6) .. (350.51,136.67) -- cycle ;
\draw  [fill={rgb, 255:red, 0; green, 0; blue, 0 }  ,fill opacity=1 ] (319.42,194.44) .. controls (319.42,193.51) and (320.17,192.75) .. (321.1,192.75) .. controls (322.02,192.75) and (322.78,193.51) .. (322.78,194.44) .. controls (322.78,195.36) and (322.02,196.12) .. (321.1,196.12) .. controls (320.17,196.12) and (319.42,195.36) .. (319.42,194.44) -- cycle ;
\draw  [fill={rgb, 255:red, 0; green, 0; blue, 0 }  ,fill opacity=1 ] (481.96,189.09) .. controls (481.96,188.17) and (482.71,187.41) .. (483.64,187.41) .. controls (484.57,187.41) and (485.32,188.17) .. (485.32,189.09) .. controls (485.32,190.02) and (484.57,190.78) .. (483.64,190.78) .. controls (482.71,190.78) and (481.96,190.02) .. (481.96,189.09) -- cycle ;
\draw  [fill={rgb, 255:red, 0; green, 0; blue, 0 }  ,fill opacity=1 ] (498.32,140) .. controls (498.32,139.07) and (499.07,138.32) .. (500,138.32) .. controls (500.93,138.32) and (501.68,139.07) .. (501.68,140) .. controls (501.68,140.93) and (500.93,141.68) .. (500,141.68) .. controls (499.07,141.68) and (498.32,140.93) .. (498.32,140) -- cycle ;
\draw  [fill={rgb, 255:red, 0; green, 0; blue, 0 }  ,fill opacity=1 ] (328.32,150) .. controls (328.32,149.07) and (329.07,148.32) .. (330,148.32) .. controls (330.93,148.32) and (331.68,149.07) .. (331.68,150) .. controls (331.68,150.93) and (330.93,151.68) .. (330,151.68) .. controls (329.07,151.68) and (328.32,150.93) .. (328.32,150) -- cycle ;
\draw [color={rgb, 255:red, 245; green, 166; blue, 35 }  ,draw opacity=1 ][fill={rgb, 255:red, 255; green, 255; blue, 255 }  ,fill opacity=1 ][line width=0.75]    (190,50) .. controls (200.2,69.35) and (215.6,99.37) .. (234.14,107.01) .. controls (250.74,113.84) and (269.85,102.74) .. (290,50) ;
\draw [shift={(205.67,77.91)}, rotate = 56.65] [fill={rgb, 255:red, 245; green, 166; blue, 35 }  ,fill opacity=1 ][line width=0.08]  [draw opacity=0] (8.04,-3.86) -- (0,0) -- (8.04,3.86) -- (5.34,0) -- cycle    ;
\draw [shift={(267.85,93.02)}, rotate = 305.66] [fill={rgb, 255:red, 245; green, 166; blue, 35 }  ,fill opacity=1 ][line width=0.08]  [draw opacity=0] (8.04,-3.86) -- (0,0) -- (8.04,3.86) -- (5.34,0) -- cycle    ;
\draw  [color={rgb, 255:red, 0; green, 0; blue, 0 }  ,draw opacity=1 ] (262.83,50) -- (600,50) -- (600,260) -- (262.83,260) -- cycle ;
\draw [color={rgb, 255:red, 245; green, 166; blue, 35 }  ,draw opacity=1 ]   (550,150) .. controls (579.79,179.69) and (550.59,233.28) .. (580,260) ;
\draw [shift={(565.15,208.56)}, rotate = 269.68] [fill={rgb, 255:red, 245; green, 166; blue, 35 }  ,fill opacity=1 ][line width=0.08]  [draw opacity=0] (8.04,-3.86) -- (0,0) -- (8.04,3.86) -- (5.34,0) -- cycle    ;
\draw [color={rgb, 255:red, 245; green, 166; blue, 35 }  ,draw opacity=1 ] [dash pattern={on 0.84pt off 2.51pt}]  (520,140) .. controls (536.19,142.89) and (538.19,142.89) .. (550,150) ;

\draw (279.6,192.1) node [anchor=north] [inner sep=0.75pt]  [color={rgb, 255:red, 144; green, 19; blue, 254 }  ,opacity=1 ]  {$\tilde{\gamma }_{*}$};
\draw (330.48,169.77) node [anchor=west] [inner sep=0.75pt]  [font=\normalsize,color={rgb, 255:red, 65; green, 117; blue, 5 }  ,opacity=1 ]  {$\tilde{\lambda }_{2}$};
\draw (480,154.1) node [anchor=west] [inner sep=0.75pt]  [font=\normalsize,color={rgb, 255:red, 65; green, 117; blue, 5 }  ,opacity=1 ]  {$\tilde{\lambda }_{3}$};
\draw (367.97,81.86) node [anchor=east] [inner sep=0.75pt]  [font=\normalsize,color={rgb, 255:red, 74; green, 144; blue, 226 }  ,opacity=1 ]  {$\tilde{\gamma }_{2}$};
\draw (469.99,85.11) node [anchor=south west] [inner sep=0.75pt]  [font=\normalsize,color={rgb, 255:red, 74; green, 144; blue, 226 }  ,opacity=1 ]  {$\tilde{\gamma }_{3}$};
\draw (472.21,111.16) node [anchor=north east] [inner sep=0.75pt]  [font=\normalsize]  {$\tilde{\gamma }_{3}( a_{3})$};
\draw (445,80) node [anchor=west] [inner sep=0.75pt]  [font=\normalsize,color={rgb, 255:red, 245; green, 166; blue, 35 }  ,opacity=1 ] {$\tilde \phi_0$};
\draw (353.19,138.65) node [anchor=north west][inner sep=0.75pt]  [font=\normalsize]  {$\tilde{\gamma }_{2}( b_{2})$};
\draw (319.75,189.59) node [anchor=south east] [inner sep=0.75pt]  [font=\normalsize]  {$\tilde{z}_{2}$};
\draw (484.44,191.17) node [anchor=north west][inner sep=0.75pt]  [font=\normalsize]  {$\tilde{z}_{3}$};
\draw (470.31,237.09) node [anchor=west] [inner sep=0.75pt]  [font=\normalsize,color={rgb, 255:red, 245; green, 166; blue, 35 }  ,opacity=1 ]  {$\tilde \phi _{\tilde{\gamma }_{3}( b_{3})}^{+}$};
\draw (323.91,240.89) node [anchor=east] [inner sep=0.75pt]  [font=\normalsize,color={rgb, 255:red, 245; green, 166; blue, 35 }  ,opacity=1 ]  {$\tilde \phi _{\tilde{\gamma }_{2}( a'_{2})}^{+}$};
\draw (411,215.4) node [anchor=north west][inner sep=0.75pt]  [color={rgb, 255:red, 194; green, 120; blue, 0 }  ,opacity=1 ]  {$\tilde{B}$};
\draw (502,136.6) node [anchor=south west] [inner sep=0.75pt]  [font=\normalsize]  {$\tilde{\gamma }_{3}( b_{3})$};
\draw (328,147.6) node [anchor=south east] [inner sep=0.75pt]  [font=\normalsize]  {$\tilde{\gamma }_{2}( a'_{2})$};
\draw (568.67,220.17) node [anchor=west] [inner sep=0.75pt]  [color={rgb, 255:red, 245; green, 166; blue, 35 }  ,opacity=1 ]  {$\tilde{\phi }$};

\end{tikzpicture}
\caption{The configuration of the proof of Lemma~\ref{lem:NotLeft}.
\label{Fig:NotLeft}}
\end{center}
\end{figure}
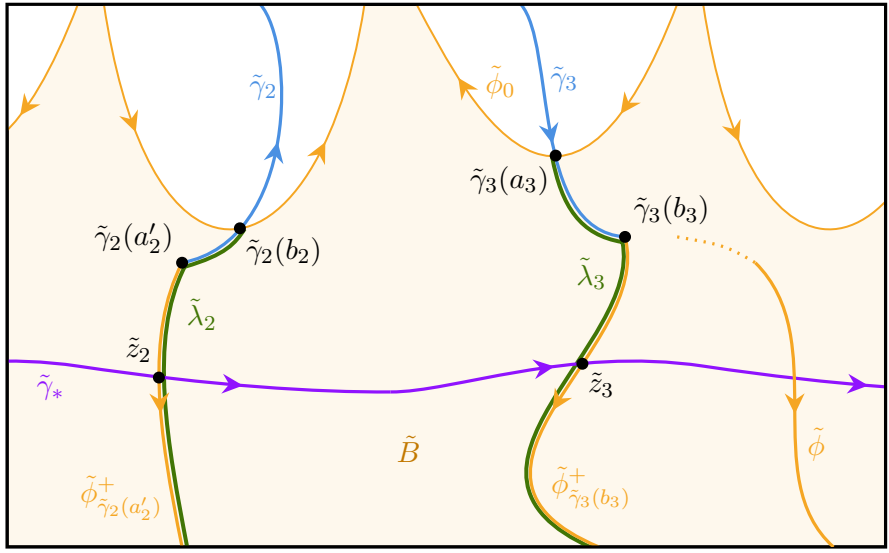

The sub-path of $\tilde\gamma_*$ that joins $\tilde z_2$ to $T^n\tilde z_3$ is disjoint from $\tilde\lambda_2$ and $T^n\tilde\lambda_3$  but at the endpoints, entering in $L_{\tilde B}( \tilde\lambda_2)$ at $\tilde z_2$ and leaving $R_{\tilde B}( T^n\tilde\lambda_3)$ at $T^n\tilde z_3$. Consequently the following inclusion $\overline{L_{\tilde B}(T^n\tilde\lambda_3)}\subset L_{\tilde B}(\tilde\lambda_2)$ holds.  Every leaf $\tilde\phi\subset L_{\tilde B}(\tilde\phi_{T^n\tilde z_3})$ is disjoint from  $T^n\tilde\lambda_3$. It is contained in $L(T^n\tilde\lambda_3)$ because the sub-path of $\tilde\gamma_*$ that joins $\tilde\phi_{T^n(\tilde z_3)}$ to $\tilde\phi$  is disjoint from  $T^n\tilde\lambda_3$ but at $T^n\tilde z_3$ and enters in $L_{\tilde B}(T^n\tilde\lambda_3)$ at $T^n\tilde z_3$.
The contradiction comes from the fact that $\tilde \phi$ must intersect $\tilde\gamma_2\vert_{[a'_2,b_2)}$ because $\tilde\phi\subset L_{\tilde B}(\tilde\phi_{z_2}).$
\end{proof}

\begin{lemma}\label{Lem:ProjAnnulus}
The set $B$ is an open annulus of $\Sigma$.
\end{lemma}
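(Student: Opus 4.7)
The plan is to exhibit $B$ as homeomorphic to the open annulus $\hat B = \tilde B/T$ via the restriction of the covering map $\hat\pi$. The set $\tilde B$ is an open $\tilde{\F}$-saturated plane in $\tilde\Sigma$ invariant under the free action of $\langle T\rangle$, hence $\hat B$ is an open annulus in $\hat\Sigma$ containing the essential simple loop $\hat\Gamma_* = \pi(\tilde\gamma_*)$. The restricted map $\hat\pi|_{\hat B}\colon \hat B\to B$ is continuous, surjective (since $B=\tilde\pi(\tilde B)=\hat\pi(\hat B)$), and a local homeomorphism. I will prove it is also injective, making it a homeomorphism and giving the annular structure to $B$.

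To prove injectivity, I assume for contradiction that two distinct points $\hat z_1\neq \hat z_2$ of $\hat B$ have the same image, and lift them to $\tilde z_1,\tilde z_2\in\tilde B$ with $\tilde z_2=R\tilde z_1$, where $R\in\mathcal{G}\setminus\langle T\rangle$. Since $\Gamma_*$ is a simple loop in $\Sigma$, any two of its lifts to the plane $\tilde\Sigma$ are either equal (which happens precisely when they differ by an element of $\langle T\rangle$) or disjoint; thus $\tilde\gamma_*$ and $R^{-1}\tilde\gamma_*$ are disjoint transverse lines. On the other hand, $\tilde z_1\in\tilde B$ gives $\tilde\phi_{\tilde z_1}\cap\tilde\gamma_*\neq\emptyset$, and $R\tilde z_1\in\tilde B$ gives $R\tilde\phi_{\tilde z_1}\cap\tilde\gamma_*\neq\emptyset$, equivalently $\tilde\phi_{\tilde z_1}\cap R^{-1}\tilde\gamma_*\neq\emptyset$. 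Hence $\tilde\phi_{\tilde z_1}$ is a common leaf of $\tilde B$ and $R^{-1}\tilde B$.

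The main obstacle, and where the real work will lie, is to strengthen this single shared leaf into a \emph{full-period} transverse equivalence: finding $a\in\R$ such that $\tilde\gamma_*|_{[a,a+1]}$ is equivalent to a subpath of $R^{-1}\tilde\gamma_*$. Once that is obtained, Lemma~\ref{LemEquivSubpath} produces an honest intersection $\tilde\gamma_*|_{[a,a+1)}\cap R^{-1}\tilde\gamma_*\neq\emptyset$, and the remark following that lemma (applied to the simple loop $\Gamma_*$) forces $R^{-1}\in\langle T\rangle$, contradicting our standing assumption.

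To produce the full-period equivalence, I will analyse the open set $J=\{s\in\R\,\vert\, \tilde\phi_{\tilde\gamma_*(s)}\cap R^{-1}\tilde\gamma_*\neq\emptyset\}$, nonempty by the previous paragraph. Every leaf of $\tilde B$ meets $\tilde\gamma_*$ in exactly one point (because $\Gamma_*$ is simple), and meets the transverse line $R^{-1}\tilde\gamma_*$ in at most one point (a transverse path in the simply connected $\tilde\Sigma$ cannot cross a leaf twice), yielding a monotone continuous correspondence $\eta\colon J\to \R$ that matches the leaves traversed by $\tilde\gamma_*|_J$ with those traversed by a subinterval of $R^{-1}\tilde\gamma_*|_{\eta(J)}$. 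The hard point is to show that $J$ must contain an interval of length at least $1$: otherwise, as $s$ approaches a boundary of $J$ from within, $\eta(s)$ must escape to $\pm\infty$ on $R^{-1}\tilde\gamma_*$, and combining this with the $T$-periodicity of $\tilde\gamma_*$ one finds translates $T^n R^{-1}\tilde\gamma_*$ accumulating on a fixed compact piece of $\tilde\gamma_*|_{[a,a+1]}$, contradicting the properness of the $\mathcal{G}$-action on compacts. This closes the contradiction and yields the desired injectivity of $\hat\pi|_{\hat B}$, whence $B\cong \hat B$ is an open annulus of $\Sigma$.
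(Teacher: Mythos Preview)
Your reduction to injectivity of $\hat\pi|_{\hat B}$ and the setup with the leaf-correspondence $\eta:J\to K$ (where $J=\{s:\tilde\gamma_*(s)\in R^{-1}\tilde B\}$ and $K=\{t:R^{-1}\tilde\gamma_*(t)\in\tilde B\}$) are fine, but the crucial claim that ``as $s$ approaches a boundary of $J$ from within, $\eta(s)$ must escape to $\pm\infty$'' is unjustified and, in fact, false in general. Nothing prevents both $J$ and $K$ from being bounded intervals: this is exactly what happens when $R^{-1}\tilde\gamma_*$ \emph{visits} $\tilde B$ (enters and leaves on the same side of $\partial\tilde B$). In that situation $\eta$ is a homeomorphism between two bounded intervals, the translates $T^nR^{-1}\tilde\gamma_*$ do not accumulate on any fixed compact piece of $\tilde\gamma_*$, and no properness contradiction arises. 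More to the point, the statement ``$B$ is an annulus'' is \emph{not} a general fact about simple transverse loops; it genuinely requires the accumulation hypothesis of the ambient Proposition~\ref{prop: nontransitivity3}, which your argument never invokes.

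The paper's proof supplies the missing ingredient via Lemma~\ref{lem:NotLeft}, proved just before and using the accumulated path $\tilde\gamma_2$: every leaf $\tilde\phi\subset\partial\tilde B^L$ satisfies $\tilde B\subset R(\tilde\phi)$, so no transverse path can enter $\tilde B$ from the left. The argument then runs the case analysis from Section~\ref{ss:strips} on $R\tilde\gamma_*$ (after arranging, by replacing $R$ with $R^{-1}$ if necessary, that $R\tilde\gamma_*\subset L(\tilde\gamma_*)$): crossing is excluded because $R\tilde\gamma_*\cap\tilde\gamma_*=\emptyset$; accumulation and equivalence at $\pm\infty$ are excluded by Lemma~\ref{LemEquivSubpath} together with recurrence of $\gamma_*$; and visiting on the left is excluded by Lemma~\ref{lem:NotLeft}. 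Your approach could be salvaged by inserting Lemma~\ref{lem:NotLeft} to eliminate the visit case, but then it essentially collapses to the paper's argument.
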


\begin{proof}
Suppose it is not. Then there exists a deck transformation $R\notin\langle T\rangle$ of $\tilde\Sigma$ such that $R\tilde B\cap \tilde B\neq \emptyset$.  As $\tilde B$ is the set of leaves met by $\tilde\gamma_*$, it implies the existence of $t\in\R$ such that $R\tilde\gamma_*(t) \in \tilde B$.
The line $\tilde\gamma_*$ lifts the simple loop $\Gamma_*$  and so we have $R\tilde\gamma_*\cap \tilde\gamma_*=\emptyset$. Moreover, there is at least one leaf of $\tilde{\F}$ that is met both by $\tilde\gamma_*$ and $R\tilde\gamma_*$. Consequently, one of the following inclusions $\overline{L(R\tilde\gamma_*)}\subset L(\tilde\gamma_*)$, $\overline{L(\tilde\gamma_*)}\subset L(R\tilde\gamma_*)$ holds. Replacing $R$ by $R{}^{-1}$ if necessary, one can suppose that the first inclusion holds, which implies that $R\tilde\gamma_*\subset L(\tilde\gamma_*)$.

Note that $R\tilde\gamma_*$ cannot accumulate on $\tilde\gamma_*$ (neither positively nor negatively) because the natural lift $\gamma_*$ of $\Gamma_*$ is recurrent and so, by Lemma~\ref{LemEquivSubpath}, cannot accumulate on itself.  Moreover it cannot be equivalent to $\tilde\gamma_*$ neither at $+\infty$ nor at $-\infty$ (by using Lemma~\ref{LemEquivSubpath}). It cannot cross $\tilde B$ because $R\tilde\gamma_*\cap \tilde\gamma_*=\emptyset$.  It remains to prove that it cannot visit $\tilde B$. 

Using the fact  that $R\tilde\gamma_*\subset L(\tilde\gamma_*)$, the line $R\tilde\gamma_*$ must visit $\tilde B$ by the left if it visits $\tilde B$. This contradicts Lemma~\ref{lem:NotLeft}: no transverse trajectory enters in $\tilde B$ by the left side.
\end{proof}

To prove Proposition \ref{prop: nontransitivity3}, it remains to prove that $\gamma_1$ is entirely contained in $B$ (which will imply that  $\tilde\gamma_1$ is entirely contained in $\tilde B$). But this is implied by the facts that $\gamma_1\vert_{[a_1,+\infty)}$ is contained in $B$ and that $\gamma_1$ is recurrent. This finishes the proof of Proposition~\ref{prop: nontransitivity3}.
\end{proof}

The following results (and others related to the accumulation property) were already stated by Lellouch in \cite[Section 2.1.1]{Lel}. Using the precise description given here, we get them as a trivial corollary.

\begin{corollary}\label{coro:recurrence}
Suppose that $\gamma_1:\R\to \Sigma$ is a positively recurrent transverse path that accumulates positively on a transverse path $\gamma_2:\R\to \Sigma$. Then there is no positively or negatively recurrent transverse path  $\gamma_0:\R\to \Sigma$ that accumulates positively or negatively on $\gamma_1$. In particular a positively recurrent transverse path does not accumulate on itself. Also, the accumulated leaf $\phi_{\gamma_2(b_2)}$ is not met by $\gamma_1$. 
\end{corollary}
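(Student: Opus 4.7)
The plan is to extract everything from Proposition~\ref{prop: nontransitivity3} applied to the pair $(\gamma_1,\gamma_2)$. This yields a simple transverse loop $\Gamma_*$, an open annulus $B\subset\Sigma$ with lift $\tilde B\subset\tilde\Sigma$, and the primitive deck transformation $T\in\G$ preserving $\tilde B$; for appropriate lifts one has $\tilde\gamma_1\subset\tilde B$, the path $\tilde\gamma_1$ is equivalent to the natural lift $\tilde\gamma_*$ of $\Gamma_*$, and $\tilde\phi_{\tilde\gamma_2(b_2)}\subset\partial\tilde B$. All three assertions of the corollary will be derived from this rigid picture.

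For the claim that $\phi_{\gamma_2(b_2)}$ is not met by $\gamma_1$, I would observe that the leaves met by $\tilde\gamma_1$ are exactly the leaves of $\tilde B$, so it suffices to show that no $\G$-translate $R\tilde\phi_{\tilde\gamma_2(b_2)}$ is a leaf of $\tilde B$. If $R\in\langle T\rangle$ then $R$ preserves $\partial\tilde B$ setwise and the translate remains on $\partial\tilde B$. If $R\notin\langle T\rangle$, Lemma~\ref{Lem:ProjAnnulus} gives $R\tilde B\cap\tilde B=\emptyset$; since $\tilde\phi_{\tilde\gamma_2(b_2)}$ lies in the closure of $\tilde B$ and $\tilde B$ is open, any point of $R\tilde\phi_{\tilde\gamma_2(b_2)}$ lying in $\tilde B$ would force $R\tilde B\cap\tilde B\neq\emptyset$, a contradiction.

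For the main assertion, suppose for contradiction that $\gamma_0$ is positively recurrent and accumulates positively on $\gamma_1$ (the three remaining sign combinations follow from symmetric arguments after reversing the orientation of $\F$ or of the parameter of $\gamma_0$). Applying Proposition~\ref{prop: nontransitivity3} to $(\gamma_0,\gamma_1)$ produces a simple loop $\Gamma^0_*$, an annulus $B^0$, a strip $\tilde B^0$, and a non-trivial deck transformation $T^0\in\G$ preserving $\tilde B^0$, together with a lift $\tilde\gamma_0\subset\tilde B^0$ equivalent to the natural lift $\tilde\gamma^0_*$ of $\Gamma^0_*$. Using the equivalence $\tilde\gamma_0|_{[a_0,+\infty)}\equiv\tilde\gamma_1|_{[a'_1,b'_1)}$ given by the accumulation (with $\tilde\gamma_1$ chosen inside $\tilde B$), the set $L$ of leaves met by $\tilde\gamma_0|_{[a_0,+\infty)}$ identifies, in the $\tilde\gamma_*$-parameterization of leaves of $\tilde B$, with the bounded half-open interval $[\psi(a'_1),\psi(b'_1))$.

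The final step is to apply $T^0$ to this accumulation. On one hand, $\tilde\gamma_0\equiv\tilde\gamma^0_*$ together with $T^0\tilde\gamma^0_*(s)=\tilde\gamma^0_*(s+1)$ show that $T^0L$ is the set of leaves met by $\tilde\gamma^0_*|_{[s_0+1,+\infty)}$, which is a strict subset of $L$. On the other hand, $T^0L$ is also the set of leaves met by $T^0\tilde\gamma_1|_{[a'_1,b'_1)}$. If $T^0\notin\langle T\rangle$, Lemma~\ref{Lem:ProjAnnulus} gives $T^0\tilde B\cap\tilde B=\emptyset$, and hence $T^0L$ consists of leaves of $T^0\tilde B$ disjoint from those of $\tilde B$, contradicting $T^0L\subset L\subset\tilde B$. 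Otherwise $T^0=T^k$ for some $k\neq 0$, and $T^0L$ is the translate of $L$ by $k$ in the $\tilde B$-parameterization; but a bounded interval cannot strictly contain its own non-trivial translate, giving the desired contradiction. The self-accumulation claim then follows at once by setting $\gamma_0=\gamma_1$ in the main assertion. The principal obstacle is the bookkeeping around the two parameterizations of the set $L$---the bounded one furnished by $\tilde B$ and the half-infinite one furnished by $\tilde B^0$---and combining them via the $T^0$-action and Lemma~\ref{Lem:ProjAnnulus} to extract a genuine contradiction.
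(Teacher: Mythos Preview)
Your treatment of the representative case ($\gamma_0$ positively recurrent and accumulating positively on $\gamma_1$) is correct and is a careful unpacking of what the paper compresses into one sentence about the local injectivity of $t\mapsto\phi_{\gamma_1(t)}$. Identifying $L$ as simultaneously a bounded interval in the $\tilde\gamma_*$-parameterization and a half-line in the $\tilde\gamma^0_*$-parameterization, and then playing the $T^0$-shift against the annulus property of $B$, is exactly the right mechanism. Your proof of the last assertion is also correct, though the paper's version is shorter: once $\gamma_1\subset B$ and $\phi_{\gamma_2(b_2)}$ lies in the frontier of $B$, disjointness is immediate downstairs without passing through translates.

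There is, however, a genuine gap in your symmetry reduction. The only symmetry available---reversing the orientation of $\F$ together with all paths---exchanges positive with negative recurrence \emph{and} positive with negative accumulation simultaneously. It therefore reduces (negatively recurrent, negative accumulation) to your treated case, and it swaps the two mixed cases (positively recurrent, negative accumulation) and (negatively recurrent, positive accumulation) with one another, but it does \emph{not} reduce either mixed case to the one you prove. In those mixed cases you cannot invoke Proposition~\ref{prop: nontransitivity3} for the pair $(\gamma_0,\gamma_1)$: neither the stated version (positive recurrence plus positive accumulation) nor its time-reversal has both hypotheses satisfied, so your construction of $T^0$ is unavailable. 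The paper's own proof is no more explicit here, and in fact the later applications only use the self-accumulation consequence for bi-recurrent paths, which does follow from the two ``diagonal'' cases you establish; but as stated your symmetry claim does not cover the full corollary.
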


\begin{proof}
To prove the first point, it suffices to note that by Proposition~\ref{prop: nontransitivity3}, the function $t\mapsto \phi_{\gamma_1(t)}$ is locally injective. The last point comes from the fact that $\gamma_1$ is contained in  $B$ while $\phi_{\gamma_2(b_2)}$ is contained in the frontier of $B$.
\end{proof}

\section{Forcing theory}\label{s.forcingtheory}

\subsection{Maximal isotopies and transverse foliations}\label{ss.maximal}

Let $\Sigma$ be an oriented boundaryless surface, not necessarily closed, not necessarily connected and $f$ a homeomorphism isotopic to the identity.  Recall that if $I=(f_t)_{t\in[0,1]}$ is an identity isotopy of $f$, the trajectory $I(z)$ of a point $z\in \Sigma$ is the path $t\mapsto f_t(z)$ defined on $[0,1]$. We can define the {\it whole trajectory} of $z$ as being the path $$ I^{\Z}(z)=\prod_{k\in\Z} I(f^k(z))$$ constructed by concatenation. More precisely,  on every interval $[k,k+1]$, $k\in\Z$, it is defined by the formula:
$$ I^{\Z}(z): t\mapsto f_{t-k}(f^k(z)).$$

We define the {\it fixed point set} and the {\it domain} of $I$ as follows:
$$\mathrm{fix}(I)=\bigcap_{t\in[0,1]} \mathrm{fix}(f_t)\, ,\enskip \mathrm{dom}(I)=\Sigma\setminus \mathrm{fix}(I).$$
Denote $\mathcal I$ the set of identity isotopies of $f$. We have a preorder on $\mathcal I$ defined as follows: say that $I\preceq I'$ if
\begin{itemize}
\item $\mathrm{fix}(I)\subset\mathrm{fix}(I')$;
\item $I'$ is homotopic to $I$ relative to $\mathrm{fix}(I)$.
\end{itemize}

Let us state two  important results. The first one is due to B\'eguin-Crovisier-Le Roux \cite{BeCLer} (see also \cite{J} for a weaker version). The second can be found in \cite{Lec1}.

\begin{theorem}\label{th:BCL}
For every $I\in\mathcal I$, there exists $I'\in\mathcal I$ such that $I\preceq I'$ and such that $I'$ is maximal for the preorder.
\end{theorem}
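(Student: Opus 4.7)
The plan is to apply Zorn's lemma to the set $\mathcal{I}_{\succeq I}=\{J\in\mathcal{I}:I\preceq J\}$ equipped with the preorder $\preceq$. Existence of a maximal element above $I$ then reduces to verifying that every totally ordered chain in $\mathcal{I}_{\succeq I}$ admits an upper bound in $\mathcal{I}$.

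Let $(I_\alpha)_{\alpha\in A}$ be such a chain. The fixed-point sets $(\mathrm{fix}(I_\alpha))_{\alpha\in A}$ form a totally ordered family of closed subsets of $\Sigma$ under inclusion. Since $\Sigma$ is a surface and hence second countable, this family admits a countable cofinal subfamily, so we may assume the chain is a sequence $(I_n)_{n\ge 0}$ with $I_n\preceq I_{n+1}$ for every $n$. The aim is to build an upper bound $I^{*}\in\mathcal{I}$ whose fixed-point set contains the (a priori non-closed) union $X_\infty=\bigcup_n\mathrm{fix}(I_n)$ and which satisfies $I^{*}\succeq I_n$ for each $n$.

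The construction proceeds by interpolation. Since $I_{n+1}$ is homotopic to $I_n$ relative to $\mathrm{fix}(I_n)$, one produces a homotopy of isotopies from $I_n$ to $I_{n+1}$ that fixes $\mathrm{fix}(I_n)$ throughout. Performing the $n$-th interpolation on a shrinking time window and concatenating yields a candidate $I^{*}$ defined on $\Sigma\times[0,1)$. The main obstacle is twofold: first, to extend $I^{*}$ continuously at $t=1$ (so that its endpoint is $f$, which uses that every $I_n$ has $f$ as its endpoint and that the homotopies can be chosen with uniform $C^{0}$ diameter control on compact sets off $X_\infty$); and second, and more delicate, to enlarge $\mathrm{fix}(I^{*})$ from $X_\infty$ to $\overline{X_\infty}$. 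For a point $z\in\overline{X_\infty}\setminus X_\infty$, one must show that the whole trajectory $I^{*\,\mathbb{Z}}(z)$ is, in a local sense, contractible, so that a further homotopy supported in a small neighborhood of $z$ allows $z$ to be absorbed into the fixed set. This is exactly the local argument of B\'eguin--Crovisier--Le Roux, which uses the fact that the trajectory of an accumulation point is pinched between the (null-homotopic, relative to growing fixed sets) trajectories of the nearby fixed points of the $I_n$.

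Once $I^{*}$ is shown to be a well-defined identity isotopy of $f$ with $\mathrm{fix}(I^{*})\supseteq\overline{X_\infty}$, the relations $I_n\preceq I^{*}$ follow directly from the interpolation: by construction, $I^{*}$ is homotopic to $I_n$ relative to $\mathrm{fix}(I_n)$ via the tail of the interpolating family. Zorn's lemma then yields a maximal element in $\mathcal{I}_{\succeq I}$, which is the desired $I'$. The crux of the argument, and by far the hardest step, is the analysis at accumulation points of fixed sets in the previous paragraph; the Zorn step itself is purely formal.
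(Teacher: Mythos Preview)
The paper does not prove this theorem; it simply states it and attributes it to B\'eguin--Crovisier--Le~Roux \cite{BeCLer} (with a weaker form due to Jaulent \cite{J}). So there is no ``paper's own proof'' to compare against.

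Your sketch is a reasonable outline of the strategy in \cite{BeCLer}: reduce to countable chains via second countability, interpolate the isotopies, and then deal with accumulation points of the growing fixed sets. You correctly identify the last step as the heart of the matter. However, as written this is not a proof but a roadmap: the continuity of $I^{*}$ at $t=1$ requires a genuine choice of homotopies with controlled support (not just ``uniform $C^0$ diameter control'', which you assert but do not construct), and the absorption of $\overline{X_\infty}\setminus X_\infty$ into the fixed set is precisely the delicate local analysis that you defer entirely to \cite{BeCLer}. In particular, your statement that the trajectory of an accumulation point is ``pinched between null-homotopic trajectories'' and hence itself contractible hides all the work: one must show that this pinching is coherent enough to allow a \emph{simultaneous} homotopy fixing the whole closure, not just point by point. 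If your intent is to cite the result, do so; if your intent is to reprove it, the sketch as it stands does not supply the key argument.
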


\begin{remark*}
An isotopy $I$ is maximal if and only if, for every $z\in\mathrm{fix}(f)\setminus\mathrm{fix}(I)$, the loop $I(z)$ is not contractible in $\mathrm{dom}(I)$. Equivalently, if we lift the isotopy $I\vert_{\mathrm{dom}(I)}$ to an identity isotopy $\widetilde I=(\widetilde f_t)_{t\in[0,1]}$ on the universal covering space $\widetilde{\mathrm{dom}}(I)$ of $\mathrm{dom}(I)$, the maximality of $I$ means that $\widetilde f_1$ is fixed point free. Note that every connected component of $\widetilde{\mathrm{dom}}(I)$ must be a topological plane. 
\end{remark*}

\begin{theorem}\label{th.transversefoliation}
If $I\in\mathcal I$ is maximal, then there exists a topological oriented singular foliation $\F$ on $M$ such that 

\begin{itemize}
\item the singular set $\mathrm{sing}(\F)$ coincides with $\mathrm{fix}(I)$;
\item for every $z\in \mathrm{dom}(I)$, the trajectory $I(z)$ is homotopic in  $\mathrm{dom}(I)$, relative to the ends, to a transverse path $\gamma$ joining $z$ to $f(z)$.
\end{itemize}
\end{theorem}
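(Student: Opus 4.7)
The plan is to reduce to an equivariant version of Brouwer's plane translation theorem by working on the universal covering space of $\mathrm{dom}(I)$, and then transfer the resulting foliation back to $\Sigma$.

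First I would lift $I\vert_{\mathrm{dom}(I)}$ to an identity isotopy $\widetilde I=(\widetilde f_t)_{t\in[0,1]}$ on $\widetilde{\mathrm{dom}}(I)$, whose connected components are topological planes (as observed in the remark following Theorem \ref{th:BCL}). Let $\mathcal G$ denote the group of covering transformations of the projection $\widetilde{\mathrm{dom}}(I)\to\mathrm{dom}(I)$. By the characterization of maximality recalled in the same remark, the time-one map $\widetilde f_1$ is fixed-point free on every component of $\widetilde{\mathrm{dom}}(I)$; equivalently, it is an orientation-preserving fixed-point free homeomorphism of each plane, which commutes with the $\mathcal G$-action.

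The heart of the argument is the equivariant Brouwer foliation theorem established in \cite{Lec1}: given any orientation-preserving fixed-point free homeomorphism of the plane that commutes with a group of covering automorphisms, there exists a non-singular oriented topological foliation of the plane, invariant under the group, whose leaves are Brouwer lines of the homeomorphism (each leaf $\tilde\phi$ satisfying $\widetilde f_1(\tilde\phi)\subset L(\tilde\phi)$). I would apply this to each component of $\widetilde{\mathrm{dom}}(I)$, assembling the results into a $\mathcal G$-invariant non-singular oriented foliation $\widetilde\F$ of $\widetilde{\mathrm{dom}}(I)$.

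Next I would project $\widetilde\F$ down to a non-singular oriented foliation on $\mathrm{dom}(I)$ and extend it to a singular foliation $\F$ on $\Sigma$ by declaring each point of $\mathrm{fix}(I)$ to be a singularity, checking that the result is indeed a topological oriented singular foliation (the delicate point being the local trivialization near the singular set, where the maximality of $I$ prevents the creation of small contractible loops of fixed points). For the transversality property, I would fix $z\in\mathrm{dom}(I)$, choose a lift $\tilde z\in\widetilde{\mathrm{dom}}(I)$, and consider the lifted trajectory $\widetilde I(\tilde z)$ joining $\tilde z$ to $\widetilde f_1(\tilde z)$. Because every leaf is a Brouwer line, the endpoint $\widetilde f_1(\tilde z)$ lies in $L(\widetilde\phi_{\tilde z})$, and more generally $\widetilde f_1$ strictly pushes every leaf to its left side. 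A continuity-plus-compactness argument along the image of $\widetilde I(\tilde z)$ then produces a homotopy rel endpoints deforming $\widetilde I(\tilde z)$ into a path positively transverse to $\widetilde\F$; projecting downward yields the required transverse path homotopic to $I(z)$ in $\mathrm{dom}(I)$.

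The main obstacle is the equivariant Brouwer foliation theorem itself, which is a genuinely delicate construction: one builds a maximal family of disjoint ``free Brouwer bricks'' (or free topological disks) compatible with the group action via a Zorn-type argument, then assembles the boundaries of these bricks into Brouwer lines using the theory of translation arcs; making the construction equivariant while retaining the transversality of leaves to $\widetilde f_1$-orbits is the technical core of \cite{Lec1}.
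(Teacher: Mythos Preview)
The paper does not prove this theorem at all: it is stated as a known result and attributed to \cite{Lec1} (see the sentence ``The second can be found in \cite{Lec1}'' just before Theorem~\ref{th:BCL}). Your proposal is essentially a sketch of the strategy carried out in \cite{Lec1}, and in broad outline it is correct: lift to the universal cover of $\mathrm{dom}(I)$, use maximality to get a fixed-point-free lift commuting with the deck group, apply the equivariant foliated Brouwer theorem, and project down.

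A couple of small clarifications. First, there is no ``delicate point'' about local trivialization near the singular set: in this context a singular foliation simply means a non-singular foliation on the complement of the singular set, so declaring the points of $\mathrm{fix}(I)$ to be singularities is automatic. Second, your description of the internal machinery of \cite{Lec1} (``free Brouwer bricks'' assembled via a Zorn-type argument) is a loose paraphrase; the actual construction uses maximal free brick decompositions and an ordering argument on the bricks, but since the paper only cites the result this is immaterial here.
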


We will say that $\F$ is {\it transverse to $I$}. It can be lifted to a non singular foliation $\widetilde{\F}$ on $\widetilde{\mathrm{dom}}(I)$ which is transverse to $\widetilde I$. This last property is equivalent to saying that every leaf $\widetilde\phi$ of $\widetilde{\F}$ is a Brouwer line of the lift $\tilde f$ induced by $I$, as defined in Section \ref{ss:loopsandpaths}. The path $\gamma$ is uniquely defined up to equivalence: if  $\gamma_1$ and $\gamma_2$ are two such paths and if $z\in\widetilde{\mathrm{dom}}(I)$ lifts $z\in{\mathrm{dom}}(I)$, then the respective lifts  $\tilde\gamma_1$, $\tilde\gamma_2$ of  $\gamma_1$, $\gamma_2$ starting at $\tilde z$ join this point to $\tilde f(\tilde z)$ and consequently meet the same leaves of $\tilde {\F}$. We will write $\gamma=I_{\F}(z)$ and call this path the {\it transverse trajectory of $z$}. It is defined, up to equivalence, on [$0,1]$. For every $n\geq 1$, we will define by concatenation the path
$$I^n_{\F}(z)= I_{\F} (z) I_{\F}(f(z))\cdots I_{\F}(f^{n-1}(z)).$$
We can also define the {\it whole transverse trajectory} of $z$ as being the path 
$$ I^{\Z}_{\F}(z)=\prod_{k\in\Z} I_{\F}(f^k(z))$$ 
coinciding on $[k,k+1]$, $k\in\Z$,  with  $I_{\F}(f^k(z))$ after translation by $-k$. 
Similarly, we define
$${\tilde I}^n_{\tilde {\F}}(\tilde z)= \tilde I_{\tilde {\F}} (\tilde z) \tilde I_{\tilde{\F}}(\tilde f(\tilde z))\cdots \tilde I_{\tilde {\F}}(\tilde f^{n-1}(\tilde z))$$
and
$$\tilde I^{\Z}_{\tilde{\F}}(\tilde z)=\prod_{k\in\Z} \tilde I_{\tilde{\F}}(\tilde f^k(\tilde z)).$$

Recall that a {\it flow-box} of $\tilde{\F}$ is an open disk $\tilde U$ of $\widetilde{\mathrm{dom}}(I)$ such that the foliation $\tilde{\F}|_{\tilde U}$ is homeomorphic to the foliation of $\R^2$ by verticals. The following results,  easy to prove (see \cite{LecT1}), will be useful in the article. 

\begin{proposition}
\label {prop:stability} For every $\tilde z\in\widetilde{\mathrm{dom}}(I)$ and every pair of integers $k_1<k_2$ there exists a neighborhood $\tilde U$ of $\tilde z$ such that $\tilde I^{\Z}_{\tilde {\F}}(\tilde z)|_{ [k_1,k_2]}$ is a subpath (up to equivalence) of $\tilde I^{\Z}_{\tilde {\F}}(\tilde z')|_{ [k_1-1,k_2+1]}$.
\end{proposition}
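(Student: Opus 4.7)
The plan is to reduce the claim to a continuity argument, using the characterization from Section~\ref{ss.intersections} that a transverse path is determined up to equivalence by the leaves through its endpoints. Write $\tilde z_k=\tilde f^k(\tilde z)$ and $\tilde\phi_k=\tilde\phi_{\tilde z_k}$ for $k\in\{k_1-1,k_1,k_2,k_2+1\}$. Since every leaf of $\tilde\F$ is a Brouwer line of $\tilde f$, one has $\tilde f^{-1}(\overline{R(\tilde\phi_{k_1})})\subset R(\tilde\phi_{k_1})$; combined with $\tilde z_{k_1}\in\tilde\phi_{k_1}$ this yields $\tilde z_{k_1-1}\in R(\tilde\phi_{k_1})$, and symmetrically $\tilde z_{k_2+1}\in L(\tilde\phi_{k_2})$.

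The sets $R(\tilde\phi_{k_1})\setminus\tilde\phi_{k_1}$ and $L(\tilde\phi_{k_2})\setminus\tilde\phi_{k_2}$ being open, continuity of $\tilde f^{k_1-1}$ and $\tilde f^{k_2+1}$ produces a neighborhood $\tilde U$ of $\tilde z$ such that, for every $\tilde z'\in\tilde U$, both $\tilde f^{k_1-1}(\tilde z')\in R(\tilde\phi_{k_1})\setminus\tilde\phi_{k_1}$ and $\tilde f^{k_2+1}(\tilde z')\in L(\tilde\phi_{k_2})\setminus\tilde\phi_{k_2}$. Since distinct leaves of $\tilde\F$ are disjoint and each leaf is connected, the entire leaf $\tilde\phi_{\tilde f^{k_1-1}(\tilde z')}$ is then contained in $R(\tilde\phi_{k_1})$, and likewise $\tilde\phi_{\tilde f^{k_2+1}(\tilde z')}\subset L(\tilde\phi_{k_2})$. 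Using the coherence of sides for two disjoint oriented leaves of a nonsingular foliation on a topological plane (each component of $\widetilde{\mathrm{dom}}(I)$ is a plane, by the remark following Theorem~\ref{th:BCL}), these translate into
\[
R(\tilde\phi_{\tilde f^{k_1-1}(\tilde z')})\subset R(\tilde\phi_{k_1}),\qquad R(\tilde\phi_{k_2})\subset R(\tilde\phi_{\tilde f^{k_2+1}(\tilde z')}).
\]

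To conclude, the characterization from Section~\ref{ss.intersections} shows that $\tilde I^{\Z}_{\tilde\F}(\tilde z')|_{[k_1-1,k_2+1]}$ meets exactly the leaves $\tilde\phi$ with $R(\tilde\phi_{\tilde f^{k_1-1}(\tilde z')})\subset R(\tilde\phi)\subset R(\tilde\phi_{\tilde f^{k_2+1}(\tilde z')})$. By the inclusions above, this family contains every leaf met by $\tilde I^{\Z}_{\tilde\F}(\tilde z)|_{[k_1,k_2]}$, and in particular both $\tilde\phi_{k_1}$ and $\tilde\phi_{k_2}$. Because a transverse path in a plane meets each leaf at most once, there exist unique parameters $s_1<s_2$ in $[k_1-1,k_2+1]$ at which the trajectory of $\tilde z'$ meets $\tilde\phi_{k_1}$ and $\tilde\phi_{k_2}$, and its restriction to $[s_1,s_2]$ meets the same leaves as $\tilde I^{\Z}_{\tilde\F}(\tilde z)|_{[k_1,k_2]}$, hence is equivalent to it. I expect the only nontrivial point to be the coherence-of-sides claim (if $A\subset R(B)$ then $B\subset L(A)$ and $R(A)\subset R(B)$), which I would handle by producing a short transverse arc from $B$ to $A$ whose direction of crossing is forced by the assumption $A\subset R(B)$; this is the only step where the topology of $\widetilde{\mathrm{dom}}(I)$ as a disjoint union of planes really enters the argument.
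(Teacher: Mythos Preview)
The paper does not prove this proposition; it cites \cite{LecT1} and calls it ``easy to prove''. Your argument is the standard one and is essentially correct.

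One point deserves a small adjustment. Your proposed justification of the coherence-of-sides claim---producing a short transverse arc from $B$ to $A$---is not always available: two distinct leaves of $\tilde\F$ may be non-separated in the leaf space (which, as the paper notes after the definition of $\tilde\F$-transverse intersection, is generally non-Hausdorff), and then no transverse arc joins them. You can, however, bypass the abstract claim entirely. Since the trajectory of $\tilde z$ on $[k_1,k_2]$ already yields $R(\tilde\phi_{k_1})\subset R(\tilde\phi_{k_2})$, and hence $L(\tilde\phi_{k_2})\subset L(\tilde\phi_{k_1})$, the transverse trajectory $\tilde I^{\Z}_{\tilde\F}(\tilde z')|_{[k_1-1,k_2+1]}$ begins in $R(\tilde\phi_{k_1})\subset R(\tilde\phi_{k_2})$ and ends in $L(\tilde\phi_{k_2})\subset L(\tilde\phi_{k_1})$. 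By connectedness it therefore meets both $\tilde\phi_{k_1}$ and $\tilde\phi_{k_2}$, at parameters $s_1<s_2$ in $[k_1-1,k_2+1]$, and its restriction to $[s_1,s_2]$ is equivalent to $\tilde I^{\Z}_{\tilde\F}(\tilde z)|_{[k_1,k_2]}$ by the characterization in Section~\ref{ss.intersections}. This closes the only loose end in your write-up without appealing to any general statement about sides of leaves.
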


\begin{proposition}
\label{prop:flowbox} For every $\tilde z\in\widetilde{\mathrm{dom}}(I)$ and every neighborhood $\tilde V$ of $\tilde z$, there exists a flow-box $\tilde U\subset \tilde V$ containing $\tilde z$, such that for every $\tilde z'\in \tilde U$, the path $\tilde I^{\Z}_{\tilde {\F}}(\tilde z')$ intersects every leaf that meets $\tilde U$.
\end{proposition}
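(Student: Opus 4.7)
The plan is to combine the Brouwer line property of the leaves of $\tilde{\F}$ with the stability statement of Proposition~\ref{prop:stability}. First I would consider the central segment $\tilde I^{\Z}_{\tilde\F}(\tilde z)\vert_{[-1,1]}$ of the whole transverse trajectory of $\tilde z$. Since every leaf of $\tilde{\F}$ is a Brouwer line of $\tilde f$, one has $\tilde f^{-1}(\tilde z)\in R(\tilde\phi_{\tilde z})$ and $\tilde f(\tilde z)\in L(\tilde\phi_{\tilde z})$, so this transverse path crosses $\tilde\phi_{\tilde z}$ strictly at time $0$, going from right to left. By the description recalled in Section~\ref{ss.intersections}, it meets every leaf $\tilde\phi$ with $R(\tilde\phi_{\tilde f^{-1}(\tilde z)})\subset R(\tilde\phi)\subset R(\tilde\phi_{\tilde f(\tilde z)})$; in particular it meets $\tilde\phi_{\tilde z}$ and all leaves lying in an open neighborhood of $\tilde\phi_{\tilde z}$ in any local leaf-space around $\tilde z$.

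The second step is to pick an initial flow-box $\tilde U_0 \subset \tilde V$ around $\tilde z$ small enough that every leaf meeting $\tilde U_0$ is met by $\tilde I^{\Z}_{\tilde\F}(\tilde z)\vert_{[-1,1]}$. Identifying some flow-box neighborhood of $\tilde z$ with $(-1,1)^{2}$, with $\tilde z$ at the origin and leaves being the vertical lines, the central segment restricts to a continuous transverse arc passing through the origin from right to left; its horizontal projection therefore contains an open interval $(-\varepsilon,\varepsilon)$ around $0$, so every vertical leaf $\{x\}\times(-1,1)$ with $\vert x\vert<\varepsilon$ is met. Taking $\tilde U_0$ to be the subflow-box of horizontal extent $(-\varepsilon,\varepsilon)$ does the job.

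Finally I would invoke Proposition~\ref{prop:stability} with $(k_1,k_2)=(-1,1)$ to obtain a neighborhood $\tilde U_1$ of $\tilde z$ such that, for every $\tilde z'\in\tilde U_1$, the path $\tilde I^{\Z}_{\tilde\F}(\tilde z)\vert_{[-1,1]}$ is a subpath (up to equivalence) of $\tilde I^{\Z}_{\tilde\F}(\tilde z')\vert_{[-2,2]}$. Choosing $\tilde U$ to be any flow-box containing $\tilde z$ and contained in $\tilde U_0\cap\tilde U_1$, we deduce that for every $\tilde z'\in\tilde U$ the whole trajectory $\tilde I^{\Z}_{\tilde\F}(\tilde z')$ meets every leaf met by $\tilde I^{\Z}_{\tilde\F}(\tilde z)\vert_{[-1,1]}$, hence every leaf meeting $\tilde U_0\supset\tilde U$. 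I expect the only slightly delicate point to be the second step, namely matching the local leaf structure of a flow-box with the set of global leaves met by the central segment, but this is essentially the defining property of a flow-box and follows from the fact that a transverse path strictly crosses every leaf it meets.
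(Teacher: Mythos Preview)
Your argument is correct and is essentially the standard one; the paper does not give its own proof of this proposition but simply cites \cite{LecT1}, where the same idea (combining the local flow-box structure with the stability Proposition~\ref{prop:stability}) underlies the result. One small clarification: in your second step you do not actually need the full segment $\tilde I^{\Z}_{\tilde\F}(\tilde z)\vert_{[-1,1]}$ to stay in the flow-box; it suffices to observe that, since the path is positively transverse and passes through $\tilde z$ at time $0$, its first coordinate in the flow-box chart is continuous and strictly monotone on a neighborhood of $t=0$, hence its range contains an interval $(-\varepsilon,\varepsilon)$ around $0$, which is exactly what you use.
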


Remind that if $f$ is a homeomorphism of $\Sigma$, a point $z$ is {\it positively recurrent} if $z\in\omega(z)$ and  {\it negatively recurrent} if $z\in\alpha(z)$. In the case where $z\in \alpha(z)\cap\omega(z)$, we say that $z$ is {\it recurrent}. For instance, if $\mu$ is an invariant finite Borel measure on $S$, then $\mu$-almost every point is recurrent.
The following result is an immediate consequence of Proposition \ref{prop:stability}.

\begin{proposition}\label{prop:recurrence} 
If $z\in\mathrm{dom}(I)$ is positively recurrent, then $I^{\Z}_{\F}(z)$ is positively recurrent.  If $z$ is negatively recurrent, then $I^{\Z}_{\F}(z)$ is negatively recurrent. 
\end{proposition}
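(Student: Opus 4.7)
The strategy is to upgrade the topological recurrence of $z$ under $f$ to a recurrence statement for the whole transverse trajectory $I^{\Z}_{\F}(z)$, using the local stability given by Proposition~\ref{prop:stability}. I treat the positively recurrent case; the negatively recurrent case is symmetric, replacing $\omega(z)$ by $\alpha(z)$ and taking $n \to -\infty$.

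Fix $a < b$ and integers $k_1 \le a < b \le k_2$, and pick a lift $\tilde z$ of $z$ in $\widetilde{\mathrm{dom}}(I)$. Proposition~\ref{prop:stability} furnishes a neighborhood $\tilde U$ of $\tilde z$ such that for every $\tilde z' \in \tilde U$, the path $\tilde I^{\Z}_{\tilde\F}(\tilde z)|_{[k_1, k_2]}$ is equivalent to a subpath of $\tilde I^{\Z}_{\tilde\F}(\tilde z')|_{[k_1 - 1, k_2 + 1]}$. Shrinking $\tilde U$ if necessary, we may assume that $\tilde\pi|_{\tilde U}$ is a homeomorphism onto an open neighborhood $U$ of $z$ in $\mathrm{dom}(I)$. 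Since $\mathrm{fix}(I) \subset \mathrm{fix}(f)$, the set $\mathrm{dom}(I)$ is open and $f$-invariant in $\Sigma$, so $U$ is also a neighborhood of $z$ in $\Sigma$.

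Positive recurrence then provides arbitrarily large integers $n$ with $f^n(z) \in U$; pick such an $n$ satisfying in addition $k_1 - 1 + n > b$. Let $\tilde z' \in \tilde U$ be the preimage of $f^n(z)$ under $\tilde\pi|_{\tilde U}$ and apply Proposition~\ref{prop:stability} to this $\tilde z'$. Projecting the resulting equivalence to $\mathrm{dom}(I)$ via $\tilde\pi$, and noting that the projection of $\tilde I^{\Z}_{\tilde\F}(\tilde z')$ is $I^{\Z}_{\F}(f^n(z))$, which as a parametrized path equals $t \mapsto I^{\Z}_{\F}(z)(t + n)$, we conclude that $I^{\Z}_{\F}(z)|_{[k_1, k_2]}$ is equivalent to a subpath of $I^{\Z}_{\F}(z)|_{[k_1 - 1 + n, k_2 + 1 + n]}$.

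Finally, because every leaf is met at most once by a transverse path, equivalence of transverse paths induces a bijection between their parameter intervals via the same-leaf relation, and hence restricts to sub-intervals. Consequently the subpath $I^{\Z}_{\F}(z)|_{[a,b]}$ of $I^{\Z}_{\F}(z)|_{[k_1,k_2]}$ is equivalent to some $I^{\Z}_{\F}(z)|_{[c,d]}$ with $[c,d] \subset [k_1-1+n, k_2+1+n]$; by the choice of $n$ we have $c \ge k_1 - 1 + n > b$, giving the required recurrence. The only delicate point in this plan is the covering-space bookkeeping: one must be careful both that $\tilde U$ is chosen evenly covered (so that $f^n(z)$ has a well-defined lift in $\tilde U$) and that equivalence of lifts translates cleanly to equivalence in $\Sigma$ under the shift identification of $I^{\Z}_{\F}(f^n(z))$ with a time-shift of $I^{\Z}_{\F}(z)$.
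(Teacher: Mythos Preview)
Your argument is correct and is precisely the approach the paper has in mind: it states the result as ``an immediate consequence of Proposition~\ref{prop:stability}'' without further detail, and your proof is the natural unpacking of that sentence. One minor wording point: the claim that ``every leaf is met at most once by a transverse path'' holds in $\widetilde{\mathrm{dom}}(I)$, not in $\mathrm{dom}(I)$, so the restriction-to-subintervals step should be carried out upstairs before projecting---but your argument already does this implicitly, so nothing is missing.
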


Let us state now the key lemma of \cite{LecT1} (Proposition 20) that is the elementary brick of the forcing theory and which will be used later. 

\begin{lemma}\label{le:forcing} 
Suppose that there exist $\tilde z_1$, $\tilde z_2$ in $\widetilde{\mathrm{dom}}(I)$ and positive integers  $n_1$, $n_2$ such that
$\tilde I^{n_1}_{\tilde {\F}}(\tilde z_1)$ and $\tilde I^{n_2}_{\tilde {\F}}(\tilde z_2)$ have an $\tilde{\F}$-transverse intersection at 
$\tilde I^{n_1}_{\tilde {\F}}(\tilde z_1)(t_1)=\tilde I^{n_2}_{\tilde {\F}}(\tilde z_2)(t_2)$. Then there exists  $\tilde z_3\in \widetilde{\mathrm{dom}}(I)$ such that $\tilde I^{n_1+n_2}_{\tilde {\F}}(\tilde z_3)$ is equivalent to $ \tilde I^{n_1}_{\tilde {\F}}(\tilde z_1)|_{[0,t_1]}\tilde I^{n_2}_{\tilde {\F}}(\tilde z_2)|_{[t_2,n_2]}$.
\end{lemma}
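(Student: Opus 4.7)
The plan is to exploit the key fact noted just after Theorem~\ref{th.transversefoliation}: every leaf $\tilde\phi$ of $\tilde{\F}$ is a Brouwer line of $\tilde f$, so that $\tilde f(\overline{L(\tilde\phi)})\subset L(\tilde\phi)$ and any forward $\tilde f$-orbit which crosses a leaf stays on the left side thereafter.

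First I would unpack the definition of $\tilde{\F}$-transverse intersection to fix parameters $a_i<t_i<b_i$ for $i=1,2$ satisfying the three conditions of Subsection~\ref{ss.intersections}. Writing $\tilde\phi_0$ for the common leaf at the intersection point and $k_i=\lfloor t_i\rfloor$, the Brouwer line property of $\tilde\phi_0$ forces $\tilde f^{k_i}(\tilde z_i)\in\overline{R(\tilde\phi_0)}$ and $\tilde f^{k_i+1}(\tilde z_i)\in L(\tilde\phi_0)$, and the same argument applied to the bracketing leaves $\tilde\phi_{\tilde I^{n_i}_{\tilde{\F}}(\tilde z_i)(a_i)}$ and $\tilde\phi_{\tilde I^{n_i}_{\tilde{\F}}(\tilde z_i)(b_i)}$ constrains several other iterates of each $\tilde z_i$ to specific half-planes. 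This pins down, at the level of individual iterates rather than continuous parameters, the discrete crossing picture.

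The heart of the proof is a Brouwer-type topological splicing. The point $\tilde z_3$ I would construct lies on the leaf $\tilde\phi_{\tilde z_1}$ (forced by the requirement that the starting leaves of the two equivalent trajectories coincide), has its first $k_1+1$ iterates closely following those of $\tilde z_1$, makes the splice inside the step from iterate $k_1$ to $k_1+1$ by crossing $\tilde\phi_0$, and from iterate $k_1+1$ onward closely follows the iterates $\tilde f^{k_2+1+j}(\tilde z_2)$ for $0\le j\le n_2-k_2-1$. Using Proposition~\ref{prop:flowbox} I would choose a flow-box $\tilde V$ around $\tilde z_1$ whose points have transverse trajectories close to $\tilde I^{k_1+1}_{\tilde{\F}}(\tilde z_1)$ for the first $k_1+1$ iterations. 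By the third condition of the transverse intersection, every transverse path joining the bracketing leaves on the $\tilde z_1$-side must cross every transverse path joining the bracketing leaves on the $\tilde z_2$-side, so the image $\tilde f^{k_1+1}(\tilde V)$ must sweep across the intersection region and meet a prescribed small flow-box around $\tilde f^{k_2+1}(\tilde z_2)$. A planar intersection/connectedness argument, applied to the image of the arc $\tilde V\cap\tilde\phi_{\tilde z_1}$ under $\tilde f^{k_1+1}$, produces the desired point $\tilde z_3$, and Proposition~\ref{prop:stability} guarantees that the remaining $n_2-k_2-1$ iterates of $\tilde z_3$ then follow those of $\tilde z_2$ leaf by leaf.

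The main obstacle is the discrepancy between the iterate count $n_1+n_2$ on the left and the combined parameter length $t_1+(n_2-t_2)<n_1+n_2$ of the concatenation on the right: one must verify that the orbit of $\tilde z_3$ over $n_1+n_2$ iterations crosses exactly the set of leaves traced by the concatenation, with no extra excursion. Since the equivalence relation counts only the set of crossed leaves and is insensitive to parameter length, this reduces to ruling out stray crossings, which is exactly what the Brouwer-line property (applied to each of the leaves appearing in the transverse-intersection conditions) achieves, once the flow-boxes have been chosen small enough to confine all relevant iterates. Verifying this simultaneously for all the leaves involved, together with the bookkeeping of deck translates needed to land $\tilde z_3$ as a genuine point of $\widetilde{\dom}(I)$, is the delicate part of the argument.
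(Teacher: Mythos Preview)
The paper does not prove this lemma: immediately before the statement it reads ``Let us state now the key lemma of \cite{LecT1} (Proposition 20) that is the elementary brick of the forcing theory and which will be used later,'' and no proof is given. So there is nothing in the present paper to compare your proposal against; the result is imported from Le~Calvez--Tal.

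That said, your sketch is broadly aligned with the spirit of the original argument in \cite{LecT1}: the Brouwer-line property of leaves is indeed the engine, and the construction does proceed by a planar intersection argument that forces an orbit to follow $\tilde z_1$ up to the crossing leaf and $\tilde z_2$ thereafter. Two small points are off. First, your remark about ``bookkeeping of deck translates needed to land $\tilde z_3$ as a genuine point of $\widetilde{\dom}(I)$'' is misplaced: the entire statement and construction live in the universal cover and no covering automorphisms enter. Second, requiring $\tilde z_3\in\tilde\phi_{\tilde z_1}$ is stronger than what equivalence demands (only that $\tilde I^{n_1+n_2}_{\tilde{\F}}(\tilde z_3)$ meet the same leaves), and the actual proof does not pin $\tilde z_3$ to that leaf. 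The genuinely delicate step you flag --- the intersection/connectedness argument producing $\tilde z_3$ --- is exactly where the work lies in \cite{LecT1}, and your outline does not yet supply it; if you intend a self-contained proof you would need to make that step precise.
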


Let us give now the principal result of \cite{LecT2}. Here, $\mathcal G$ is the group of covering automorphisms of $\widetilde{\mathrm{dom}}(I)$ and $[T]_{\mathcal {FHL}}\in \mathcal {FHL}(S)$ is the free homotopy class (in $S$) of a loop $\Gamma\subset \mathrm{dom}(I)$ naturally defined by $T$ (see Paragraph~\ref{ss:horseshoe}).

\begin{theorem} \label{th:rotationalhorseshoe1}  
Suppose that there exists $\tilde z\in \widetilde{\mathrm{dom}}(I)$, $T\in \mathcal G\setminus\{\mathrm{Id}\}$ and $r\geq 1$ such that ${\tilde I}^r_{\tilde {\F}}(\tilde z)$ and $T{\tilde I}^r_{\tilde {\F}}(\tilde z)$ have an $\tilde{\F}$-transverse intersection at ${\tilde I}^r_{\tilde {\F}}(\tilde z)(a)=T({\tilde I}^r_{\tilde {\F}}(\tilde z))(a')$ where $a'<a$. Then  $f$ admits a rotational horseshoe of type $([T]_{\mathcal {FHL}}, r)$.
\end{theorem}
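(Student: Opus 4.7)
The approach is to iterate Lemma~\ref{le:forcing} to realize arbitrary combinatorial patterns of trajectories, then extract a Cantor-like invariant set by a compactness argument. The hypothesis $a'<a$ is the crucial feature that makes iteration possible: writing $\tilde\gamma=\tilde I^r_{\tilde\F}(\tilde z)$, the transverse intersection of $\tilde\gamma$ and $T\tilde\gamma$ at parameters $a$ on the first and $a'$ on the second means that when we concatenate $\tilde\gamma|_{[0,a]}$ with $T\tilde\gamma|_{[a',r]}$, the second factor still contains a copy of the original intersection pattern (between parameters $a'$ and $a$, which is a non-degenerate interval since $a'<a$). Hence the forcing construction can be applied again inside the tail of the newly obtained trajectory.

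I would formalize this by defining, for each finite word $w=(w_0,\dots,w_{n-1})\in\{0,1\}^n$, a point $\tilde z_w\in\widetilde{\dom}(I)$ and a deck transformation $T_w=T^{k(w)}$, where $k(w)=w_0+\dots+w_{n-1}$, such that $\tilde I^{rn}_{\tilde\F}(\tilde z_w)$ is equivalent to a concatenation of $n$ subpaths, the $i$-th subpath being a portion of $T^{k_i(w)}\tilde\gamma$ with $k_i(w)=w_0+\dots+w_{i-1}$. This is proved by induction on $n$ using Lemma~\ref{le:forcing}: at each step, the ``switch'' or ``no switch'' choice between $\tilde\gamma$ and $T\tilde\gamma$ is realized by forcing, thanks to the intersection produced above. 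In particular, $\tilde f^{rn}(\tilde z_w)$ lies on the leaf $T_w\tilde\phi_{\tilde f^{rn}(\tilde z)}$.

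Next I would pass to bi-infinite sequences by compactness. For each $s\in\{0,1\}^\Z$, extracting convergent subsequences from the projections to $S$ of $\tilde z_w$ for longer and longer finite sub-words of $s$ yields a point $z_s\in S$. Letting $Y$ denote the closure of $\{z_s:s\in\{0,1\}^\Z\}$ and $Z=\{0,1\}^\Z$, one gets a continuous factor map $Y\to Z$ that intertwines $f^r|_Y$ with the shift $\sigma$. A periodic sequence $s$ of period $q$ with $p$ symbols equal to $1$ in each period produces, via the Brouwer–Lefschetz type closing arguments available because every leaf of $\tilde\F$ is a Brouwer line of $\tilde f$, a genuine periodic point $z_s$ of period dividing $rq$ with $\tilde f^{rq}(\tilde z_s)=T^p(\tilde z_s)$. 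Consequently the loop defined by $I^{rq}(z_s)$ is freely homotopic in $\dom(I)$ to the loop associated to $T^p$, which lies in $([T]_{\mathcal{FHL}})^p$, proving that $Y$ is a rotational horseshoe of type $([T]_{\mathcal{FHL}},r)$.

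The main obstacle I anticipate is twofold. First, one must ensure that the factor map $Y\to Z$ has uniformly finite fibers, which requires a flow-box argument (Proposition~\ref{prop:flowbox}) to control how many distinct starting points $\tilde z_w$ are compatible with a given initial word; the transversality of the foliation keeps the ordering of leaves along $\tilde\gamma$ discrete and finite on any compact subpath. Second, and more delicately, one must promote the limit points coming from periodic sequences into actual periodic points of $f^r$ of the predicted period (up to a uniform factor): this is handled by replacing the direct limit with a fixed-point argument for $\tilde f^{rq}\circ T^{-p}$ acting on a suitable topological disk bounded by leaves, exploiting again the Brouwer line property. These are precisely the technical steps carried out in \cite{LecT2}.
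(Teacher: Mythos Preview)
Your proposal is essentially correct and sketches the argument of \cite{LecT2} that the paper invokes as a black box; the paper's own proof does not redo this construction but simply quotes the output of \cite{LecT2} (an $\hat f^r$-invariant set $\hat Y$ in the annular covering $\widehat{\mathrm{dom}}(I)=\widetilde{\mathrm{dom}}(I)/T$ carrying the shift structure and the periodic points $\hat z_{p/q}$) and then explains the projection step to $S$.

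The one structural difference worth noting is that both the paper and \cite{LecT2} build the horseshoe in the annular cover $\widehat{\mathrm{dom}}(I)$ rather than directly in $S$. This is not cosmetic: in the annulus the itinerary map $\hat Y\to\{1,2\}^{\Z}$ is the natural coding, and the projection $\hat\pi:\hat Y\to Y\subset S$ then plays the role of the finite extension $Z\to Y$ required by the definition of topological horseshoe, with the uniform bound on fibers coming immediately from compactness of $\hat Y$. In your sketch you build $Y\subset S$ directly and claim a factor map $Y\to\{0,1\}^{\Z}$; to match the definition in Paragraph~\ref{ss:horseshoe} you still need the intermediate space $Z$ with finite-to-one projection onto $Y$, and the cleanest choice for $Z$ is precisely $\hat Y$. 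Your flow-box argument for finite fibers is the right idea, but routing it through the annular cover is what makes it transparent.
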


\begin{proof} 
What is proved in \cite{LecT2} is the following, where $\widehat{\mathrm{dom}}(I)=\widetilde{\mathrm{dom}}(I)/T$ and $\hat f$ is the homeomorphism of $\widehat{\mathrm{dom}}(I)$ induced by $\tilde f$.

There exists an $\hat f^r$-invariant compact set $\hat Y$ such that
\begin{itemize}
\item $\hat f^r$ is an extension of the Bernouilli shift $\sigma :\{1, 2\}^{\Z}\to \{1, 2\}^{\Z}$;
\item the preimage of every $q$-periodic sequence of $\{1, 2\}^{\Z}$  by the factor map contains  at least one $q$-periodic point of $\hat f^r$;
\item for every  $p/q\in[0,1]\cap \Q$ written in an irreducible way, there exists $\hat z_{p/q}\in \hat Y$ such that $\tilde f^{rq}(\tilde z_{p/q})=T^p(\tilde z_{p/q})$ if $\tilde z_{p/q}\in \widetilde{\mathrm{dom}}(I)$ lifts $\hat z_{p/q}$.
\end{itemize}
The image $Y$ of $\hat Y$ by the covering projection $\hat\pi: \widehat{\mathrm{dom}}(I)\to\mathrm{dom}(I)$ is invariant by $f^r$. It is a topological horseshoe because $\hat\pi|_{ \hat Y}$ is a semi-conjugacy from $\tilde f^r{}|_{ \hat Y}$ to $f^r|_{ Y}$ and because every $z\in Y$ has finitely many lifts in $\hat Y$ (with an uniform bound $s$) because $\hat Y$ is compact. The loop of $S$ naturally defined by $I^{rq}(z_{p/q})$, where $z_{p/q}=\tilde\pi( \tilde z_{p/q})$, belongs to $[T]_{\FHL}^p$. Moreover, the $\hat f^r$-orbit of $\hat z_{p/q}$ has $q$ points because $p$ and $q$ are relatively prime. It projects onto the $f^r$-orbit of $z_{p/q}$, which has at least $q/s$ points. So, the period of $z_{p/q}$ (for $f$) is at least $q/s$.
\end{proof}

\begin{remark*} 
In particular, the theorem asserts the existence of a topological horseshoe, and so the positiveness of the topological entropy, in the case where there exists $z\in \mathrm{dom}(I)$ such that $I^{\Z}_{\F}(z)$ has an $\F$-transverse self-intersection. It was proved in  \cite{LecT1} that such a situation occurs in the case where there exist two positively (or negatively) recurrent points $z_1$, $z_2$ in $\mathrm{dom}(I)$ such that $I^{\Z}_{\F}(z_1)$ and $I^{\Z}_{\F}(z_2)$ have an $\F$-transverse intersection. For example this happens if $f$ preserves a Borel probability measure with total support and if there exist two points $z_1$, $z_2$ in $\mathrm{dom}(I)$ such that $I^{\Z}_{\F}(z_1)$ and $I^{\Z}_{\F}(z_2)$ have an $\F$-transverse intersection. Indeed, by Proposition \ref{prop:stability}, it is also the case for $I^{\Z}_{\F}(z'_1)$ and $I^{\Z}_{\F}(z'_2)$ if $z'_1$, $z'_2$ are close to $z_1$, $z_2$ respectively. But if  $f$ preserves a Borel probability measure $\lambda$ with total support, then $\lambda$-almost every point is recurrent and so, the set of recurrent points is dense.
\end{remark*}

What follows, which is stronger than what is said in the previous remark, is crucial  in \cite{Lel} and will also be fundamental in our study.

\begin{corollary} \label{th:rotationalhorseshoe2}  
Suppose that $\Sigma$ is a closed surface and that $\nu_1, \nu_2$ are ergodic invariant probability measures. If there exists $\tilde z_1\in\mathrm{dom}(I)\cap \operatorname{supp}(\nu_1)$ and $\tilde z_2\in\mathrm{dom}(I)\cap \operatorname{supp}(\nu_2)$ such that ${\tilde I}^{\Z}_{\tilde {\F}}(\tilde z_1)$ and ${\tilde I}^{\Z}_{\tilde {\F}}(\tilde z_2)$ intersect $\F$-transversally, then for every neighborhood $\mathcal U$ of $\mathrm{rot}_{f}(\nu_1)$ in $H_1(S,\R)$, there exists $T\in \mathcal G\setminus\{\mathrm{Id}\}$ and  $r\geq 1$ such $[T]/r\in\mathcal U$ and such that $f$ admits a rotational horseshoe of type $([T]_{\FHL}, r)$.
\end{corollary}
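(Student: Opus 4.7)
The plan is to verify the hypothesis of Theorem~\ref{th:rotationalhorseshoe1} for a cleverly constructed point $\tilde w$, together with a deck transformation $T_n$ chosen so that $[T_n]/r$ approximates $\mathrm{rot}_f(\nu_1)$. First I would reduce to the case where (the projection of) $\tilde z_1$ is recurrent and Birkhoff-generic for $\nu_1$: by the ergodic theorem applied to the $H_1(S,\R)$-valued function $\mathrm{rot}_f$, such points are dense in $\operatorname{supp}(\nu_1)$, and by Proposition~\ref{prop:stability} the $\tilde\F$-transverse intersection of $\tilde I^{\Z}_{\tilde\F}(\tilde z_1)$ with $\tilde I^{\Z}_{\tilde\F}(\tilde z_2)$ is robust enough under small perturbations of $\tilde z_1$ (transverse crossing being topologically stable) to survive this replacement. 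After shifting base-points along their orbits, I assume $\tilde p = \tilde I^{\Z}_{\tilde\F}(\tilde z_1)(s_1) = \tilde I^{\Z}_{\tilde\F}(\tilde z_2)(s_2)$ with $s_1, s_2 \in (0,1)$.

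Second, Birkhoff-genericity combined with recurrence yields arbitrarily large integers $n$ and deck transformations $T_n \in \mathcal G$ such that $\tilde f^n(\tilde z_1)$ is close to $T_n\tilde z_1$ and $[T_n]/n \to \mathrm{rot}_f(\nu_1)$ in $H_1(S,\R)$. Applying Proposition~\ref{prop:stability} to the close pair $\tilde z_1$ and $T_n^{-1}\tilde f^n(\tilde z_1)$, then $T_n$-translating, shows that $\tilde I^{\Z}_{\tilde\F}(\tilde z_1)|_{[n-1, n+2]}$ is equivalent to a subpath of $T_n\tilde I^{\Z}_{\tilde\F}(\tilde z_1)|_{[-2,3]}$. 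The $T_n$-translate of the original transverse intersection is a transverse intersection of $T_n\tilde I^{\Z}_{\tilde\F}(\tilde z_1)$ with $T_n\tilde I^{\Z}_{\tilde\F}(\tilde z_2)$ at $T_n\tilde p$; transporting it through this equivalence produces a \emph{second} $\tilde\F$-transverse intersection, between $\tilde I^{\Z}_{\tilde\F}(\tilde z_1)$ (at parameter near $n+s_1$) and $T_n\tilde I^{\Z}_{\tilde\F}(\tilde z_2)$ (at parameter near $s_2$), realised on the leaf $T_n\tilde\phi_{\tilde p}$.

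Third, I would apply Lemma~\ref{le:forcing} twice to splice along these two transverse intersections. A first splice at $\tilde p$, with $\tilde I^1_{\tilde\F}(\tilde z_2)$ and $\tilde I^{n+1}_{\tilde\F}(\tilde z_1)$, produces $\tilde z_3$ with $\tilde I^{n+2}_{\tilde\F}(\tilde z_3)$ equivalent to $\tilde I^1_{\tilde\F}(\tilde z_2)|_{[0,s_2]}\cdot \tilde I^{n+1}_{\tilde\F}(\tilde z_1)|_{[s_1,n+1]}$, and this spliced trajectory still carries the second transverse intersection (preserved by equivalence). A second splice, at that intersection and with $\tilde I^1_{\tilde\F}(T_n\tilde z_2)$, produces $\tilde w$ whose trajectory $\tilde I^{n+3}_{\tilde\F}(\tilde w)$ follows $\tilde z_2$ briefly, then $\tilde z_1$ across one $T_n$-period, then $T_n\tilde z_2$. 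Comparing $\tilde I^{n+3}_{\tilde\F}(\tilde w)$ with $T_n\tilde I^{n+3}_{\tilde\F}(\tilde w)$: both cross the leaf $T_n\tilde\phi_{\tilde p}$, but $\tilde w$ does so at a late parameter $a\approx n+s_2$ transitioning from the $T_n\tilde z_1$- to the $T_n\tilde z_2$-direction, while $T_n\tilde w$ does so at an early parameter $a'\approx s_2$ transitioning the opposite way. The two paths thus have an $\tilde\F$-transverse intersection with $a>a'$. Theorem~\ref{th:rotationalhorseshoe1} applied with $(\tilde z,T,r)=(\tilde w,T_n,n+3)$ then produces a rotational horseshoe of type $([T_n]_{\mathcal{FHL}},n+3)$, and since $[T_n]/(n+3)\to \mathrm{rot}_f(\nu_1)$, this lies in $\mathcal U$ for $n$ large enough.

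The hard part will be the interplay between leaf-level equivalence of transverse trajectories (what Proposition~\ref{prop:stability} actually delivers) and the stricter notion of $\tilde\F$-transverse intersection (which requires a genuine coincidence of points, not merely of leaves). Each time a crossing is transported through an equivalence, in Step~2 and in each application of Lemma~\ref{le:forcing}, I must verify it is realised by an actual intersection point of the two transverse trajectories; this will rely on the openness of the transverse-crossing configuration under $C^0$-perturbation and on a careful choice of the generic lift in Step~1.
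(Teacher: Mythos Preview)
Your overall strategy---recurrence for a Birkhoff-generic point, transport of the transverse crossing via Proposition~\ref{prop:stability}, repeated use of Lemma~\ref{le:forcing}, then Theorem~\ref{th:rotationalhorseshoe1}---is sound, and the approximation $[T_n]/(n+3)\to\mathrm{rot}_f(\nu_1)$ is exactly what is needed. (One small point you do not mention: $T_n\neq\mathrm{Id}$ is automatic, since otherwise $\tilde I^{\Z}_{\tilde\F}(\tilde z_1)$ would meet the leaf $\tilde\phi_{\tilde z_1}$ twice.)

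Your construction is, however, genuinely different from the paper's. The paper replaces \emph{both} $z_1$ and $z_2$ by generic points, applies Lemma~\ref{le:forcing} \emph{once} to produce a two-piece trajectory (a long $\tilde z_1''$-arc followed by a long $\tilde z_2''$-arc), and then compares it with its translate by the product $T_{2,i'}T_{1,i}$. At the comparison leaf, one of the two translates is still entirely inside its $\tilde z_1''$-segment and the other entirely inside its $\tilde z_2''$-segment, so the $\tilde\F$-transverse intersection is inherited verbatim from the original one between $\tilde z_1''$ and $\tilde z_2''$---no further argument is needed. Your three-piece ``sandwich'' $\tilde z_2\!\to\!\tilde z_1\!\to\!T_n\tilde z_2$ is more economical (only $\nu_1$-genericity is used, so the statement you actually prove is slightly stronger), but it forces you to compare $\tilde w$ with $T_n\tilde w$ at a leaf where \emph{both} paths are at a splice point: one arriving like $T_n\tilde z_1$ and leaving like $T_n\tilde z_2$, the other doing the reverse. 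That the two ``swapped'' concatenations still have an $\tilde\F$-transverse intersection is true---conditions~(1) and~(2) in the definition are immediate from the original crossing, and the separating condition~(3) follows because both past-witness leaves lie in $R(T_n\tilde\phi_{\tilde p})$ while both future-witness leaves lie in $L(T_n\tilde\phi_{\tilde p})$, so the four half-planes $R(\tilde\phi_{a_1}),R(\tilde\phi_{a_2}),L(\tilde\phi_{b_1}),L(\tilde\phi_{b_2})$ are pairwise disjoint and the linking is symmetric under swapping the $b$-labels---but this is a genuine verification you will need to write out, and it is \emph{not} the issue you flagged as ``the hard part''. The equivalence-versus-coincidence concern you raise is, by contrast, routine: $\tilde\F$-transverse intersection depends only on the equivalence classes of the paths, and one may always realise the coincidence by sliding one representative along leaves.
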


Note that this corollary can be applied in the case where $\nu_1=\nu_2$ and some $\tilde z\in\mathrm{dom}(I)\cap \mathrm{supp}(\nu_1)$ is such that ${\tilde I}^{\Z}_{\tilde {\F}}(\tilde z)$ has an $\F$-transverse self-intersection.

\begin{proof} 
Let $j\in\{1,2\}$. One knows that $\nu_j$-almost every point $z_j'$ satisfies the following properties:
 
\begin{itemize}
\item $z_j'$ is recurrent;
\item its orbit is dense in $\mathrm{supp}(\nu_j)$;
\item if $\tilde z_j'\in \widetilde{\mathrm{dom}}(I)$ is a lift of $z_j'$, then there exists a sequence $(T_{j,i})_{i\geq 0}$ in $\mathcal G$ and a sequence $(n_{j,i})_{i\geq 0}$ in $\N\setminus\{0\}$ such that 
$$\lim_{i\to+\infty} n_{j,i}=+\infty\,,\quad \lim_{i\to+\infty} \frac{[T_{j,i}]}{n_{j,i}}=\mathrm{rot}_f(\nu_j)\,,\quad \lim_{i\to +\infty} T_{j,i}^{-1}f^{n_{j,i}}(\tilde z_j')=\tilde z_j'.$$
\end{itemize}
 
By Proposition \ref{prop:stability} and the hypothesis of the corollary we know that ${I}^{\Z}_{{\F}}(z_1')$ and ${I}^{\Z}_{{\F}}(z_2')$ intersect $\F$-transversally.
So there exists $r'\in\N\setminus\{0\}$, $s_1,s_2\in\Z$ and two lifts $\tilde z_1'$ and $\tilde z_2'$ of $z_1'$ and $z_2'$ such that ${\tilde I}^{r'}_{\tilde {\F}}(\tilde f^{s_1}(\tilde z_1'))$ and ${\tilde I}^{r'}_{\tilde {\F}}(\tilde f^{s_2}(\tilde z_2'))$ intersect $\F$-transversally. Denote $\tilde z_j'' = \tilde f^{s_j}(\tilde z_j')$. See Figure~\ref{Fig:rotationalhorseshoe2} for a description of the proof configuration.

By Proposition \ref{prop:stability}, if $i$ is large enough then, up to equivalence, ${\tilde I}^{r'}_{\tilde {\F}}(\tilde z''_j)$  is a subpath of $T_{j,i}^{-1}{\tilde I}^{r'+2}_{\tilde {\F}}(\tilde f^{n_{j,i}-1}(\tilde z''_j))$. So $T_{1,i}^{-1}{\tilde I}^{r'+2}_{\tilde {\F}}(\tilde f^{n_{1,i}-1}(\tilde z''_1))$ and ${\tilde I}^{r'}_{\tilde {\F}}(\tilde z''_2)$ have an $\tilde{\F}$-transverse intersection at $T_{1,i}^{-1}{\tilde I}^{r'+2}_{\tilde {\F}}(\tilde f^{n_{1,i}-1}(\tilde z''_1))(a) = {\tilde I}^{r'}_{\tilde {\F}}(\tilde z''_2)(b)$, 
as well as $T_{2,i}^{-1}{\tilde I}^{r'+2}_{\tilde {\F}}(\tilde f^{n_{2,i}-1}(\tilde z''_2))$ and ${\tilde I}^{r'}_{\tilde {\F}}(\tilde z''_1)$ have an $\tilde{\F}$-transverse intersection at $T_{2,i}^{-1}{\tilde I}^{r'+2}_{\tilde {\F}}(\tilde f^{n_{2,i}-1}(\tilde z''_2))(c) = {\tilde I}^{r'}_{\tilde {\F}}(\tilde z''_1)(d)$ (we omit here the dependences on $i,i'$ for briefness of notations).

\begin{figure}
\begin{center}

\tikzset{every picture/.style={line width=1.2pt}} 

\begin{tikzpicture}[x=0.75pt,y=0.75pt,yscale=-1.3,xscale=1.3]
\clip  (95,82.6) rectangle (435,291) ;

\draw [color={rgb, 255:red, 208; green, 2; blue, 27 }  ,draw opacity=1 ]   (133.72,231.83) .. controls (177.9,223.46) and (280.37,241.89) .. (344.97,238.79) .. controls (338.17,189.89) and (347.87,135.19) .. (342.27,102.89) ;
\draw [color={rgb, 255:red, 7; green, 90; blue, 190 }  ,draw opacity=1 ]   (133.6,233.2) .. controls (201.4,223.4) and (315,253.8) .. (403.6,233.2) ;
\draw [shift={(272.35,238.13)}, rotate = 184.21] [fill={rgb, 255:red, 7; green, 90; blue, 190 }  ,fill opacity=1 ][line width=0.08]  [draw opacity=0] (8.04,-3.86) -- (0,0) -- (8.04,3.86) -- (5.34,0) -- cycle    ;
\draw [color={rgb, 255:red, 245; green, 166; blue, 35 }  ,draw opacity=1 ]   (137.57,182.14) .. controls (160.14,213.86) and (104.71,257.57) .. (134.31,269.06) ;
\draw [shift={(134.41,230.72)}, rotate = 293.42] [fill={rgb, 255:red, 245; green, 166; blue, 35 }  ,fill opacity=1 ][line width=0.08]  [draw opacity=0] (8.04,-3.86) -- (0,0) -- (8.04,3.86) -- (5.34,0) -- cycle    ;
\draw [color={rgb, 255:red, 245; green, 166; blue, 35 }  ,draw opacity=1 ]   (320.71,189) .. controls (329,217.57) and (351.57,254.43) .. (370.14,264.14) ;
\draw [shift={(341.57,233.64)}, rotate = 238.22] [fill={rgb, 255:red, 245; green, 166; blue, 35 }  ,fill opacity=1 ][line width=0.08]  [draw opacity=0] (8.04,-3.86) -- (0,0) -- (8.04,3.86) -- (5.34,0) -- cycle    ;
\draw [color={rgb, 255:red, 245; green, 166; blue, 35 }  ,draw opacity=1 ]   (299.74,92.49) .. controls (327.74,85.06) and (377,133.29) .. (412.43,111.57) ;
\draw [shift={(358.78,109.6)}, rotate = 202.01] [fill={rgb, 255:red, 245; green, 166; blue, 35 }  ,fill opacity=1 ][line width=0.08]  [draw opacity=0] (8.04,-3.86) -- (0,0) -- (8.04,3.86) -- (5.34,0) -- cycle    ;
\draw [color={rgb, 255:red, 96; green, 117; blue, 5 }  ,draw opacity=1 ]   (138.31,219.34) .. controls (319.17,244.49) and (346.89,241.63) .. (330.31,98.49) ;
\draw [shift={(293.22,227.89)}, rotate = 167.58] [fill={rgb, 255:red, 96; green, 117; blue, 5 }  ,fill opacity=1 ][line width=0.08]  [draw opacity=0] (8.04,-3.86) -- (0,0) -- (8.04,3.86) -- (5.34,0) -- cycle    ;
\draw  [draw opacity=0][fill={rgb, 255:red, 0; green, 0; blue, 0 }  ,fill opacity=1 ] (130.96,233.2) .. controls (130.96,231.74) and (132.14,230.56) .. (133.6,230.56) .. controls (135.06,230.56) and (136.24,231.74) .. (136.24,233.2) .. controls (136.24,234.66) and (135.06,235.84) .. (133.6,235.84) .. controls (132.14,235.84) and (130.96,234.66) .. (130.96,233.2) -- cycle ;
\draw [color={rgb, 255:red, 1; green, 167; blue, 179 }  ,draw opacity=1 ]   (353.6,263.2) .. controls (335,231.8) and (350.2,137.8) .. (343.6,103.2) ;
\draw [shift={(344.18,180.59)}, rotate = 91.22] [fill={rgb, 255:red, 1; green, 167; blue, 179 }  ,fill opacity=1 ][line width=0.08]  [draw opacity=0] (8.04,-3.86) -- (0,0) -- (8.04,3.86) -- (5.34,0) -- cycle    ;
\draw  [draw opacity=0][fill={rgb, 255:red, 0; green, 0; blue, 0 }  ,fill opacity=1 ] (350.8,262.9) .. controls (350.8,261.44) and (351.98,260.26) .. (353.44,260.26) .. controls (354.9,260.26) and (356.09,261.44) .. (356.09,262.9) .. controls (356.09,264.36) and (354.9,265.54) .. (353.44,265.54) .. controls (351.98,265.54) and (350.8,264.36) .. (350.8,262.9) -- cycle ;
\draw [color={rgb, 255:red, 96; green, 117; blue, 5 }  ,draw opacity=0.65 ]   (277.91,120.54) .. controls (458.77,145.69) and (486.49,142.83) .. (469.91,-0.31) ;
\draw [shift={(432.82,129.09)}, rotate = 167.58] [fill={rgb, 255:red, 96; green, 117; blue, 5 }  ,fill opacity=0.65 ][line width=0.08]  [draw opacity=0] (8.04,-3.86) -- (0,0) -- (8.04,3.86) -- (5.34,0) -- cycle    ;
\draw  [draw opacity=0][fill={rgb, 255:red, 0; green, 0; blue, 0 }  ,fill opacity=1 ] (135.76,220) .. controls (135.76,218.54) and (136.94,217.36) .. (138.4,217.36) .. controls (139.86,217.36) and (141.04,218.54) .. (141.04,220) .. controls (141.04,221.46) and (139.86,222.64) .. (138.4,222.64) .. controls (136.94,222.64) and (135.76,221.46) .. (135.76,220) -- cycle ;

\draw (330,237.76) node [anchor=south east] [inner sep=0.75pt]  [color={rgb, 255:red, 208; green, 2; blue, 27 }  ,opacity=1 ]  {$\tilde{\gamma }_{i,i'}$};
\draw (208.86,223.17) node [anchor=south] [inner sep=0.75pt]  [color={rgb, 255:red, 96; green, 117; blue, 5 }  ,opacity=1 ]  {$\tilde{I}_{\F}^{\Z}( \tilde z_{3})$};
\draw (193.84,237.46) node [anchor=north] [inner sep=0.75pt]  [color={rgb, 255:red, 7; green, 90; blue, 190 }  ,opacity=1 ]  {$T_{1,i}^{-1}\tilde{I}_{\F}^{\Z}(\tilde z''_{1})$};
\draw (349.64,172.08) node [anchor=west] [inner sep=0.75pt]  [color={rgb, 255:red, 1; green, 167; blue, 179 }  ,opacity=1 ]  {$\tilde{I}_{\F}^{\Z}( \tilde z''_{2})$};
\draw (128.96,240) node [anchor=east] [inner sep=0.75pt]    {$T_{1,i}^{-1} \tilde z''_{1}$};
\draw (353.6,266.6) node [anchor=north] [inner sep=0.75pt]    {$\tilde z''_{2}$};
\draw (308.93,123.34) node [anchor=north east] [inner sep=0.75pt]  [color={rgb, 255:red, 96; green, 117; blue, 5 }  ,opacity=0.8 ]  {$T_{1,i}^{-1} T_{2,i'}^{-1}\tilde{I}_{\F}^{\Z}(\tilde z_{3})$};
\draw (134.16,209.6) node [anchor=east] [inner sep=0.75pt]    {$\tilde z_{3}$};

\end{tikzpicture}

\caption{The configuration of the proof of Corollary~\ref{th:rotationalhorseshoe2}. The orange lines are leaves.
\label{Fig:rotationalhorseshoe2}}
\end{center}
\end{figure}
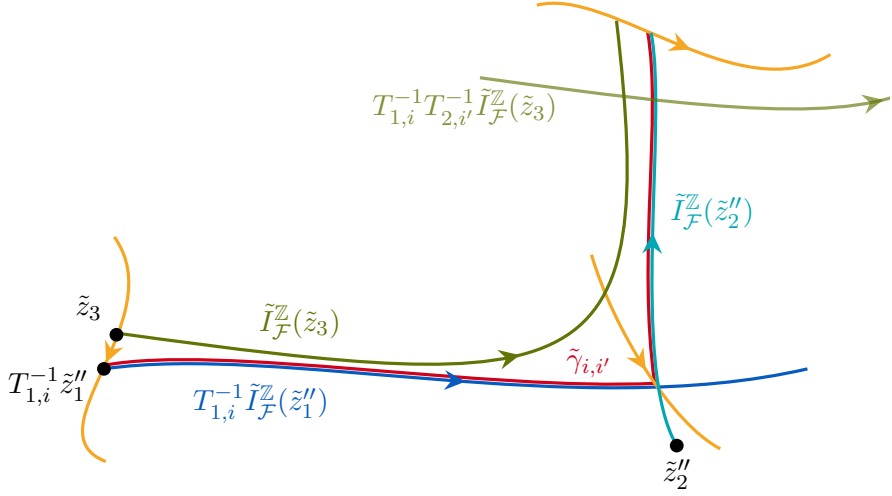

Lemma~\ref{le:forcing} then implies that for any $i,i'$, there exists $\tilde z_3\in \widetilde{\mathrm{dom}}(I)$ such that $\tilde I^{2r'+2+n_{1,i}+n_{2,i'}}_{\tilde {\F}}(\tilde z_3)$ is equivalent to the path 
\[\tilde\gamma_{i,i'} = T_{1,i}^{-1}{\tilde I}^{r'+1+n_{1,i}}_{\tilde {\F}}(\tilde z''_1)|_{[0,n_{1,i}-1+a]}
\cdot 
{\tilde I}^{r'+1+n_{2,i'}}_{\tilde {\F}}(\tilde z''_2)|_{[b,r'+1+n_{2,i'}]}.\]
Consider the parameter $e\in[0, 2r'+3+n_{1,i}+n_{2,i'}]$ such that
$$\tilde I^{\Z}_{\tilde {\F}}(\tilde z_3)(e)= T_{1,i}^{-1}{\tilde I}^{\Z}_{\tilde {\F}}(\tilde z''_1)(n_{1,i}-1+a)={\tilde I}^{\Z}_{\tilde {\F}}(\tilde z''_2)(b).$$
Note that if $i,i'$ are large enough, then $n_{1,i}-1+a\ge d$, and $b\le n_{2,i'}-1+c$. It implies that $T_{1,i}\tilde\gamma_{i,i'} $ has an $\tilde\F$-transverse intersection with $T_{2,i'}^{-1}\tilde\gamma_{i,i'}$ at a point $T_{1,i}\tilde\gamma_{i,i'}(e') = T_{2,i'}^{-1}\tilde\gamma_{i,i'}(e'')$, where $e'<e<e''$.  So, $\tilde\gamma_{i,i'}$ has an $\tilde\F$-transverse intersection with $T_{2,i'}T_{1,i}\tilde\gamma_{i,i'}$ at a point $\tilde\gamma_{i,i'}(e'')= T_{2,i'}T_{1,i}\tilde\gamma_{i,i'}(e')$, where $e'<e''$.
By Theorem~\ref{th:rotationalhorseshoe1}, there exists $s\ge 1$ such that $f$ admits a rotational horseshoe of type $([T_{2,i'}T_{1,i}]_\FHL, 2r'+2+n_{1,i}+n_{2,i'})$. If $i$ is large enough ($i'$ being fixed but large enough to ensure that the above properties hold), then we have $[T_{2,i'}T_{1,i}]_\FHL/(2r'+2+n_{1,i}+n_{2,i'})\in \mathcal U$.
\end{proof}

Let us finish this quick introduction to some forcing theory tools by the following theorem of Lellouch's thesis \cite[Théorème C]{Lel}:

\begin{theorem} \label{th:lellouch}
Suppose that $g\geq 2$. If $f\in \mathrm{Homeo}_*(S)$ preserves two Borel probability measures $\mu_1$ and $\mu_2$ such that $\mathrm{rot}_f(\mu_1)\wedge \mathrm{rot}_f(\mu_2)\not=0$, then $f$ has a topological horseshoe. In particular, $f$ has infinitely many periodic points.

Moreover, if $\mu_1$ is ergodic, then these periodic points can be supposed to have rotation vectors arbitrarily close to $\mathrm{rot}_f(\mu_1)$ and with arbitrarily large period: for every neighbourhood $\mathcal U$ of $\mathrm{rot}_f(\mu_1)$ in $H_1(S,\R)$, there exists a rotational horseshoe of type $(\kappa,r)$ with $[\kappa]/r\in \mathcal U$.
\end{theorem}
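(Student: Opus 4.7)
The plan is to reduce, via the forcing machinery of Section~\ref{s.forcingtheory}, to the problem of producing an $\F$-transverse intersection between typical transverse trajectories, and then to obtain a contradiction in the remaining case using the structure theorems of Section~\ref{s.foliations}. By applying ergodic decomposition to $\mu_1$ and then to $\mu_2$ and using bilinearity of $\wedge$, I may assume both measures are ergodic with $\mathrm{rot}_f(\mu_1)\wedge\mathrm{rot}_f(\mu_2)\neq 0$; in particular, both rotation vectors are nonzero. I fix a maximal identity isotopy $I$ of $f$ (Theorem~\ref{th:BCL}) and a transverse singular foliation $\F$ (Theorem~\ref{th.transversefoliation}). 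Since neither rotation vector vanishes, $\mu_i(\mathrm{fix}(I))=0$, so $\mu_i$-almost every $z_i$ is recurrent, lies in $\mathrm{dom}(I)$, and for any lift $\tilde z_i\in\widetilde{\mathrm{dom}}(I)$ there exist $T_{i,n}\in\G$ and $k_n\to\infty$ with $[T_{i,n}]/k_n\to\mathrm{rot}_f(\mu_i)$ and $T_{i,n}^{-1}\tilde f^{k_n}(\tilde z_i)\to\tilde z_i$.

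If some $\mu_1$-typical $z_1$ has an $\F$-transverse self-intersection in $I^{\Z}_{\F}(z_1)$, Corollary~\ref{th:rotationalhorseshoe2} applied with $\nu_1=\nu_2=\mu_1$ already yields a rotational horseshoe of type $(\kappa,r)$ with $[\kappa]/r$ arbitrarily close to $\mathrm{rot}_f(\mu_1)$, and the theorem is proved. The analogous conclusion holds for $\mu_2$. Similarly, if typical $z_1,z_2$ can be chosen so that $I^{\Z}_{\F}(z_1)$ and $I^{\Z}_{\F}(z_2)$ have an $\F$-transverse intersection, Corollary~\ref{th:rotationalhorseshoe2} delivers the desired rotational horseshoe close to $\mathrm{rot}_f(\mu_1)$. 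Consequently, the theorem reduces to the following rigidity claim: \emph{if no typical $I^{\Z}_{\F}(z_i)$ admits an $\F$-transverse self-intersection and no typical pair admits a mutual $\F$-transverse intersection, then $\mathrm{rot}_f(\mu_1)\wedge\mathrm{rot}_f(\mu_2)=0$.}

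To establish this claim, I would argue as follows. Using Corollary~\ref{coro:recurrence} and Proposition~\ref{prop: nontransitivity3}, together with the fact that a positively recurrent transverse path with no $\F$-transverse self-intersection behaves, up to equivalence, like the natural lift of a transverse simple loop, the trajectory $I^{\Z}_{\F}(z_1)$ produces a transverse simple loop $\Gamma_{*,1}\subset S$ whose strip $\tilde B_1\subset\widetilde{\mathrm{dom}}(I)$ contains the lifted trajectory, with generating deck transformation $T_1$. Combined with the Birkhoff statement above, this forces $\mathrm{rot}_f(\mu_1)\in\R_{>0}\,[\Gamma_{*,1}]$. The same construction yields $\Gamma_{*,2}$, $\tilde B_2$, $T_2$, and $\mathrm{rot}_f(\mu_2)\in\R_{>0}\,[\Gamma_{*,2}]$. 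The absence of $\F$-transverse mutual intersections between $I^{\Z}_{\F}(z_1)$ and $I^{\Z}_{\F}(z_2)$, combined with Proposition~\ref{prop:transversestrips} (whose ``visit and draw'' and ``cross and draw'' conditions both manufacture $\F$-transverse intersections), constrains every $\G$-translate of $\tilde B_2$ relative to $\tilde B_1$ to be disjoint, nested, or meeting only in a shared boundary leaf. A case analysis then shows that this geometric compatibility forces $[\Gamma_{*,1}]$ and $[\Gamma_{*,2}]$ to be collinear in $H_1(S,\R)$, whence $\mathrm{rot}_f(\mu_1)\wedge\mathrm{rot}_f(\mu_2)=0$, contradicting the hypothesis.

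The main obstacle is the final homological rigidity statement: proving that two transverse simple loops whose strips host non-mutually-$\F$-transversally-intersecting recurrent typical trajectories must be homologically parallel. This is the technical core of Lellouch's thesis. The hypothesis $g\geq 2$ is used here to ensure that strips are genuine planes in $\widetilde{\mathrm{dom}}(I)$ so that their homological direction is well defined, and to rule out torus-like configurations in which essential loops can be mutually disjoint without being homologous. The remainder of the argument is a fairly direct orchestration of the forcing-theoretic tools already assembled in the paper, with Corollary~\ref{th:rotationalhorseshoe2} producing the horseshoe as soon as an $\F$-transverse intersection is detected.
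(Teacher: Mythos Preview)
The paper does not prove Theorem~\ref{th:lellouch}; it is quoted from Lellouch's thesis \cite[Th\'eor\`eme~C]{Lel} and used as a black box. So there is no ``paper's own proof'' to compare against, and your proposal must be judged on its own.

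Your outline is the right shape---reduce to ergodic measures, detect an $\F$-transverse intersection between typical trajectories and invoke Corollary~\ref{th:rotationalhorseshoe2}, and otherwise derive a contradiction with $\mathrm{rot}_f(\mu_1)\wedge\mathrm{rot}_f(\mu_2)\neq 0$---but it has two genuine gaps. First, the assertion that a positively recurrent transverse path without $\F$-transverse self-intersection is equivalent to the natural lift of a transverse simple loop is \emph{not} a consequence of Proposition~\ref{prop: nontransitivity3} or Corollary~\ref{coro:recurrence}: those results concern accumulation, and Proposition~\ref{prop: nontransitivity3} requires as a hypothesis that the path accumulate on another path, which you have not arranged. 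The simple-loop structure for self-intersection-free recurrent transverse paths is a separate result (it appears in \cite{LecT1}) that you would need to cite and invoke explicitly. Second, and more seriously, the ``case analysis'' showing that two such simple loops with no mutual $\F$-transverse intersections between typical trajectories must have collinear homology classes is the entire content of the theorem, and you do not carry it out. You yourself flag this as ``the technical core of Lellouch's thesis''. Proposition~\ref{prop:transversestrips} only manufactures transverse intersections under draw-and-visit or draw-and-cross hypotheses; ruling these out leaves many configurations (e.g.\ one strip crossing the other without drawing it, or trajectories staying in one strip while merely touching the other), and the passage from ``no forced transverse intersection'' to ``$[\Gamma_{*,1}]$ and $[\Gamma_{*,2}]$ are proportional'' requires substantial additional argument---in Lellouch's work, this is where Lemmas~\ref{le:travelling} and~\ref{co:signofintersectionzero} and further strip-crossing analysis enter. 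As written, your proposal is a plausible proof \emph{plan} but not a proof.
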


Here $\wedge$ is the {\it intersection form}. It is the symplectic form on $H_1(S, \R)$ defined by the property that if $\Gamma_1$ and $\Gamma_2$ are two loops in $S$, then $[\Gamma_1]\wedge[\Gamma_2]$ is the algebraic intersection number between $ \Gamma_1$ and $\Gamma_2$. Equivalently, up to a multiplicative constant, it is the form induced \emph{via} Poincaré duality by $\wedge : H^1(S,\R) \times H^1(S,\R) \to H^2(S,\R)$.

\subsection{Forcing theory in the annular covering space}\label{sec:Dyna}

We suppose now that $\Sigma$ is an oriented closed surface and denote it $S$. We keep the other notations. We consider $ T\in{\mathcal G}\setminus\mathrm {Id}$ and a $T$-strip $\tilde B\subset\widetilde{ \mathrm{dom}}(I)$ (we suppose that $T$ coincides with the identity on the connected components of $\mathrm{dom}(I)$ that do not contain $\tilde B$). We fix a $T$-invariant line $\tilde\gamma_*\subset \tilde B$. We define
\begin{itemize}
\item the surface $\widehat{\mathrm{dom}}(I)=\widetilde{\mathrm{dom}}(I)/T$;
\item the projections $\pi: \widetilde{\mathrm{dom}}(I)\to \widehat{\mathrm{dom}}(I)$ and $\hat\pi: \widehat{\mathrm{dom}}(I)\to  \mathrm{dom}(I)$;
\item the identity isotopy  $\hat I$ on $\widehat{\mathrm{dom}}(I)$ lifted by $\tilde I$;
\item  the lift $\hat f$ of $f|_{ \mathrm{dom}(I)}$  to $\widehat{\mathrm{dom}}(I)$ lifted by $\tilde f$;
\item the foliation $\hat{\F}$ on $\widehat{\mathrm{dom}}(I)$ lifted by $\tilde{\F}$;
\item the loop $\hat\Gamma_*=\pi(\tilde\gamma_*)$.
\end{itemize}
The complement of $\hat\Gamma_*$ in its connected component has two annular connected components $L(\hat\Gamma_*)$ and $R(\hat\Gamma_*)$. We denote $\hat \infty_L$ the common end of $\widehat{\mathrm{dom}}(I)$ and $L(\hat\Gamma_*)$ and $\hat \infty_R$ the common end of $\widehat{\mathrm{dom}}(I)$ and $R(\hat\Gamma_*)$. 

We consider 
\begin{itemize}
\item the set $\tilde W^{R\to L}$ of points $\tilde z\in\widetilde{\mathrm{dom}}(I)$ such that  $\tilde I_{\tilde {\F}}^{\Z}(\tilde z)$ crosses $\tilde B$ from the right to the left;
\item the set $\tilde W^{L\to R}$ of points $\tilde z\in\widetilde{\mathrm{dom}}(I)$ such that  $\tilde I_{\tilde {\F}}^{\Z}(\tilde z)$ crosses $\tilde B$ from the left to the right;
\item the set $\tilde W^{R\to R}$ of points $\tilde z\in\widetilde{\mathrm{dom}}(I)$ such that  $\tilde I_{\tilde {\F}}^{\Z}(\tilde z)$ visits $\tilde B$ on the right;
\item the set $\tilde W^{L\to L}$ of points $\tilde z\in\widetilde{\mathrm{dom}}(I)$ such that  $\tilde I_{\tilde {\F}}^{\Z}(\tilde z)$ visits $\tilde B$ on the left;
\item the set $\tilde W^D$ of points $\tilde z\in\widetilde{\mathrm{dom}}(I)$ such that  $\tilde I_{\tilde {\F}}^{\Z}(\tilde z)$ draws $\tilde B$.
\end{itemize}
Note that all these sets are invariant by $\tilde f$ and by $T$. Note also that they are open, as a consequence of Proposition \ref{prop:stability}. 

We define the respective projections in $\widehat{ \mathrm{dom}}(I)$
$$\hat W^{R\to L}\,,\enskip \hat W^{L\to R}\,, \enskip \hat W^{R\to R}, \enskip \hat W^{L\to L},\enskip \hat W^{D},$$ 
that are open and invariant by $\hat f$ and the respective projections in $\mathrm{dom}(I)$
$$W^{R\to L}\,,\enskip W^{L\to R}\,, \enskip  W^{R\to R}, \enskip W^{L\to L},\enskip  W^{D},$$ 
that are open and invariant by $f$.

Finally, we define
\begin{itemize}
\item the set $\hat \infty_R\to\hat \infty_L$ of points  $\hat z\in\widehat{\mathrm{dom}}(I)$ such that $$\lim_{k\to -\infty}\hat f^k(\hat z)= \hat \infty_R\,, \enskip \lim_{k\to +\infty}\hat f^k(\hat z)= \hat \infty_L;$$
\item the set $\hat \infty_L\to\hat \infty_R$ of points  $\hat z\in\widehat{\mathrm{dom}}(I)$ such that $$\lim_{k\to -\infty}\hat f^k(\hat z)= \hat \infty_L\,,\enskip\lim_{k\to +\infty}\hat f^k(\hat z)= \hat \infty_R.$$
\end{itemize}

We will state some results that have been proven in \cite{Lel} and will add some others that do not explicitely appear there. The following result has been proved in \cite{Lel}
(Proposition 2.2.12).

\begin{lemma} \label{le:equivalence}
Suppose that $\nu\in{\mathcal M}(f)$ is ergodic and that $\nu$-almost every point $z$ has a lift $\tilde z\in\widetilde{\mathrm{dom}}(I)$ such that $\tilde I^{\Z}_{\tilde{\F}}(\tilde z)$ is equivalent in $+\infty$ or $-\infty$ to $\tilde\gamma_{*}$. Then there exists $a\geq 0$\footnote{The proof given in \cite{Lel} says that $a\geq 0$ but we will slightly improve it in Lemma~\ref{claim:proportional} to obtain $a>0$.} such that $\mathrm{rot}(\nu)=a[T]$.
\end{lemma}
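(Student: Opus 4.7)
The plan is to reduce the problem to a Birkhoff-type computation of a winding cocycle in the annular cover $\widehat{\mathrm{dom}}(I)=\widetilde{\mathrm{dom}}(I)/T$, and then transfer the conclusion to $H_1(S,\R)$.

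First, by ergodicity of $\nu$, either the set $A^+$ of points $z$ admitting a lift whose whole transverse trajectory is equivalent to $\tilde\gamma_*$ at $+\infty$ has full $\nu$-measure, or the analogous set $A^-$ does. Replacing $f$ by $f^{-1}$ and $\tilde\gamma_*$ by its orientation-reverse (which replaces $T$ by $T^{-1}$ and $[T]$ by $-[T]$, and simultaneously reverses the sign of $\mathrm{rot}$), I may assume $\nu(A^+)=1$. I parametrize $\tilde\gamma_*$ so that $T\tilde\gamma_*(s)=\tilde\gamma_*(s+1)$, and for a $\nu$-typical $z\in A^+$ with a lift $\tilde z$ realising the equivalence I denote by $\sigma_n(z)\in\R$ the unique parameter at which $\tilde\gamma_*$ meets the leaf through $\tilde f^n(\tilde z)$. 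Equivalence at $+\infty$ forces $(\sigma_n(z))$ to be eventually non-decreasing and to tend to $+\infty$.

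Setting $k_n(z):=\lfloor\sigma_n(z)\rfloor$, the point $T^{-k_n(z)}\tilde f^n(\tilde z)$ lies on a leaf met by $\tilde\gamma_*$ at a parameter in $[0,1)$; compactness of $\tilde\gamma_*|_{[0,1]}$ together with a uniform flow-box cover of this segment places $T^{-k_n(z)}\tilde f^n(\tilde z)$ in a fixed relatively compact set $K\subset\widetilde{\mathrm{dom}}(I)$, independent of $n$. For the homology transfer, let $R_n\in\mathcal G$ be the deck transformation obtained by lifting a short closing path $\gamma_{f^n(z),z}$ starting at $\tilde f^n(\tilde z)$: its endpoint is $R_n\tilde z$, and the image of $R_n$ in $H_1(S,\Z)$ is exactly $[\Gamma_n(z)]$. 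Since the paths $\gamma_{f^n(z),z}$ have uniformly bounded length, $R_n\tilde z$ lies within bounded distance of $\tilde f^n(\tilde z)$; combined with $T^{-k_n(z)}\tilde f^n(\tilde z)\in K$, this forces $T^{-k_n(z)}R_n$ to move $\tilde z$ into a fixed compact set, so by proper discontinuity of the $\mathcal G$-action the correction factor $T^{-k_n(z)}R_n$ belongs to a finite subset of $\mathcal G$. Taking images in $H_1(S,\Z)$ yields
\[ [\Gamma_n(z)] = k_n(z)\,[T] + O(1). \]

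Birkhoff's theorem ensures that $\mathrm{rot}_f(z)=\lim_n[\Gamma_n(z)]/n$ exists and is $\nu$-a.e.\ constant equal to $\mathrm{rot}_f(\nu)$. The displayed estimate then forces $k_n(z)/n$ to converge $\nu$-a.e.\ to a constant $a$, with $\mathrm{rot}_f(\nu)=a[T]$; since $k_n(z)\to+\infty$ along every $\nu$-typical orbit, $a\geq 0$. The principal technical difficulty is ensuring the uniform finiteness of the correction factors $T^{-k_n(z)}R_n$: this requires combining the leaf-level control supplied by the equivalence hypothesis, the stability of transverse trajectories (Proposition~\ref{prop:stability}) used to keep the closing lift short, and the proper discontinuity of $\mathcal G$ on $\widetilde{\mathrm{dom}}(I)$. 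Once this geometric control is in place, the remainder is routine averaging.
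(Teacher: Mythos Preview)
Your argument has a genuine gap at the compactness step. You claim that ``compactness of $\tilde\gamma_*|_{[0,1]}$ together with a uniform flow-box cover of this segment places $T^{-k_n(z)}\tilde f^n(\tilde z)$ in a fixed relatively compact set $K$''. But what you actually know is only that $T^{-k_n(z)}\tilde f^n(\tilde z)$ lies on a \emph{leaf} that meets $\tilde\gamma_*|_{[0,1)}$. Leaves of $\tilde{\mathcal F}$ are proper lines in $\widetilde{\mathrm{dom}}(I)$; a flow-box cover of the compact arc $\tilde\gamma_*|_{[0,1]}$ controls only where each such leaf \emph{crosses} $\tilde\gamma_*$, not where $T^{-k_n(z)}\tilde f^n(\tilde z)$ sits along that leaf. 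Equivalence of transverse paths records only the set of leaves crossed, and gives no transverse bound. In the annular picture, you are asserting that the $\hat f$-orbit of $\hat z$ stays in a compact subset of $\hat B$, and nothing you have said rules out the orbit drifting to an end of $\hat B$ while still crossing the correct leaves in the correct order. Everything downstream --- the finiteness of the correction factors $T^{-k_n}R_n$ via proper discontinuity, the $O(1)$ estimate $[\Gamma_n(z)]=k_n[T]+O(1)$, and the Birkhoff conclusion --- rests on this unproved compactness. (There is also a smaller issue: the closing paths $\gamma_{f^n(z),z}$ live in $S$, not in $\mathrm{dom}(I)$, so lifting them to define $R_n\in\mathcal G$ needs care; one should work in $\tilde S$ or restrict to closing paths in $\mathrm{dom}(I)$.)

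The paper does not give its own proof here, citing Lellouch, but the argument it uses for the sharpening in Lemma~\ref{claim:proportional} indicates the intended route: pick a flow-box $U\subset B$ of positive $\nu$-measure with the property of Proposition~\ref{prop:flowbox}, lift to $\tilde U\subset\tilde B$, and analyse the first-return map. The point is to show that each return of a $\nu$-typical orbit to $U$ corresponds to a deck transformation in $\langle T\rangle$; then the return cocycle $\rho_U$ takes values in $\Z[T]$ and the rotation formula $\mathrm{rot}_f(\nu)=\int_U\rho_U\,d\nu$ immediately yields $\mathrm{rot}_f(\nu)\in\R[T]$. Proving that returns lie in $\langle T\rangle$ uses the equivalence hypothesis together with the foliation structure (for instance via an argument in the spirit of Lemma~\ref{LemEquivSubpath}), and crucially avoids any claim about compactness of the lifted orbit. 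Your strategy of tracking the full orbit rather than returns is not wrong in spirit, but to make it work you would need a separate argument bounding the ``leaf-direction'' drift, and that argument is missing.
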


The next one also has been proved in \cite{Lel} (Lemma 2.2.3 and Proposition 2.2.4).

\begin{lemma} \label{le:travelling}
Suppose that $\nu\in{\mathcal M}(f)$ is ergodic. We have the following:
\begin{enumerate}
\item if $[T]\wedge \mathrm{rot}_f(\nu)>0$, then  $\nu(\hat\pi(\hat \infty_R\to\hat \infty_L))=1$;
\item if $[T]\wedge \mathrm{rot}_f(\nu)<0$, then $\nu(\hat\pi( \hat\infty_L\to\hat \infty_R))=1$.
\end{enumerate}
\end{lemma}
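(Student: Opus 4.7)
I treat assertion (1); assertion (2) is symmetric. The plan is to apply Birkhoff's ergodic theorem to a well-chosen closed $1$-form on $S$, lift the resulting linear growth to the annular cover, and read off the escape to an end.

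Let $\hat C$ denote the connected component of $\widehat{\mathrm{dom}}(I)$ containing $\hat\Gamma_*$; it is a topological open annulus with ends $\hat\infty_L$ and $\hat\infty_R$, and the remaining components carry only the trivial $T$-action so play no role. Since $[T]\wedge\mathrm{rot}_f(\nu)\neq 0$, a typical orbit has non-zero net rate of intersection with $\Gamma_*:=\hat\pi(\hat\Gamma_*)$ and therefore $\nu$-almost surely visits the component $C\subset\mathrm{dom}(I)$ underneath $\hat C$; by ergodicity one reduces to $\nu(C)=1$. I would then pick a smooth closed $1$-form $\alpha$ on $S$ supported in a thin tubular neighborhood of $\Gamma_*$ and Poincar\'e dual to $[T]$, so that $\int_\gamma\alpha=[T]\wedge[\gamma]$ for every loop $\gamma$. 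The cohomological definition of $\mathrm{rot}_f(\nu)$ combined with Birkhoff's ergodic theorem applied to the additive cocycle $z\mapsto\int_{I(z)}\alpha$ then gives, for $\nu$-a.e. $z$,
\[\frac{1}{n}\int_{I^n(z)}\alpha\longrightarrow\langle[\alpha],\mathrm{rot}_f(\nu)\rangle=[T]\wedge\mathrm{rot}_f(\nu)>0.\]

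Next I lift to $\hat C$. The pullback $\hat\pi^*\alpha$ is a closed $1$-form on $\hat C$, and since $H^1(\hat C,\R)\cong\R$ is generated by the class of $\hat\Gamma_*$ with $\int_{\hat\Gamma_*}\hat\pi^*\alpha=\int_{\Gamma_*}\alpha=[T]\wedge[T]=0$ by skew-symmetry, the form $\hat\pi^*\alpha$ is exact: write $\hat\pi^*\alpha=dH$ for some continuous $H\colon\hat C\to\R$. For any lift $\hat z\in\hat C$ of $z$ one has
\[H(\hat f^n\hat z)-H(\hat z)=\int_{\hat I^n(\hat z)}\hat\pi^*\alpha=\int_{I^n(z)}\alpha,\]
which tends to $+\infty$ as $n\to+\infty$ and, by the same argument in reverse, to $-\infty$ as $n\to-\infty$.

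The hard part is then to upgrade $H(\hat f^n\hat z)\to\pm\infty$ to the topological convergence $\hat f^n\hat z\to\hat\infty_L$ (resp. $\hat\infty_R$): one must prove that $H$ is proper on the annulus $\hat C$, with $H\to+\infty$ at $\hat\infty_L$ and $H\to-\infty$ at $\hat\infty_R$ (up to a global sign). I would obtain this by analysing $\hat\pi^{-1}(\Gamma_*)\cap\hat C$ as the disjoint union of the closed loop $\hat\Gamma_*$ together with countably many properly embedded lines in the annulus, across each of which $H$ jumps by $\pm 1$ in the right-to-left direction, and by a topological separation argument in $\hat C$ showing that any approach to an end produces an unbounded signed count of such crossings with the appropriate sign. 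Granting this properness, one concludes $\hat z\in\hat\infty_R\to\hat\infty_L$, and therefore $z\in\hat\pi(\hat\infty_R\to\hat\infty_L)$; $f$-invariance of this set combined with the ergodicity of $\nu$ yields $\nu(\hat\pi(\hat\infty_R\to\hat\infty_L))=1$.
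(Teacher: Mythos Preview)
The paper does not prove this lemma; it cites Lellouch \cite{Lel} (Lemma 2.2.3 and Proposition 2.2.4). So there is no proof in the paper to compare against, and I can only assess your argument on its own merits.

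Your steps up through $H(\hat f^n\hat z)\to+\infty$ are correct and constitute the heart of the matter: Birkhoff applied to the cocycle $z\mapsto\int_{I(z)}\alpha$ gives linear drift, and the exactness of $\hat\pi^*\alpha$ on $\hat C$ (because $[T]\wedge[T]=0$) lets you integrate to a primitive $H$. Since $H$ is continuous, hence bounded on compact sets, $H(\hat f^n\hat z)\to+\infty$ already shows that the forward $\hat f$-orbit leaves every compact subset of $\hat C$. Combining this with the connectedness of $\omega$-limit sets in the two-point end-compactification of the annulus $\hat C$, you even get for free that $\hat f^n\hat z$ converges to \emph{one} of the two ends.

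The gap is in identifying \emph{which} end: your claim that $H$ is proper with $H\to+\infty$ at $\hat\infty_L$ and $H\to-\infty$ at $\hat\infty_R$ is where the argument breaks. The line components of $\hat\pi^{-1}(\Gamma_*)\cap\hat C$ inherit their orientation from $\Gamma_*$, and there is no reason for the induced ``left/right'' of each such line to agree with the $\hat\infty_L/\hat\infty_R$ separation of $\hat C$. Concretely, lift to $\widetilde{\mathrm{dom}}(I)_{\tilde\gamma_*}$: the translates $R\tilde\gamma_*$ for $R\notin\langle T\rangle$ are lines whose $L$-side can lie on either side of $\tilde\gamma_*$, so as one escapes to an end of $\hat C$ the signed crossing count can oscillate; a ``topological separation argument'' will not force a consistent sign. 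In particular $H$ need not be proper.

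A cleaner way to pin down the end is to abandon $\Gamma_*$ as the dual object. Choose instead a \emph{simple} loop $\Gamma'$ on $S$ with $[\Gamma']\wedge[T]=1$ (possible since $[T]\neq 0$) and work in the annular cover $\hat S=\tilde S/T$ of the \emph{closed} surface. The preimage of $\Gamma'$ in $\hat S$ is a disjoint union of properly embedded lines, each meeting the core loop once, hence each separating the two ends of $\hat S$; moreover all of them are crossed with the \emph{same} sign when going from the $R$-end to the $L$-end (this is exactly the content of $[\Gamma']\wedge[T]=1$). The corresponding primitive $H'$ on $\hat S$ is then genuinely proper with the correct signs at the ends, Birkhoff gives $H'(\hat f_S^n\hat w)\to+\infty$, and one concludes $\hat f_S^n\hat w\to\hat\infty_L^S$ in $\hat S$. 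One finally transfers the conclusion back to $\widehat{\mathrm{dom}}(I)$ via the natural equivariant map $\widehat{\mathrm{dom}}(I)\to\hat S$; this last step also requires some care (the map is not proper), but it is routine once one knows the orbit in $\hat C$ already escapes to a single end.
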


Let us prove now:

\begin{lemma} \label{le:intersecting}
If there exists $\mu\in{\mathcal M}(f)$ with total support such that $[T]\ \wedge \mathrm{rot}_f(\mu)=0$, then every essential simple loop of $\widehat{\mathrm{dom}}(I)$ meets its image by $\hat f$.
\end{lemma}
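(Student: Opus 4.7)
The plan is to argue by contradiction. Assume there is an essential simple loop $\hat C\subset\widehat{\mathrm{dom}}(I)$ with $\hat f(\hat C)\cap\hat C=\emptyset$. Being essential, $\hat C$ necessarily lies in the annular component $\hat V:=\widetilde V/\langle T\rangle$ of $\widehat{\mathrm{dom}}(I)$, where $\widetilde V$ is the connected component of $\widetilde{\mathrm{dom}}(I)$ containing $\tilde B$. Replacing $\hat f$ by $\hat f^{-1}$ if necessary, I may assume $\hat f(\hat C)\subset L(\hat C)$. Then $\tilde C:=\pi^{-1}(\hat C)$ is a $T$-invariant line in $\widetilde V$ satisfying $\tilde f(\overline{L(\tilde C)})\subset L(\tilde C)$, so $\tilde C$ is a Brouwer line for $\tilde f$. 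Since $\tilde f$ commutes with the orientation-preserving deck group of $\tilde\pi:\widetilde V\to V_0:=\hat\pi(\hat V)$, every translate $g\tilde C$ is likewise a Brouwer line for $\tilde f$.

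The second step is a flux computation against a loop representing $[T]$. Because $\hat V$ is an annulus, $\hat C$ is freely homotopic in $\hat V$ to $\hat\Gamma_*$, so after suitably orienting $\hat C$ the image $C:=\hat\pi(\hat C)\subset S$ satisfies $[C]=[T]$ in $H_1(S,\Z)$. I will pick a closed $1$-form $\alpha$ on $S$ Poincar\'e-dual to $[C]$, realized as a bump form in a thin tubular neighborhood of $C$, oriented so that every $R\to L$ crossing of $C$ contributes $+1$. Setting $\phi(z):=\int_{I(z)}\alpha$, the defining equation of the rotation vector gives
\[\int_S\phi\,d\mu=\langle[\alpha],\mathrm{rot}_f(\mu)\rangle=\pm[T]\wedge\mathrm{rot}_f(\mu)=0\]
by hypothesis. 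The goal is to contradict this by showing that $\phi\geq 0$ on $S$ and $\phi\geq 1$ on a nonempty open subset.

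For the sign claim, lift $I(z)$ to a path $\tilde I(\tilde z)$ from $\tilde z$ to $\tilde f(\tilde z)$ in the simply connected $\widetilde V$; then $\int_{\tilde I(\tilde z)}\tilde\pi^{*}\alpha$ decomposes as the sum over lifts $g\tilde C\subset\tilde\pi^{-1}(C)$ of algebraic intersections of $\tilde I(\tilde z)$ with each $g\tilde C$. For every such Brouwer line the intersection number lies in $\{0,+1\}$: it is $+1$ exactly when $\tilde z\in R(g\tilde C)$ and $\tilde f(\tilde z)\in L(g\tilde C)$, while the opposite case $\tilde z\in L(g\tilde C)$ and $\tilde f(\tilde z)\in R(g\tilde C)$ is ruled out by $\tilde f(\overline{L(g\tilde C)})\subset L(g\tilde C)$. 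For the positivity region, set $B:=R(\hat C)\cap\hat f^{-1}(L(\hat C))$; since $\hat f^{-1}(L(\hat C))\supsetneq L(\hat C)$, $B$ is a nonempty one-sided open neighborhood of $\hat C$ inside $R(\hat C)$, and for any $z\in B_0:=\hat\pi(B)$ the contribution from $g=\mathrm{Id}$ alone already forces $\phi(z)\geq 1$. Full support of $\mu$ gives $\mu(B_0)>0$, hence $\int_S\phi\,d\mu\geq\mu(B_0)>0$, contradicting the previous display.

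The main technical point is verifying the Brouwer line property for every translate $g\tilde C$, which reduces to the commutation of $\tilde f$ with deck transformations together with orientation preservation. Beyond this the argument is essentially a direct flux computation, in which the full-support hypothesis on $\mu$ does the essential work of converting pointwise positivity of $\phi$ on the open set $B_0$ into a strictly positive integral.
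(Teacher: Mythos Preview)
Your proof follows essentially the same strategy as the paper's: assume a free essential loop $\hat C$ exists, observe that all deck translates of its lift are Brouwer lines for $\tilde f$, and derive a contradiction by showing that the flux of $\mu$ across $C=\hat\pi(\hat C)$ is strictly positive. The paper carries this out via Birkhoff sums of the indicator functions $\eta_{\tau\tilde\gamma}$ and an ergodic decomposition of $\mu$; you do it via a single integral of $\phi(z)=\int_{I(z)}\alpha$ against $\mu$. Both routes are fine and the underlying idea is identical.

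There is one genuine technical gap in your write-up. You realize $\alpha$ as ``a bump form in a thin tubular neighborhood of $C$'' and then claim that $\int_{\tilde I(\tilde z)}\tilde\pi^*\alpha$ decomposes as a sum of algebraic intersection numbers with the individual lifts $g\tilde C$. This presupposes that $C$ is embedded in $S$, so that the lifts $g\tilde C$ (and their tubular neighborhoods) are pairwise disjoint. But $\hat\pi|_{\hat C}$ need not be injective: for instance if $T$ is a proper power in $\mathcal G$, or more generally whenever the covering $\hat\pi:\hat V\to V_0$ identifies points of $\hat C$. In that case $C$ has self-intersections, there is no honest tubular neighborhood, and your decomposition of the integral fails. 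The fix is exactly what the paper does: drop the differential form entirely and define the displacement function combinatorially as $\sum_{\tau}\bigl(\eta_{\tau\tilde C}(\tilde f(\tilde z))-\eta_{\tau\tilde C}(\tilde z)\bigr)$, where $\eta$ is the indicator of $L(\cdot)$. Each summand is nonnegative by the Brouwer-line property regardless of whether the lines $g\tilde C$ intersect one another, and the sum still computes the algebraic intersection of $I(z)$ with $C$, hence pairs correctly with $\mathrm{rot}_f(\mu)$. Once you make this replacement the rest of your argument goes through unchanged.
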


\begin{proof}
Suppose that there exists an essential simple loop $\hat \Gamma$ such that $\hat f(\hat\Gamma)\cap \hat \Gamma=\emptyset$. Orient $\hat\Gamma$ in such a way that $\hat \infty_L$ is the common end of $\widehat{\mathrm{dom}}(I)$ and $L(\hat\Gamma)$ and $\hat \infty_R$ the common end of $\widehat{\mathrm{dom}}(I)$ and $R(\hat\Gamma)$. There is no loss of generality by supposing that $\hat f(\hat \Gamma)$ is included in  $L(\hat\Gamma)$. Consider the line $\tilde \gamma$ of $\tilde S$ that lifts $\hat\Gamma$. We have $\tilde f(\overline{L(\tilde\gamma))}\subset L(\tilde\gamma)$ and more generally $\tilde f(\overline{L(T'(\tilde\gamma)))}\subset L(T'(\tilde\gamma))$ for every $T'\in \mathcal G$ because $\tilde f$ commutes with $T'$.

If $\tilde \gamma'$ is an oriented line of $\widetilde{\mathrm{dom}}(I)$, recall that $\widetilde{\mathrm{dom}}(I)_{\tilde\gamma'}$ is the connected component of $\widetilde{\mathrm{dom}}(I)$ that contains $\tilde \gamma'$. Denote $\eta_{\tilde \gamma'}$ the function defined on $\widetilde{\mathrm{dom}}(I)_{\tilde\gamma'}$ that is equal to $0$ on $R(\tilde\gamma')$, to $1$ on $L(\tilde\gamma')$ and to $1/2$ on $\tilde\gamma'$. Noting that $T''(\tilde \gamma) =T'(\tilde \gamma)$ if $T'' {}^{-1}T'\in\langle T\rangle$, one deduces that the notation $\tau\tilde \gamma$ has a sense for every left coset  $\tau\in \mathcal G/\langle T\rangle$. Furthermore, if $\nu\in\mathcal M(f)$ is ergodic, then for $\nu$-almost every point $z$, the following holds for every lift $\tilde z$ of $z$:
$$ [T]\ \wedge \mathrm{rot}_f(\nu)=\lim_{n\to +\infty } \frac1n \sum_{\tau\in \mathcal G/\langle T\rangle} \Big( \eta_{\tau\tilde\gamma} (\tilde f^n(\tilde z))-  \eta_{\tau\tilde\gamma} ( \tilde z)\Big).$$

Indeed, if one considers the loop $\Gamma=\hat\pi(\hat\Gamma)$ of $S$, then $\sum_{\tau\in \mathcal G/\langle T\rangle} \eta_{\tau\tilde\gamma} (\tilde f^n(\tilde z))-  \eta_{\tau(\tilde\gamma)} ( \tilde z)$ (note that the sum is finite) is equal to the sum of the algebraic intersection numbers between all lifts of $\Gamma$  with the trajectory $\tilde I^n(\tilde z) $ (at least when $z$ and $f^n(z)$ are not on $\Gamma)$, meaning the algebraic intersection number between $\Gamma$ and $I^n(z)$.

Observe that for every $\tau\in \mathcal G/\langle T\rangle$, the function $\eta_{\tau\tilde\gamma}\circ \tilde f-\eta_{\tau\tilde\gamma} $ is non negative on $\widetilde{\mathrm{dom}}(I)_{\tilde\gamma}$ and positive in the strip between $\tilde\gamma$ and $\tilde f(\tilde\gamma)$.  We deduce that for every ergodic invariant probability measure $\nu$ it holds that $[T]\ \wedge \mathrm{rot}_f(\nu)\geq 0$. Moreover, we have a strict inequality if the measure of the strip between $\tilde\gamma$ and $\tilde f(\tilde\gamma)$ is non zero for the measure $\tilde\nu$ that lifts $\nu$. By using the ergodic decomposition of $\mu$, we deduce that $[T]\ \wedge \mathrm{rot}_f(\mu)> 0$, which contradicts the hypothesis.
\end{proof}

\begin{lemma}\label{le:signofintersection}
Suppose that $\nu\in{\mathcal M}(f)$ and $\nu'\in{\mathcal M}(f)$ are ergodic and satisfy 
$$\nu(W^{R\to L}\cap W^D)=1\,,\enskip [T]\, \wedge \mathrm{rot}_f(\nu')<0.$$ 
Then one of the following assertions holds:

\begin{itemize} 
\item for $\nu$-almost every point $z$ and $\nu'$-almost every point $z'$, the paths $I_{\F}^{\Z}(z)$ and $I_{\F}^{\Z}(z')$ have an $\F$-transverse intersection;
\item for $\nu$-almost every point $z$ and $\nu'$-almost every point $z'$, the path $I_{\F}^{\Z}(z')$ accumulates on $I_{\F}^{\Z}(z)$.
\end{itemize}
\end{lemma}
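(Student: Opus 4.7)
\emph{Plan of proof.} The plan is to combine the one-direction crossing data for $\nu$-typical orbits and $\nu'$-typical orbits with the forcing in the strip $\tilde B$. First, since $[T]\wedge \mathrm{rot}_f(\nu')<0$, Lemma~\ref{le:travelling}~(2) yields that for $\nu'$-a.e.\ $z'$ there is a lift $\hat z'\in\widehat{\mathrm{dom}}(I)$ whose $\hat f$-orbit tends to $\hat\infty_L$ in the past and to $\hat\infty_R$ in the future. Pulling back to $\widetilde{\mathrm{dom}}(I)$, the transverse trajectory $\tilde I^{\Z}_{\tilde\F}(\tilde z')$ of a suitable lift $\tilde z'$ of $z'$ crosses $\tilde B$ from the left to the right: indeed, the $\hat$-orbit must cross the essential simple loop $\hat\Gamma_*$ separating $L(\hat\Gamma_*)\ni\hat\infty_L$ from $R(\hat\Gamma_*)\ni\hat\infty_R$, and the ``set of parameters inside $\tilde B$ is an interval'' property from Section~\ref{ss:strips} forces the entry through $\partial\tilde B^L$ and the exit through $\partial\tilde B^R$. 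By hypothesis and the choice of lift, $\nu$-a.e.\ $z$ has a lift $\tilde z$ such that $\tilde I^{\Z}_{\tilde\F}(\tilde z)$ crosses $\tilde B$ from the right to the left and draws $\tilde B$. Both $I^{\Z}_{\F}(z)$ and $I^{\Z}_{\F}(z')$ are recurrent by Proposition~\ref{prop:recurrence}.

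Second, I want to establish the following dichotomy, which is an analogue of the second item of Proposition~\ref{prop:transversestrips} for paths crossing $\tilde B$ in opposite directions: either there exists $k\in\Z$ such that $\tilde I^{\Z}_{\tilde\F}(\tilde z)$ and $T^k\tilde I^{\Z}_{\tilde\F}(\tilde z')$ have an $\tilde\F$-transverse intersection, which projects to an $\F$-transverse intersection between $I^{\Z}_{\F}(z)$ and $I^{\Z}_{\F}(z')$ in $\mathrm{dom}(I)$; or else $\tilde I^{\Z}_{\tilde\F}(\tilde z')$ accumulates on some $T^k\tilde I^{\Z}_{\tilde\F}(\tilde z)$, which projects to $I^{\Z}_{\F}(z')$ accumulating on $I^{\Z}_{\F}(z)$.

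The core geometric step is this dichotomy. Assume no such transverse intersection exists. Since $\tilde I^{\Z}_{\tilde\F}(\tilde z)$ draws $\tilde B$, it meets a leaf $\tilde\phi$ and its $T$-translate $T\tilde\phi$; together with the R-to-L crossing, its image in the (non-Hausdorff, one-dimensional) space of leaves of $\tilde\F|_{\tilde B}$ contains a full $T$-fundamental domain. Any L-to-R crossing by some translate $T^k\tilde I^{\Z}_{\tilde\F}(\tilde z')$ must therefore meet a leaf already crossed by $\tilde I^{\Z}_{\tilde\F}(\tilde z)$. The absence of an $\tilde\F$-transverse intersection forces the two paths to share leaves only in a ``one-sided'' way, so that on any common leaf the two transverse paths are locally equivalent. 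Using the drawing property to iterate this sharing and the positive recurrence of $I^{\Z}_{\F}(z')$, one obtains an equivalence of a half-infinite subpath of $\tilde I^{\Z}_{\tilde\F}(\tilde z')$ with a subpath of $T^k\tilde I^{\Z}_{\tilde\F}(\tilde z)$, that is, the desired accumulation.

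The main obstacle I expect is making precise the ``sharing leaves without transverse intersection'' step: one has to rule out the situation where the two paths share a leaf but eventually separate in such a way that the accumulation fails, and one must feed in the recurrence to extend a local coincidence of leaves to a genuine half-infinite equivalence. For the latter I will essentially redo the Sub-lemma~\ref{SlemFoliationLoop}/Lemma~\ref{LemEquivSubpath}/Lemma~\ref{LemInclus} argument used in the proof of Proposition~\ref{prop: nontransitivity3}, adapted to the present setting of two distinct recurrent transverse paths with prescribed behaviour in $\tilde B$; the output of that argument is exactly the accumulation statement demanded by the second alternative of the lemma.
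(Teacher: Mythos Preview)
Your argument has a genuine gap at the very first step. From $[T]\wedge\mathrm{rot}_f(\nu')<0$ and Lemma~\ref{le:travelling} you correctly obtain a lift $\hat z'\in\hat\infty_L\to\hat\infty_R$, but you then assert that the transverse trajectory $\tilde I^{\Z}_{\tilde\F}(\tilde z')$ of a suitable lift \emph{crosses} $\tilde B$ from the left to the right. This is not justified and is in general false. The condition $\hat z'\in\hat\infty_L\to\hat\infty_R$ constrains the dynamical orbit, and although the transverse trajectory does meet $\tilde B$, nothing forces the interval $\{t:\tilde I^{\Z}_{\tilde\F}(\tilde z')(t)\in\tilde B\}$ to be bounded. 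The trajectory may perfectly well accumulate on $\tilde\gamma_*$ (positively or negatively), or be equivalent to $\tilde\gamma_*$ at $+\infty$ or $-\infty$; these are exactly the remaining possibilities listed at the end of Section~\ref{ss:strips}. You have simply omitted them.

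This omission is fatal, because the second alternative in the lemma's conclusion comes \emph{precisely} from the accumulation case, not from any subtle failure of transverse intersection between two genuine crossings. Your proposed ``dichotomy'' for opposite-direction crossings is in fact not a dichotomy at all: if $\tilde I^{\Z}_{\tilde\F}(\tilde z)$ crosses $\tilde B$ from right to left and draws, and $\tilde I^{\Z}_{\tilde\F}(\tilde z')$ genuinely crosses from left to right (finite interval), then an appropriate $T^k$-translate of the latter \emph{always} has an $\tilde\F$-transverse intersection with the former --- this is the content of Proposition~\ref{prop:transversestrips}. There is no accumulation alternative to recover there, and the elaborate programme you sketch (redoing Lemmas~\ref{LemEquivSubpath}--\ref{LemInclus}) is aimed at a non-existent phenomenon.

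The paper's proof is much simpler. It partitions the $\nu'$-typical behaviour with respect to $\tilde B$ into three $f$-invariant sets: $W_1$ (equivalent to $\tilde\gamma_*$ at $\pm\infty$), $W_2$ (accumulates on $\tilde\gamma_*$), $W_3$ (crosses $\tilde B$ from left to right). Lemma~\ref{le:travelling} gives $\nu'(W_1\cup W_2\cup W_3)=1$, ergodicity gives one of them full measure, Lemma~\ref{le:equivalence} rules out $W_1$ since $\mathrm{rot}_f(\nu')\notin\R[T]$, and then $W_2$ yields the accumulation alternative (any path that draws $\tilde B$ meets every leaf of $\tilde B$, hence is accumulated on) while $W_3$ yields the transverse-intersection alternative via Proposition~\ref{prop:transversestrips}.
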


\begin{proof}
Define three $f$-invariant sets $W_1$, $W_2$, $W_3$ as follows:
\begin{itemize}
\item $z'\in W_1$ if it has a lift $\tilde z'$ such that $\tilde I_{\tilde{\F}}^{\Z}(\tilde z')$ is equivalent to $\tilde \gamma_*$ at $+\infty$ or at $-\infty$;
\item $z'\in W_2$ if it has a lift $\tilde z'$ such that $\tilde I_{\tilde{\F}}^{\Z}(\tilde z')$ accumulates on $\tilde\gamma_*$ positively or negatively;
\item $z'\in W_3$ if it has a lift $\tilde z'$ such that $\tilde I_{\tilde{\F}}^{\Z}(\tilde z')$ crosses $\tilde B$ from the left to the right.
\end{itemize}
By Lemma~\ref{le:travelling}, we know that $\nu'$-almost every point $z'$ has a lift $\hat z'\in \widehat{\mathrm{dom}}(I)$ that belongs to $\hat \infty_L\to\hat \infty_R$. Consequently $\nu' (W_1\cup W_2\cup W_3) =1$, which implies by ergodicity of $\nu'$ that one of the sets $W_1$, $W_2$, $W_3$ has $\nu'$-measure $1$. 
By Lemma~\ref{le:equivalence}, $\nu'(W_1)\not=1$ because $\mathrm{rot}_f(\nu')\notin \R [T]$ (by the hypothesis $[T]\, \wedge \mathrm{rot}_f(\nu')<0$). If $\nu'(W_2)=1$, then the second item of the lemma holds because for every leaf $\tilde \phi \subset \tilde B$,  $\nu$-almost every point $z$ belongs to $W_D$ and so has a lift $\tilde z\in \widetilde{\mathrm{dom}}(I)$ such that $\tilde I_{\tilde{\F}}^{\Z}(\tilde z)$ meets $\tilde \phi$. By Proposition~\ref{prop:transversestrips}, if $\nu'(W_3)=1$, then the first item of the lemma holds. 
\end{proof}

\begin{corollary}\label{co:signofintersection}
Suppose that $\nu\in{\mathcal M}(f)$ is ergodic and satisfies
$$\nu(W^{R\to L}\cap W^D)=1\,,\quad [T] \wedge \mathrm{rot}_f(\nu)<0.$$ 
Then, for $\nu$-almost every point $z$, the path $I_{\F}^{\Z}(z)$ has an $\F$-transverse self intersection.
\end{corollary}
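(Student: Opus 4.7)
The plan is to apply Lemma~\ref{le:signofintersection} with $\nu'=\nu$, then to exclude the accumulation alternative using Corollary~\ref{coro:recurrence}, and finally to upgrade the transverse intersection alternative into a self-intersection via a recurrence/density argument on the orbit of a generic point.

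First, the hypotheses of Lemma~\ref{le:signofintersection} hold for the pair $(\nu,\nu)$: the condition $\nu(W^{R\to L}\cap W^D)=1$ is the standing hypothesis on $\nu$, and $[T]\wedge \mathrm{rot}_f(\nu)<0$ gives the required sign for $\nu'=\nu$. The lemma yields that either, for $\nu\otimes\nu$-almost every $(z,z')$, the paths $I_\F^\Z(z)$ and $I_\F^\Z(z')$ have an $\F$-transverse intersection, or, for $\nu\otimes\nu$-almost every $(z,z')$, the path $I_\F^\Z(z')$ accumulates on $I_\F^\Z(z)$.

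Suppose the accumulation alternative holds. By Fubini applied twice, one may select $z'\in\operatorname{supp}(\nu)$ such that (i) $z'$ is recurrent, so $I_\F^\Z(z')$ is both positively and negatively recurrent by Proposition~\ref{prop:recurrence}, (ii) for $\nu$-almost every $z$, $I_\F^\Z(z')$ accumulates on $I_\F^\Z(z)$, and (iii) for $\nu$-almost every $z''$, $I_\F^\Z(z'')$ accumulates on $I_\F^\Z(z')$. Choose then $z=z''$ in the intersection of the two full-measure sets in (ii)--(iii) with the further condition that $I_\F^\Z(z)$ be positively and negatively recurrent. Setting $\gamma_2=I_\F^\Z(z)$, $\gamma_1=I_\F^\Z(z')$ and $\gamma_0=I_\F^\Z(z)$, we obtain a recurrent path $\gamma_1$ accumulating on $\gamma_2$ together with a recurrent path $\gamma_0$ accumulating on $\gamma_1$, which directly contradicts Corollary~\ref{coro:recurrence}. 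Therefore the accumulation alternative is impossible.

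Hence the transverse intersection alternative holds, so the set
$$A=\{(z,z')\in S\times S\mid I_\F^\Z(z)\text{ and } I_\F^\Z(z')\text{ have an }\F\text{-transverse intersection}\}$$
has full $\nu\otimes\nu$-measure. Any such intersection is realized by finite subpaths of the two whole transverse trajectories, so by Proposition~\ref{prop:stability} the set $A$ is open in $S\times S$. Fix a $\nu$-generic point $z$ whose $f$-orbit is dense in $\operatorname{supp}(\nu)$ and whose slice $F_z=\{z'\in S:(z,z')\in A\}$ has full $\nu$-measure. Since $A$ is open, so is $F_z$; as a dense open subset of $\operatorname{supp}(\nu)$, it is met by the orbit of $z$. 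Pick $n\geq 1$ with $f^n(z)\in F_z$, so that $I_\F^\Z(z)$ and $I_\F^\Z(f^n(z))$ admit an $\F$-transverse intersection at a point $\gamma_1(t_1)=\gamma_2(t_2)$ in $S$. Fixing a lift $\tilde z$ of $z$, every lift of $I_\F^\Z(f^n(z))$ is of the form $s\mapsto T\,\tilde I_{\tilde\F}^\Z(\tilde z)(s+n)$ for some $T\in\mathcal G$. Choosing the lift that realizes the intersection yields $T\in\mathcal G$ and parameters such that $\tilde I_{\tilde\F}^\Z(\tilde z)(t_1)=T\,\tilde I_{\tilde\F}^\Z(\tilde z)(t_2+n)$ with $\tilde\F$-transverse intersection there, which is precisely an $\F$-transverse self-intersection of $I_\F^\Z(z)$.

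The main delicate point in this plan is the elimination of the accumulation alternative: one must carefully use Fubini to feed three different generic points into Corollary~\ref{coro:recurrence}. The step $(z,f^n(z))\in A\Rightarrow$ self-intersection is essentially formal once the parametrization shift and the deck transformation $T$ are tracked; if needed one can argue, exactly as in the proof of Corollary~\ref{th:rotationalhorseshoe2}, that replacing $n$ by a suitable iterate ensures that the intersection produced by the openness of $A$ is not a trivial reparametrization artifact.
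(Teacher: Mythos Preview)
Your proof is correct and follows the same overall strategy as the paper (apply Lemma~\ref{le:signofintersection} with $\nu'=\nu$, then rule out accumulation via Corollary~\ref{coro:recurrence}). But you take a substantially longer route for both alternatives because you treat the lemma's conclusion as a black-box ``$\nu\otimes\nu$-a.e.'' statement and then work hard (Fubini, orbit density) to get back to a single point.

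The paper's proof is one line because the proof of Lemma~\ref{le:signofintersection} actually produces explicit $f$-invariant full-measure \emph{product} sets: the conclusion holds for every $z\in W^{R\to L}\cap W^D$ and every $z'$ in either $W_2$ or $W_3=W^{L\to R}$. With $\nu'=\nu$ one may therefore simply take $z=z'$ in the intersection. In the accumulation case this gives a recurrent transverse path accumulating on itself, which is exactly what the ``in particular'' clause of Corollary~\ref{coro:recurrence} forbids. In the transverse case one gets directly that $I^{\Z}_{\F}(z)$ intersects itself $\F$-transversally (the two lifts cross $\tilde B$ in opposite directions, hence differ by a deck transformation outside $\langle T\rangle$). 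No Fubini, no orbit-density argument is needed.

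Two small remarks on your version. First, in your three-path elimination of the accumulation alternative, the lemma only guarantees ``accumulates'' (positively or negatively), while Corollary~\ref{coro:recurrence} as stated needs $\gamma_1$ to accumulate \emph{positively} on $\gamma_2$; since you have accumulation in both directions between $I^{\Z}_{\F}(z)$ and $I^{\Z}_{\F}(z')$, you can simply swap the roles of $z$ and $z'$ (or pass to $f^{-1}$) to land in the positive case. Second, your concern about a ``trivial reparametrization artifact'' in the transverse case is unnecessary: a single lift of a transverse path cannot have a $\tilde{\F}$-transverse intersection with itself, because the leaves it meets are linearly ordered and the crossing conditions in the definition cannot be met; hence the deck transformation you obtain is automatically nontrivial.
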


\begin{proof} 
Let us apply Lemma \ref{le:signofintersection} with $\nu'=\nu$ and use the fact that a recurrent transverse path does not accumulate on itself (Corollary~\ref{coro:recurrence}).
\end{proof}

This result is still true if $\nu(W^{R\to L}\cap W^D)=1$ and $ [T]\ \wedge \mathrm{rot}_f(\nu)=0$. More precisely we have (see \cite{Lel}, Proposition 3.3.1).

\begin{lemma}\label{co:signofintersectionzero}
Suppose that $\nu\in{\mathcal M}(f)$ is ergodic and satisfies $$\nu(W^{R\to L}\cap W^D)=1\,,\enskip [T]\ \wedge \mathrm{rot}_f(\nu)=0.$$ 
Then $\nu(W^{L\to R})=1$ and for $\nu$-almost every point $z$, the path $I_{\F}^{\Z}(z)$ has an $\F$-transverse self intersection.
\end{lemma}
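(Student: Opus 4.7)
The $\F$-transverse self-intersection statement follows immediately from the second item of Proposition~\ref{prop:transversestrips} applied with $\tilde\gamma_1=\tilde\gamma_2=\tilde I^{\Z}_{\tilde \F}(\tilde z)$, where $\tilde z$ lifts a $\nu$-typical $z\in W^{R\to L}\cap W^D$. Both copies cross $\tilde B$ from right to left and draw it, so the proposition yields $k\in\Z$ such that $\tilde\gamma$ and $T^k\tilde\gamma$ have a $\tilde\F$-transverse intersection; projecting to $S$ (using that the $\langle T\rangle$-action on $\widetilde{\mathrm{dom}}(I)$ is free, so that for $k\neq 0$ the two preimages of the intersection point have distinct time parameters, while for $k=0$ the intersection is already a bona fide self-intersection of $\tilde\gamma$) gives the required $\F$-transverse self-intersection of $\gamma=I^\Z_\F(z)$ in $S$.

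For the equality $\nu(W^{L\to R})=1$, by ergodicity of $\nu$ it suffices to preclude $\nu(W^{L\to R})=0$. The plan is to argue by contradiction using Atkinson's recurrence theorem~\cite{At} applied to the integer-valued cocycle $\phi\colon \mathrm{dom}(I)\to\Z$ whose value at $z$ is the signed algebraic intersection number of the one-step transverse path $I_\F(z)$ with the loop $\Gamma_*=\hat\pi(\hat\Gamma_*)\subset S$ (after a generic perturbation of $\Gamma_*$ to avoid singularities and endpoints of trajectory segments). Since $\int\phi\,d\nu=[T]\wedge\mathrm{rot}_f(\nu)=0$ and $\nu$ is ergodic, Atkinson's theorem yields for $\nu$-a.e. $z$ that either the Birkhoff sums $S_n\phi(z)$ are bounded or they oscillate with $\limsup_n S_n\phi=+\infty$ and $\liminf_n S_n\phi=-\infty$. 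Under the contrary hypothesis $\nu(W^{L\to R})=0$, no strict $L\to R$ crossing of any $\G$-translate of $\tilde B$ by $\tilde\gamma$ occurs; combined with the strict $R\to L$ crossing of $\tilde B$ itself from $z\in W^{R\to L}$, the net contribution of the strict crossings to $S_n\phi$ is essentially monotone nondecreasing, so $S_n\phi$ cannot achieve the unbounded negative excursions required by either branch of the Atkinson dichotomy, giving a contradiction.

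The main obstacle in implementing this plan is the careful accounting of how visits $z\in W^{L\to L}$ and $z\in W^{R\to R}$ can contribute to $\phi$: each such visit produces zero net contribution to $S_n\phi$ but may yield local oscillations, so a priori such visits might be suspected of supplying the large negative values demanded by the oscillation branch of Atkinson's theorem. To rule this out, one exploits the bounded-interval structure of the time spent by $\tilde\gamma$ inside each $\G$-translate of $\tilde B$, together with Corollary~\ref{coro:recurrence} (forbidding a recurrent transverse path from accumulating on itself), in order to bound the local oscillations uniformly and hence incompatible with arbitrarily deep Atkinson excursions. The cocycle $\phi$ must be set up with sufficient care (choosing $\Gamma_*$ generically, and possibly smoothing into a closed $1$-form representative) for its integrability and integer-valuedness on a full-measure set to hold.
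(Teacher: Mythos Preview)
Your logical structure is inverted and the first step contains a genuine error. The paper itself does not prove this lemma but cites Lellouch (\cite{Lel}, Proposition 3.3.1); however, the remark following the statement makes the intended logic explicit: the equality $\nu(W^{L\to R})=1$ is \emph{the key point that permits to get the second conclusion}. In other words, one first establishes that $\nu$-typical points also have lifts crossing $\tilde B$ from left to right, and \emph{then} the $\F$-transverse self-intersection follows by applying the second item of Proposition~\ref{prop:transversestrips} to one lift crossing $R\to L$ (and drawing) and another lift of the same point crossing $L\to R$.

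Your attempt to derive the self-intersection directly, by applying Proposition~\ref{prop:transversestrips} with $\tilde\gamma_1=\tilde\gamma_2=\tilde I^\Z_{\tilde\F}(\tilde z)$ both crossing $R\to L$, does not work. The second item of that proposition, as actually used throughout the paper (see the proofs of Lemma~\ref{le:signofintersection}, Corollary~\ref{co:signofintersection}, and Lemma~\ref{le:nointersection}), requires the two paths to cross $\tilde B$ in \emph{opposite} directions; the phrasing ``from the right to the left'' for $\tilde\gamma_2$ in the displayed statement is a typo. Two transverse paths crossing the strip in the same direction need not have an $\tilde\F$-transverse intersection with any $T^k$-translate, and your handling of the case $k=0$ (``already a bona fide self-intersection'') is not justified: a path trivially shares all its leaves with itself without producing a transverse self-intersection in the sense of \S\ref{ss.intersections}. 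So the self-intersection conclusion genuinely needs $\nu(W^{L\to R})=1$ as input, and cannot be obtained first.

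Your plan for $\nu(W^{L\to R})=1$ via Atkinson's theorem is in the right spirit --- this is exactly the kind of ``more subtle argument of ergodic theory'' alluded to in the remark, and the cocycle you describe satisfies $\int\phi\,d\nu=[T]\wedge\mathrm{rot}_f(\nu)=0$. But the sketch is incomplete at the point you yourself flag: controlling the contribution of visits and of crossings of the other $\G$-translates of $\tilde B$ is the entire content of the argument, and ``bounded-interval structure together with Corollary~\ref{coro:recurrence}'' does not by itself yield a uniform bound on the negative oscillations of $S_n\phi$. Lellouch's proof organises this differently (working with first-return maps to a well-chosen flow-box and a more careful decomposition of the trajectory into excursions), and that bookkeeping is not something one can wave away.
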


\begin{remark*} 
The conclusion $\nu(W^{L\to R})= 1$ is not explicitely stated in  \cite{Lel}, Proposition 3.3.1. But, as explained by the author at the beginning of the proof, it is the key point that permits to get the second conclusion. The first condition says that there are points ``that go up'', which implies by the second condition, that there are points ``that go down''. We have a situation very similar to the one that occurs under the hypothesis of Corollary \ref{co:signofintersection}, but more subtle arguments of ergodic theory are needed. 
\end{remark*}

\begin{lemma} \label{le:nointersection}
Suppose that there exist $\lambda\in{\mathcal M}(f)$ such that $\mathrm{supp}(\lambda)=S$. 
If $\nu\in{\mathcal M}(f)$ is ergodic and satisfies $$\nu(W^{R\to L}\cap W^D)=1\,, \enskip[T]\wedge\mathrm{rot}_f(\nu) >0,$$ then there exists $\nu'\in{\mathcal M}(f)$ ergodic, such that one of the following assertions holds:
\begin{itemize} 
\item for $\nu$-almost every point $z$ and $\nu'$-almost every point $z'$, the paths $I_{\F}^{\Z}(z)$ and $I_{\F}^{\Z}(z')$ have an $\F$-transverse intersection;
\item for $\nu$-almost every point $z$ and $\nu'$-almost every point $z'$, the path $I_{\F}^{\Z}(z')$ accumulates on $I_{\F}^{\Z}(z)$.
\end{itemize}
\end{lemma}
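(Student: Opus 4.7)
The plan is to dichotomize on whether there exists an ergodic measure with strictly negative intersection against $[T]$, using the full support of $\lambda$ only in the complementary hard case.

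First, I would ask whether there exists an ergodic $\nu' \in \mathcal{M}(f)$ with $[T]\wedge\mathrm{rot}_f(\nu')<0$. If so, Lemma~\ref{le:signofintersection} applied to the pair $(\nu,\nu')$ --- our $\nu$ already satisfies the required hypothesis $\nu(W^{R\to L}\cap W^D)=1$ --- immediately yields either the $\F$-transverse intersection (a) or the accumulation (b), and the proof is complete. So the substantive case is when no such $\nu'$ exists: every ergodic invariant measure $\mu$ satisfies $[T]\wedge\mathrm{rot}_f(\mu)\geq 0$, and by ergodic decomposition so does $\lambda$.

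I would then split into the subcase $[T]\wedge\mathrm{rot}_f(\lambda)>0$ and the subcase $[T]\wedge\mathrm{rot}_f(\lambda)=0$. In the strictly positive subcase, I would select an ergodic component $\nu'$ of $\lambda$ with $[T]\wedge\mathrm{rot}_f(\nu')>0$. By Lemma~\ref{le:travelling}, $\nu'$-a.e.\ lift goes from $\hat\infty_R$ to $\hat\infty_L$ in $\widehat{\mathrm{dom}}(I)$, so it must cross $\hat B$ from right to left. Combining with an analog of the drawing argument of Lemma~\ref{co:signofintersectionzero} (or by direct flux considerations in the cover $\widehat{\mathrm{dom}}(I)$), I would upgrade this to $\nu'(W^{R\to L}\cap W^D)=1$, and then invoke Proposition~\ref{prop:transversestrips}(2) on lifts of typical $\nu$- and $\nu'$-trajectories, both crossing $R\to L$ with one of them drawing $\tilde B$: this produces, after an appropriate deck translation $T^k$, an $\tilde{\F}$-transverse intersection and hence conclusion (a).

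In the subcase $[T]\wedge\mathrm{rot}_f(\lambda)=0$, Lemma~\ref{le:intersecting} becomes available and provides the structural fact that every essential simple loop of $\widehat{\mathrm{dom}}(I)$ meets its image under $\hat f$. Here the full support of $\lambda$ is essential: $\lambda$ charges the open set $\hat\pi(\hat B)$, so some ergodic component $\nu'$ has support meeting this set, and by averaging $[T]\wedge\mathrm{rot}_f(\nu')=0$ for a positive-mass set of components. For such a $\nu'$ I would run the trichotomy used in the proof of Lemma~\ref{le:signofintersection}: the $\nu'$-typical lifted trajectory either (i) is equivalent to $\tilde\gamma_*$ at $\pm\infty$, (ii) accumulates on $\tilde\gamma_*$, or (iii) crosses $\tilde B$ from left to right. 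Option (i) is ruled out by Lemma~\ref{le:equivalence} when $\mathrm{rot}_f(\nu')\notin\R[T]$; option (iii) paired with the $R\to L$ crossing and drawing of $\nu$-trajectories gives conclusion (a) via Proposition~\ref{prop:transversestrips}(2); and option (ii) gives conclusion (b), using the drawing condition $\nu(W^D)=1$ to ensure that the accumulated leaf is met by $\nu$-typical trajectories. The case $\mathrm{rot}_f(\nu')\in\R[T]$ is handled using the hypothesis $[T]\wedge\mathrm{rot}_f(\nu')=0$ together with a variant of Lemma~\ref{co:signofintersectionzero} forcing transverse self-intersection that then links back to $\nu$-trajectories through the density provided by $\lambda$.

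The main obstacle I anticipate is this last subcase: rigorously choosing the ergodic component $\nu'$ of $\lambda$ that both interacts with the strip $\tilde B$ and falls into the productive branch of the trichotomy. The delicate bookkeeping is in showing that full support of $\lambda$, combined with Lemma~\ref{le:intersecting} and an analog of Lemma~\ref{co:signofintersectionzero} for $[T]$-neutral ergodic measures, forbids every $\nu'$ from being simultaneously trapped in the $\tilde\gamma_*$-equivalence case and isolated from $\nu$-trajectories --- thereby ensuring that at least one component realizes either the crossing alternative or the accumulation alternative.
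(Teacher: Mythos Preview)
Your opening move---if some ergodic $\nu'$ has $[T]\wedge\mathrm{rot}_f(\nu')<0$, apply Lemma~\ref{le:signofintersection}---is correct and matches the paper. The gaps are in the complementary case.

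In your subcase $[T]\wedge\mathrm{rot}_f(\lambda)>0$ you pick $\nu'$ with $[T]\wedge\mathrm{rot}_f(\nu')>0$, claim $\nu'(W^{R\to L}\cap W^D)=1$, and invoke Proposition~\ref{prop:transversestrips}. Neither step is justified: Lemma~\ref{le:travelling} only says the $\nu'$-typical lift travels from $\hat\infty_R$ to $\hat\infty_L$, not that it crosses $\tilde B$ (it could visit, accumulate, or be equivalent at infinity); and even granting both $\nu$- and $\nu'$-trajectories cross $R\to L$, two paths crossing a strip in the \emph{same} direction do not in general intersect $\tilde{\F}$-transversally---every actual application of Proposition~\ref{prop:transversestrips}(2) in the paper pairs an $R\to L$ crossing with an $L\to R$ one.

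In your subcase $[T]\wedge\mathrm{rot}_f(\lambda)=0$ you pick $\nu'$ with $[T]\wedge\mathrm{rot}_f(\nu')=0$ and try to run the trichotomy of Lemma~\ref{le:signofintersection}. But that trichotomy rests on Lemma~\ref{le:travelling}, which needs a \emph{strict} sign; with zero intersection nothing forces the $\nu'$-lift to go end to end, so the options ``cross $L\to R$ / accumulate / equivalent at $\pm\infty$'' are not exhaustive. Your fallback to Lemma~\ref{co:signofintersectionzero} also fails: that lemma requires $\nu'(W^{R\to L}\cap W^D)=1$, which you have not arranged for your chosen $\nu'$.

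The idea you are missing is localization: restrict $\lambda$ to the open $f$-invariant set $W^{R\to L}\cap W^D$ (nonempty because $\nu$ charges it, hence of positive $\lambda$-measure by full support). If $[T]\wedge\mathrm{rot}_f(\lambda_{W^{R\to L}\cap W^D})\le 0$, an ergodic component $\nu'$ of this restriction automatically satisfies $\nu'(W^{R\to L}\cap W^D)=1$ with $[T]\wedge\mathrm{rot}_f(\nu')\le 0$; the strict case is Lemma~\ref{le:signofintersection}, and the equality case is now \emph{exactly} the hypothesis of Lemma~\ref{co:signofintersectionzero}, which yields $\nu'(W^{L\to R})=1$ and hence a transverse intersection with $\nu$-trajectories via Proposition~\ref{prop:transversestrips}. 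If instead $[T]\wedge\mathrm{rot}_f(\lambda_{W^{R\to L}\cap W^D})>0$, the paper uses the standing assumption $[T]\wedge\mathrm{rot}_f(\lambda)=0$ (coming from the construction of $T$ in Lemma~\ref{le:goodstrip}; tacitly used in the proof though not restated in the lemma) together with $\mathrm{rot}_f(\lambda_{\mathrm{fix}(I)})=0$ to force $[T]\wedge\mathrm{rot}_f(\lambda_{\mathrm{dom}(I)\setminus(W^{R\to L}\cap W^D)})<0$, producing an ergodic $\nu'$ with strictly negative intersection and reducing again to Lemma~\ref{le:signofintersection}.
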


\begin{proof} 
By hypothesis $W^{R\to L}\cap W^D$ is a non empty invariant open set and so we have
$$\lambda(W^{R\to L}\cap W^D)>0.$$

Suppose first that $[T]\wedge\mathrm{rot}_f(\lambda_{W^{R\to L}\cap W^D}) \leq 0$. Using the ergodic decomposition of $\lambda_{W^{R\to L}\cap W^D}$, we deduce that there exists $\nu'\in{\mathcal M}(f)$ ergodic such that $\nu'(W^{R\to L}\cap W^D)=1$ and $[T]\wedge\mathrm{rot}_f(\nu') \leq 0$.  If  $[T]\wedge\mathrm{rot}_f(\nu')<0$, we can apply Lemma \ref{le:signofintersection} and so the conclusion of Lemma \ref{le:nointersection} holds. If  $[T]\wedge\mathrm{rot}_f(\nu')=0$ we know that  $\nu'(W^{L\to R})= 1$ by Lemma \ref{co:signofintersectionzero} and so the first item of the conclusion of Lemma \ref{le:nointersection} holds thanks to Proposition \ref{prop:transversestrips}.

Suppose now that $[T]\wedge\mathrm{rot}_f(\lambda_{W^{R\to L}\cap W^D}) > 0$. From the equalities 
$$[T]\wedge\mathrm{rot}_f(\lambda)=0$$
and
$$ \mathrm{rot}_f(\lambda_{\mathrm{fix}(I)})=0\enskip\mathrm{ if} \enskip\lambda(\mathrm{fix}(I))\not=0,$$ we deduce that 
$$\lambda\left(\mathrm{dom}(I)\setminus (W^{R\to L}\cap W^D)\right)>0$$ and  
$$[T]\wedge\mathrm{rot}_f(\lambda_{\mathrm{dom}(I)\setminus (W^{R\to L}\cap W^D)})<0.$$
Using the ergodic decomposition of $\lambda_{\mathrm{dom}(I)\setminus (W^{R\to L}\cap W^D)}$, we deduce that there exists $\nu'\in{\mathcal M}(f)$ such that $[T]\wedge\mathrm{rot}_f(\nu') < 0$. Here again we refer to Lemma \ref{le:signofintersection} to ensure that the conclusion of Lemma \ref{le:nointersection} holds.
\end{proof}

Let us conclude this section with a new result that will be useful for our purpose.

\begin{proposition} \label{prop:conditionhorsehoestrip}  
Suppose that $\nu\in{\mathcal M}(f)$ and $\nu'\in{\mathcal M}(f)$ are ergodic and that for $\nu$-almost every point $z\in {\mathrm{dom}}(I) $ and $\nu'$-almost every point $z'\in {\mathrm{dom}}(I) $, the path $I_{\F}^{\Z}(z')$ accumulates on  $I_{\F}^{\Z}(z)$, then $\mathrm{rot}_{f}(\nu)\wedge \mathrm{rot}_{f}(\nu')\neq 0$.
\end{proposition}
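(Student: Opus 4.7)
I proceed by contradiction, setting $v = \mathrm{rot}_f(\nu)$ and $v' = \mathrm{rot}_f(\nu')$ and assuming $v\wedge v'=0$. Picking $\nu$-typical $z$ and $\nu'$-typical $z'$ (both recurrent, since typical points of an ergodic measure are recurrent), set $\gamma_1=I_\F^\Z(z')$ and $\gamma_2=I_\F^\Z(z)$; after possibly replacing $f$ by $f^{-1}$, I may assume $\gamma_1$ accumulates \emph{positively} on $\gamma_2$. Proposition~\ref{prop: nontransitivity3} then produces a transverse simple loop $\Gamma_*\subset S$, a non-trivial deck transformation $T\in\mathcal{G}$, the associated $T$-strip $\tilde B\subset\widetilde{\mathrm{dom}}(I)$ and a $T$-invariant transverse line $\tilde\gamma_*$ lifting $\Gamma_*$, such that the chosen lift of $\gamma_1$ is equivalent to $\tilde\gamma_*$ at $+\infty$ and the accumulated leaf $\tilde\phi_{\tilde\gamma_2(b_2)}$ lies either in $\partial\tilde B^L$ or in $\partial\tilde B^R$. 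Applying Lemma~\ref{le:equivalence} in its improved form (the $a>0$ stated in the footnote) to $\nu'$ yields $v'=a[T]$ with $a>0$, so our assumption reduces to $v\wedge[T]=0$.

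By the symmetry $L\leftrightarrow R$, I assume $\tilde\phi_{\tilde\gamma_2(b_2)}\subset\partial\tilde B^L$, so that Proposition~\ref{prop: nontransitivity3}(3) gives $\tilde B\subset R(\tilde\phi)$ for every leaf $\tilde\phi\subset\partial\tilde B^L$. Positive transversality then implies that a transverse trajectory can exit $\tilde B$ through $\partial\tilde B^L$ (crossing such a $\tilde\phi$ carries it from $R(\tilde\phi)\supset\tilde B$ to $L(\tilde\phi)$, outside $\tilde B$), but cannot enter $\tilde B$ through $\partial\tilde B^L$ (this would require the forbidden $L\to R$ crossing). Combined with the equivalence $\tilde\gamma_2|_{[a_2,b_2)}\equiv\tilde\gamma_*|_{[s_2,+\infty)}$ established in the proof of Proposition~\ref{prop: nontransitivity3}, which forces $\tilde\gamma_2$ to meet $T^k\tilde\phi_{\tilde\gamma_*(s_2)}$ for arbitrarily large $k\geq 0$, this gives $\nu(W^D)=1$, and, whenever the (single, by Proposition~\ref{prop:transversestrips}-type considerations) maximal interval of $t$ with $\tilde\gamma_2(t)\in\tilde B$ is of the form $[\alpha,b_2]$ with $\alpha>-\infty$, it also gives $\nu(W^{R\to L}\cap W^D)=1$. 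Lemma~\ref{co:signofintersectionzero}, applied with $v\wedge[T]=0$, then forces $\nu(W^{L\to R})=1$, contradicting the disjointness of $W^{R\to L}$ and $W^{L\to R}$.

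The main obstacle is the degenerate case $\alpha=-\infty$, in which $\tilde\gamma_2$ lies in $\tilde B$ for all $t\leq b_2$ and the ``crossing'' categories do not directly apply. I expect to dispose of this case as follows: the equivalence $\tilde\gamma_2|_{[a_2,b_2)}\equiv\tilde\gamma_*|_{[s_2,+\infty)}$ combined with the fact that $\tilde\gamma_2|_{(-\infty,a_2)}$ remains in $\tilde B$ should force $\tilde\gamma_2$ to be equivalent to $\tilde\gamma_*$ at $-\infty$ in the sense of Lemma~\ref{le:equivalence}, whence $v\in\R[T]$ by that lemma. At the same time, by Proposition~\ref{prop: nontransitivity3}(2) the path $\gamma_1$ is globally equivalent to the natural lift of $\Gamma_*$, so $\gamma_1$ and $\gamma_2$ would be equivalent at $-\infty$; a careful application of Lemma~\ref{LemEquivSubpath} together with Corollary~\ref{coro:recurrence} (a recurrent transverse path does not accumulate on itself) should then force $\tilde\phi_{\tilde\gamma_2(b_2)}$ to lie \emph{inside} $\tilde B$ rather than on its boundary, contradicting the conclusion of Proposition~\ref{prop: nontransitivity3}. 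This recurrence-based exclusion of the drift case is the technically most delicate step; modulo it, the proof splits cleanly into the crossing case dispatched by Lemma~\ref{co:signofintersectionzero} and this degenerate drift case.
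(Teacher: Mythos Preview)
Your overall strategy---contradiction via Lemma~\ref{co:signofintersectionzero}---is different from the paper's and, once two imprecisions are fixed, it works.

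First, the ``degenerate case'' $\alpha=-\infty$ does not arise, and your elaborate plan to exclude it is both unnecessary and (as you admit) not carried out. The point is that $\gamma_2=I_{\F}^{\Z}(z)$ meets the leaf $\phi_{\gamma_2(b_2)}$, which by Proposition~\ref{prop: nontransitivity3} lies in $\partial B$, hence outside the open annulus $B$. Since $z$ is recurrent, $\gamma_2$ meets this leaf---and therefore leaves $B$---infinitely often in both time directions. Because $B$ is an annulus (Proposition~\ref{prop: nontransitivity3}(1)), its preimage in $\widetilde{\mathrm{dom}}(I)$ is a disjoint union of translates of $\tilde B$; hence any lift of $\gamma_2$ lying in $\tilde B$ on some interval must exit $\tilde B$ at a finite past time as well. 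This one-line recurrence argument is exactly how the paper obtains $\nu(W^{R\to L})=1$.

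Second, your final contradiction is misformulated: the sets $W^{R\to L}$ and $W^{L\to R}$ are \emph{not} disjoint in general, since they are projections to $\mathrm{dom}(I)$ and a single point may have several lifts behaving differently relative to $\tilde B$. What you actually have is stronger: since $\tilde B\subset R(\tilde\phi)$ for every $\tilde\phi\subset\partial\tilde B^L$, no transverse path can enter $\tilde B$ through $\partial\tilde B^L$, whence $\tilde W^{L\to R}=\emptyset$ and $W^{L\to R}=\emptyset$. This immediately contradicts $\nu(W^{L\to R})=1$.

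With these fixes your argument is complete and genuinely shorter than the paper's. The paper does not argue by contradiction: after establishing $\nu(W^{R\to L})=1$ and $\mathrm{rot}_f(\nu')=a[T']$ with $a>0$ (Lemma~\ref{claim:proportional}), it proves the strict inequality $[\Gamma_*]\wedge\mathrm{rot}_f(\nu)>0$ directly, choosing a flow-box $U\subset B$, defining an explicit nonnegative function $\delta_U$ counting right-to-left crossings of $B$ along return excursions of $\nu$-typical points, and showing $\int_U\delta_U\,d\nu>0$. Your route trades this computation for a black-box invocation of Lemma~\ref{co:signofintersectionzero}; this is cleaner but hides the quantitative content (the paper actually exhibits the sign of the intersection number).
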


\begin{proof}
There is no loss of generality by supposing that $I_{\F}^{\Z}(z')$ accumulates positively on  $I_{\F}^{\Z}(z)$. By Proposition
\ref{prop: nontransitivity3}, there exists a transverse simple loop $\Gamma_*\subset\Sigma$ such that 
\begin{itemize}
\item $I_{\F}^{\Z}(z')$ is equivalent to the natural lift of $\Gamma_*$;
\item  the union $B$ of leaves met by $\Gamma_*$ is an open annulus of $S$;
\item if $\tilde\gamma_*$ is a lift  of $\Gamma_*$ to $\widetilde{\mathrm{dom}}(I)$, then for $\nu$-almost every point $z\in {\mathrm{dom}}(I) $, there is a lift $\tilde z\in \widetilde{\mathrm{dom}}(I)$ such that $\tilde I_{\F}^{\Z}(\tilde z)$ meets  $\partial \tilde B^L$;
\item  for every $\tilde \phi\subset \partial \tilde B^L$ it holds that  $\tilde B\subset R(\tilde \phi)$.
\end{itemize}
The point $z$ can be chosen recurrent and so every leaf of $\F$ met by $I_{\F}^{\Z}(z)$ is met infinitely many often in the past and in the future. In particular, $I_{\F}^{\Z}(z)$ goes in and out of $B$ infinitely many times, but it never enters in $B$ on the left because $\tilde B\subset R(\tilde \phi)$
for every $\tilde \phi\subset \partial \tilde B^L$. We deduce that every lift  $\tilde z\in \tilde B$ of $z$ crosses $\tilde B$ from the right to the left. So, referring to the notations of the whole section, we have $\nu(W^{R\to L})=1$.

\begin{lemma}\label{claim:proportional} 
Let $\nu'$ be an $f$-invariant ergodic probability measure such that $\nu'(\mathrm{dom}(I))=1$. Suppose that there is some deck transformation $T\in\G\setminus\{\mathrm{Id}\}$ and a $T$-strip $\tilde B$ projecting an an open annulus $B$ of $S$ such that $\nu'$-almost every point $z'\in \dom(I)$ satisfies $I^{\Z}_{\F}(z')\subset B$.
Then there exists $a>0$ such that $\mathrm{rot}_f(\nu')=a[T]$.
\end{lemma}

\begin{proof}
By Lemma \ref{le:equivalence} there exists $a\geq 0$ such that $\mathrm{rot}_f(\nu')=a[T]$. We need to prove that $a\neq 0$. 
Let $U'\subset B$ be a topological open disk such that $\nu'(U')\not=0$. We can suppose that $U'$ is a flow-box that satisfies the conclusion of Proposition \ref{prop:flowbox}. Write $\varphi_U':U'\to U'$ for the first return map of $f$ and $\tau_U' :U'\to \N\setminus\{0\}$ for  the time of first return map, which are defined $\nu'$-almost everywhere on $U'$. Note that $\nu'\vert_{U'}$ is an ergodic invariant measure for $\varphi_U'$. 
Fix a lift $\tilde U'\subset \tilde B$ of $U'$. For every point $z\in U'$ such that $\tau_U'(z)$ exists, denote $\tilde z$ the lift of $z$ that is in $\tilde U'$ and $\delta_{U'}(z)$ the integer such that $\tilde f^{\tau_{U'}(z)}(\tilde z)\in T^{\delta(z)} \tilde U'$. One gets a map $\delta_{U'}:U'\to \Z$ defined $\nu'$-almost everywhere on $U'$. Remind that a map $\rho_{U'}: U'\to H_1(S,\Z)$ has been defined in the introduction and that $\rho_{U'}(z)=\delta(z)[T]$. Note also that $\delta(z)>0$. The measure $\nu'$ being ergodic, by Kac's theorem one knows that 
$$\int_{U'} \tau_{U'} \, d\nu'=\nu'\left(\bigcup_{k\geq 0} f^k({U'})\right)=\nu'\left(\bigcup_{k\in\Z} f^k({U'})\right)=1,$$
and consequently that  $\tau_{U'}^*(z)=1/\nu'({U'})$ for $\nu'$-almost every point $z\in {U'}$, where $\tau_{U'}^*$ and $\rho_{U'}^*$ has been defined in \eqref{eq:def*} (page \pageref{eq:def*}). Furthermore, for $\nu'$-almost every point $z\in {U'}$, it holds that 
$$\mathrm{rot}_f(\nu')=\mathrm{rot}_f(z)=\rho_{U'}{}^*(z)/\tau_{U'}^*(z)=\nu'({U'})\rho_{U'}^*(z)=\left(\int_{U'} \delta(z)\, d\nu'(z)\right) [T].$$
Observe now that $\int_{U'} \delta(z)\, d\nu'(z)>0$. This proves the lemma.
\end{proof}

To prove Proposition \ref{prop:conditionhorsehoestrip} it remains to prove that $\mathrm{rot}_{f}(\nu')\wedge \mathrm{rot}_{f}(\nu)> 0$ which would lead to the result with Theorem~\ref{th:lellouch}. Let $U\subset B$ be a topological open disk such that $\nu(U)\not=0$ and that is a flow-box that satisfies the conclusion of Proposition \ref{prop:flowbox}. Perturbing $\Gamma_*$ and reducing $U$ if necessary, one can suppose that $U\cap \Gamma_*=\emptyset$. Write $\varphi_U:U\to U$ for the first return map of $f$ and $\tau_U :U\to \N\setminus\{0\}$ for  the time of first return map, which are defined $\nu$-almost everywhere on $U$. We will define a function $\delta_U:U\to \Z$ in a different way. For every point $z\in U$ such that $\tau_U(z)$ exists, set $m=\tau_U(z)$ and consider the set
$$X_z=\big\{t\in[0, m]\,\vert\, I_{\F}^{t}(z)\subset U\big\}.$$
Suppose first that $X_z\not=[0,\tau_U(z)]$. Then denote $(J_{\xi})_{\xi\in\Xi}$ the family of connected components of $X_z$. One component $J_{\xi^-}$ can be written $J_{\xi^-}=[0, b_{\xi^-})$, one component $J_{\xi^+}$ can be written $J_{\xi^-}=(a_{\xi^+}, m]$ and the remaining components can be written  $J_{\xi}=(a_{\xi}, b_{\xi})$. 
Consider such a component $J_{\xi}$. The path $I_{\F}^{m}(z)$ can be lifted to a path $\tilde I_{\tilde{\F}}^{m}(\tilde z)$ (the lift depending on $\xi$) such that $\tilde I_{\tilde{\F}}^{n}(\tilde z)((a_{\xi}, b_{\xi}))\subset \tilde B$.  
By assumptions, one knows that $\tilde I_{\tilde{\F}}^{m}(\tilde z)(a_{\xi})\in \partial \tilde B^R$ and we set 
$$\delta_{\xi}=\begin{cases} 
0 & \mathrm{if}\enskip \tilde I_{\tilde{\F}}^{m}(\tilde z)(b_{\xi})\in \partial \tilde B^R,\\  
1 & \mathrm{if}\enskip \tilde I_{\tilde{\F}}^{m}(\tilde z)(b_{\xi})\in \partial \tilde B^L. 
\end{cases}$$
In the first situation $\tilde I_{\tilde{\F}}^{m}(\tilde z)|_{[a_{\xi}, b_{\xi}]}$ visits $\tilde B$ on the right, in the second one it crosses $\tilde B$ for the right to the left. 
Note that there are finitely many $\xi\in\Xi$ such that $\delta_{\xi}=1$ because there are finitely many $\xi\in\Xi$ such that $I_{{\F}}^{m}(z)([a_{\xi}, b_{\xi}])\cap \Gamma_*\not=\emptyset$. Indeed, $\tilde \gamma_*$ is contained in $\tilde B$, while each such $I_{{\F}}^{m}(z)([a_{\xi}, b_{\xi}])$ meets $\partial \tilde B$; the conclusion follows by a compactness argument.

The path $I_{\F}^{m}(z)$ can be lifted to a path $\tilde I_{\tilde{\F}}^{m}(\tilde z)$ such that $\tilde I_{\tilde{\F}}^{n}(\tilde z)([0, b_{\xi_-}))\subset \tilde B$. Set 
$$\delta_{\xi_-}=\begin{cases} 
1/2 & \mathrm{if}\enskip  \tilde I_{\tilde{\F}}^{m}(\tilde z)(b_{\xi_-})\in \partial B^L,\\  
-1/2  &\mathrm{if}\enskip \tilde I_{\tilde{\F}}^{n}(\tilde z)(b_{\xi_-})\in \partial B^R. 
\end{cases}$$
Finally, set $\delta_{\xi_+}=1/2$.
Observe now that we have 
$$ [\Gamma_*]\wedge\rho_U(z)= \delta_U(z),$$
where $\rho_U$ is defined page~\pageref{eq:def*}, and
$$\delta_{U}(z)=\begin{cases} 
\sum_{\xi\in\Xi}\delta_i & \mathrm{if}\enskip X_z\not=[0,\tau_U(z)],\\  
0 & \mathrm{if}\enskip X_z=[0,\tau_U(z)] .
\end{cases}$$ 
The function $\delta_U$ is non negative but does not vanishes almost $\nu_U$-everywhere because $I^{\Z}_{\mathcal Z}(z)$ does not stay in $B$ for $\nu$-almost every point. So, we have
$$[\Gamma]\wedge\mathrm{rot}_f(\nu)=\nu(U)[\Gamma]\wedge\rho_U^*(z)=\int_U \delta_U(z)\, d\nu(z)>0.$$
\end{proof}

\begin{remark*}
Using Lellouch's techniques \cite[Section 3.4]{Lel}, one can more generally show that if $z$ and $z'$ are recurrent points (not necessarilly trajectories of typical points for ergodic measures) and if  $I_{\F}^{\Z}(z')$ accumulates on  $I_{\F}^{\Z}(z)$, then $f$ has a topological horseshoe\footnote{Be careful, in this case we do not have that $I_{\F}^{\Z}(z')$ and  $I_{\F}^{\Z}(z)$ intersect $\F$-transversally.}. However, we will not use this property in the sequel.
\end{remark*}

\section{Proof of the main theorem}\label{s.beginningproof}

We suppose in this section that the hypotheses of Theorem \ref{th:main} are satisfied. 
We consider an oriented closed surface $S$ of genus $g\geq 2$ and a homeomorphism $f$ of $S$ isotopic to the identity that preserves a Borel probability measure $\lambda$ with total support such that $\mathrm{rot}_f(\lambda)=s\rho$, with $\rho\in H_1(S,\Z)\setminus\{0\}$ and $s\in\R$. We keep the notations of the article. 
We consider a Borel probability measure $\nu$, invariant by $f$ and ergodic. We consider a neighborhood $\mathcal U$ of $\mathrm{rot}_f(\nu)$ in $H_1(S,\R)$ and want to prove that there exists a homotopical interval of rotation $(\kappa,r)$ such that  $[\kappa]/r\in \mathcal U$.

There is no loss of generality by supposing that $f$ is not the identity map; in this case one can consider a maximal isotopy $I$ of $f$ by Theorem \ref{th:BCL} with non empty domain. By Theorem \ref{th.transversefoliation}, one can find a non singular foliation $\F$ on $\mathrm{dom}(I)$ transverse to $I$. Remind that: 

\begin{itemize}
\item $\widetilde{\mathrm{dom}}(I)$ is the universal covering space of $\mathrm{dom}(I)$;
\item $\widetilde{\mathrm{dom}}(I)_X$ is the connected component of $\widetilde{\mathrm{dom}}(I)$ that contains a given connected set $X\subset\widetilde{\mathrm{dom}}(I)$;
\item $\tilde \pi : \widetilde{\mathrm{dom}}(I)\to\mathrm{dom}(I)$ is the covering projection;
\item $\mathcal G$ is the group of covering automorphism of $\tilde\pi$;
\item $[T]\in H_1(S,\Z)$ is the homology class of a loop $\Gamma\subset \mathrm{dom}(I)$ associated to $T\in\mathcal G$;
\item $\tilde I$ is the lift of $I|_{\mathrm{dom}(I)}$ to $\widetilde{\mathrm{dom}}(I)$ that starts from the identity;
\item $\tilde f$ is the lift of $f|_{ \mathrm{dom}(I)}$  to $\widetilde{\mathrm{dom}}(I)$ that is the end point of $\tilde I$;
\item $\tilde{\F}$ is the lift of $\F$ to $\widetilde{\mathrm{dom}}(I)$;
\item $I_{\F}^{\Z}(z)$ is the whole $\F$-transverse trajectory of a point $z\in \mathrm{dom}(I)$;
\item $\tilde I_{\tilde{\F}}^{\Z}( \tilde z)$ is the whole $\tilde {\F}$-transverse trajectory of a point $\tilde z\in \widetilde{\mathrm{dom}}(I)$. 
\end{itemize}

Suppose first that $\mathrm{rot}_f(\nu)\wedge\mathrm{rot}_f(\lambda)\not =0$. Using the  ergodic decomposition of $\lambda$, we deduce that there exists $\nu'\in {\mathcal M}(f)$ ergodic such that $\mathrm{rot}_f(\nu)\wedge\mathrm{rot}_f(\nu')\not =0$. By Theorem~\ref{th:lellouch}, we know that $f\vert_{\mathrm{dom}(I)}$ has a rotational topological horseshoe of type $(\kappa,r)$ with $[\kappa]/r\in \mathcal U$. If $\Gamma\subset \mathrm{dom}(I)$ is a loop associated to $T$, then for every $p/q\in[0,1]$ written in an irreducible way, there exists a periodic point $z\in\mathrm{dom}(I)$ of period $rq$ such that $I^{rq}(z)$ is freely homotopic to $[\Gamma]^p$ in  $\mathrm{dom}(I)$: it is freely homotopic to $[\Gamma]^p$ in $S$. Hence, $f$ has a homotopical interval of rotation of type $(\kappa, r)$ such that $[\kappa]/r\in \mathcal U$, and the conclusion of Theorem \ref{th:main} holds. 

It remains to study the case where  $\mathrm{rot}_f(\nu)\wedge \mathrm{rot}_f(\lambda)=0$. 

\begin{lemma} \label{le:goodstrip}  
Suppose that $\mathrm{rot}_f(\nu)\wedge \mathrm{rot}_f(\lambda)=0$. There exists $T\in \G\setminus\{\mathrm {Id}\}$ satisfying $[T]\wedge \mathrm{rot}_f(\lambda)=0$ and a $T$-strip $\tilde B$ such that $\nu$-almost every point $z\in \mathrm{dom}(I)$ has a lift  $\tilde z$ such that $\tilde I_{\tilde{\F}}^{\Z}(\tilde z)$ draws $\tilde B$.
Moreover if $\mathcal U$ is a neighborhood of $\mathrm{rot}_f(\nu)$, one can suppose that there exists $r\geq1$ such that $[T]/r\in \mathcal U$.
\end{lemma}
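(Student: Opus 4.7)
The case $\mathrm{rot}_f(\lambda)=0$ reduces to the simpler task of approximating $\mathrm{rot}_f(\nu)$ by $[T]/r$, since $[T]\wedge\mathrm{rot}_f(\lambda)=0$ is automatic. Hence assume $s\neq 0$ and set $\rho:=\mathrm{rot}_f(\lambda)/s\in H_1(S,\Z)$; the hypothesis becomes $\mathrm{rot}_f(\nu)\wedge\rho=0$. We may also assume $\nu(\dom(I))=1$: otherwise $\nu$ is a Dirac mass at a point of $\mathrm{fix}(I)$, $\mathrm{rot}_f(\nu)=0$, and the conclusion is immediate. Let $\alpha$ be a closed $1$-form on $S$ with $[\alpha]\in H^1(S,\Z)$ Poincar\'e dual to $\rho$, so that $[\Gamma]\wedge\rho=\int_\Gamma\alpha\in\Z$ for every loop $\Gamma$ in $S$.

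Pick a $\nu$-typical point $z_0\in\dom(I)$ and choose a flow-box $U\subset\dom(I)$ for $\F$ containing $z_0$ and satisfying Proposition \ref{prop:flowbox}, with $\nu(U)>0$. Consider the induced system $(U,\varphi_U,\nu_U)$ from \S\ref{ss.rotationvector}, with return time $\tau_U$ and displacement $\rho_U$, and set $\delta(z):=\rho_U(z)\wedge\rho\in\Z$. This integer-valued $\nu_U$-integrable cocycle has mean zero, since
\[
\int_U\delta\,d\nu_U=\Bigl(\int_U\rho_U\,d\nu_U\Bigr)\wedge\rho=\tau_U^{*}\bigl(\mathrm{rot}_f(\nu)\wedge\rho\bigr)=0.
\]
Atkinson's recurrence theorem \cite{At} applied to $(U,\varphi_U,\nu_U)$ and $\delta$ then yields: for $\nu_U$-a.e.\ $z\in U$, the Birkhoff sums $S_n\delta(z):=\sum_{k=0}^{n-1}\delta(\varphi_U^k(z))$ vanish for infinitely many $n$. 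Birkhoff's ergodic theorem applied to $\rho_U$ and $\tau_U$ implies that the same full-measure set contains points $z$ such that $S_n\rho_U(z)/S_n\tau_U(z)\to\mathrm{rot}_f(\nu)$ as $n\to\infty$. Fix such a $z$ and select $n$ so large that $S_n\delta(z)=0$ and the quotient lies in $\mathcal U$.

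Form the closed-up transverse loop $\Gamma_n:=I^{S_n\tau_U(z)}_\F(z)\cdot\gamma$, where $\gamma\subset U$ is a transverse segment returning $\varphi_U^n(z)$ to $z$ (available from the flow-box structure). Its class $h_n:=[\Gamma_n]=S_n\rho_U(z)\in H_1(S,\Z)$ satisfies $h_n\wedge\rho=S_n\delta(z)=0$, together with $h_n/r\in\mathcal U$ where $r:=S_n\tau_U(z)\ge 1$. Choose $T\in\G\setminus\{\mathrm{Id}\}$ such that a fixed lift of $\Gamma_n$ to $\widetilde\dom(I)$ runs from some $\tilde z$ to $T\tilde z$, so that $[T]=h_n$ and $[T]\wedge\mathrm{rot}_f(\lambda)=s(h_n\wedge\rho)=0$. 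In the annular cover $\widehat\dom(I)=\widetilde\dom(I)/T$, the projection $\hat\Gamma_n$ of this lift is an essential transverse loop; applying the surgery used in the proof of Proposition \ref{prop: nontransitivity3} (iteratively minimize the parameter difference at which $\hat\Gamma_n$ re-meets the same leaf, and replace intermediate arcs), we extract an essential simple transverse loop $\hat\Gamma_*$ in $\widehat\dom(I)$ while preserving the cohomological invariant $[T]\wedge\rho=0$ and the approximation $[T]/r\in\mathcal U$ (updating $T$ and $r$ as needed). Let $\tilde\gamma_*$ be the $T$-invariant lift of $\hat\Gamma_*$ and $\tilde B$ the associated $T$-strip, namely the union of leaves of $\tilde\F$ meeting $\tilde\gamma_*$.

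Finally, the trajectory $\tilde I^\Z_{\tilde\F}(\tilde z)$ of the chosen base point draws $\tilde B$ by construction, since it contains a sub-path equivalent to one period of $\tilde\gamma_*$. The drawing property defines an open $f$-invariant subset $W^D\subset\dom(I)$ by Proposition \ref{prop:stability}; this set meets the support of $\nu$ (it contains $z$), so by ergodicity $\nu(W^D)=1$, giving the drawing conclusion for $\nu$-almost every point.

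The principal obstacle is the combined use of Atkinson's integer-valued recurrence theorem and Birkhoff's ergodic theorem to produce a closed-up transverse loop with \emph{exact} vanishing intersection against $\rho$, while simultaneously controlling the rotation vector so that $[T]/r\in\mathcal U$; a secondary technicality is checking that the cut-and-paste surgery in the annular cover that produces a simple transverse loop preserves both numerical invariants and the drawing property.
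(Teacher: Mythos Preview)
Your approach is essentially the same as the paper's: a flow-box $U$ in $\mathrm{supp}(\nu)$, the first return cocycle $\rho_U$, Atkinson's theorem applied to the integer-valued function $\rho_U\wedge\rho$ to force exact vanishing of the partial sum, Birkhoff's theorem to force $[T]/r\in\mathcal U$, and then openness of $W^D$ together with ergodicity of $\nu$ for the conclusion.

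The one step where you diverge is the surgery in the annular cover to produce a \emph{simple} transverse loop, with the phrase ``updating $T$ and $r$ as needed''. This is both unnecessary and, as written, misleading. Once you have a transverse arc $\tilde\gamma_1$ in $\widetilde{\mathrm{dom}}(I)$ from the leaf $\tilde\phi_{\tilde z}$ to $T\tilde\phi_{\tilde z}$ (which exists by the flow-box property of Proposition~\ref{prop:flowbox}, since the trajectory meets every leaf through $\tilde U$ and every leaf through $T\tilde U$), the concatenation $\tilde\gamma_*=\prod_{k\in\Z}T^k\tilde\gamma_1$ is automatically a line: it is a transverse path in a topological plane, hence meets every leaf at most once, hence is injective and proper. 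Its projection to $\widehat{\mathrm{dom}}(I)=\widetilde{\mathrm{dom}}(I)/T$ is then a simple essential loop (injectivity of $\tilde\gamma_*$ forces $\tilde\gamma_*(s)=T^k\tilde\gamma_*(t)\Rightarrow s=t+k$). So no surgery is required, and in particular $T$ and $r$ are never updated; your parenthetical suggests you might be contemplating surgery in the base surface instead, which would indeed change $[T]$ and destroy the carefully arranged conditions $[T]\wedge\rho=0$ and $[T]/r\in\mathcal U$. The paper follows exactly this direct route: take $\tilde\gamma_1$ as a subpath of $\tilde I^{\Z}_{\tilde\F}(\tilde z_1)$ and concatenate.
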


\begin{proof} 
Fix $z_0\in\mathrm{supp}(\nu)\cap \mathrm{dom}(I)$ and a lift $\tilde z_0\in  \widetilde{\mathrm{dom}}(I)$ of $z_0$. One can find a topological open disk $U\subset \mathrm{dom}(I)$ containing $z_0$ such that the connected component $\tilde U$ of $\tilde\pi^{-1}(U)$ containing $\tilde z_0$ is a flow-box that satisfies the conclusion of Proposition \ref{prop:flowbox}. Write $\varphi_U:U\to U$ for the first return map of $f$ and $\tau_U :U\to \N\setminus\{0\}$ for  the time of first return map, which are defined $\nu$-almost everywhere on $U$. Note that $\nu\vert_{U}$ is an ergodic invariant measure of $\varphi_U$. 
Remind that a map $\rho_U: U\to H_1(S,\Z)$ has been defined in the introduction. For every point $z\in U$ such that $\tau_U(z)$ exists, denote $\tilde z$ the preimage of $z$ by $\tilde\pi$ that is in $\tilde U$ and $\delta_U(z)$ the automorphism such that $\tilde f^{\tau_U(z)}(\tilde z)\in \delta_U(z)(\tilde U)$. One gets a map $\delta_U:U\to \mathcal G$ defined $\nu$-almost everywhere on $U$ such that $\rho_U(z)=[\delta_U(z)]$. The measure $\nu$ being ergodic, one knows that $$\int_U \tau_U \, d\nu=\nu\left(\bigcup_{k\geq 0} f^k(U)\right)=\nu\left(\bigcup_{k\in\Z} f^k(U)\right)=1,$$
and consequently that  $\tau_U{}^*(z)=1/\nu(U)$ for $\nu$-almost every point $z\in U$, where $\tau_U{}^*$ and $\rho_U{}^*$ has been defined in \eqref{eq:def*} (page \pageref{eq:def*}). Furthermore, for $\nu$-almost every point $z\in U$, it holds that 
$$\int_U\rho_U (z)\, d\nu(z)=\nu(U)\rho_U{}^*(z)=\mathrm{rot}_f(z)=\mathrm{rot}_f(\nu),$$
which implies that
$$\int_U\rho_U (z)\wedge \mathrm{rot}_f(\lambda)\, d\mu(z)=\nu(U)\rho_U{}^*(z)\wedge \mathrm{rot}_f(\lambda)=\mathrm{rot}_f(\nu)\wedge \mathrm{rot}_f(\lambda)=0.$$
By Atkinson's  Theorem \cite{At}, one knows that if $\varepsilon>0$ is fixed, then for $\nu|_{ U}$ almost every point $z$, there exists $n\geq 1$ such that
$$\left\vert\sum_{k=0}^{n-1} \rho_U(\varphi_U{}^k(z))\wedge  \mathrm{rot}_f(\lambda)\right \vert<\varepsilon.$$
As observed by Lellouch \cite{Lel}, we can slightly improve this result: for $\nu|_{ U}$ almost every point $z$, it holds that
$$\liminf_{n\to+\infty} \,\left\vert\sum_{k=0}^{n-1} \rho_U(\varphi_U{}^k(z))\wedge  \mathrm{rot}_f(\lambda)\right \vert=0.$$
So, if we fix a norm $\Vert\enskip\Vert$ on $H_1(S,\R)$ and $\eta>0$, we can find $z_1\in \mathrm{supp}(\mu)\cap U$ and $n\geq 1$ such that (recall that $\mathrm{rot}_f(\lambda)=s\rho$, with $\rho\in H_1(S,\Z)\setminus\{0\}$ and $s\in\R$)
$$ \left\vert\sum_{k=0}^{n-1}  \rho_U(\varphi_U{}^k(z_1))\wedge  \mathrm{rot}_f(\lambda)\right\vert<s,$$
and such that
$$\left\Vert {1\over n} \sum_{k=0}^{n-1} \rho_U(\varphi_U{}^k(z_1))-\mathrm{rot}_f(\nu)\right\Vert < \eta.$$ 

Every number  $\rho_U(\varphi_U{}^k(z_1))\wedge  \mathrm{rot}_f(\lambda)$ belonging to $s\Z$ we deduce that
$$ \sum_{k=0}^{n-1}  \rho_U(\varphi_U{}^k(z_1))\wedge  \mathrm{rot}_f(\lambda)=0.$$ 
Set 
$$r=\sum_{0\leq k<n} \tau_U({\varphi_U{}^k(z_1)})$$ and denote $\tilde z_1$ the lift of $z_1$ that belongs to $\tilde U$. The automorphism $T$ such that $\tilde f^{r}(\tilde z_1)\in T(\tilde U)$ can be written
$$T=T_{n-1}\circ \dots \circ T_1,$$ where $T_k$ is an automorphism conjugated to $ \delta_U(\varphi_U{}^{k}(z_1))$, so we have
$$[T]=\sum_{0\leq k<n} \big[\delta_U(\varphi_U{}^{k}(z_1))\big].$$ 
Consequently, it holds that 
$$ [T]\wedge \mathrm{rot}_f(\lambda)=0\, , \qquad \big\Vert[T]/r -\mathrm{rot}_f(\nu)\big\Vert<\eta.$$

Note that we have $\tilde f^{r}(\tilde z_1)\in T(\tilde U)$ if $\tilde z_1$ is the lift of $z_1$ that belongs to $\tilde U$. The property of $\tilde U$ stated in Proposition \ref{prop:flowbox} tells us that $\tilde I_{\tilde{\F}}^{\Z}(\tilde z_1)$ intersects every leaf that meets $\tilde U$ and every leaf that meets $T(\tilde U)$. So, there is subpath $\tilde\gamma_1$ of $\tilde I_{\tilde{\F}}^{\Z}(\tilde z_1)$ that joins $\phi_{\tilde z_1}$ to  $T(\phi_{\tilde z_1})$. 
Of course we have $T\not=\mathrm{Id}$. Moreover $\tilde I_{\tilde{\F}}^{\Z}(\tilde z_1)$ draws the $T$-strip $\tilde B$ defined by the line $\tilde\gamma_*$ obtained by concatenating\footnote{Strictly speaking one has to modify the path $\gamma_1$ lifted by $\tilde\gamma_1$ to be able to concatenate $T^k(\tilde\gamma_1)$ with $T^{k+1}(\tilde\gamma_1)$: it is sufficient to move it along the leaves so that the last endpoint of $\tilde \gamma_1$ with the first endpoint of $T(\tilde\gamma_1)$ coincide.} the paths $T^k(\tilde\gamma_1)$, $k\in\Z$.
As explained before, Proposition \ref{prop:stability} tells us  that the set $W^D$ of points $z\in U$  that have a lift $\tilde z$ such that $\tilde I_{\tilde{\F}}^{\Z}(\tilde z)$ draws $\tilde B$ is open. It is $T$-invariant and contains $z_1\in \mathrm{supp}(\nu)$. The measure $\nu$ being ergodic, it holds that $\nu(W^D)=1$.
\end{proof}

\begin{proof}[Proof of Theorem \ref{th:main}]

Let us summarize in which cases the results we have already proved allow us to get Theorem \ref{th:main}. Recall that the sets $W^*$ are defined in Paragraph~\ref{sec:Dyna}.
\begin{itemize}
\item If $\nu(W^{R\to R}\cap W^D)=1$ or $\nu(W^{L\to L}\cap W^D)=1$, then by Proposition~\ref{prop:transversestrips} for $\nu$-almost every point $z$, the path $I_{\F}^{\Z}(z)$ has an $\F$-transverse self intersection; this allows to apply Corollary~\ref{th:rotationalhorseshoe2} and to get a suitable rotational horseshoe.
\item If $\nu(W^{R\to L}\cap W^D)=1$, there are three cases:
\begin{itemize}
\item If $[T]\wedge\mathrm{rot}_f(\nu)<0$, then one can apply Corollary~\ref{co:signofintersection} which shows that for $\nu$-almost every point $z$, the path $I_{\F}^{\Z}(z)$ has an $\F$-transverse self intersection; this allows to apply Corollary~\ref{th:rotationalhorseshoe2} and to get a suitable rotational horseshoe.
\item If $[T]\wedge\mathrm{rot}_f(\nu)=0$, then one can apply Lemma~\ref{co:signofintersectionzero} which shows that for $\nu$-almost every point $z$, the path $I_{\F}^{\Z}(z)$ has an $\F$-transverse self intersection; as before this allows to apply Corollary~\ref{th:rotationalhorseshoe2} and to get a suitable rotational horseshoe.
\item If $[T]\wedge\mathrm{rot}_f(\nu)>0$, then one can apply Lemma~\ref{le:nointersection}. It tells us that there exists an ergodic invariant probability measure $\nu'$ such that for $\nu$-almost every point $z$ and $\nu'$-almost every point $z'$, either the paths $I_{\F}^{\Z}(z)$ and $I_{\F}^{\Z}(z')$ have an $\F$-transverse intersection, or the path $I_{\F}^{\Z}(z')$ accumulates on $I_{\F}^{\Z}(z)$. In the first case one can apply Corollary~\ref{th:rotationalhorseshoe2} to get a suitable rotational horseshoe. In the second case Proposition~\ref{prop:conditionhorsehoestrip} tells us that $\mathrm{rot}_{f}(\nu)\wedge \mathrm{rot}_{f}(\nu')\neq 0$. Lellouch's Theorem~\ref{th:lellouch} then gives us a suitable rotational horseshoe.
\end{itemize}
\item The case $\nu(W^{L\to R}\cap W^D)=1$ is identical to the case $\nu(W^{R\to L}\cap W^D)=1$.
\end{itemize}

In all these cases the existence of a suitable homotopical interval of rotation is due to the presence of a rotational topological horseshoe. To get Theorem \ref{th:main} it remains to study a last case where the existence of a suitable homotopical interval of rotation will have another reason. One can write $T=T'{}^{m}$, $m\geq 1$, where $T'\in\mathcal G$ is irreductible. The following proposition will permit us to finish the proof of Theorem \ref{th:main}. 
Indeed, let $\mathcal U$ be a neigborhood of $\mathrm{rot}_f(\nu)$ in $H_1(S,\R)$. One can find $p_0/q_0\in(0,a)$ written in an irreducible way such that $p_0[T']/q_0\in \mathcal U$. By Proposition \ref{prop:existlargeperiod}, for every $p/q\in[0,1]$ written in an irreducible way, there exists $\tilde z_{p/q}$ such that $\tilde f^{qq_0}(\tilde z) =T'{}^{pp_0}(\tilde z)$. The image $z_{p/q}=\tilde\pi(\tilde z_{p/q})\in S$ is fixed by $f^{qq_0}$ and the loop of $S$ defined by  $I^{qq_0}(z_{p/q})$ belongs to $[T']_{\mathcal{FHL}}{}^{pp_0}$. 
Denote $q'=qq_0/s$ the period of $z_{p/q}$. There exists $R\in\mathcal G$ such that $\tilde f^{q'}(\tilde z_{p/q})=R(\tilde z_{p/q})$. We deduce that $T'{}^{pp_0}(\tilde z_{p/q}) =\tilde f^{qq_0}(\tilde z_{p/q})=R^{s}(\tilde z_{p/q})$. It implies that $T'{}^{pp_0}=R^{s}$. The group $\langle T' , R\rangle$ being a free group, it must be infinite cyclic. We deduce that $R$ is a power of $T'$ because $T'$ is irreducible and so $s$ divides $pp_0$ and $qq_0$. 
The integers $p_0$ and $q_0$ being relatively prime, it holds that $s\, gcd(s,p_0) ^{-1}\, gcd(s,q_0)^{-1}$ is an integer. Moreover it is relatively prime with $p_0$ and with $q_0$. So it divides $p$ and $q$. These integers being relatively prime, we have $s=gcd(s, p_0) \, gcd (s,q_0)\leq p_0 q_0$ and hence the period $q'=qq_0/r=s$ of  $z_{p/Q}$ satisfies $q'\geq q/p_0$. We deduce that $([T']_{\mathcal{FHL}}{}^{p_0}, q_0, p_0)$ is a homotopical interval of rotation.
\end{proof}

\begin{proposition} \label{prop:existlargeperiod} 
If the sets
$$W^{R\to L}\cap W^D\,,\quad W^{L\to R}\cap W^D\,,\quad W^{R\to R}\cap W^D\,,\quad W^{L\to L}\cap W^D$$ 
are $\nu$-null sets, then there exists $a>0$ such that :
\begin{itemize}
\item one has $\mathrm{rot}_f(\nu)=a[T']$;
\item for every $p/q\in[0,a)\cap \Q$, written in an irreducible way, there exists $\tilde z$ such that $\tilde f^q(\tilde z) =T'{}^p(\tilde z)$.
\end{itemize}
\end{proposition}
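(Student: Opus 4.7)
The plan is to proceed in three stages: (I) confine $\nu$-typical transverse trajectories to the strip $\tilde B$; (II) deduce the rotation vector formula from Lemma~\ref{claim:proportional}; (III) apply Poincar\'e--Birkhoff (Theorem~\ref{th:PB}) in a suitable annular cover to produce the periodic orbits.

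Stage~(I) is the main obstacle. I would show that for $\nu$-a.e.~$z\in\dom(I)$, the whole transverse trajectory $I^{\Z}_{\F}(z)$ is contained in $B=\tilde\pi(\tilde B)$. Equivalently, choosing the lift $\tilde z$ whose trajectory $\tilde\gamma:=\tilde I^{\Z}_{\tilde \F}(\tilde z)$ draws $\tilde B$ (which exists $\nu$-a.e.~by Lemma~\ref{le:goodstrip}), the parameter interval $J:=\{t:\tilde\gamma(t)\in\tilde B\}$ equals all of $\R$. The hypothesis of the proposition immediately rules out $J$ being bounded, since otherwise $\tilde z$ would cross or visit $\tilde B$ and place $z$ in one of the four $\nu$-null excluded sets. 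The delicate case is $J$ a half-line, say $(-\infty,b]$, which by ergodicity would hold on a $\nu$-full invariant set and would mean $\tilde\gamma$ exits $\tilde B$ through $\tilde\phi_{\tilde\gamma(b)}\subset\partial\tilde B$ forever. Combining the positive recurrence of $\gamma=I^{\Z}_{\F}(z)$ (Proposition~\ref{prop:recurrence}) with the accumulation analysis of Proposition~\ref{prop: nontransitivity3}, the tail $\tilde\gamma|_{[b,+\infty)}$ would be trapped in a distinct strip $\tilde B'$, so Proposition~\ref{prop:conditionhorsehoestrip} would yield $\mathrm{rot}_f(\nu)\wedge\mathrm{rot}_f(\nu')\neq 0$ for some auxiliary ergodic $\nu'$; Theorem~\ref{th:lellouch} would then produce a rotational horseshoe, contradicting the standing assumption that we are in the residual ``no horseshoe'' regime of the main theorem.

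With confinement in hand, Stage~(II) is immediate: Lemma~\ref{claim:proportional} applied to $\nu$ yields $a_0>0$ with $\mathrm{rot}_f(\nu)=a_0[T]$, and since $T=(T')^m$ gives $[T]=m[T']$ in $H_1(S,\Z)$, we obtain $\mathrm{rot}_f(\nu)=(ma_0)[T']$, so the first item holds with $a:=ma_0>0$. For Stage~(III), I would work in the annular cover $\bar\Sigma:=\widetilde{\dom}(I)_{\tilde\gamma_*}/\langle T'\rangle$, whose canonical lift is $\tilde f$ and whose rotation numbers are taken with respect to $T'$. A $\nu$-typical orbit confined to $\tilde B$ descends to a positively recurrent point of the induced map $\bar f$ on $\bar\Sigma$ with rotation number $a$; Carath\'eodory prime end theory (Paragraph~\ref{ss:Caratheodory}), applied to an appropriate $\bar f$-invariant annular region capturing $\mathrm{supp}(\nu)$, extends $\bar f$ to a compact annulus on whose boundary circles the rotation number is $0$. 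Lemma~\ref{le:intersecting}, valid because $[T]\wedge\mathrm{rot}_f(\lambda)=0$, rules out the second alternative of Theorem~\ref{th:PB} (an essential invariant simple loop disjoint from its image). The first alternative then delivers, for every $p/q\in(0,a)\cap\Q$ in irreducible form, a periodic point $\tilde z$ with $\tilde f^q(\tilde z)=(T')^p(\tilde z)$, which is precisely the second conclusion.
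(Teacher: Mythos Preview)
Your overall architecture (confinement $\Rightarrow$ rotation formula $\Rightarrow$ Poincar\'e--Birkhoff) matches the paper's, but Stage~(I) has a genuine gap and Stage~(III) skips steps that the paper treats carefully.

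\textbf{Stage~(I).} Ruling out bounded $J$ is fine, but your treatment of the half-line case does not go through. You assert that the tail $\tilde\gamma|_{[b,+\infty)}$ ``would be trapped in a distinct strip $\tilde B'$'' via Proposition~\ref{prop: nontransitivity3}, but that proposition has as \emph{hypothesis} that a positively recurrent path accumulates on another path; it does not say a tail leaving a strip must enter another strip. You have not exhibited any accumulation, nor the auxiliary measure $\nu'$ needed for Proposition~\ref{prop:conditionhorsehoestrip}. Moreover you invoke a ``standing no-horseshoe assumption'' that is not among the hypotheses of the proposition. The paper's route is different and self-contained: first it shows $\tilde B$ is $T'$-invariant (a point you omit); then it rules out accumulation on $\tilde\gamma_*$ directly, because drawing $\tilde B$ means every leaf of $\tilde B$ is hit by some $T'^k\tilde\gamma$, so accumulation on $\tilde\gamma_*$ would force the recurrent path $\gamma$ to accumulate on itself, contradicting Corollary~\ref{coro:recurrence}. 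This already gives equivalence of $\tilde\gamma$ with $\tilde\gamma_*$ at $+\infty$ or $-\infty$. The upgrade to full equivalence (Lemma~\ref{le:whole equivalence}) is then a recurrence argument: assuming $\tilde\gamma$ exits $\tilde B$ at some past time, one finds deck transformations $R_k$ with $R_k\tilde f^{n_k}(\tilde z)\to\tilde z$, shows each $R_k\tilde\gamma_*$ is equivalent at $+\infty$ to $\tilde\gamma_*$, and concludes via Lemma~\ref{LemEquivSubpath} that $R_k\in\langle T'\rangle$, contradicting the assumed exit.

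\textbf{Stage~(III).} Two points are glossed over. First, positive recurrence of the lift $\hat z$ in the annular cover $\widetilde{\dom}(I)_{\tilde\gamma_*}/\langle T'\rangle$ is not automatic from recurrence of $z$: one must check that whenever the $\tilde f$-orbit of $\tilde z$ returns to $R\tilde U$ then $R\in\langle T'\rangle$; this is the second half of Lemma~\ref{le:whole equivalence}. Second, the compactification providing a boundary with rotation number $0$ is not always via prime ends: the paper splits into the case where the stabilizer of the component $\widetilde{\dom}(I)_{\tilde\gamma_*}$ equals $\langle T'\rangle$ (prime ends on the image annulus in $S$) and the case where it is larger (hyperbolic boundary circles as in \S2.3). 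Your single reference to Carath\'eodory does not cover both.
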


\begin{proof} 
Recall that $T=T'{}^m$, where $m\geq 1$. Let us begin by proving that $\tilde B$ is invariant by $T'$. It is sufficient to prove that for every $n>0$ we have $\overline{L(T'{}^n\tilde\phi})\subset L(\tilde\phi)$. If $L(\tilde\phi)\subset L(T'{}^n\tilde\phi)$, then for every $k\geq 1$ we have  $L(T'{}^{nk}\tilde\phi)\subset L(T'{}^{n(k+1)}\tilde\phi)$ and so we deduce that  $L(\tilde\phi)\subset L(T'{}^{nm}\tilde\phi)$, which contradicts the inclusion $\overline{L(T'{}^{nm}\tilde\phi)}\subset L(\tilde\phi)$. If $L(\tilde\phi)\cap L(T'{}^n\tilde\phi)=\emptyset$, then $L(\tilde\phi)$ is disjoint from its image by $T'{}^n$. The map $T'{}^n$ being fixed point free, Brouwer Translation Theorem \cite{Br} tells us that $L(\tilde\phi)$ is disjoint from its image by $T'{}^{nm}$, which contradicts the inclusion $\overline{L(T'{}^{nm}\tilde\phi)}\subset L(\tilde\phi)$. Similarly, if $R(\tilde\phi)\cap R(T'{}^n\tilde\phi)=\emptyset$, then $R(\tilde\phi)$ is disjoint from its image by $T'{}^{nm}$, which contradicts the inclusion $\overline{R(\tilde\phi)}\subset R(T'{}^{nm}\tilde\phi)$. The only remaing case is the case where $\overline{L(T'{}^n\tilde\phi})\subset L(\tilde\phi)$.

In the following instead of seeing $\tilde B$ as a $\tilde T$-strip, we will see it as a $T'$-strip:  one can choose $\tilde \gamma_*$ to be  invariant by $T'$ and suppose that $\tilde\gamma_*(t)=T'\gamma_*(t)$ for every $t\in\R$.

By construction of $\tilde B$ we know that $\nu(W^D)=1$.
So, $\nu$-almost every point $z\in \mathrm{dom}(I)$ is recurrent and has a lift $\tilde z\in \widetilde{\mathrm{dom}}(I)$ such that $\tilde I_{\tilde{\F}}^{\Z}(\tilde z)$ is equivalent to $\tilde\gamma_*$ at $+\infty$ or $-\infty$. Indeed if $z\in W^D$ is recurrent and if $\tilde z$ was a lift of $z$  such that $\tilde I_{\tilde{\F}}^{\Z}(\tilde z)$ accumulates on $\tilde\gamma_*$, then there would exist $k\in\Z$ such that $\tilde I_{\tilde{\F}}^{\Z}(\tilde z)$ accumulates on $\tilde I_{\tilde{\F}}^{\Z}(T'{}^k(\tilde z))$. It is impossible because $z$ is recurrent and so has no self-accumulation by Corollary~\ref{coro:recurrence}.
Hence, $\tilde I_{\tilde{\F}}^{\Z}(\tilde z)$ does not accumulate on $\tilde\gamma_*$, and by the hypothesis of the proposition it cannot go out of $\tilde B$ both before and after it draws $\tilde B$. This implies that it has to be equivalent to $\tilde\gamma_*$ at $+\infty$ or $-\infty$.

In fact we can be more precise: if there are $a<a'$ and $b\in\R$ such that $\tilde I_{\tilde{\F}}^{\Z}(\tilde z)|_{[a,a']}$ is equivalent to $\tilde \gamma_*|_{[b,b+1]}$, then either $\tilde I_{\tilde{\F}}^{\Z}(\tilde z)|_{[a,+\infty)}$ is equivalent to a subpath of $\tilde \gamma_*$  
(and equivalent to $\tilde\gamma_*$ at $+\infty$ but we will not use this property) or $\tilde I_{\tilde{\F}}^{\Z}(\tilde z)|_{(-\infty,a']}$ is equivalent to a subpath of $\tilde \gamma_*$. From this we will deduce the following lemma.

\begin{lemma} \label{le:whole equivalence}
The transverse path $\tilde I_{\tilde{\F}}^{\Z}(\tilde z)$ is equivalent to $\tilde \gamma_*$. Moreover there is neighborhood $\tilde U$ of $\tilde z$ such that if the orbit of $\tilde z$ meets $R\tilde U$ for some $R\in \mathcal G$, then $R$ is a power of $T'$.
\end{lemma}

\begin{proof}
Let us treat the case where $\tilde I_{\tilde{\F}}^{\Z}(\tilde z)|_{[a,+\infty)}$ is equivalent to a subpath of $\tilde \gamma_*$, the other case being identical.

Suppose that $\tilde I_{\tilde{\F}}^{\Z}(\tilde z)$ is not equivalent to $\tilde \gamma_*$. Then, as we have already seen that it cannot accumulate in $\tilde \gamma_*$, this means that there exists $b<a$, $b\in\Z$, such that $\tilde I_{\tilde{\F}}^b(\tilde z)\notin \tilde B$. 

By recurrence of the point $z$, there exists a sequence of integers $n_k\to -\infty$, and a sequence of deck transformations $(R_k)_{k\in\N}\in\G$ such that $R_k\tilde f^{n_k}(\tilde z)$ tends to $z$; in particular for any $k$ large enough:

\begin{itemize}
\item $\tilde \gamma_*|_{[b,b+1]}$ is equivalent to a subpath of $R_k \tilde I_{\tilde{\F}}^{\Z}(\tilde z)|_{[n_k+a-1,+\infty)}$ (and in particular this path draws $\tilde B$);
\item  $R_k\tilde I_{\tilde{\F}}^{n_k+b}(\tilde z)\notin \tilde B$.
\end{itemize}
By the same reasoning as before the lemma, we deduce that either the trajectory $R_k\tilde I_{\tilde{\F}}^{\Z}(\tilde z)|_{[n_k+a+1,+\infty)}$ is equivalent to a subpath of $\tilde \gamma_*$, or $R_k\tilde I_{\tilde{\F}}^{\Z}(\tilde z)|_{(-\infty,n_k+a'-1]}$ is  equivalent to a subpath of $\tilde \gamma_*$. By the second point above, the second situation is impossible. Hence, $R_k\tilde I_{\tilde{\F}}^{\Z}(\tilde z)|_{[n_k+a+1,+\infty)}$ is equivalent to a subpath of $\tilde \gamma_*$.

In particular, this implies that $R_k \tilde\gamma_*$ is equivalent at $+\infty$ to $\tilde\gamma_*$. By Lemma~\ref{LemEquivSubpath}, this implies that $R_k\tilde\gamma_*\cap\tilde\gamma_*\neq\emptyset$; more precisely it implies that for any $n$ large enough, $R_k\tilde\gamma_*\cap\tilde\gamma_*|_{[b+n, b+n+1)}\neq\emptyset$, hence that $R_k\tilde\gamma_*\cap\tilde\gamma_*$ is infinite. This implies that $R_k\tilde\gamma_* = \tilde\gamma_*$, in other words $R_k=T'{}^{i_k}$ for some $i_k\in\Z$. 

We deduce that $T'{}^{i_k}\tilde I_{\tilde{\F}}^{\Z}(\tilde z)|_{[n_k+a+1,+\infty)}$ is equivalent to a subpath of $\tilde \gamma_*$, equivalently (as $\tilde\gamma_*$ is $T'$-invariant), for any $k\in\N$, the path $\tilde I_{\tilde{\F}}^{\Z}(\tilde z)|_{[n_k+a+1,+\infty)}$ is equivalent to a subpath of $\tilde \gamma_*$.

This proves that $\tilde I_{\tilde{\F}}^{\Z}(\tilde z)$ is equivalent to a subpath of $\tilde \gamma_*$. As it cannot accumulate in $\tilde\gamma_*$, this proves that $\tilde I_{\tilde{\F}}^{\Z}(\tilde z)$ is equivalent to $\tilde \gamma_*$. 

To get the second part of the lemma, consider a neighborhood $\tilde U$ of $\tilde z$ such that for every $\tilde z'\in \tilde U$, the path  $\tilde I_{\tilde{\F}}^{\Z}(\tilde z')$ draws $\tilde\gamma_*$. If $\tilde f^k(\tilde z)\in R\tilde U$, $R\in \mathcal G$, then  $\tilde I_{\tilde{\F}}^{\Z}(\tilde z)$ draws $R(\tilde\gamma_*)$. We deduce that $\tilde I_{\tilde{\F}}^{\Z}(\tilde z)$ is equivalent to $R\tilde\gamma_*$. What was done above tells us that $R\in\langle T'\rangle$. 
\end{proof}

Now, let us consider
\begin{itemize}
\item the connected component  $\widetilde{\mathrm{dom}}(I)_{\tilde \gamma_*}$ of $\widetilde{\mathrm{dom}}(I)$ that contains $\tilde\gamma_*$,
\item the quotient space  $\widehat{\mathrm{dom}}(I)=\widetilde{\mathrm{dom}}(I)/T$,
\item the foliation $\hat{\F}$ of $\widehat{\mathrm{dom}}(I)$ lifted by $\tilde{\F}$,
\item the  covering projection  $\hat\pi: \widehat{\mathrm{dom}}(I)\to \mathrm{dom}(I)$,
\item the annulus $\widehat{\mathrm{dom}}(I)_{\tilde \gamma_*}=\widetilde{\mathrm{dom}}(I)_{\tilde \gamma_*}/T'$, 
\item the universal covering projection  $\pi: \widetilde{\mathrm{dom}}(I)_{\tilde \gamma_*}\to \widehat{\mathrm{dom}}(I)_{\tilde \gamma_*}$.
\end{itemize}

\begin{lemma} \label{le:rotationnumberannulus} 
It holds that $\nu$-almost every point $z$ has a lift in $ \widehat{\mathrm{dom}}(I)_{\tilde \gamma_*}$ that is positively recurrent and has a rotation number $a>0$ (in the annulus). Moreover we have $\mathrm{rot}_f(\nu)=a[T']$.
\end{lemma}

\begin{proof} 
We know that $\nu$-almost every point $z$ is positively recurrent and has a lift $\tilde z$ in $ \widetilde{\mathrm{dom}}(I)_{\tilde \gamma_*}$ that draws $\tilde B$. We have seen in Lemma  \ref{le:whole equivalence} that  $\tilde I_{\tilde{\F}}^{\Z}(\tilde z)$ is equivalent to $\tilde\gamma_*$ and that there exists a neighborhood $\tilde U$ of $\tilde z$ such that if the orbit of $\tilde z$ meets $R\tilde U$,  for some $R\in \mathcal G$, then $R$ is a power of $T'$. Using the fact that $z$ is recurrent, we deduce that $\widehat z=\pi(\widetilde z)$ is positively recurrent.  By the argument given in the proof of Lemma~\ref{claim:proportional}, we deduce that $z$ has rotation number $a>0$. Moreover we have $\mathrm{rot}_f(\nu)=a[T]$.
\end{proof}

Now there are two cases to consider. The first case is the case where the stabilizer of $\widetilde{\mathrm{dom}}(I)_{\tilde\gamma_*}$ is generated by $T'$ and the second case is when it is larger. In the first case, $\hat\pi$ sends homeomorphically $\widehat{\mathrm{dom}}(I)_{\tilde\gamma_*}$ onto a connected component of $\mathrm{dom}(I)$.  Moreover, the frontier of this annulus is made of contractible fixed points of $f$.
In the second case, $\hat\pi$ sends $\widehat{\mathrm{dom}}(I)_{\tilde\gamma_*}$ onto a hyperbolic surface whose universal covering space is $\widetilde{\mathrm{dom}}(I)_{\tilde\gamma_*}$ and the group of covering automorphisms is the stabilizer of $\widetilde{\mathrm{dom}}(I)_{\tilde\gamma_*}$ in $\mathcal G$. 
In both cases, there exists an extension $\overline{\widehat{\mathrm{dom}}}(I)_{\tilde\gamma_*}$ of $\widehat{\mathrm{dom}}(I)_{\tilde\gamma_*}$ obtained by blowing at least one end $e$ with a circle $\hat \Gamma_e$ and $\hat f$ extends to a homeomorphism $\overline{\hat f}$ of $\overline{\widehat{\mathrm{dom}}}(I)_{\tilde\gamma_*}$ (see Paragraph~\ref{ss:Caratheodory}). Furthermore, the rotation number(s) induced on  the added circle(s) by the lift of $\overline{\hat f}$  that extends $\tilde f$ are equal to $0$. 

By Lemma \ref{le:rotationnumberannulus}, there exist positively recurrent points with rotation number $a>0$ where $\mathrm{rot}_f(\nu)=a[T']$.
Consequently, according to Theorem~\ref{th:PB} that can be applied thanks to Lemma~\ref{le:intersecting}, for every rational number $p/q\in(0,a)$, written in an irreducible way, there exists a point $\tilde z$ such that $\tilde f ^q(\tilde z)=T^p(\tilde z)$. As $\tilde f$ also has a fixed point by Lefschetz index theorem, this means that $f$ has a homotopical interval of rotation of type $(\kappa, r)$ such that $[\kappa]/r\in \mathcal U$.
\end{proof}

\end{document}